\newtheorem{lemma}{Lemma}[section]
\newtheorem{theorem}[lemma]{Theorem}
\newtheorem{Corollary}[lemma]{Corollary}
\newtheorem{Proposition}[lemma]{Proposition}
\newtheorem{theoremintro}{Theorem}
\theoremstyle{definition}
\newtheorem{Definition}[lemma]{Definition}
\newtheorem{Remark}[lemma]{\sc Remark}
\newtheorem{Warning}[lemma]{\sc Warning}
\newtheorem{Example}[lemma]{\sc Example}
\newtheorem{Notation}[lemma]{Notation}
\def\colim{\mathop{\mathrm{colim}}}
\newcommand{\E}{\mathcal{E}}
\newcommand{\Sym}{\mathbb{S}}
\newcommand{\qi}{\xrightarrow{ \,\smash{\raisebox{-0.65ex}{\ensuremath{\scriptstyle\sim}}}\,}}
\newcommand{\lqi}{\xleftarrow{ \,\smash{\raisebox{-0.65ex}{\ensuremath{\scriptstyle\sim}}}\,}}
\newcommand{\C}{\mathcal{C}}
\newcommand{\id}{\operatorname{id}}
\newcommand{\kk}{\Bbbk}
\author{Victor Roca i Lucio}
\title{Higher Lie theory in positive characteristic}
\date{\today}
\address{Victor Roca i Lucio, Université Paris Cité, Sorbonne Université, CNRS, IMJ-PRG, F-75013 Paris, France}
\email{\href{mailto:rocalucio@imj-prg.fr}{rocalucio@imj-prg.fr}}
\subjclass[2020]{18M70, 18N40, 22E60, 55P62, 55U10, 14D15, 14D23}
\keywords{Algebraic operads, deformation theory, formal moduli problems, homotopical algebra, Lie theory, $p$-adic homotopy theory.}
\thanks{}
\begin{document}
		
\begin{abstract}
The main goal of this article is to develop integration theory for \textit{absolute} partition $\mathcal{L}_\infty$-algebras, which are point-set models for the (spectral) partition Lie algebras of Brantner--Mathew where infinite sums of operations are well-defined by definition. We construct a Quillen adjunction between absolute partition $\mathcal{L}_\infty$-algebras and simplicial sets, and show that the right adjoint is a well-behaved integration functor. Points in this simplicial set are given by solutions to a Maurer--Cartan equation, and we give explicit formulas for gauge equivalences between them. We construct the analogue of the Baker--Campbell--Hausdorff formula in this setting and show it produces an isomorphic group to the classical one over a characteristic zero field. 

\medskip

We apply these constructions to show that absolute partition $\mathcal{L}_\infty$-algebras encode the $p$-adic homotopy types of pointed connected finite nilpotent spaces, up to certain equivalences which we describe by explicit formulas. In particular, these formulas also allow us to give a combinatorial description of the homotopy groups of the $p$-completed spheres as solutions to a certain equation in a given degree, up to an equivalence relation imposed by elements one degree above. Finally, we construct absolute partition $\mathcal{L}_\infty$ models for $p$-adic mapping spaces, which combined with the description of the homotopy groups gives an algebraic description of the homotopy type of these $p$-adic mapping spaces parallel to the unstable Adams spectral sequence.
\end{abstract}

\maketitle

\setcounter{tocdepth}{1}

\tableofcontents

\section*{Introduction}
Historically, Lie algebras emerged in correspondence with groups, as the natural structure on the tangent space at the identity of a smooth manifold endowed with a compatible group structure. In this analytic case, Lie's third theorem establishes a correspondence between simply connected Lie groups and finite dimensional Lie algebras over $\mathbb{R}$. The universal way to recover a such a Lie group from its Lie algebra is via the exponential map, which converges in some range and becomes a local isomorphism. The Baker--Campbell--Hausdorff formula appears as the universal way to turn the exponential into a group morphism (as it defines a group structure) and is given by 

\[
\mathrm{BCH}(x,y) = x + y + \frac{1}{2}[x,y] + \frac{1}{12}[x,[x,y]] - \frac{1}{12}[y,[x,y]] + \cdots~,
\]
\vspace{0.1pc}

for any two elements $x,y$ in a Lie algebra $\mathfrak{g}$. This universal formula is an infinite sum which only involves iterated brackets of $x$ and $y$ with fractional coefficients. We will refer to this procedure as the \textit{integration} of the Lie algebra $\mathfrak{g}$. In the case of Lie groups and Lie algebras over $\mathbb{R}$, one relies on the topological properties of the real numbers to make the exponential and the Baker--Campbell--Hausdorff formula converge. See \cite{SerreLie} for an account of these results.

\medskip

A purely algebraic formulation of this correspondence can be stated over a general field $\kk$ of characteristic zero by imposing nilpotency conditions that make these sums finite. Via this procedure, one can get two equivalences. A group theoretic one, between the categories of finite dimensional nilpotent Lie algebras over the rationals $\mathbb{Q}$ and torsion free radicable nilpotent abstract groups. And a geometric one, between the categories of $\kk$-unipotent algebraic groups and finite dimensional nilpotent Lie algebras over $\kk$.

\medskip

\textbf{Higher Lie theory in characteristic zero.} This integration procedure, which is at the heart of classical Lie theory, admits a generalization at the homotopical level, and this opens the door for what can be called \textit{higher Lie theory.} If one considers differential graded (dg) Lie algebras, it is natural to consider them up to quasi-isomorphism. Their homotopy category (or $\infty$-category in modern terminology) is equivalent to the homotopy category of derived infinitesimal deformation problems. This statement, now a theorem by Lurie and Pridham \cite{Lurie11,Pridham10}, gives a precise meaning to the old idea that "deformation problems are encoded by Lie algebras", a principle stated by Drinfeld, Deligne and many others, and which goes back to the work of Kodaira--Spencer \cite{KodairaSpencer58} and Gerstenhaber \cite{Gerstenhaber64}. See, for instance, \cite{toenfmp, fmps} for more on this subject.

\medskip

This means that the integration of a dg Lie algebra should not be a group, but an $\infty$-groupoid, and moreover the $\infty$-groupoid of the deformation problem it encodes, where points are given by infinitesimal deformations, paths by equivalences between deformations, and so on. A first general approach to the integration of dg Lie algebras is given by the seminal work of Hinich in \cite{Hinich01}, using methods from Sullivan \cite{Sullivan77}. A refined version of the integration procedure was constructed by Getzler in \cite{Getzler09} for nilpotent $\mathcal{L}_\infty$-algebras (homotopy Lie algebras). The key point is that now, if one views a nilpotent Lie algebra as a nilpotent $\mathcal{L}_\infty$-algebra in degree $0$ and applies Getzler's functor to it, one gets a simplicial set which is \textit{isomorphic} to the classifying space of the abstract group produced by the Baker--Campbell--Hausdorff formula. 

\medskip

Another approach to integration is given by the work of Robert-Nicoud and Vallette in \cite{robertnicoud2020higher}. Using operadic methods, they gave a tractable description of Getzler's functor. These methods also allowed them to construct  \textit{higher Baker--Campbell--Hausdorff formulas}: they showed that any horn-filling problem in the integration $\infty$-groupoid is solved by explicit infinite sums involving the higher brackets, which generalise the classical Baker--Campbell--Hausdorff. Thus the integration procedure produces an $\infty$-groupoid which is not only a homotopy type, but that can be considered a \textit{group up to homotopy} in an algebraic sense. Finally, the author generalized these results to the case involving curvature in \cite{lucio2022integration} by considering curved \textit{absolute} $\mathcal{L}_\infty$-algebras, a new type of algebraic structures where all formal sums of operations are well-defined by definition and which includes nilpotent examples by default. 

\medskip

\textbf{Rational homotopy theory.} The story of dg Lie algebras is also related to rational homotopy models for spaces. Using them, Quillen constructed the first rational models for simply connected spaces in \cite{Quillen69}. While later Sullivan constructed simpler to compute rational models dg commutative algebras in \cite{Sullivan77}, these two approaches are not unrelated. The relationship between these two types of models is called Koszul duality. This duality links dg commutative algebras and dg Lie algebras, and it is also at the heart of why dg Lie algebras encode infinitesimal deformation problems. 

\medskip

It is a modern insight of \cite{Liemodels}, and \cite{robertnicoud2020higher} in the $\mathcal{L}_\infty$ case, that integration theory can help simplify Quillen's original approach and give a direct construction of Lie models for spaces. This was also generalized to curved absolute $\mathcal{L}_\infty$-algebras in \cite{lucio2022integration}, where they were shown to provide rational models for non-necessarily pointed nor connected finite type nilpotent spaces.

\medskip

\textbf{Problems in positive characteristic.} Over a field $\kk$ of positive characteristic, the correspondences between groups and Lie algebras largely break down. Neither the exponential nor the Baker--Campbell--Hausdorff formula are fully defined, as they involve fractional coefficients with factorial denominators. 

\medskip

The tangent space at the identity of an algebraic group still has a Lie structure, and even an extra operation which makes it a $p$-\textit{restricted Lie algebra}, see \cite[Section I.7]{Jantzen}. However, there is no way in general to recover a group only from the data of its tangent $p$-restricted Lie algebra. Furthermore, two different algebraic groups, even two $\kk$-unipotent algebraic groups, can have the same $p$-restricted Lie algebra, as for example the additive group $\mathbb{G}_a$ and its Frobenius kernel $\alpha_p$. And many aspects of the classical theory of Lie algebras break down in positive characteristic, see \cite{Straderestricted}. 

\medskip

There is a partial group theoretic correspondence achieved by Lazard in \cite{Lazardnilpotent}. It establishes an equivalence of categories between the category of nilpotent $p$-restricted Lie algebras with order $p^n$ and nilpotency class $c$ and the category of finite abstract $p$-groups with order $p^n$ and nilpotency class $c$, if $c < p$. The key ingredient is to use the fact that the prime divisors of the denominator of the weight $n$ coefficients in the Baker--Campbell--Hausdorff formula only involve primes $\leq n$. Nevertheless, as Lazard points out in his introduction: "On the other hand, although the study of the relationship between groups and Lie algebras is still very incomplete, we can nevertheless estimate that Lie algebras will prove insufficient, even for the study of $p$-finite groups. We should therefore investigate whether other algebraic structures could be used to construct new categories of groups."\footnote{"D'autre part, si l'étude des relations entre groupes et algèbres de Lie est encore très incomplète, on peut estimer néanmoins que les algèbres de Lie se révéleront insuffisantes, même pour l'étude des $p$-groupes finis. Il conviendrait donc de rechercher si d'autres structures algébriques pourraient permettre la construction de nouvelles catégories de groupes." —Page 105, line 13 of \cite{Lazardnilpotent}.}

\medskip

One of the main ideas of this paper is to try to fix these problems by replacing the notion of Lie algebras with homotopically meaningful analogues in positive characteristic. Thus, instead of trying to generalize Lie theory to a positive characteristic setting, we focus on generalizing higher Lie theory, with the goal that a better behaved version of classical Lie theory might also emerge in this framework. 

\medskip

\textbf{Deformation theory and partition Lie algebras.} While classical Lie theory breaks down over a positive characteristic field, the notion of derived infinitesimal deformations does not. And it still makes sense to ask whether their homotopy theory is equivalent to the homotopy theory of some algebraic structure. This question was first settled by Brantner and Mathew in \cite{brantnermathew}: they proved that the $\infty$-category of such deformation problems is equivalent to the $\infty$-category of algebras over a monad, which they called \textit{partition Lie algebras}. Let us mention that in positive characteristic, there are two possible notions of derived infinitesimal deformations, either defined in terms of simplicial commutative algebras or in terms of $\mathbb{E}_\infty$-algebras, which are commutative up to homotopy algebras, and each is equivalent to a different type of partition Lie algebras. We will only be working with $\mathbb{E}_\infty$-infinitesimal deformation problems and their equivalent (spectral) partition Lie algebras in this paper —the adjective spectral will mostly be omitted from now on. 

\medskip

Since we want concrete algebraic objects to work with, we will consider point-set objets which, considered up to quasi-isomorphisms, give back the $\infty$-category of partition Lie algebras. Brantner, Campos and Nuiten showed that algebras over a class of operads do provide us with these pointed-set models in \cite{pdalgebras}. We make a specific choice for these point-set models for two main reasons: this choice allows us to directly apply the results of \cite{bricevictor}, the second is more involved and explained in Subsection \ref{subsection: comparison results}. Let us give an explicit description of these point-set models, which we call \textit{partition }$\mathcal{L}_\infty$\textit{-algebras}. We consider a differential graded $\kk$-vector space $\mathfrak{h}$ endowed with a family of operations

\[
\left\{ l_n^{(\sigma_0,\cdots,\sigma_r)}: \mathfrak{g}^{\otimes n} \longrightarrow \mathfrak{g} \right\}~,
\]
\vspace{0.1pc}

where $l_n^{(\sigma_0,\cdots,\sigma_r)}$ is of degree $-r-1$, for all $n \geq 2$, and all $r+1$-tuples of permutations in $\mathbb{S}_n$, where $r \geq 0$ and $\sigma_i \neq \sigma_{i+1}$ for $0 \leq i \leq r-1$. The relations satisfied by these operations are described in Appendix \ref{Appendice formules}. One should understand these operations and these relations as witnessing the fact that $\mathfrak{h}$ is endowed with a bracket $l_2^{\mathrm{id}}(-,-)$ which satisfies the Jacobi relation and is (anti)-symmetric \textit{only up to higher homotopies}, see Remark \ref{Rmk: explaining the structure} for more details. 

\medskip

\textbf{Main results.} From now on, we work over a field $\kk$ without any further assumption. The first main goal of this paper is to develop the integration theory of \textit{absolute} partition $\mathcal{L}_\infty$-algebras, which are, roughly speaking, partition $\mathcal{L}_\infty$-algebras in which all formal power series of the structural operations explained before have a well-defined image by definition. This algebraic framework is convenient as integration theory involves many different infinite sums of operations (Maurer--Cartan equations, Baker--Campbell--Hausdorff type formulas, etc). Otherwise, one needs to make sense of these infinite sums by imposing complete filtrations or nilpotency conditions. Although we will not emphasize this in this introduction, we will also allow \textit{curvature}, as it is necessary in some arguments to work in this more general framework. In order to do so, we build upon the ideas introduced in \cite{lucio2022integration} and heavily rely on the positive characteristic homotopical operadic calculus developed by Le Grignou and the author in \cite{bricevictor}. In particular, these absolute algebras appear because they are the Koszul dual notion of $\mathbb{E}_\infty$-coalgebras.

\medskip

Using these methods, we construct an adjunction 
\[
\begin{tikzcd}[column sep=5pc,row sep=3pc]
          \mathsf{sSet}_* \arrow[r, shift left=1.1ex, "\mathcal{L}_*"{name=A}]
          &\mathsf{abs}~\mathcal{L}_\infty^\pi\text{-}\mathsf{alg}^{\mathsf{qp}\text{-}\mathsf{comp}} \arrow[l, shift left=.75ex, "\mathcal{R}_*"{name=B}]
           \arrow[phantom, from=A, to=B, , "\dashv" rotate=-90]
\end{tikzcd}
\]

between the category of pointed simplicial sets and of absolute partition $\mathcal{L}_\infty$-algebras. The key ingredient is to first consider the explicit $\mathbb{E}_\infty$-coalgebra structure on the cellular chains functor $C_*^c(-)$ constructed by Berger and Fresse in \cite{BergerFresse04}. And then to push-forward this functor along the complete bar-cobar adjunction that links coalgebras over a suitable model for the $\mathbb{E}_\infty$ operad and absolute partition $\mathcal{L}_\infty$-algebras. On \textit{trivial} absolute partition $\mathcal{L}_\infty$-algebras (otherwise known as chain complexes), this adjunction coincides with the Dold--Kan correspondence.

\medskip

In this adjunction, the functor $\mathcal{R}_*$ is \textit{the integration functor} we were looking for. We use the results of \cite{bricevictor} to transfer the model structure on $\mathbb{E}_\infty$-coalgebras where weak equivalences are given by quasi-isomorphisms to qp-complete absolute partition $\mathcal{L}_\infty$-algebras along the complete bar-cobar adjunction, which then becomes a Quillen equivalence. Asking for qp-completeness is a small technical assumption that amounts to requiring that the topology induced by a canonical filtration is separated. For this model structure, the adjunction $\mathcal{L}_* \dashv \mathcal{R}_*$ becomes a Quillen adjunction, where we endow pointed simplicial sets with the Kan--Quillen model structure. The following result which roughly states that $\mathcal{R}_*$ is a well-behaved integration functor directly follows. 

\begin{theoremintro}[Theorem \ref{thm: propriétés de l'intégration}]\label{thm A intro}\leavevmode
\begin{enumerate}
\item For any qp-complete absolute partition $\mathcal{L}_\infty$-algebra $\mathfrak{g}$, the simplicial set $\mathcal{R}_*(\mathfrak{g})$ is a Kan complex.

\medskip

\item Let $f: \mathfrak{g} \twoheadrightarrow \mathfrak{h}$ be a degree-wise surjection of qp-complete absolute partition $\mathcal{L}_\infty$-algebras. Then 
\[
\mathcal{R}_*(f): \mathcal{R}_*(\mathfrak{g}) \twoheadrightarrow \mathcal{R}_*(\mathfrak{h})
\]
is a fibration of simplicial sets. 

\medskip

\item The functor $\mathcal{R}_*$ preserves weak equivalences. In particular, it sends any filtered quasi-isomorphism $f: \mathfrak{g} \qi \mathfrak{h}$ of qp-complete absolute partition $\mathcal{L}_\infty$-algebras to a weak-homotopy equivalence of simplicial sets.
\end{enumerate}
\end{theoremintro}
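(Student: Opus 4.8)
The plan is to deduce all three statements formally from the single input recorded in the previous theorem: the pair $(\mathcal{L}, \mathcal{R})$ is a Quillen adjunction, so $\mathcal{R}$ is a right Quillen functor from complete curved absolute partition $\mathcal{L}_\infty$-algebras to simplicial sets equipped with the Kan--Quillen model structure. Beyond this, I would only invoke the explicit description of the transferred model structure on the algebra side that was obtained when it was constructed, namely that its fibrations are the epimorphisms and, consequently, that every object is fibrant, since each algebra surjects onto the terminal one. With these facts in hand, the three properties are standard consequences of right Quillen-ness.

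\medskip

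For (1) I would argue that a right Quillen functor preserves fibrant objects, as right adjoints preserve the terminal object and carry the fibration $\mathfrak{g} \twoheadrightarrow *$ to a fibration, and that the fibrant objects of the Kan--Quillen model structure are exactly the Kan complexes. Since every complete curved absolute partition $\mathcal{L}_\infty$-algebra $\mathfrak{g}$ is fibrant, its image $\mathcal{R}(\mathfrak{g})$ is fibrant, hence a Kan complex. For (2) I would use that a right Quillen functor sends fibrations to fibrations by definition; as the epimorphism $f$ is a fibration in the transferred model structure, $\mathcal{R}(f)$ is a Kan fibration. Both arguments are purely formal once the description of fibrations is granted.

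\medskip

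For (3) I would appeal to Ken Brown's lemma, which guarantees that a right Quillen functor preserves weak equivalences between fibrant objects; since all objects are fibrant, $\mathcal{R}$ preserves all weak equivalences, and weak equivalences of simplicial sets are precisely weak homotopy equivalences. The ``in particular'' clause then reduces to the assertion that a graded quasi-isomorphism $f\colon \mathfrak{g} \qi \mathfrak{h}$, that is a map inducing a quasi-isomorphism on the associated graded of the complete filtration, belongs to the class of weak equivalences of the model structure. This is the one point I expect to require genuine care rather than formal manipulation: because the curvature obstructs passing to homology directly, one cannot compare $\mathfrak{g}$ and $\mathfrak{h}$ naively, and must instead work on the associated graded, checking that a filtered quasi-isomorphism between complete objects is carried by the complete bar functor $\widehat{\mathrm{B}}_{\iota}$ to a weak equivalence of coalgebras, typically via the convergence of the associated filtration spectral sequence. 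Granting that containment, which is part of the construction of the transferred model structure, statement (3) follows immediately, and with it the whole theorem.
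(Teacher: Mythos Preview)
Your proposal is correct and matches the paper's approach exactly: the paper's proof consists of the single sentence ``The functor $\mathcal{R}$ is a right Quillen functor,'' and you have simply unpacked the standard consequences of that fact. Your identification of the only non-formal input --- that graded quasi-isomorphisms are weak equivalences in the transferred model structure --- is also handled separately in the paper (as a proposition just before the theorem, citing the general quasi-planar machinery), so your instinct that this is the one point requiring prior work is exactly right.
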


Let $\mathfrak{g}$ be a qp-complete absolute partition $\mathcal{L}_\infty$-algebra and let us omit certain subtleties concerning infinite sums in this introduction for simplicity's sake. Leveraging the explicit nature of the formulas in \cite{BergerFresse04}, we can give a combinatorial description of the simplicial set $\mathcal{R}_*(\mathfrak{g})$. The $0$-simplices of $\mathcal{R}(\mathfrak{g})$ are given by \textit{Maurer--Cartan elements}, which are defined as elements $\alpha$ in degree $0$ which satisfy the following equation
\[
\sum_{n \geq 2} l_n^{\mathrm{id}}(\alpha, \cdots, \alpha) + d_\mathfrak{g}(\alpha) = 0~.
\]
We also completely characterize $1$-simplices in $\mathcal{R}(\mathfrak{g})$ by explicit formulas, which correspond to \textit{gauge equivalences} between Maurer--Cartan elements. This allows us to describe $\pi_0(\mathcal{R}(\mathfrak{g}))$, that is, the set of Maurer--Cartan elements up to gauge equivalence; when $\mathfrak{g}$ encodes a derived deformation problem, this set is the set of deformations up to equivalences and gives back the underlying classical deformation problem. In this direction, we show that under a certain \textit{homotopy completeness} condition on a partition $\mathcal{L}_\infty$-algebra, we can present at the point-set level the derived deformation problem it encodes via our integration functor, see Proposition \ref{Prop: gives back the fmp}. 

\medskip

We also describe the higher homotopy groups $\pi_k(\mathcal{R}(\mathfrak{g}),0)$ at the Maurer--Cartan element $0$ by explicit formulas. An element $\varepsilon$ of $\mathfrak{g}$ in degree $k \geq 1$ is a \textit{representative element} if it satisfies the following equation: 

\[
\displaystyle d_\mathfrak{g}(\varepsilon) + \sum_{n \geq 2} \sum_{\substack{w \in \E(n)_{k(n-1)}                             \\ \overline{w}_1 = \mathrm{id}_{\Sym_n}, \, \overline{w}_j \in \Sym_n}} (-1)^{\left[\frac{k(k-1)}{2}\right] \frac{(n+2)(n-1)}{2}} \prod_{j = 2}^k \mathrm{sign}(\overline{w}_j) ~ l^w_n(\varepsilon,\cdots,\varepsilon)= 0~, 
\]
where the sum runs over all $w = (\sigma_0,\cdots, \sigma_{k.(n-1)})$ in $\mathcal{E}(n)_{k.(n-1)}$ such that every $n$-tuple $\overline{w}_j \coloneqq (\sigma_{(j-1)(n-1)}(1),\cdots, \sigma_{j(n-1)}(1))$ is a permutation in $\mathbb{S}_n$ for all $1 \leq j \leq k$ and in particular $\overline{w}_1$ is the identity permutation of $\mathbb{S}_n$. Here $\mathrm{sign}(\overline{w}_j)$ stands for the signature of the permutation $\overline{w}_j$. Two representative elements $\varepsilon_1$ and $\varepsilon_2$ are \textit{interval equivalent} if there exists an element $\varphi$ in $\mathfrak{g}$ of degree $k+1$ which links the two elements via explicit combinatorial formulas, see Definition \ref{def: interval equivalences} for more details. 

\begin{theoremintro}[Theorem \ref{thm: fake Berglund theorem}]\label{thm B intro}
Let $k \geq 1$. There is a canonical natural bijection

\[
\pi_k(\mathcal{R}_*(\mathfrak{g}),0) \cong \mathrm{rep}(\mathfrak{g}_k)/\sim_{\mathrm{int}}~,
\]
\vspace{0.1pc}

between the $k$-th homotopy group of $\mathcal{R}_*(\mathfrak{g})$ at the Maurer--Cartan $0$ and the set of representative elements of degree $k$ in $\mathfrak{g}$ up to interval equivalence.  
\end{theoremintro}

In essence, these formulas come from the explicit $\mathbb{E}_\infty$-coalgebra structure on the reduced cellular chains on the $k$-sphere $\tilde{C}_*^c(S^k)$. Similar formulas exist for the homotopy groups at the other base points, determined by the unreduced cellular chains on the $k$-sphere. It is interesting to point out that when $\kk$ is a field of characteristic zero, this structure is homotopically trivial; thus a representative element is just a cycle and interval equivalences reduce to common boundaries. This coincides with Berglund's theorem in \cite{Berglund15} which states that the homotopy groups of the integration functor are given by the homology groups of the algebra. 

\medskip

By Theorem \ref{thm A intro}, we know that $\mathcal{R}_*(\mathfrak{g})$ is a Kan complex, thus a model for an $\infty$-groupoid. Inspired by the work of \cite{robertnicoud2020higher}, we show that $\mathcal{R}(\mathfrak{g})$ is in fact an \textit{algebraic Kan complex}, where all horn-fillers are characterized and given by explicit combinatorial formulas, see Proposition \ref{prop: general formula for higher bch}. In general, the remaining obstacle to fully compute these formulas is a full computation of the $\mathbb{E}_\infty$-coalgebra structure of the cellular chains on the $n$-simplex $C_*^c(\Delta^n)$. We carry out this computation in full detail for the case $n=2$, which is the case that corresponds to the classical Baker--Campbell--Hausdorff formula in  \cite[Section 5.3]{robertnicoud2020higher}, since it gives a formula for the composition of paths in $\pi_1(\mathcal{R}(\mathfrak{g}),0)$, thus determines its group structure. 

\medskip

Let $x$ and $y$ be two representative elements in $\mathfrak{g}_1$. The \textit{horn-filler product} $\mathrm{HF}_0(x,y)$ is given by 

\[
\mathrm{HF}_0(x,y) = \sum_{m \geq 0} \sum_{\tau \in \mathrm{SCPT}^{\mathrm{left}}_m} \sum_{L^{01,12}(\tau)} \alpha^{(\tau;~ 01, \cdots, 12)} \gamma_\mathfrak{g}\left(\tau\left(x,\cdots, y, \cdots, x, \cdots, y; x+y \right)\right) ~, 
\]
\vspace{0.1pc}

where the sum is taken over all "left-handed symmetric corked planar trees" with leaves labelled by either $x$, $y$ or $x+y$, according to the specific labelling of the tree. These trees have vertices labelled by tuples of permutations and one sums over the composition along these trees of the associated structure operations of $\mathfrak{g}$, applied to the labels of the leaves. See Theorem \ref{thm: horn-filler formula (BCH)} for more details on what left-handed symmetric corked planar trees are and how these coefficients are defined. As an illustration, the first terms of this formula are given by 
\[ 
\begin{aligned}
\mathrm{HF}_0(x,y) =&~x + y - l_2^{(12)}(x,y) - l_3^{((123),(213))}(x,x,y) - l_3^{((132),(213))}(x,x,y) - l_3^{((123),(231))}(x,x,y)-\\
&-l_3^{((132),(231))}(x,x,y) - l_3^{((123),(231))}(y,x,y) - l_3^{((132),(231))}(y,x,y) +\\
&+ l_3^{((123),(132))}(x,y,y) + l_3^{((123),(312))}(x,y,y) + l_3^{((123),(321))}(x,y,y)+\\
&+l_3^{((123),(231))}\left(l_2^{(12)}(x,y),x,y\right) + l_3^{((132),(231))}\left(l_2^{(12)}(x,y),x,y\right) + \cdots \\
\end{aligned}
\]
In particular, all the coefficients in this sum are $\pm 1$, hence it is well-defined in any characteristic. 

\medskip

Any absolute partition $\mathcal{L}_\infty$-algebra can be restricted to an absolute $\mathcal{L}_\infty$-algebra, see Subsection \ref{subsection: comparison results}. An interesting case where this formula can be applied is given by nilpotent partition $\mathcal{L}_\infty$-algebras concentrated in degree $1$, since their restriction gives a nilpotent Lie algebra seen as an absolute $\mathcal{L}_\infty$-algebra in degree $1$ (we are working with \textit{shifted structures}, otherwise it would sit in degree $0$). For examples, see Examples \ref{Example: 2-nilpotent associative algebras}, \ref{example: 3-nilpotent monodic Lie algebra} and \ref{Example: comparison for 2-nilpotent associative algebras}. 

\begin{theoremintro}[Theorems \ref{thm: integration gives a nilpotent group} and \ref{thm: gives back bch in characteristic zero}]\label{thm C intro}\leavevmode 
Let $\mathfrak{g}$ be a nilpotent partition $\mathcal{L}_\infty$-algebra concentrated in degree $1$.

\medskip

\begin{enumerate}
\item The simplicial set $\mathcal{R}_*(\mathfrak{g})$ is isomorphic to the classifying space of the group 
\[
\big(\mathfrak{g},\mathrm{HF}_0(-,-),0\big)~,
\]
which is a nilpotent group. 

\medskip

\item If $\kk$ is a field of characteristic zero, there is an isomorphism of groups 
\[
\big(\mathfrak{g},\mathrm{HF}_0(-,-),0\big) \cong \big(\mathfrak{g},\mathrm{BCH}(-,-),0\big)~,
\]
between the nilpotent group obtained with the horn-filler product and the exponential group obtained from the underlying nilpotent Lie algebra of $\mathfrak{g}$ using the Baker--Campbell--Hausdorff formula.  
\end{enumerate}
\end{theoremintro}

However, let us mention that unlike in characteristic zero, an absolute partition $\mathcal{L}_\infty$-algebra does not need to be concentrated in degree $1$ to produce a classifying space, as can be seen from Theorem \ref{thm B intro}. The horn-filler product, applied to two representative elements still produces a representative element which, up to interval equivalence, corresponds to their product in the first homotopy group. There is a more general comparison statement given by Proposition \ref{prop: comparison characteristic zero}, which says that under the appropriate induction/restriction functors, the integration functor we have defined is naturally weakly equivalent to the one defined in \cite{lucio2022integration}, and thus to \cite{Getzler09} and \cite{robertnicoud2020higher} under the appropriate hypothesis. In particular, one can apply the induction functor to any nilpotent Lie algebra in degree $1$ and then apply the integration functor constructed here: it gives a space weakly equivalent to the classifying space of its exponential group, where the product on the first homotopy group is determined by the formula for the horn-filler product. Thus, in characteristic zero, equivalences mentioned before between nilpotent Lie algebras and types of groups could be "translated" to equivalences with certain nilpotent partition $\mathcal{L}_\infty$-algebras, perhaps no longer restricted in degree $1$. Understanding if similar statements hold in positive characteristic shall be the subject of future research. 

\medskip

\textbf{Applications to $p$-adic homotopy theory.} The second main goal of this paper is to apply the constructions performed so far to the study of $p$-adic homotopy types, like \cite{Liemodels, robertnicoud2020higher, lucio2022integration} in the characteristic zero case, using the left adjoint functor $\mathcal{L}_*$. For that, we fix $\kk$ to be an algebraically closed field of characteristic $p$ and we use Koszul duality to dualize Mandell's results in \cite{Mandell}.

\begin{theoremintro}[Theorem \ref{thm: modèles d'homotopie rationnel type fini}]\label{thm D intro}
Let $X$ be a pointed connected finite type nilpotent simplicial set. The unit of adjunction
\[
\eta_X: X \qi \mathcal{R}_*\mathcal{L}_*(X) 
\]

is an $\mathbb{F}_p$-equivalence. 
\end{theoremintro}

The relationship between these models and Mandell's work then becomes akin to the relationship that links Quillen's models using Lie algebras and Sullivan's models using commutative algebras in rational homotopy theory. However, it should be noted that we first consider absolute partition $\mathcal{L}_\infty$-algebras up to transferred weak equivalences from $\mathbb{E}_\infty$-coalgebras and not up to quasi-isomorphisms, and thus that with these weak equivalences, they present the same underlying $\infty$-category as their Koszul dual $\mathbb{E}_\infty$-coalgebras. This is consistent with Lurie's result in \cite{Lurie11Rational} which states that cochains on spaces are formally étale over a positive characteristic field: indeed, up to comparison results, this entails that the models considered here are in fact acyclic. 

\medskip

Nevertheless, we can leverage Theorem \ref{thm B intro} to define another notion of weak equivalence on absolute partition $\mathcal{L}_\infty$-algebras, which is this time transferred from pointed simplicial sets via the adjunction $\mathcal{L}_* \dashv \mathcal{R}_*$. We show that absolute partition $\mathcal{L}_\infty$-algebras admit a model structure with these equivalences in Theorem \ref{thm: local model structure with pi-equivalences} and that the resulting $\infty$-category is a coreflective $\infty$-subcategory of that of $\mathbb{E}_\infty$-coalgebras. Since these equivalences coincide with quasi-isomorphisms in degrees $\geq 1$ when $\kk$ is a field of characteristic zero, the question remains on whether the $\infty$-category described by this model structure is the $\infty$-category of algebras over some $\infty$-categorical monad in general.

\medskip

\textbf{A new combinatorial description of the homotopy groups of the $p$-completed spheres.} Let $S^m$ be the $m$-sphere and let $(S^m)_{\mathbb{F}_p}$ denote its $p$-completion. Then it follows from Theorem \ref{thm D intro} that, for every $k \geq 1$, there is a an isomorphism of groups

\[
\pi_k((S^m)_{\mathbb{F}_p},*) \cong \pi_k(\mathcal{R}_*\mathcal{L}_*(S^m),0)~, 
\]
\vspace{0.1pc}

and it follows from Theorem \ref{thm B intro} that these latter homotopy groups admit a description in terms of representative elements of degree $k$ in $\mathcal{L}_*(S^m)$ up to interval equivalences. See Theorem \ref{thm: homotopy groups of X} for the general statement. In this case, the absolute partition $\mathcal{L}_\infty$-algebra $\mathcal{L}_*(S^m)$ can be fully determined: as a graded vector space, it admits a basis given by symmetric rooted trees with leaves labelled by the single generator of $\tilde{C}_*^c(S^m)$ by Lemma \ref{Lemma: basis elements}, and all the terms of the differential can be computed by Lemma \ref{lemma: coalgebra structure on the reduced spheres}. The structural operations act by grafting the corresponding trees. So computing representative elements and determining when they are interval equivalent reduces to a purely combinatorial problem, albeit not an easy one. Therefore this gives a new way to try to compute these homotopy groups, which are at the heart of algebraic topology. Furthermore, it should be possible to plug these equations into a computer, using for instance the Python computer package developed in \cite{pythonbarratteccles}. Finding representative elements by hand is not obvious, and thus pushing the computational aspects of these results is beyond the scope of the present paper. Aside from this computational aspects, let us mention that this results embeds these homotopy groups into an ambient algebraic structure, and therefore also opens the question of whether the ambient structure descends or not to these homotopy groups.

\medskip

\textbf{Mapping spaces.} Finally, using the theory of mapping coalgebras developed by Le Grignou in \cite{grignou2022mappingII, grignou2022mapping}, we construct models for $p$-adic mapping spaces. Note that we work in the more general curved setting to get unpointed mapping spaces.

\begin{theoremintro}[Theorem \ref{thm: vrai thm mapping spaces}]\label{thm E intro}
Let $\mathfrak{g}$ be a qp-complete curved absolute partition $\mathcal{L}_\infty$-algebra and let $X$ be a simplicial set. There is a weak equivalence of Kan complexes

\[
\mathrm{Map}(X, \mathcal{R}(\mathfrak{g})) \simeq \mathcal{R}\left(\mathrm{hom}(C^c_*(X),\mathfrak{g})\right)~,
\]
\vspace{0.25pc}

which is natural in $X$ and in $\mathfrak{g}$, where $\mathrm{hom}(C^c_*(X),\mathfrak{g})$ denotes the convolution curved absolute partition $\mathcal{L}_\infty$-algebra. Furthermore, it is possible to replace the cellular chains $C^c_*(X)$ by the homology $\mathrm{H}_*(X)$ to obtain a smaller model, meaning that there is a weak equivalence of Kan complexes
\[
\mathrm{Map}(X, \mathcal{R}(\mathfrak{g})) \simeq \mathcal{R}\left(\mathrm{hom}(\mathrm{H}_*(X),\mathfrak{g})\right)~,
\]

which is now only natural in $\mathfrak{g}$. 
\end{theoremintro}

So when one takes $\mathfrak{g}$ to be a model for the $p$-completion of a space $Y$, Theorem \ref{thm E intro} gives a model for the mapping space of $X$ into the $p$-completion of $Y$, without any hypotheses on the source. To the best of our knowledge, this is the first "algebraic" model for such mapping spaces, which were extensively studied in relationship with the Sullivan conjecture in \cite{Miller,Carlsson,LannesSchwartz,LannesT}. These authors usually work at the level of power operations, that is, they compute the homotopy type of such mapping spaces using the unstable coalgebra structure on the homology of spaces, either by spectral sequences arguments (the unstable Adams' spectral sequence of Bousfield and Kan in \cite{BousfieldKanUnstable}) or by algebraic construction at that level (Lannes' $T$-functor). The model constructed in Theorem \ref{thm E intro} can be thought of as a lift from the power operations level to the "algebraic" level of some of these constructions. In particular, applying Theorem \ref{thm B intro} to the convolution algebra $\mathrm{hom}(\mathrm{H}_*(X),\mathfrak{g})$ gives a combinatorial presentation of the information in the unstable Adams' spectral sequence. 

\medskip

\subsection*{Acknowledgments}
It is my pleasure to thank Lukas Brantner, Ricardo Campos, Mario Fuentes Rumí, Najib Idrissi, Brice Le Grignou, Joost Nuiten, Daniel Robert-Nicoud, Jérôme Scherer, Bruno Vallette and Felix Wierstra for stimulating conversations about these and related topics. I would also like to thank Jérôme Scherer for useful comments on a draft version.

\subsection*{Conventions} 
Let $\kk$ be a field. In this paper, we will work with two different base categories. The standard base category of differential graded (dg) $\kk$-modules, and the lager base category of \textit{pre-differential graded (pdg)} $\kk$-modules. A pre-differential graded $\kk$-module is the data of a graded $\kk$-module together with a degree $-1$ endomorphism. We work with the \textit{homological convention} in both cases. Differential graded $\kk$-modules are a full subcategory of pre-differential graded $\kk$-modules. Both categories form symmetric monoidal categories endowed with the tensor product of graded $\kk$-modules together with the Koszul sign rule, where the unit is given by $\kk$ concentrated in degree $0$. We will omit the prefix $\kk$ when referring to $\kk$-modules from now on. 

\medskip

Let $\mathcal{C}$ be a category and let $\mathrm{W}$ be a class of arrows in $\mathcal{C}$. We will denote $\mathcal{C}~[\mathrm{W}^{-1}]$ the $\infty$-category obtained by localizing $\mathcal{C}$ at $\mathrm{W}$. When working at the $\infty$-categorical level, limits and colimits should be understood as meaning homotopy limits and colimits. Given a Quillen adjunction between model categories, we will add the prefixes $\mathbb{L}$ or $\mathbb{R}$ for the left (resp. right) derived functors.

\section{Absolute partition $\mathcal{L}_\infty$-algebras}
The goal of this section is to introduce absolute partition $\mathcal{L}_\infty$-algebras. They are the \textit{absolute} analogues of partition $\mathcal{L}_\infty$-algebras, which are explicit point-set models for the (spectral) partition Lie algebras introduced in \cite{brantnermathew}. Roughly speaking, absolute types of algebras are types of algebraic structures where \textit{infinite sums} of structural operations have a well-defined image \textit{by definition}. For an introduction to this type of algebraic structures, we refer to \cite{absolutealgebras}. 

\subsection{Explicit models for partition Lie algebras}
Formal moduli problems, defined over Artinian $\mathbb{E}_\infty$-algebras, encode infinitesimal deformations in the context of spectral algebraic geometry, see \cite[Part IV]{sag}. Brantner--Mathew showed in \cite{brantnermathew} that the $\infty$-category of such formal moduli problems is equivalent to the $\infty$-category of algebras over an $\infty$-categorical monad; these algebras are called \textit{(spectral) partition Lie algebras}. Later, Brantner--Campos--Nuiten proved in \cite{pdalgebras} that this $\infty$-category admits presentations by point-set models localized at weak equivalences. The goal of this subsection is to introduce a particular choice of point-set models; this choice is motivated by the homotopical operadic calculus developed in \cite{bricevictor} and also by considerations explained in Subsection \ref{subsection: comparison results}.

\medskip

Let us denote by $\mathcal{E}$ the the dg Barratt--Eccles operad of \cite{BergerFresse04} and by $\mathcal{E}^{\mathrm{nu}}$ its reduced version (meaning $\mathcal{E}^{\mathrm{nu}}(0) = 0$). The operadic bar construction $\mathrm{B}\mathcal{E}^{\mathrm{nu}}$ of $\mathcal{E}^{\mathrm{nu}}$ forms a conilpotent dg cooperad. Its linear dual $(\mathrm{B}\mathcal{E}^{\mathrm{nu}})^*$ is a dg operad, which is isomorphic to $\Omega (\mathcal{E}^{\mathrm{nu}})^*$, the cobar construction on the linear dual cooperad of $\mathcal{E}^{\mathrm{nu}}$. 

\begin{Definition}[Partition $\mathcal{L}_\infty$-algebra]
A \textit{partition} $\mathcal{L}_\infty$\textit{-algebra} $(\mathfrak{h},\gamma_\mathfrak{h},d_\mathfrak{h})$ amounts to the data of a dg module $(\mathfrak{h},d_\mathfrak{h})$ together with a dg $\Omega (\mathcal{E}^{\mathrm{nu}})^*$-algebra structure
\[
\gamma_\mathfrak{h}: \bigoplus_{n \geq 1} \Omega (\mathcal{E}^{\mathrm{nu}})^*(n) \otimes_{\mathbb{S}_n} ~ \mathfrak{h}^{\otimes n} \longrightarrow \mathfrak{h}~.
\]
\end{Definition}

\begin{Notation}
If there is no ambiguity, we will often drop the adjective \textit{spectral} when referring to $\infty$-categorical partition Lie algebras or to their point-set models, as we will not deal with the simplicial version in this paper. We will also refer to partition $\mathcal{L}_\infty$-algebras as $\mathcal{L}_\infty^\pi$-algebras sometimes.
\end{Notation}

The data of the structural morphism $\gamma_\mathfrak{h}$ is equivalent to the data of a family of operations

\[
\left\{ l_n^{(\sigma_0,\cdots,\sigma_r)}: \mathfrak{h}^{\otimes n} \longrightarrow \mathfrak{h} \right\}~,
\]
\vspace{0.1pc}

where $l_n^{(\sigma_0,\cdots,\sigma_r)}$ is of degree $-r-1$, for all $n \geq 2$, and all $r+1$-tuples of permutations in $\mathbb{S}_n$, where $r \geq 0$ and $\sigma_i \neq \sigma_{i+1}$ for $0 \leq i \leq r-1$. These operations are subject to relations, imposed by the differential of the operad $\Omega (\mathcal{E}^{\mathrm{nu}})^*$. This differential is partially determined by the cooperad structure $(\mathcal{E}^{\mathrm{nu}})^*$, which is quite hard to compute; we refer to Appendix \ref{Appendice formules} for more details.

\begin{Remark}\label{Rmk: explaining the structure}
Notice that a \textit{shifted} $\mathcal{L}_\infty$-algebra is the data of a family of \textit{symmetric} operations
\[
\left\{ l_n: \mathfrak{h}^{\odot n} \longrightarrow \mathfrak{h} \right\}
\]
of degree $-1$, for all $n \geq 2$, which satisfy some compatibility conditions. In a partition $\mathcal{L}_\infty$-algebra, the structural operations are no longer symmetric. Operations labelled by $r+1$-tuples of distinct permutations in $\mathbb{S}_n$ can be interpreted as higher coherences for this lack of symmetry of the operations $l_n^\mathrm{id}$, labelled by the identity permutations. Notice, however, that these homotopies compute \textit{derived invariants} and not derived coinvariants.
\end{Remark}

\begin{Proposition}[{\cite[Proposition 4.34]{pdalgebras}}]\label{Prop: semi-model on partition L infinity}
The category of partition $\mathcal{L}_\infty$-algebras admits a cofibrantly generated semi-model structure determined by the following classes of morphisms

\begin{enumerate}
\item weak equivalences are given by quasi-isomorphisms,

\item fibrations are given by degree-wise epimorphisms,

\item cofibrations are determined by the left-lifting property.
\end{enumerate} 
\end{Proposition}

\begin{proof}
Follows from the fact that the underlying dg $\mathbb{S}$-module of $\Omega (\mathcal{E}^{\mathrm{nu}})^*$ is cofibrant in the tame model structure of dg $\mathbb{S}$-modules. We refer to \cite[Chapter 4]{pdalgebras} for more details.
\end{proof}

The category of $\mathcal{L}^\pi_\infty$-algebras, localized at quasi-isomorphisms, presents the $\infty$-category of partition Lie algebras in the sense of \cite{brantnermathew}. 

\begin{theorem}[{\cite[Theorem 4.47]{pdalgebras}}]
There is an equivalence of $\infty$-categories 

\[
\mathcal{L}_\infty^\pi\text{-}\mathsf{alg}~[\mathrm{Q.iso}^{-1}] \qi \mathsf{Alg}_{\mathrm{Lie}^\pi}(\mathsf{Mod}_\kk)~,
\]
\vspace{0.1pc}

between the $\infty$-category of partition $\mathcal{L}_\infty$-algebras localized at quasi-isomorphisms and the $\infty$-category of (spectral) partition Lie algebras in the $\infty$-category of dg modules. 
\end{theorem}

\begin{Remark}\label{Rmk: change of models for partition Lie algebras}
Let $\mathcal{O}$ be an $\mathbb{E}_\infty$-operad, that is, any $\mathbb{S}$-projective resolution of the operad $\mathcal{C}om$. Then dg $(\mathrm{B}\mathcal{O})^*$-algebras admit a semi-model structure, which localized at quasi-isomorphisms also presents the $\infty$-category of partition Lie algebras. For instance, the surjections operad also provides us with point-set models for partition $\mathcal{L}_\infty$-algebras, as in \cite[Definition 4.46]{pdalgebras}. As to why we do not work with this particular model, see Subsection \ref{subsection: comparison results}.
\end{Remark}

\begin{Remark}
As a particular case of the results in \cite{deuxiemepapier}, one can \textit{directly} show that the $\infty$-category of formal moduli problems defined over $\mathbb{E}_\infty$-algebras is equivalent to the $\infty$-category of this point-set version partition $\mathcal{L}_\infty$-algebras localized at quasi-isomorphisms, without resorting to their $\infty$-categorical definition. 
\end{Remark}

\subsection{Absolute partition $\mathcal{L}_\infty$-algebras}\label{subsection: absolute partition} In this subsection, we introduce the \textit{absolute} version of partition $\mathcal{L}_\infty$-algebras. These can be considered the positive characteristic analogues of the absolute $\mathcal{L}_\infty$-algebras introduced in \cite{lucio2022integration}. Absolute types of algebras appear when one considers algebras over the dual cooperad instead of algebras over the operad; here we consider algebras over the cooperad $\mathrm{B}\mathcal{E}^{\mathrm{nu}}$. We also give an explicit algebraic description of this structure.

\begin{Definition}[Absolute partition $\mathcal{L}_\infty$-algebra]
An \textit{absolute partition} $\mathcal{L}_\infty$\textit{-algebra} $\mathfrak{g}$ amounts the data $(\mathfrak{g},\gamma_\mathfrak{g},d_\mathfrak{g})$ of a dg $\mathrm{B}\mathcal{E}^{\mathrm{nu}}$-algebra. 
\end{Definition}

\begin{Remark}
For the definition of an algebra over a cooperad, introduced in \cite{linearcoalgebras}, see for instance \cite[Section 3]{linearcoalgebras}, \cite[Section 1]{absolutealgebras} or \cite[Section 3]{bricevictor}.  
\end{Remark}

Let us make this definition more explicit. An absolute $\mathcal{L}^\pi_\infty$-algebra structure on a dg module $(\mathfrak{g},d_{\mathfrak{g}})$ amounts to the data of a structural morphism 

\[
\gamma_\mathfrak{g}: \prod_{n \geq 1} \mathrm{Hom}_{\mathbb{S}_n}\left(\mathrm{B}\mathcal{E}^{\mathrm{nu}}(n), \mathfrak{g}^{\otimes n}\right) \longrightarrow \mathfrak{g}~,
\]

which satisfies the following conditions: it is compatible with the differentials and it satisfies the associativity condition of an algebra over a monad, which is given by the left-hand side endofunctor. Unlike algebras over an operad, this monad involves a product over the arity instead of a direct sum. 

\begin{lemma}\label{Lemma: Iso}
Let $(\mathfrak{g},d_{\mathfrak{g}})$ be a dg module. There is an isomorphism of dg modules
\[
\prod_{n \geq 1} \mathrm{Hom}_{\mathbb{S}_n}\left(\mathrm{B}\mathcal{E}^{\mathrm{nu}}(n), \mathfrak{g}^{\otimes n}\right) \cong \prod_{n \geq 1} \Omega (\mathcal{E}^{\mathrm{nu}})^*(n) \otimes_{\mathbb{S}_n} \mathfrak{g}^{\otimes n}~,
\]
natural in $\mathfrak{g}$. 
\end{lemma}

\begin{proof}
There is an isomorphism of dg modules
\[
\mathrm{Hom}_{\mathbb{S}_n}\left(\mathrm{B}\mathcal{E}^{\mathrm{nu}}(n), \mathfrak{g}^{\otimes n}\right) \cong \left(\Omega (\mathcal{E}^{\mathrm{nu}})^*(n)  \otimes \mathfrak{g}^{\otimes n}\right)^{\mathbb{S}_n}~,
\]

since $\mathrm{B}\mathcal{E}^{\mathrm{nu}}(n)$ is a degree-wise finite dimension bounded below dg module. Furthermore, there is an isomorphism 
\[
\left(\Omega (\mathcal{E}^{\mathrm{nu}})^*(n) \otimes \mathfrak{g}^{\otimes n}\right)^{\mathbb{S}_n} \cong \Omega (\mathcal{E}^{\mathrm{nu}})^*(n) \otimes_{\mathbb{S}_n} \mathfrak{g}^{\otimes n}~,
\]

given by the norm map, since $\Omega (\mathcal{E}^{\mathrm{nu}})^*(n)$ is a quasi-free dg $\mathbb{S}_n$-module for all $n \geq 0$ (meaning it is a degree-wise free $\mathbb{S}_n$-module). See \cite[Section 1.2]{bricevictor} for more details on quasi-freeness.
\end{proof}

A \textit{symmetric rooted tree} is a rooted tree where vertices have at least two incoming edges and where each vertex $v$ is labelled by a $(r_v+1)$-tuple $(\sigma_{0}, \cdots, \sigma_{r_v})$ of permutations $\sigma_j$ in $\mathbb{S}_{\mathrm{In}(v)}$, where $\mathrm{In}(v)$ is the number of incoming edges of $v$ and $r_v$ is an integer $r_v \geq 0$. We ask that $\sigma_i \neq \sigma_{i+1}$ for all $0 \leq i \leq r_v-1$. The \textit{degree} $\mathrm{deg}(\tau)$ of a symmetric rooted tree $\tau$ is given by 

\[
\mathrm{deg}(\tau) \coloneqq \sum_{v \in \mathrm{V}(\tau)} r_v + 1~,
\]

where $\mathrm{V}(\tau)$ is the set of all vertices of $\tau$. We denote $\mathrm{SRT}^{\delta}$ the finite set of symmetric rooted trees of degree $\delta$, where $\delta \geq 0$. The \textit{arity} of a symmetric corked rooted tree is the number of leaves. We denote by $\mathrm{SRT}_n^\delta$ the set of symmetric rooted trees of degree $\delta$ and of arity $n$. In summary, these are rooted trees labelled by elements in the Barratt--Eccles operad; they form a vector basis of the $\mathbb{S}$-module $\mathrm{B}\mathcal{E}^{\mathrm{nu}}(n)$.

\medskip

We will denote by $c_n^{(\sigma_0,\cdots,\sigma_r)}$ the $n$-corolla labelled by $(\sigma_0,\cdots,\sigma_r)$. Weight one symmetric rooted trees are given by $n$-corollas labelled by a single permutation $\sigma$ in $\mathbb{S}_n$. The only symmetric rooted tree of weight (and of degree) $0$ is the trivial tree of arity one with zero vertices.

\begin{lemma}\label{Lemma: basis elements}
Let $\mathfrak{g}$ be a graded module with a basis $\left\{ g_b ~|~b \in B \right\}~.$ The graded module
\[
\prod_{n \geq 1} \Omega(\mathcal{E}^{\mathrm{nu}})^*(n) \otimes_{\mathbb{S}_n} \mathfrak{g}^{\otimes n}
\]
admits a vector basis given by formal power series of linear combinations of symmetric rooted trees with leaves labelled by the basis elements of $\mathfrak{g}$. That is, elements of the form
\[
\sum_{n\geq 1} \sum_{\delta \geq 0} \sum_{\tau \in \mathrm{SRT}_n^\delta} \sum_{j \in \mathrm{I}_\tau} \lambda_\tau^{(j)} \tau \left(g_{i_1}^{(j)}, \cdots, g_{i_n}^{(j)}\right)~,
\]
where $\mathrm{I}_\tau$ is a finite set, $\lambda_\tau$ is a scalar in $\kk$ and $(i_1,\cdots,i_n)$ is in $B^n$. Here $\tau(g_{i_1}, \cdots, g_{i_n})$ refers to the symmetric rooted tree $\tau$ with input leaves decorated by the elements $\tau(g_{i_1}, \cdots, g_{i_n})$. The homological degree of $\tau(g_{i_1}, \cdots, g_{i_n})$ is given by $\sum_j \mathrm{deg}(g_{i_j})- \mathrm{deg}(\tau)$, where $\mathrm{deg}(\tau)$ is the degree of $\tau$ defined above.
\end{lemma}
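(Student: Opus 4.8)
The plan is to deduce the statement from \cref{Iso} by transporting to its right-hand side a basis that is transparent on the left-hand side. Since that isomorphism is natural in $\mathfrak g$ and is an isomorphism of the underlying graded modules, it carries any (completed) basis of $\prod_{n\geq 0}\mathrm{Hom}_{\mathbb{S}_n}(\mathrm{B}^{\mathrm{s.a}}\mathcal E(n),\mathfrak g^{\otimes n})$ to one of the object in the statement. I would begin by recording, as established in the discussion preceding the statement, that for each $n$ the graded module $\mathrm{B}^{\mathrm{s.a}}\mathcal E(n)$ has a $\kk$-basis consisting of the arity-$n$ symmetric corked rooted trees, split by the weight grading $\omega=\#\mathrm V(\tau)$ into degreewise finite pieces, and with the internal homological degree of a basis tree $\tau$ equal to the combinatorial quantity $\mathrm{deg}(\tau)=\mathrm{Cork}(\tau)+\sum_{v}(r_v+1)$.

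Next I would make the left-hand side explicit. Forgetting the symmetric-group action, a graded linear map $\mathrm{B}^{\mathrm{s.a}}\mathcal E(n)\to\mathfrak g^{\otimes n}$ is the same datum as an arbitrary family $(v_\tau)_\tau$ indexed by the basis trees, with $v_\tau\in\mathfrak g^{\otimes n}$, so that $\mathrm{Hom}(\mathrm{B}^{\mathrm{s.a}}\mathcal E(n),\mathfrak g^{\otimes n})\cong\prod_\tau\mathfrak g^{\otimes n}$. Each $v_\tau$ is an honest element of $\mathfrak g^{\otimes n}$, hence a \emph{finite} combination $\sum_{i\in I_\tau}\lambda_\tau^{(i)}\,g_{i_1}^{(i)}\otimes\cdots\otimes g_{i_n}^{(i)}$ of basis tensors; this is exactly why the index set $I_\tau$ in the statement is finite, whereas the genuinely infinite freedom lies in the choice of the family over all trees $\tau$ (equivalently over all weights $\omega$) and all arities $n$, which is what produces the formal power series in the displayed formula. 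The elementary maps sending one tree $\tau$ to one basis tensor $g_{i_1}\otimes\cdots\otimes g_{i_n}$ and every other tree to $0$ are transported by \cref{Iso} to the decorated trees $\tau(g_{i_1},\dots,g_{i_n})$, and such an elementary map has homological degree (output degree) minus (input degree) $=\sum_j\mathrm{deg}(g_{i_j})-\mathrm{deg}(\tau)$, matching the claimed degree formula.

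Finally I would incorporate the symmetric-group invariance. Passing from $\mathrm{Hom}$ to $\mathrm{Hom}_{\mathbb{S}_n}$ amounts, through the norm-map identification used in the proof of \cref{Iso}, to replacing invariants by the coinvariants $\widehat{\Omega}^{\mathrm{s.a}}\mathcal E^*(n)\,\widehat{\otimes}_{\mathbb{S}_n}\mathfrak g^{\otimes n}$, which is legitimate because $\widehat{\Omega}^{\mathrm{s.a}}\mathcal E^*(n)$ is quasi-free as a dg $\mathbb{S}_n$-module. As the underlying graded $\mathbb{S}_n$-module is then free, the coinvariants acquire an honest basis obtained by choosing one representative per orbit of the diagonal action on the product basis $\{\tau\otimes(g_{i_1}\otimes\cdots\otimes g_{i_n})\}$; these representatives are precisely the decorated trees $\tau(g_{i_1},\dots,g_{i_n})$, now ranging over symmetric corked rooted trees $\tau\in\mathrm{SCRT}_n^\omega$ and over leaf-labels $(i_1,\dots,i_n)\in B^n$. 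Reassembling the product over $n$ and the completions then yields exactly the displayed formal power series.

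The main difficulty is bookkeeping rather than conceptual, and is twofold. First, one must verify that the combinatorial degree $\mathrm{deg}(\tau)$ really coincides with the internal homological degree of the corresponding basis element of the semi-augmented bar construction: each simplicial label $(\sigma_0,\dots,\sigma_r)$ should contribute its simplicial degree $r$ together with the suspension of the bar construction, and each cork a single unit, giving $\mathrm{Cork}(\tau)+\sum_v(r_v+1)$. Second, one must check that the decorated trees form a genuine basis of the coinvariants and not merely a spanning set, i.e.\ that the $\mathbb{S}_n$-orbits are counted without redundancy; this is where the freeness packaged in the quasi-freeness of $\widehat{\Omega}^{\mathrm{s.a}}\mathcal E^*(n)$---for which the distinctness condition $\sigma_i\neq\sigma_j$ on the vertex labels is responsible---must be used. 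Both points rest on the grading and freeness conventions for $\mathrm{B}^{\mathrm{s.a}}$ recalled from \cite[Appendix]{lucio2022integration} and \cite{bricevictor}, together with \cref{Iso}.
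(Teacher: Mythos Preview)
Your proof is correct and contains the same underlying content as the paper's, just organized differently: the paper's one-line argument works directly on the right-hand side of \cref{Iso}, observing that $\widehat{\Omega}^{\mathrm{s.a}}\mathcal{E}^*(n)$ is the linear dual of $\mathrm{B}^{\mathrm{s.a}}\mathcal{E}(n)$ and hence inherits the symmetric corked rooted tree basis, so the completed coinvariant tensor product is described by direct inspection. You instead compute everything on the $\mathrm{Hom}$ side and then transport through \cref{Iso}, which is slightly more circuitous but has the expository advantage of making transparent why each $I_\tau$ is finite while the sum over $(n,\omega,\tau)$ is a genuine formal power series, and of isolating exactly where quasi-freeness is used; substantively the two arguments coincide.
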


\begin{proof}
It follows by direct inspection from the description of $\Omega(\mathcal{E}^{\mathrm{nu}})^*(n)$ in terms of symmetric rooted trees, since it is the linear dual of $\mathrm{B}\mathcal{E}^{\mathrm{nu}}$. 
\end{proof}

Let $(\mathfrak{g},d_\mathfrak{g})$ be a dg module. Let us describe the differential on the dg module 
\[
\prod_{n \geq 1} \Omega(\mathcal{E}^{\mathrm{nu}})^*(n) \otimes_{\mathbb{S}_n} \mathfrak{g}^{\otimes n}~.
\]
It is given by the sum of three terms $d_1$, $d_2$ and $d_3$. Let us describe their image on a symmetric rooted tree $\tau(g_{i_1}, \cdots, g_{i_n})$ with leaves labelled by elements in $\mathfrak{g}$; their images on a formal power series of such trees is obtained by the formal sum of their images on each tree.

\medskip

\begin{enumerate}
\item The first term $d_1$ is given by the sum 
\[
d_1(\tau(g_{i_1}, \cdots, g_{i_n})) = \sum_{j = 1}^n (-1)^{\epsilon} ~ \tau\left(g_{i_1}, \cdots, d_{\mathfrak{g}}(g_{i_j}), \cdots, g_{i_n}\right)~,
\]
where the sign $\epsilon = \sum_{k=1}^{j-1}|g_{i_k}|$ is determined by the Koszul sign rule.

\medskip

\item The second term $d_2$ is determined by its image on $n$-corollas $c_n^{(\sigma_0,\cdots,\sigma_r)}$, as it corresponds to the differential on $(\mathcal{E}^{\mathrm{nu}})^*$ (the linear dual of the differential in \cite{BergerFresse04}). For such a corolla, it is given by  
\[
d_2\left( c_n^{(\sigma_0,\cdots,\sigma_r)}(g_{i_1}, \cdots, g_{i_n}) \right) = \sum_{i=0}^r (-1)^i \sum_{\substack{\sigma \in \mathbb{S}_n\\ \sigma \neq \sigma_{i-1},\sigma_{i}}} c_n^{(\sigma_0, \cdots, \sigma, \cdots, \sigma_r)}(g_{i_1}, \cdots, g_{i_n})~,
\]
where the first sum is taken over all $\sigma$ in $\mathbb{S}_n$ which are different from the permutation $\sigma_{i-1}$ right before and the permutation $\sigma_{i}$ right after, since $\sigma$ is added in the $i$-th spot. Equivalently, one can sum over all permutations $\sigma$ in $\mathbb{S}_n$ and declare that any resulting tuple with two equal consecutive permutations is sent to zero. The value on a general tree $\tau(g_{i_1}, \cdots, g_{i_n})$ is given by the sum over all vertices of $\tau$ of the above image, together with the appropriate sign.

\medskip

\item The third term $d_3$ is induced by the partial decompositions maps of partial cooperad $(\mathcal{E}^{\mathrm{nu}})^*$. Applied to a corolla labelled by $(\sigma_{0}, \cdots, \sigma_{r_v})$, it gives a sum of symmetric rooted trees with two vertices $v^{(1)}$ and $v^{(2)}$, labelled by all the possible partial decompositions of the operation $(\sigma_{0}, \cdots, \sigma_{r_v})$ inside $(\mathcal{E}^{\mathrm{nu}})^*$. The corolla $v^{(2)}$ lies on the $i$-th input of $v^{(1)}$ precisely when the partial decomposition $\Delta_i$ has been applied to $(\sigma_{0}, \cdots, \sigma_{r_v})$. 

\medskip

In general, the term $d_3$ is the sum over all the vertices of a symmetric rooted tree of the previous rule on corollas. For more concrete formulas of the partial decompositions of $(\mathcal{E}^{\mathrm{nu}})^*$, see the Appendix \ref{Appendice formules}. All the previous sums involve signs which are determined by the Koszul sign rule. 

\medskip
\end{enumerate} 

\begin{Remark}[The norm isomorphism]\label{Remark: norm map}
Let us describe the isomorphism induced by the norm map in Lemma \ref{Lemma: Iso}. It is given by 
\[
\begin{tikzcd}[column sep=1pc,row sep=0.5pc]
\mathbb{N}: \Omega(\mathcal{E}^{\mathrm{nu}})^*(n) \otimes_{\mathbb{S}_n} \mathfrak{g}^{\otimes n} \arrow[r]
&\mathrm{Hom}_{\mathbb{S}_n}\left(\mathrm{B}\mathcal{E}^{\mathrm{nu}}(n), \mathfrak{g}^{\otimes n}\right) \\
\tau(g_{1},\cdots ,g_{n}) \arrow[r,mapsto]
&\displaystyle \sum_{\sigma \in \mathbb{S}_n} \left[\mathrm{ev}_{(g_{\sigma^{-1}(1)},\cdots, g_{\sigma^{-1}(n)})}(\sigma \bullet \tau): \sigma \bullet \tau \mapsto g_{\sigma^{-1}(1)} \otimes \cdots \otimes g_{\sigma^{-1}(n)} \right]~.
\end{tikzcd}
\]
where $\mathrm{ev}_{(g_{1},\cdots, g_{n})}(\tau)$ is a Dirac function which is zero everywhere, except on $\tau$, where its value is given by $g_{1} \otimes \cdots \otimes g_{n}$. This allows us to pass from one presentation to the other. For example, this morphism is given on corollas labelled by $w = (\sigma_0,\cdots,\sigma_r)$ as:
\[
\mathbb{N}\left(c_n^{w}(g_{1},\cdots ,g_{n})\right) = \sum_{\sigma \in \mathbb{S}_n} \left[\mathrm{ev}_{(g_{\sigma^{-1}(1)},\cdots, g_{\sigma^{-1}(n)})}\left(c_n^{\sigma.w}\right): c_n^{\sigma.w} \mapsto (g_{\sigma^{-1}(1)},\cdots, g_{\sigma^{-1}(n)}) \right]~,
\]

where $\sigma.w$ is given by $(\sigma.\sigma_0,\cdots,\sigma.\sigma_r)$. 
\end{Remark}

\textbf{Structural data.} The structural data of an absolute $\mathcal{L}^\pi_\infty$-algebra structure thus amounts to the data of a map of dg modules
\[
\gamma_\mathfrak{g}: \prod_{n \geq 1} \Omega(\mathcal{E}^{\mathrm{nu}})^*(n) \otimes_{\mathbb{S}_n} \mathfrak{g}^{\otimes n} \longrightarrow \mathfrak{g}~,
\]
which sends any formal power series of the form
\[
\sum_{n\geq 1} \sum_{\delta \geq 0} \sum_{\tau \in \mathrm{SCRT}_n^\delta} \sum_{i \in \mathrm{I}_\tau} \lambda_\tau^{(i)} \tau \left(g_{i_1}^{(i)}, \cdots, g_{i_n}^{(i)}\right)
\]
to a well-defined image 
\[
\gamma_\mathfrak{g}\left( \sum_{n\geq 1} \sum_{\delta \geq 0} \sum_{\tau \in \mathrm{SCRT}_n^\delta} \sum_{i \in \mathrm{I}_\tau} \lambda_\tau^{(i)} \tau \left(g_{i_1}^{(i)}, \cdots, g_{i_n}^{(i)}\right)\right)~,
\]
in the dg module $\mathfrak{g}$. This \textbf{does not presuppose} an underlying topology on $\mathfrak{g}$. 

\medskip

The \textit{elementary operations} induced by the structural map $\gamma_\mathfrak{g}$ are the following family of operations 
\[
\left\{l_n^{(\sigma_0,\cdots,\sigma_r)} \coloneqq \gamma_\mathfrak{g} \left( c_n^{(\sigma_0,\cdots,\sigma_r)}(-,\cdots,-) \right): \mathfrak{g}^{\otimes n} \longrightarrow \mathfrak{g}~\right\}~,
\]
where $l_n^{(\sigma_0,\cdots,\sigma_r)}$ is of degree $-r-1$, for all $n \geq 2$, and all $r+1$-tuples of permutations in $\mathbb{S}_n$, where $r \geq 0$ and $\sigma_i \neq \sigma_{i+1}$ for $0 \leq i \leq r-1$. Recall that $c_n$ stands for the $n$-corolla.

\medskip

\textbf{Quasi-planar completeness.} The conilpotent dg cooperad $\mathrm{B}\mathcal{E}^{\mathrm{nu}}$ is a \textit{quasi-planar} conilpotent dg cooperad, and therefore it admits a canonical filtration, called the \textit{quasi-planar filtration}. This filtration can be considered an analogue of the coradical filtration in a positive characteristic setting. We refer to  \cite[Subsection 2.6]{bricevictor} for the definition of a quasi-planar cooperad, to \cite[Subsection 2.7]{bricevictor} for the proof that $\mathrm{B}\mathcal{E}^{\mathrm{nu}}$ is quasi-planar and to \cite[Subsection 2.10]{bricevictor} for the definition of the canonical quasi-planar filtration of a quasi-planar cooperad. 

\medskip

The quasi-planar filtration on $\mathrm{B}\mathcal{E}^{\mathrm{nu}}$ an exhaustive filtration 

\[
0 \hookrightarrow F_0 \mathrm{B}\mathcal{E}^{\mathrm{nu}} \hookrightarrow F_1 \mathrm{B}\mathcal{E}^{\mathrm{nu}} \hookrightarrow F_2 \mathrm{B}\mathcal{E}^{\mathrm{nu}} \hookrightarrow \cdots \hookrightarrow \colim_{\delta} F_\delta \mathrm{B}\mathcal{E}^{\mathrm{nu}} \cong \mathrm{B}\mathcal{E}^{\mathrm{nu}}~, 
\]

where $F_\delta \mathrm{B}\mathcal{E}^{\mathrm{nu}}$ is the conilpotent sub-cooperad that contains symmetric rooted trees of degree at most $\delta$. This filtration induces a canonical filtration on any absolute $\mathcal{L}^\pi_\infty$-algebra, which we call the qp-filtration.

\medskip

Let $\mathfrak{g}$ be a absolute $\mathcal{L}^\pi_\infty$-algebra, its \textit{qp-filtration} $\mathrm{W}_\delta\mathfrak{g}$ is defined as the following pushout 

\[
\begin{tikzcd}[column sep=3pc,row sep=4pc]
    \displaystyle \prod_{n \geq 1} \mathrm{Hom}_{\mathbb{S}_n}\left(\mathrm{B}\mathcal{E}^{\mathrm{nu}}(n), \mathfrak{g}^{\otimes n}\right) \ar[r, "\pi_\delta",twoheadrightarrow] \arrow[dr, phantom, "\ulcorner", very near end]  \ar[d,"\gamma_\mathfrak{g}",swap]
    & \displaystyle \prod_{n \geq 1} \mathrm{Hom}_{\mathbb{S}_n}\left(F_\delta\mathrm{B}\mathcal{E}^{\mathrm{nu}}(n), \mathfrak{g}^{\otimes n}\right)
    \ar[d]
    \\
    \mathfrak{g} \ar[r,twoheadrightarrow]
    &\mathrm{W}_\delta\mathfrak{g}~, 
\end{tikzcd}
\]

where $\pi_\delta$ is the projection induced by the inclusion $F_\delta\mathrm{B}\mathcal{E}^{\mathrm{nu}} \hookrightarrow \mathrm{B}\mathcal{E}^{\mathrm{nu}}$. By Lemmas \ref{Lemma: Iso} and \ref{Lemma: basis elements}, it can be computed that 

\[
\mathrm{W}_\delta\mathfrak{g} = \mathrm{Im}\left(\gamma_\mathfrak{g} \vert_{W^\delta} : \prod_{n \geq 0} W^\delta \Omega(\mathcal{E}^{\mathrm{nu}})^*(n) \otimes_{\mathbb{S}_n}  \mathfrak{g}^{\otimes n} \longrightarrow \mathfrak{g} \right)~,
\]

for any $\delta \geq 0$, where $W^\delta\Omega(\mathcal{E}^{\mathrm{nu}})^*$ is the sub-dg-$\mathbb{S}$-module of symmetric rooted trees of degree greater or equal to $\delta$. An element $g$ in $\mathfrak{g}$ is of \textit{weight} $\delta_0$, meaning it is in $\mathrm{W}_{\delta_0}\mathfrak{g}$, if and only if it can be written as
\[
g = \gamma_\mathfrak{g}\left(\sum_{\delta \geq \delta_0} \sum_{\tau \in \mathrm{SCRT}(\delta)} \lambda_\tau \tau(g_{i_1}, \cdots, g_{i_n})\right)~. 
\]
Each step of the qp-filtration $\mathrm{W}_\delta\mathfrak{g}$ is an ideal of $\mathfrak{g}$, meaning $\mathfrak{g}/\mathrm{W}_\delta\mathfrak{g}$ has an unique absolute $\mathcal{L}^\pi_\infty$-algebra structure induced by the structure of $\mathfrak{g}$. For example, $\mathfrak{g}/\mathrm{W}_1\mathfrak{g}$ is a dg module with a trivial absolute $\mathcal{L}^\pi_\infty$-algebra structure, given by the generators of $\mathfrak{g}$. We refer to \cite[Section 3.6]{bricevictor} for more details on qp-filtrations. 

\begin{Definition}[Qp-complete absolute $\mathcal{L}^\pi_\infty$-algebra] 
Let $(\mathfrak{g},\gamma_\mathfrak{g}, d_\mathfrak{g})$ be an absolute $\mathcal{L}^\pi_\infty$-algebra. It is \textit{qp-complete} if the canonical map
\[
\varphi_\mathfrak{g}: \mathfrak{g} \twoheadrightarrow \lim_{\delta} \mathfrak{g}/\mathrm{W}_\delta\mathfrak{g}
\]
is an isomorphism of absolute $\mathcal{L}^\pi_\infty$-algebras.
\end{Definition}

\begin{Remark}
The map $\varphi_\mathfrak{g}$ is always an epimorphism, see \cite[Lemma 29]{bricevictor}. Thus being qp-complete essentially means that the topology induced by the canonical filtration is separated. 
\end{Remark}

Qp-complete absolute $\mathcal{L}^\pi_\infty$-algebras form a reflexive full sub-category of absolute $\mathcal{L}^\pi_\infty$-algebras, where the reflector is given, for an absolute $\mathcal{L}^\pi_\infty$-algebra $\mathfrak{g}$, by the completion 
\[
\widehat{\mathfrak{g}} \coloneqq \lim_{\delta} \mathfrak{g}/\mathrm{W}_\delta\mathfrak{g}~.
\]
In general, we will restrict to qp-complete absolute $\mathcal{L}^\pi_\infty$-algebras, as they are algebraically and homotopically better behaved. 

\medskip

\textbf{Comparison functors.} We can compare absolute $\mathcal{L}^\pi_\infty$-algebras with $\mathcal{L}^\pi_\infty$-algebras via the following adjunction.

\begin{lemma}\label{lemma: abs res adjunction}
There is an adjunction 
\[
\begin{tikzcd}[column sep=5pc,row sep=3pc]
            \mathcal{L}^\pi_\infty\text{-}\mathsf{alg} \arrow[r, shift left=1.1ex, "\mathrm{Ab}" {name=F}] &~\mathsf{abs}~\mathcal{L}^\pi_\infty\text{-}\mathsf{alg} \arrow[l, shift left=.75ex, "\mathrm{Res}"{name=U}]
            \arrow[phantom, from=F, to=U, , "\dashv" rotate=-90]
\end{tikzcd}
\]
between the category of absolute $\mathcal{L}^\pi_\infty$-algebras and the category of $\mathcal{L}^\pi_\infty$-algebras, where the right adjoint $\mathrm{Res}$ is given by restricting the structure to the elementary operations.
\end{lemma}

\begin{proof}
The canonical inclusion map of monads 
\[
\iota: \bigoplus_{n \geq 1} \Omega (\mathcal{E}^{\mathrm{nu}})^*(n) \otimes_{\mathbb{S}_n} (-)^{\otimes n} \longrightarrow \prod_{n \geq 1} \Omega (\mathcal{E}^{\mathrm{nu}})^*(n) \otimes_{\mathbb{S}_n} ~ (-)^{\otimes n}~,
\]
induces an adjunction between their respective categories of algebras. The restriction along this morphism of the structure map 
of an absolute $\mathcal{L}^\pi_\infty$-algebra is given by the elementary operations. 
\end{proof}

\begin{Definition}[Nilpotent partition $\mathcal{L}_\infty$-algebra]\label{def: nilpotent partition Linfiniti algebra}
Let $(\mathfrak{h},\gamma_\mathfrak{h},d_\mathfrak{h})$ be a $\mathcal{L}^\pi_\infty$-algebra. It is \textit{nilpotent} if the structural morphism 
\[
\gamma_\mathfrak{h}: \bigoplus_{n \geq 1} \Omega (\mathcal{E}^{\mathrm{nu}})^*(n) \otimes_{\mathbb{S}_n} ~ \mathfrak{h}^{\otimes n} \longrightarrow \mathfrak{h}~.
\]
factors through 
\[
\gamma_{\mathfrak{h}}: \bigoplus_{n \geq 1}^{k} F_\delta\Omega(\mathcal{E}^{\mathrm{nu}})^*(n) \otimes_{\mathbb{S}_n} \mathfrak{h}^{\otimes n} \longrightarrow \mathfrak{h}
\]
for some $k \geq 1$ and some $\delta \geq 0$, where $F_\delta\Omega(\mathcal{E}^{\mathrm{nu}})^*$ is the sub-operad containing only symmetric rooted trees of degree $\leq \delta$. 
\end{Definition}

\begin{Remark}
One can define a \textit{lower central series} for partition $\mathcal{L}_\infty$-algebras, in an analogue way to \cite[Definition 4.2]{Getzler09}. Being \textit{arity-nilpotent} amounts to this lower central series terminating. Nevertheless, \textit{arity-nilpotency} does not imply \textit{weight-nilpotency} in this case, as there are operations of arbitrarily high weight in each arity, and vice-versa. Our notion of nilpotency requires both being arity-nilpotent and weight-nilpotent. 
\end{Remark}

Any nilpotent partition $\mathcal{L}_\infty$-algebra is a qp-complete absolute partition $\mathcal{L}_\infty$-algebra. In fact,  the restriction functor of Lemma \ref{lemma: abs res adjunction} is fully faithful on nilpotent partition $\mathcal{L}_\infty$-algebras, and the adjunction restricts to an equivalence on nilpotent objects.

\begin{Remark}[Pro-nilpotent partition $\mathcal{L}_\infty$-algebras]
Since qp-complete absolute partition $\mathcal{L}_\infty$-algebras are stable under limits, which are computed in the ground category of dg modules, any limit of nilpotent partition $\mathcal{L}_\infty$-algebras is again a qp-complete absolute partition $\mathcal{L}_\infty$-algebra. In particular, any pro-nilpotent partition $\mathcal{L}_\infty$-algebras is. 
\end{Remark}

\textbf{Maurer--Cartan elements.} Finally, we define the analogue of the Maurer--Cartan equation for absolute $\mathcal{L}^\pi_\infty$-algebras. 

\begin{Definition}[Maurer--Cartan equation]
Let $(\mathfrak{g},\gamma_\mathfrak{g}, d_\mathfrak{g})$ be an absolute $\mathcal{L}^\pi_\infty$-algebra. A \textit{Maurer--Cartan element} $\alpha$ is an element in $\mathfrak{g}$ of degree $0$ which satisfies the following equation
\[
\gamma_\mathfrak{g}\left(\sum_{n \geq 2} c_n^{\mathrm{id}}(\alpha, \cdots, \alpha) \right) + d_\mathfrak{g}(\alpha) = 0~.
\]
The set of Maurer--Cartan elements in $\mathfrak{g}$ is denoted by $\mathcal{MC}(\mathfrak{g})~.$
\end{Definition}

\begin{Remark}
Let $\mathfrak{g}$ be a qp-complete absolute $\mathcal{L}^\pi_\infty$-algebra. If $\alpha$ is in weight $1$, the above sum \textit{splits} as the corresponding infinite sum, in $\mathfrak{g}$, of the elementary operations indexed by the identity permutation applied to $\alpha$, by qp-completeness.
\end{Remark}

\subsection{Curved absolute partition $\mathcal{L}_\infty$-algebras}\label{subsection: curved absolute partition} Finally, we introduce the more general notion of a curved absolute partition $\mathcal{L}_\infty$-algebra. Absolute partition $\mathcal{L}_\infty$-algebras, like their $\mathcal{L}_\infty$ counterparts, are \textit{pointed}, in the sense that the element $0$ is always a Maurer--Cartan element. One can think of curved absolute $\mathcal{L}_\infty^\pi$-algebras as the \textit{unpointed} version of this structure, where adding a distinguished element, called the \textit{curvature}, prevents $0$ from being a Maurer--Cartan element. We refer to the introduction of \cite{lucio2022integration} for more details about the curved setting. The goal of this subsection, parallel to Subsection \ref{subsection: absolute partition}, is to make the definition of curved absolute partition $\mathcal{L}_\infty$-algebras explicit. 

\medskip

We start by considering the dg operad $\mathcal{E}$, which is the unital version of the Barratt-Eccles dg operad, meaning that $\mathcal{E}(0) = \kk$. This dg operad is no longer augmented, hence we cannot perform the classical operadic bar construction on $\mathcal{E}$. We consider instead the conilpotent curved cooperad $\mathrm{B}^{\mathrm{s.a}}\mathcal{E}$, where $\mathrm{B}^{\mathrm{s.a}}$ stands for the semi-augmented bar construction of \cite{HirshMilles12}. See also \cite[Appendix]{lucio2022integration} for more details about this bar construction. 

\medskip

A vector basis of $\mathrm{B}^{\mathrm{s.a}}\mathcal{E}$ is given by symmetric \textit{corked} rooted trees. A symmetric corked rooted tree is a rooted tree where vertices either have at least two incoming edges or zero incoming edges. Vertices with zero incoming edges are called \textit{corks}. Each vertex with at least two incoming edges is labelled by a $r_v+1$-uple $(\sigma_{0}, \cdots, \sigma_{r_v})$, where $\sigma_j$ is an element of $\mathbb{S}_{\mathrm{In}(v)}$, where $r_v \geq 0$, and where $\sigma_i \neq \sigma_j$ for $i \neq j$. The \textit{degree} $\mathrm{deg}(\tau)$ of a symmetric corked rooted tree $\tau$ is given by 

\[
\mathrm{deg}(\tau) \coloneqq \mathrm{Cork}(\tau) + \sum_{v \in \mathrm{V}(\tau)} r_v + 1~,
\]

where $\mathrm{Cork}(\tau)$ is the number of corks of $\tau$ and where $\mathrm{V}(\tau)$ is the set of all vertices of $\tau$. The arity of a tree is given by the number of leaves and the weight by the number of vertices (including the corks). We denote $\mathrm{SCRT}_n^\omega$ denote the set of symmetric corked rooted trees of arity $n$ and with $\omega$ internal edges. Notice that any symmetric rooted trees are included in symmetric corked rooted trees. 

\begin{lemma}\label{lemma: quasi-planar}\leavevmode
\begin{enumerate}
\item The conilpotent curved cooperad $\mathrm{B}^{\mathrm{s.a}}\mathcal{E}$ is quasi-planar.

\medskip

\item Its canonical quasi-planar filtration is given by 
\[
0 \hookrightarrow F_0 \mathrm{B}^{\mathrm{s.a}}\mathcal{E} \hookrightarrow F_1 \mathrm{B}^{\mathrm{s.a}}\mathcal{E} \hookrightarrow F_2 \mathrm{B}^{\mathrm{s.a}}\mathcal{E} \hookrightarrow \cdots \hookrightarrow \colim_{\delta} F_\delta \mathrm{B}^{\mathrm{s.a}}\mathcal{E} \cong \mathrm{B}^{\mathrm{s.a}}\mathcal{E}~, 
\]
where $F_\delta \mathrm{B}^{\mathrm{s.a}}\mathcal{E}$ is the conilpotent curved sub-cooperad containing symmetric corked rooted trees of degree at most $\delta$. 
\end{enumerate}
\end{lemma}

\begin{proof}
The analogue of this result for $\mathrm{B}\mathcal{E}$ is proven in \cite[Section 2.7]{bricevictor}, and it holds \textit{mutatis mutandis} for the semi-augmented bar construction as well. Let us explain why it is quasi-planar: the filtration induced by considering only symmetric corked rooted trees of degree $\leq \delta$ induces an $\omega$-ladder of conilpotent curved cooperads whose colimit is $\mathrm{B}^{\mathrm{s.a}}\mathcal{E}$. The underlying conilpotent graded cooperads of each step of the filtration is clearly planar and the differential simply vanishes on the associated graded, hence it is a quasi-planar filtration. The canonical quasi-planar filtration is induced by the $\mathcal{E}$-comodule structure of $\Omega\mathrm{B}^{\mathrm{s.a}}\mathcal{E}$, which can be explicitly computed like in \cite[Proposition 16]{bricevictor} and coincides with the above filtration.
\end{proof}

\begin{Warning}
When considering curved structures, we will work over the base category of \textit{pre-differential (pdg) modules}, which are graded modules with a degree $-1$ endomorphism which does not necessarily square to zero. 
\end{Warning}

\begin{Definition}[Curved absolute partition $\mathcal{L}_\infty$-algebra]
A \textit{curved absolute partition} $\mathcal{L}_\infty$\textit{-algebra} $\mathfrak{g}$ amounts the data $(\mathfrak{g},\gamma_\mathfrak{g},d_\mathfrak{g})$ of a curved $\mathrm{B}^{\mathrm{s.a}}\mathcal{E}$-algebra. 
\end{Definition}

\textbf{Structural data.} A curved absolute $\mathcal{L}^\pi_\infty$-algebra structure on a pdg module $(\mathfrak{g},d_{\mathfrak{g}})$ amounts to the data of a structural morphism 
\[
\gamma_\mathfrak{g}: \prod_{n \geq 0} \mathrm{Hom}_{\mathbb{S}_n}\left(\mathrm{B}^{\mathrm{s.a}}\mathcal{E}(n), \mathfrak{g}^{\otimes n}\right) \longrightarrow \mathfrak{g}~,
\]

which satisfies the conditions of a curved algebra over a curved cooperad. These amount to a compatibility condition with the pre-differentials, the associativity condition of an algebra over a monad and finally a compatibility condition with the curvature of the cooperad. We refer to \cite[Section 3.2]{bricevictor} for more details. 

\medskip

The linear dual dg operad of $\mathrm{B}^{\mathrm{s.a}}\mathcal{E}$ is given by $\widehat{\Omega}^{\mathrm{s.a}}\mathcal{E}^*$, the semi-coaugmented cobar construction of the linear dual of $\mathcal{E}^*$, see \cite[Appendix]{lucio2022integration}. This dg operad admits a linear basis given by formal power series of symmetric corked rooted trees. Like in Lemma \ref{Lemma: Iso}, there is an isomorphism of pdg modules 
\[
\prod_{n \geq 0} \mathrm{Hom}_{\mathbb{S}_n}\left(\mathrm{B}^{\mathrm{s.a}}\mathcal{E}(n), \mathfrak{g}^{\otimes n}\right) \cong \prod_{n \geq 0} \widehat{\Omega}^{\mathrm{s.a}}\mathcal{E}^*(n) ~\widehat{\otimes}_{\mathbb{S}_n}~ \mathfrak{g}^{\otimes n}~.
\]

where $\widehat{\otimes}$ denotes the completed tensor product, defined as 
\[
\widehat{\Omega}^{\mathrm{s.a}}\mathcal{E}^*(n) ~\widehat{\otimes}_{\mathbb{S}_n}~ \mathfrak{g}^{\otimes n} \coloneqq \lim_{\delta} \left(F_\delta\widehat{\Omega}^{\mathrm{s.a}}\mathcal{E}^*(n) \otimes_{\mathbb{S}_n} \mathfrak{g}^{\otimes n}\right)
\]
where $F_\delta\widehat{\Omega}^{\mathrm{s.a}}\mathcal{E}^*$ refers to the sub dg-operad which only involves symmetric corked rooted trees of degree $\leq \delta$. The previous isomorphism follows from the fact that this later dg sub-operad is arity-wise finite dimensional and quasi-planar. 

\medskip

The structural data of a curved absolute $\mathcal{L}^\pi_\infty$-algebra structure can thus be rewritten as a map 
\[
\gamma_\mathfrak{g}: \prod_{n \geq 0} \widehat{\Omega}^{\mathrm{s.a}}\mathcal{E}^*(n) ~\widehat{\otimes}_{\mathbb{S}_n} ~ \mathfrak{g}^{\otimes n} \longrightarrow \mathfrak{g}~. 
\]
The product on the left hand side admits a linear basis given by formal power series of symmetric corked rooted trees decorated by elements of $\mathfrak{g}$, similar to Lemma \ref{Lemma: basis elements}. Thus this structural morphism sends any formal power series of the form
\[
\sum_{n\geq 0} \sum_{\delta \geq 0} \sum_{\tau \in \mathrm{SCRT}_n^\delta} \sum_{i \in \mathrm{I}_\tau} \lambda_\tau^{(i)} \tau \left(g_{i_1}^{(i)}, \cdots, g_{i_n}^{(i)}\right)
\]
to a well-defined image 
\[
\gamma_\mathfrak{g}\left( \sum_{n\geq 0} \sum_{\delta \geq 0} \sum_{\tau \in \mathrm{SCRT}_n^\delta} \sum_{i \in \mathrm{I}_\tau} \lambda_\tau^{(i)} \tau \left(g_{i_1}^{(i)}, \cdots, g_{i_n}^{(i)}\right)\right)~,
\]
in the pdg module $\mathfrak{g}$. 

\medskip

The \textit{elementary operations} induced by the structural map $\gamma_\mathfrak{g}$ are defined in the same way as for absolute $\mathcal{L}^\pi_\infty$-algebras, namely by restricting the structural morphism to corollas. In the curved case, there is an extra elementary operation $l_0: \kk \longrightarrow \mathfrak{g}$ called the \textit{curvature}, induced by the image of the single cork by the structural morphism $\gamma_\mathfrak{g}$. 

\medskip

\textbf{Curved condition.} The structural map $\gamma_\mathfrak{g}$ needs to satisfy associativity and a compatibility with the pre-differential conditions that are analogous to those explained in \cite[Section 1]{lucio2022integration}. Furthermore, in order to form a \textit{curved} absolute $\mathcal{L}^\pi_\infty$-algebra structure, the following equation on the elementary operations
\begin{equation}\label{curved condition}
d_\mathfrak{g}^2(g) = l_2^{(12)}(l_0,g) + l_2^{(21)}(l_0,g)~,
\end{equation}
needs to hold. This equation comes precisely from the requirement that the pdg $\mathrm{B}^{\mathrm{s.a}}\mathcal{E}$-algebra structure on $\mathfrak{g}$ needs to be compatible with the curvature of the conilpotent curved cooperad, that is, the diagram in \cite[Definition 44]{bricevictor} needs to commute. 

\medskip

\textbf{Quasi-planar completeness.} We make the same definitions as in Subsection \ref{subsection: absolute partition}, using the quasi-planar filtration of $\mathrm{B}^{\mathrm{s.a}}\mathcal{E}$. The $\delta$-stage of the canonical filtration on a curved absolute $\mathcal{L}^\pi_\infty$-algebra is given by the image by the structural morphism of formal power series which only involve symmetric corked rooted trees of degree equal or higher than $\delta$, for $\delta \geq 0$. \textit{Mutatis mutandis,} the same results hold for this canonical filtration as in the previous subsection. See \cite[Sections 3.6 and 3.7]{bricevictor} for the general case. 

\medskip

\textbf{Comparison with absolute $\mathcal{L}_\infty^\pi$-algebras.} Absolute $\mathcal{L}_\infty^\pi$-algebras are particular examples of curved absolute $\mathcal{L}_\infty^\pi$-algebras, where the curvature $l_0$ is zero. 

\begin{Proposition}\label{Prop: curved non curved comparison}
There is an adjunction 
\[
\begin{tikzcd}[column sep=7pc,row sep=3pc]
\mathsf{curv}~\mathsf{abs}~\mathcal{L}_\infty^\pi\text{-}\mathsf{alg} \arrow[r, shift left=1.1ex, "(-)_*"{name=F}]      
&\mathsf{abs}~\mathcal{L}_\infty^\pi\text{-}\mathsf{alg}~, \arrow[l, shift left=.75ex, "\mathrm{U}"{name=U}]
\arrow[phantom, from=F, to=U, , "\dashv" rotate=-90]
\end{tikzcd}
\]
between curved absolute $\mathcal{L}_\infty^\pi$-algebras and absolute $\mathcal{L}_\infty^\pi$-algebras, where the functor $\mathrm{U}$ is fully faithful. The essential image of $\mathrm{U}$ is given by curved absolute $\mathcal{L}_\infty^\pi$-algebras such that the curvature $l_0$ is zero. 
\end{Proposition}

\begin{proof}
The canonical inclusion of dg operads $\mathcal{E}^{\mathrm{nu}} \hookrightarrow \mathcal{E}$ induces an inclusion of conilpotent curved cooperads $\mathrm{B}\mathcal{E}^{\mathrm{nu}} \hookrightarrow  \mathrm{B}^{\mathrm{s.a}}\mathcal{E}$ (any dg cooperad is a curved cooperad with zero curvature). This morphism induces the above adjunction. It is straightforward to check that $\mathrm{U}$ is fully faithful and to identify its essential image. 
\end{proof}

\textbf{Maurer--Cartan equation.} Let $(\mathfrak{g},\gamma_\mathfrak{g}, d_\mathfrak{g})$ be a curved absolute $\mathcal{L}^\pi_\infty$-algebra. A \textit{Maurer--Cartan element} $\alpha$ is an element in $\mathfrak{g}$ of degree $0$ which satisfies the following equation
\[
\gamma_\mathfrak{g}\left(\sum_{n \geq 0,~n\neq 1} c_n^{\mathrm{id}}(\alpha, \cdots, \alpha) \right) + d_\mathfrak{g}(\alpha) = 0~.
\]
Notice that this equation now involves the curvature in arity zero, thus $0$ is not automatically a Maurer--Cartan element. When the curvature is zero, it specifies to the Maurer--Cartan equation in the previous subsection. 

\subsection{Model structures}\label{subsection: model structures}
The goal of this subsection is to endow the category of qp-complete curved absolute $\mathcal{L}^\pi_\infty$-algebras with a model category structure, transferred from their Koszul dual coalgebras, which are non-necessarily conilpotent homotopy counital cocommutative coalgebras. Specifically, they are coalgebras over the dg operad $\Omega \mathrm{B}^{\mathrm{s.a}}\mathcal{E}$, which we will denote by $u\mathcal{EE}_\infty$. We perform the same constructions in the non-curved/non-counital case, and compare their homotopy theories.

\begin{Proposition}\label{prop: left transferred structure}
There is a model structure on the category of $u\mathcal{EE}_\infty$-coalgebras, left-transferred along the cofree-forgetful adjunction
\[
\begin{tikzcd}[column sep=7pc,row sep=3pc]
\mathsf{dg}~\mathsf{mod} \arrow[r, shift left=1.1ex, "\mathcal{C}(u\mathcal{EE}_\infty)(-)"{name=F}]      
&u\mathcal{EE}_\infty\textsf{-}\mathsf{coalg}~, \arrow[l, shift left=.75ex, "U"{name=U}]
\arrow[phantom, from=F, to=U, , "\dashv" rotate=-90]
\end{tikzcd}
\]
where 
\begin{enumerate}
\item the class of weak equivalences is given by quasi-isomorphisms,
\item the class of cofibrations is given by degree-wise monomorphisms,
\item the class of fibrations is given by right lifting property with respect to acyclic cofibrations.
\end{enumerate}
\end{Proposition}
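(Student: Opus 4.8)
The plan is to recognize the asserted structure as the \emph{left-induced} model structure along the cofree--forgetful adjunction, and to deduce its existence from the general machinery for coalgebras over quasi-planar conilpotent curved cooperads developed in \cite{bricevictor}. The key formal observation is that, for coalgebras, it is the forgetful functor $U$ that is the left adjoint (it preserves and creates colimits), while $\mathcal{C}(u\mathcal{EE}_\infty)(-)$ is the right adjoint; declaring the weak equivalences to be the quasi-isomorphisms and the cofibrations to be the degree-wise monomorphisms therefore amounts exactly to declaring that these two classes are created by $U$, which is the defining feature of a left-transferred structure. Since $u\mathcal{EE}_\infty = \Omega\mathrm{B}^{\mathrm{s.a}}\mathcal{E}$ is the cobar construction of the quasi-planar cooperad $\mathrm{B}^{\mathrm{s.a}}\mathcal{E}$ by \cref{lemma: quasi-planar}, the whole statement reduces to verifying the hypotheses of that general existence theorem for this particular cooperad.

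First I would record the formal inputs. The category $u\mathcal{EE}_\infty\textsf{-}\mathsf{coalg}$ is locally presentable and the forgetful functor $U$ to $\mathsf{dg}\text{-}\mathsf{mod}$ admits the cofree functor as a right adjoint, which makes the accessibility-based machinery applicable. Because $\kk$ is a field, $\mathsf{dg}\text{-}\mathsf{mod}$ carries the model structure whose weak equivalences are the quasi-isomorphisms and whose cofibrations are the degree-wise monomorphisms, and both classes are detected on underlying modules by $U$. Consequently the two proposed classes (1) and (2) automatically enjoy the two-out-of-three property and closure under retracts, while the cofibrations are closed under the relevant pushouts and transfinite compositions. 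Defining the fibrations by the right lifting property against acyclic cofibrations, it then only remains to produce the two weak factorization systems and to verify the acyclicity condition relating them.

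The genuine obstacle, as always for a left-induced structure, is that the fibrations are visible only through a lifting property: one must construct the factorization of an arbitrary morphism as an acyclic cofibration followed by a fibration, and check that the acyclic cofibrations so obtained are precisely the monomorphisms that are quasi-isomorphisms. This is exactly where quasi-planarity enters. By \cref{lemma: quasi-planar}, the canonical quasi-planar filtration exhibits $\mathrm{B}^{\mathrm{s.a}}\mathcal{E}$ as an $\omega$-sequential colimit of conilpotent curved cooperads whose underlying graded pieces are planar and on whose associated graded the differential vanishes. I would use this filtration to build the required factorizations as a tower of cofree extensions, proceeding stage by stage along the filtration degree; at each stage the obstruction lives in a free graded $\mathbb{S}_n$-module (the quasi-freeness already exploited in \cref{Iso}), and the factorization reduces to the completely explicit one in $\mathsf{dg}\text{-}\mathsf{mod}$ over $\kk$, where every short exact sequence of complexes splits. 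Assembling the stage-wise factorizations yields both weak factorization systems and the acyclicity condition at once, which is precisely the content of the transfer theorem of \cite{bricevictor}; the one point demanding care is the curved, rather than merely dg, nature of $\mathrm{B}^{\mathrm{s.a}}\mathcal{E}$, so that the inductive arguments must be carried out at the level of pre-differential graded modules and only invoke that the associated graded is genuinely dg, as guaranteed by the quasi-planar filtration.
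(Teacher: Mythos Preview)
Your proposal is correct and follows essentially the same approach as the paper: both reduce the statement to the quasi-planarity of $\mathrm{B}^{\mathrm{s.a}}\mathcal{E}$ (\cref{lemma: quasi-planar}) and then invoke the transfer machinery of \cite{bricevictor}. The paper phrases the conclusion slightly differently, passing through the intermediate observation that quasi-planarity makes $\Omega\mathrm{B}^{\mathrm{s.a}}\mathcal{E}$ cofibrant in the semi-model structure of \cite{Fresse09} and that any cofibrant dg operad is \emph{coadmissible}; your sketch instead unfolds what the factorization argument from \cite{bricevictor} looks like, but this is the same content.
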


\begin{proof}
By Lemma \ref{lemma: quasi-planar}, $\mathrm{B}^{\mathrm{s.a}}\mathcal{E}$ is a quasi-planar curved conilpotent cooperad. Therefore the dg operad $\Omega \mathrm{B}^{\mathrm{s.a}}\mathcal{E}$ is cofibrant in the semi-model category of \cite{Fresse09} by \cite[Proposition 11]{bricevictor}. And any cofibrant dg operad is coadmissible. See \cite[Section 3.13]{bricevictor} for more details.
\end{proof}

There is a \textit{complete bar-cobar adjunction} relating $u\mathcal{EE}_\infty$-coalgebras and qp-complete curved absolute $\mathcal{L}^\pi_\infty$-algebras: 
\[
\begin{tikzcd}[column sep=5pc,row sep=3pc]
            u\mathcal{EE}_\infty\text{-}\mathsf{coalg} \arrow[r, shift left=1.1ex, "\widehat{\Omega}_{\iota}"{name=F}] & \mathsf{curv}~\mathsf{abs}~\mathcal{L}^\pi_\infty\text{-}\mathsf{alg}^{\mathsf{qp}\text{-}\mathsf{comp}} ~,  \arrow[l, shift left=.75ex, "\widehat{\mathrm{B}}_{\iota}"{name=U}]
            \arrow[phantom, from=F, to=U, , "\dashv" rotate=-90]
\end{tikzcd}
\]

induced by the twisting morphism $\iota: \mathrm{B}^{\mathrm{s.a}}\mathcal{E} \longrightarrow \Omega \mathrm{B}^{\mathrm{s.a}}\mathcal{E}$, see \cite[Section 3.11]{bricevictor}. Using this adjunction, one can transfer the model structure on $u\mathcal{EE}_\infty$-coalgebras to the category of qp-complete curved absolute $\mathcal{L}^\pi_\infty$-algebras.

\begin{theorem}\label{thm: Equivalence the Quillen CC infini et absolues}
There is a model structure on the category of qp-complete curved absolute $\mathcal{L}^\pi_\infty$-algebras, right-transferred along the complete bar-cobar adjunction
\[
\begin{tikzcd}[column sep=5pc,row sep=3pc]
          u\mathcal{EE}_\infty\text{-}\mathsf{coalg} \arrow[r, shift left=1.1ex, "\widehat{\Omega}_{\iota}"{name=F}] & \mathsf{curv}~\mathsf{abs}~\mathcal{L}^\pi_\infty\text{-}\mathsf{alg}^{\mathsf{qp}\text{-}\mathsf{comp}}~, \arrow[l, shift left=.75ex, "\widehat{\mathrm{B}}_{\iota}"{name=U}]
            \arrow[phantom, from=F, to=U, , "\dashv" rotate=-90]
\end{tikzcd}
\]
where 
\begin{enumerate}
\item the class of weak equivalences is given by morphisms $f$ such that $\widehat{\mathrm{B}}_\iota(f)$ is a quasi-isomorphism,
\item the class of fibrations is given is given by morphisms $f$ which are degree-wise epimorphisms,
\item  and the class of cofibrations is given by left lifting property with respect to acyclic fibrations.
\end{enumerate}
\medskip

Furthermore, this adjunction is a Quillen equivalence. 
\end{theorem}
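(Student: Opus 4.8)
The plan is to realize this statement as a direct instance of the general transfer theorem for complete bar--cobar adjunctions along quasi-planar curved conilpotent cooperads developed in \cite{bricevictor}, so that the bulk of the work reduces to verifying that the present situation satisfies the hypotheses of that machinery. Two such hypotheses are already in place: the coalgebra side carries the left-transferred model structure of \cref{prop: left transferred structure} (with monomorphisms as cofibrations and quasi-isomorphisms as weak equivalences), and the cooperad $\mathrm{B}^{\mathrm{s.a}}\mathcal{E}$ is quasi-planar by \cref{lemma: quasi-planar}. The quasi-planarity is exactly what guarantees, on the one hand, that $u\mathcal{EE}_\infty = \Omega\mathrm{B}^{\mathrm{s.a}}\mathcal{E}$ is a cofibrant and coadmissible dg operad, and on the other hand that the category of qp-complete curved absolute $\mathcal{L}^\pi_\infty$-algebras is the correct target: the qp-completion supplies the (co)limits needed for the category to be bicomplete and for the completed cobar functor $\widehat{\Omega}_\iota$ to be well-defined as a left adjoint.

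Granting the general framework, I would first establish that the three classes of maps form a model structure. The weak equivalences satisfy two-out-of-three since $\widehat{\mathrm{B}}_\iota$ is a functor and quasi-isomorphisms do. For the factorization and lifting axioms, the strategy is to transport them across the adjunction from the coalgebra model structure of \cref{prop: left transferred structure}: applying the complete cobar functor $\widehat{\Omega}_\iota$ to the generating (acyclic) cofibrations of $u\mathcal{EE}_\infty$-coalgebras and running the small object argument against the resulting set of maps produces the desired factorizations on the algebra side. The one substantive point here is the identification of the fibrations with the degree-wise epimorphisms, which follows from the explicit behaviour of $\widehat{\mathrm{B}}_\iota$ along the qp-filtration: it sends degree-wise epimorphisms to fibrations of coalgebras, and conversely degree-wise epimorphisms are shown to have the right lifting property against the acyclic cofibrations.

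Finally, for the Quillen equivalence it suffices to prove that the derived unit and counit of the adjunction are weak equivalences. Here I would argue on the associated graded induced by the qp-filtration: the quasi-planar filtration of \cref{lemma: quasi-planar} equips both the bar and the cobar construction with exhaustive filtrations whose associated graded is computed by a planar, hence tractable, Koszul-type complex, and one shows that the counit $\widehat{\Omega}_\iota\widehat{\mathrm{B}}_\iota\mathfrak{g} \to \mathfrak{g}$ and the unit $C \to \widehat{\mathrm{B}}_\iota\widehat{\Omega}_\iota C$ induce quasi-isomorphisms on associated graded, concluding by a complete-filtration comparison argument. This last step, namely proving the acyclicity of the bar--cobar resolution, i.e. the homotopical invertibility of the twisting morphism $\iota$ in this curved, absolute, positive-characteristic setting, is the main obstacle, and it is precisely the point where the full strength of the quasi-planar homotopical operadic calculus of \cite{bricevictor} is needed, exactly as its characteristic-zero analogue relies on \cite{lucio2022integration}.
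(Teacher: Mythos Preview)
Your proposal is correct and takes essentially the same approach as the paper: both reduce the theorem to the general transfer machinery of \cite[Sections 6 and 7]{bricevictor} via the quasi-planarity of $\mathrm{B}^{\mathrm{s.a}}\mathcal{E}$ established in \cref{lemma: quasi-planar}. The paper's proof is a one-line citation, whereas you additionally sketch the internal mechanics of that machinery (small object argument, identification of fibrations, filtered acyclicity of the bar--cobar resolution), but the strategy is identical.
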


\begin{proof}
Follows from \cite[Theorems 10 and 11]{bricevictor}, using the quasi-planarity of $\mathrm{B}^{\mathrm{s.a}}\mathcal{E}$. 
\end{proof}

There is a particular kind of weak equivalence between qp-complete curved absolute partition $\mathcal{L}^\pi_\infty$-algebras which admits an easy description. 

\begin{Definition}[Associated graded]  
Let $\mathfrak{g}$ be a curved absolute $\mathcal{L}^\pi_\infty$-algebra. Its \textit{associated graded} of weight $\delta$ is defined as 
\[
\mathrm{gr}_{\delta}(\mathfrak{g}) \coloneqq \mathrm{W}_\delta \mathfrak{g}/\mathrm{W}_{\delta+1} \mathfrak{g}~,
\]
where $\mathrm{W}_\delta \mathfrak{g}$ is the canonical filtration of $\mathfrak{g}$ given by the images of formal power series involving symmetric corked rooted trees of degree greater or equal to $\delta$. 
\end{Definition}

\begin{Remark}
For any $\delta \geq 0$, the associated graded $\mathrm{gr}_{\delta}(\mathfrak{g})$ is a chain complex. Indeed, recall that 
\[
d_\mathfrak{g}^2(-) = l_2^{(12)}(l_0,-) + l_2^{(21)}(l_0,-)~. 
\]
It implies that 
\[
d_\mathfrak{g}^2(\mathrm{W}_\delta \mathfrak{g}) \subseteq \mathrm{W}_{\delta+1} \mathfrak{g}~, 
\]
hence $d_\mathfrak{g}^2$ is zero in the associated graded. 
\end{Remark}

\begin{Definition}[Filtered quasi-isomorphisms] 
Let $f: \mathfrak{g} \longrightarrow \mathfrak{u}$ be a morphism of qp-complete curved absolute partition $\mathcal{L}^\pi_\infty$-algebras. It is a \textit{filtered quasi-isomorphism} if in induces a quasi-isomorphism
\[
\mathrm{gr}_{\delta}(f): \mathrm{gr}_{\delta}\mathfrak{g} \qi \mathrm{gr}_{\delta}\mathfrak{u}
\]

between their respective associated graded complexes, for all $\delta \geq 0$.
\end{Definition}

\begin{Proposition}
Let $f: \mathfrak{g} \longrightarrow \mathfrak{u}$ be a filtered quasi-isomorphism between two qp-complete curved absolute partition $\mathcal{L}^\pi_\infty$-algebras. Then 
\[
\widehat{\mathrm{B}}_\iota(f): \widehat{\mathrm{B}}_\iota(\mathfrak{g}) \qi \widehat{\mathrm{B}}_\iota(\mathfrak{u}) 
\]
is a quasi-isomorphism and $f$ is a weak equivalence of qp-complete curved absolute partition $\mathcal{L}^\pi_\infty$-algebras.
\end{Proposition}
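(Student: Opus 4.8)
The second assertion is an immediate consequence of the first: by \cref{thm: Equivalence the Quillen CC infini et absolues}, the weak-equivalences of qp-complete curved absolute partition $\mathcal{L}^\pi_\infty$-algebras are \emph{by definition} the morphisms $f$ for which $\widehat{\mathrm{B}}_\iota(f)$ is a quasi-isomorphism. So the entire content is to prove that a graded quasi-isomorphism $f \colon \mathfrak{g} \to \mathfrak{h}$ induces a quasi-isomorphism $\widehat{\mathrm{B}}_\iota(f)$. The plan is to run a filtration/spectral sequence comparison argument based on the qp-filtration.

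First I would transport the qp-filtrations $\mathrm{W}_\bullet \mathfrak{g}$ and $\mathrm{W}_\bullet \mathfrak{h}$ to decreasing filtrations on the complete bar constructions $\widehat{\mathrm{B}}_\iota(\mathfrak{g})$ and $\widehat{\mathrm{B}}_\iota(\mathfrak{h})$, by filtering the leaf-decorations according to their qp-weight. Since the elementary operations $l_n^{(\sigma_0,\cdots,\sigma_r)}$ strictly raise the qp-weight and the curvature $l_0$ sits in weight one, the bar differential is filtered for this induced filtration, and $\widehat{\mathrm{B}}_\iota(f)$ is a filtered morphism because $f$ preserves the qp-filtration. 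I would then check that this induced filtration is exhaustive (as $\mathrm{W}_0 = \mathfrak{g}$) and complete Hausdorff; completeness is exactly where the qp-completeness hypothesis on $\mathfrak{g}$ and $\mathfrak{h}$, together with the fact that $\widehat{\mathrm{B}}_\iota$ is a \emph{completed} bar construction, enters. These properties ensure strong convergence of the associated spectral sequences and license the comparison theorem for complete exhaustive filtered complexes from \cite{bricevictor}.

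Next I would identify the associated graded. Because every structural operation strictly raises the qp-weight and the curvature lives in weight one (cf. the Remark following the definition of the associated graded), passing to $\mathrm{gr}_\bullet$ kills the curved and higher-operation part of the bar differential, leaving only the internal pre-differentials of the $\mathrm{gr}_\delta \mathfrak{g}$ together with the cobar differential of $u\mathcal{EE}_\infty$, which does not alter the qp-weight of the decorations. In other words, $\mathrm{gr}_\bullet \widehat{\mathrm{B}}_\iota(\mathfrak{g})$ is canonically the bar construction of the \emph{trivial} (abelian) algebra $\mathrm{gr}_\bullet \mathfrak{g} = \bigoplus_\delta \mathrm{gr}_\delta \mathfrak{g}$, and likewise for $\mathfrak{h}$. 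The hypothesis that $f$ is a graded quasi-isomorphism says exactly that each $\mathrm{gr}_\delta(f) \colon \mathrm{gr}_\delta \mathfrak{g} \qi \mathrm{gr}_\delta \mathfrak{h}$ is a quasi-isomorphism; since we work over a field, a Künneth argument (an internal filtration by the number of tree-vertices) shows that the bar construction of the trivial algebra preserves such quasi-isomorphisms, so $\mathrm{gr}_\bullet \widehat{\mathrm{B}}_\iota(f)$ is a quasi-isomorphism. This is an isomorphism on the first page of the spectral sequence.

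Finally I would invoke the comparison theorem from the second paragraph: a morphism of complete exhaustive filtered complexes that is a quasi-isomorphism on associated graded is itself a quasi-isomorphism. This proves that $\widehat{\mathrm{B}}_\iota(f)$ is a quasi-isomorphism, and by the reduction at the start that $f$ is a weak-equivalence. The main obstacle is the convergence bookkeeping: the induced filtration on the complete bar construction is not bounded, so one must verify with care that it is complete and exhaustive before the comparison theorem can be applied; this is precisely where qp-completeness and the use of the \emph{completed} rather than the naive bar construction are indispensable.
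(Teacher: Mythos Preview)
Your proposal is correct: the filtration/spectral-sequence comparison you outline is precisely the kind of argument that the paper defers to by citing \cite[Section 6]{bricevictor} as its entire proof. The paper gives no details beyond that citation, so your sketch is not a different route but rather a spelled-out version of the referenced result; the only caveat is the convergence bookkeeping you already flag, which is handled in \textit{loc.\ cit.}
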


\begin{proof}
Follows from \cite[Section 6.5]{bricevictor}. 
\end{proof}

\textbf{The non-unital case.} We denote by $\mathcal{EE}_\infty$ the dg operad $\Omega \mathrm{B}\mathcal{E}^{\mathrm{nu}}$. Coalgebras over this operad are non-necessarily conilpotent homotopy cocommutative coalgebras, without a (homotopy) counit. The category of $\mathcal{EE}_\infty$-coalgebras admits a left-transferred model structure from dg modules, analogous to the one in Proposition \ref{prop: left transferred structure} (since $\mathrm{B}\mathcal{E}^{\mathrm{nu}}$ is also quasi-planar). There is a \textit{complete bar-cobar} adjunction 
\[
\begin{tikzcd}[column sep=5pc,row sep=3pc]
            \mathcal{EE}_\infty\text{-}\mathsf{coalg} \arrow[r, shift left=1.1ex, "\widehat{\Omega}^\flat_{\iota}"{name=F}] & \mathsf{abs}~\mathcal{L}^\pi_\infty\text{-}\mathsf{alg}^{\mathsf{qp}\text{-}\mathsf{comp}} ~,  \arrow[l, shift left=.75ex, "\widehat{\mathrm{B}}^\flat_{\iota}"{name=U}]
            \arrow[phantom, from=F, to=U, , "\dashv" rotate=-90]
\end{tikzcd}
\]
between $\mathcal{EE}_\infty$-coalgebras and qp-complete absolute $\mathcal{L}^\pi_\infty$-algebras, which we denote by $\widehat{\Omega}^\flat_{\iota} \dashv \widehat{\mathrm{B}}^\flat_{\iota}$, in order to avoid confusion with the previous complete bar-cobar adjunction. Analogously to Theorem \ref{thm: Equivalence the Quillen CC infini et absolues}, one can transfer the model structure on $\mathcal{EE}_\infty$-coalgebras and obtain a Quillen equivalence. Furthermore, filtered quasi-isomorphisms are again particular examples of weak equivalences in the transferred structure again. 

\begin{Proposition}\label{Prop: curved non curved comparison}
The adjunction 
\[
\begin{tikzcd}[column sep=7pc,row sep=3pc]
\mathsf{curv}~\mathsf{abs}~\mathcal{L}_\infty^\pi\text{-}\mathsf{alg}^{\mathsf{qp}\text{-}\mathsf{comp}} \arrow[r, shift left=1.1ex, "(-)_*"{name=F}]      
&\mathsf{abs}~\mathcal{L}_\infty^\pi\text{-}\mathsf{alg}^{\mathsf{qp}\text{-}\mathsf{comp}}~, \arrow[l, shift left=.75ex, "\mathrm{U}"{name=U}]
\arrow[phantom, from=F, to=U, , "\dashv" rotate=-90]
\end{tikzcd}
\]
between qp-complete curved absolute $\mathcal{L}^\pi_\infty$-algebras and qp-complete absolute $\mathcal{L}^\pi_\infty$-algebras is a Quillen adjunction. Furthermore, the functor $\mathrm{U}$ is homotopically fully faithful. 
\end{Proposition}

\begin{proof}
The proof is completely analogous to \cite[Proposition 1.37]{lucio2022integration}.
\end{proof}

Absolute partition $\mathcal{L}_\infty$-algebras can also be homotopically compared to partition $\mathcal{L}_\infty$-algebras. 

\begin{Proposition}\label{Prop: adjunction entre absolues et pas absolues}
The adjunction
\[
\begin{tikzcd}[column sep=5pc,row sep=3pc]
            \mathcal{L}^\pi_\infty\text{-}\mathsf{alg}^{\mathsf{qp}\text{-}\mathsf{comp}} \arrow[r, shift left=1.1ex, "\mathrm{Ab}" {name=F}] &~\mathsf{abs}~\mathcal{L}^\pi_\infty\text{-}\mathsf{alg} \arrow[l, shift left=.75ex, "\mathrm{Res}"{name=U}]
            \arrow[phantom, from=F, to=U, , "\dashv" rotate=-90]
\end{tikzcd}
\]
between qp-complete absolute partition $\mathcal{L}_\infty$-algebras and partition $\mathcal{L}_\infty$-algebras is a Quillen adjunction.
\end{Proposition}

\begin{proof}
The restriction functor preserves fibrations, which are degree-wise surjections in both cases, and weak equivalences, since any weak equivalence in this transferred model structure on absolute partition $\mathcal{L}_\infty$-algebras is in particular a quasi-isomorphism. We refer to \cite[Corollary 11]{bricevictor} for more details on this last point. 
\end{proof}

\begin{Remark}[About the underlying $\infty$-categories]\label{Rmk: underlying infinit cats}
The $\infty$-category obtained by localizing $\mathcal{EE}_\infty$-coalgebras at quasi-isomorphisms should be equivalent to the $\infty$-category of non-counital (equivalently, coaugmented) $\mathbb{E}_\infty$-coalgebras in $\mathsf{Mod}_\kk$, the $\infty$-category of $\kk$-modules. An explicit model for $\mathsf{Mod}_\kk$ is obtained by localizing dg $\kk$-modules at quasi-isomorphisms. See \cite[Section 3.1]{ellipticI} for a precise definition of $\mathbb{E}_\infty$-coalgebras. Proving this statement shall be the subject of future work. 

\medskip

Assuming this rectification result, it is clear that since absolute partition $\mathcal{L}_\infty$-algebras, with the transferred model structure, are Quillen equivalent to $\mathcal{EE}_\infty$-coalgebras, then their underlying $\infty$-category is again that of coaugmented $\mathbb{E}_\infty$-coalgebras in $\mathsf{Mod}_\kk$. However, absolute partition $\mathcal{L}_\infty$-algebras admit a right Bousfield localization at quasi-isomorphism, see \cite[Section 7.5]{bricevictor}. This gives the two following adjunctions at the level of $\infty$-categories:

\[
\begin{tikzcd}[column sep=3pc,row sep=3pc]
          \mathsf{abs}~\mathcal{L}_\infty^\pi\text{-}\mathsf{alg}^{\mathsf{qp}\text{-}\mathsf{comp}}~[\mathrm{Q.iso}^{-1}] \arrow[r, shift left=1.1ex, "\mathrm{Id}"{name=F}]           
         &\mathsf{abs}~\mathcal{L}_\infty^\pi\text{-}\mathsf{alg}^{\mathsf{qp}\text{-}\mathsf{comp}}~[\mathrm{W.eq}^{-1}] \arrow[l, shift left=.75ex, "\mathrm{Id}"{name=U}] \arrow[r, shift left=1.1ex, "\widehat{\mathrm{B}}^\flat_{\iota}"{name=A}]
         &\mathcal{EE}_\infty\text{-}\mathsf{cog}~[\mathrm{Q.iso}^{-1}]~. \arrow[l, shift left=.75ex, "\widehat{\Omega}^\flat_{\iota}"{name=B}] \arrow[phantom, from=F, to=U, , "\dashv" rotate=-90]\arrow[phantom, from=A, to=B, , "\dashv" rotate=90]
\end{tikzcd}
\]

Notice that in the right hand side adjunction, the functors are both left and right adjoints since it is an equivalence of $\infty$-categories. The composite adjunction should be a model for a dual version of the enhanced bar-cobar adjunction of Francis--Gaitsgory in \cite[Section 3.3]{FrancisGaitsgory}; this dual version should be obtained by factorizing topological André--Quillen cohomology of coaugmented $\mathbb{E}_\infty$-coalgebras through algebras over a monad at the $\infty$-categorical level. 
\end{Remark}

\subsection{Monoidal structures and convolution algebras}\label{subsection: convolution}
The category of $u\mathcal{EE}_\infty$-coalgebras is admits a monoidal structure, where the monoidal product is given by the underlying tensor product of dg modules. This monoidal structure is \textit{biclosed} and compatible with the model structure.
However, it is not stricly symmetric, only symmetric up to homotopy.  These constructions are based on the results of \cite{grignou2022mappingII, grignou2022mapping}. See also \cite[Section 1.4]{lucio2022integration} for an explanation of similar constructions. 

\begin{Proposition}
The category of $u\mathcal{EE}_\infty$-coalgebras forms a monoidal model category, where the monoidal product is given by tensor product.
\end{Proposition}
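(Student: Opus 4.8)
The goal is to show that the category of $u\mathcal{EE}_\infty$-coalgebras, equipped with the left-transferred model structure of \cref{prop: left transferred structure} and the tensor product of the underlying dg modules as monoidal product, forms a monoidal model category. Since the monoidal unit and the monoidal product are both inherited from the underlying category of dg modules along the forgetful functor, the plan is to reduce every axiom to the corresponding well-known statement for the projective monoidal model structure on $\dgmod$, and to verify that the extra coalgebraic data is preserved.

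\medskip

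First I would check that the tensor product of two $u\mathcal{EE}_\infty$-coalgebras is again a $u\mathcal{EE}_\infty$-coalgebra, i.e. that the monoidal structure is genuinely defined on the category. This follows from the fact that $u\mathcal{EE}_\infty = \Omega \mathrm{B}^{\mathrm{s.a}}\mathcal{E}$ is a model for unital $\mathbb{E}_\infty$-coalgebras together with the Hopf-type (bialgebra) compatibility of $\mathcal{E}$ with its own tensor product: the Barratt--Eccles operad is a Hopf operad, so the tensor product of two coalgebras over it carries a canonical codiagonal, and this passes to the cobar construction since $\iota$ is compatible with the comonoidal structure. The underlying object of the tensor product being the tensor product of dg modules is immediate by construction, and the unit is $\kk$ with its canonical (trivial) coalgebra structure.

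\medskip

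The core of the proof is the verification of the two monoidal model axioms. For the \textbf{pushout--product axiom}, let $f$ and $g$ be cofibrations; by \cref{prop: left transferred structure} cofibrations are exactly the degree-wise monomorphisms, and the forgetful functor $U$ to $\dgmod$ preserves and detects both monomorphisms and quasi-isomorphisms, as well as colimits. Hence the pushout--product $f \,\square\, g$ computed in $u\mathcal{EE}_\infty$-coalgebras is sent by $U$ to the pushout--product computed in $\dgmod$, and one invokes the fact that $\dgmod$ with the tensor product and the monomorphism model structure satisfies the pushout--product axiom. Because weak equivalences and cofibrations are both created by $U$, this immediately yields the axiom upstairs. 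For the \textbf{unit axiom} there is nothing to prove, since the unit $\kk$ is already cofibrant (every object is cofibrant, as all monomorphisms are cofibrations and $0 \hookrightarrow X$ is a monomorphism).

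\medskip

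The step I expect to be the main obstacle is the very first one: being \emph{closed} and being compatible as a \emph{monoidal model category} both rely on the internal-hom/enrichment, and constructing the internal hom of $u\mathcal{EE}_\infty$-coalgebras is not formal, since the category of coalgebras over a cooperad is not simply a category of comodules. Here I would lean on the external input cited in the excerpt, namely the mapping-coalgebra machinery of \cite{grignou2022mapping, grignou2022mappingII} and its positive-characteristic incarnation in \cite{mappingcoalgebrascharp}: this is precisely what guarantees that the tensor product is closed and that the closed structure is homotopically well-behaved. Thus, while the pushout--product and unit axioms reduce cleanly to $\dgmod$ via the forgetful functor, the existence and compatibility of the internal hom — which is what upgrades the bare pushout--product statement to the full statement that we have a \emph{monoidal model category} in the closed sense asserted in the surrounding discussion — is the genuinely nontrivial ingredient, and is imported from that forthcoming work rather than reproved here.
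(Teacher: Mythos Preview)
Your reduction of the pushout--product and unit axioms to the underlying category of dg modules is correct and matches what the paper does in one sentence (``the tensor product trivially satisfies the compatibility conditions of \cite[Chapter 4]{HoveyModel}''). The forgetful functor from coalgebras over an operad to dg modules is a left adjoint, so it creates colimits, cofibrations and weak equivalences; hence your argument for these axioms goes through.

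The genuine content of the proposition, however, is the existence of the monoidal structure itself, i.e.\ a Hopf operad structure on $u\mathcal{EE}_\infty = \Omega \mathrm{B}^{\mathrm{s.a}}\mathcal{E}$, and here your argument has a gap. Saying that ``$\mathcal{E}$ is a Hopf operad and this passes to the cobar construction since $\iota$ is compatible with the comonoidal structure'' is not a proof: the bar and cobar constructions do not formally commute with the arity-wise (Hadamard) tensor product, so a diagonal on $\mathcal{E}$ does not automatically induce one on $\Omega \mathrm{B}^{\mathrm{s.a}}\mathcal{E}$. The paper instead uses a specific identification: $\Omega \mathrm{B}^{\mathrm{s.a}}\mathcal{E}$ is isomorphic to the cellular chains of the Boardman--Vogt $W$-construction of the \emph{simplicial} Barratt--Eccles operad. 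Because this $W$-construction is performed at the level of simplicial operads, it carries a diagonal coming from the diagonal of simplicial sets, and applying cellular chains gives the Hopf operad structure. This is the key step you are missing.

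Finally, your last paragraph is off target. The proposition asserts only that the category is a \emph{monoidal} model category; the closed structure and the internal hom $\{-,-\}$ are treated separately after the proposition, with the existence of the internal hom imported from \cite{grignou2022mapping}. So there is no need to worry about the closed structure in this proof, and it is not ``the main obstacle'' here.
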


\begin{proof}
The dg operad $\Omega \mathrm{B}^{\mathrm{s.a}}\mathcal{E}$ is isomorphic to the cellular chains of the Boardman-Vogt construction of the simplicial Barratt-Eccles operad. This implies that it inherits a Hopf structure, since the Barratt-Eccles operad is a Hopf operad. Finally, the tensor product trivially satisfies the compatibility conditions of \cite[Chapter 4]{HoveyModel}, that is, the pushout-product and the unit axioms.
\end{proof}

\begin{Proposition}[{\cite[Proposition 16]{grignou2022mappingII}}]\label{Prop: mapping coalgebras}
The category of dg $u\mathcal{EE}_\infty$-coalgebras is a biclosed monoidal category, meaning that there exists a left (resp. right) internal hom bifunctor 

\[
\{-,-\}_{L/R}: \left(u\mathcal{EE}_\infty\text{-}\mathsf{coalg}\right)^{\mathsf{op}} \times u\mathcal{EE}_\infty\text{-}\mathsf{coalg} \longrightarrow u\mathcal{EE}_\infty\text{-}\mathsf{coalg}
\]
\vspace{0.1pc}

and, for any triple of $u\mathcal{EE}_\infty$-coalgebras $C,D,E$, there exists isomorphisms

\[
\mathrm{Hom}_{u\mathcal{EE}_\infty\text{-}\mathsf{coalg}}(C \otimes D, E) \cong \mathrm{Hom}_{u\mathcal{EE}_\infty\text{-}\mathsf{coalg}}(C, \{D,E\}_L)~,
\] 
\[
\mathrm{Hom}_{u\mathcal{EE}_\infty\text{-}\mathsf{coalg}}(C \otimes D, E) \cong \mathrm{Hom}_{u\mathcal{EE}_\infty\text{-}\mathsf{coalg}}(D, \{C,E\}_R)~,
\]
\vspace{0.1pc}

which are natural in $C,D$ and $E$.
\end{Proposition}

\begin{Remark}
The existence of this internal hom functors can also be directly deduced from the adjoint functor theorem. 
\end{Remark}

\textbf{On the non-symmetry of the tensor product.} The Hopf structure on the Barratt-Eccles dg operad is not symmetric, however it is well known that it is symmetric up to coherent homotopies, since it is induced by the Alexander--Whitney map. 
In fact, there are two Hopf structures on $\Omega \mathrm{B}^{\mathrm{s.a}}\mathcal{E}$, each given by the choice of a cellular approximation of the diagonal map of the interval. As in \cite[Remark 1.47]{lucio2022integration}, for any choice of Hopf structure for $\Omega \mathrm{B}^{\mathrm{s.a}}\mathcal{E}$, we will have that 
\[
C \otimes D \simeq D \otimes C~,
\]
for any two $u\mathcal{EE}_\infty$-coalgebras $C$ and $D$. These monoidal structure are both "symmetric up to homotopy", a notion which, as far as we know, has no $1$-categorical definition. Whatever this might mean, it should entail that the induced monoidal structure on the underlying $\infty$-category is symmetric. From now on, we will not specify which choice of cellular approximation of the diagonal of the interval we make, nor will we distinguish between left and right internal hom bifunctors, as they are also homotopy equivalent. 

\medskip

\textbf{Convolution structures.} Given a $u\mathcal{EE}_\infty$-coalgebra $C$ and a curved absolute partition $\mathcal{L}^\pi_\infty$-algebra $\mathfrak{g}$, there is a \textit{convolution curved absolute} $\mathcal{L}^\pi_\infty$-\textit{algebra} structure on the pdg module of graded morphisms $\mathrm{hom}(C,\mathfrak{g})$. Its structure map admits an analogous description to \cite[Definition 1.48]{lucio2022integration}. The key point is that the internal hom and the convolution algebra are compatible in the following sense.

\begin{theorem}[{\cite{grignou2022mapping}}]\label{thm: compatibilité des mappings et des convolutions}
Let $C$ be a $u\mathcal{EE}_\infty$-coalgebra and let $\mathfrak{g}$ be a curved absolute $\mathcal{L}^\pi_\infty$-algebra. There is a natural isomorphism of $u\mathcal{EE}_\infty$-coalgebras

\[
\left\{C, \widehat{\mathrm{B}}_\iota(\mathfrak{g}) \right\} \cong \widehat{\mathrm{B}}_\iota \left(\mathrm{hom}(C,\mathfrak{g}) \right)~,
\]
\vspace{0.1pc}

where $\mathrm{hom}(C,\mathfrak{g})$ denotes the convolution curved absolute $\mathcal{L}^\pi_\infty$-algebra of $C$ and $\mathfrak{g}$.
\end{theorem}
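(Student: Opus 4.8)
The plan is to establish the natural isomorphism
\[
\left\{C, \widehat{\mathrm{B}}_\iota(\mathfrak{g}) \right\} \cong \widehat{\mathrm{B}}_\iota \left(\mathrm{hom}(C,\mathfrak{g}) \right)
\]
by comparing both sides through the universal property of the internal hom together with the adjunction defining $\widehat{\mathrm{B}}_\iota$. The cleanest route is to show that both $u\mathcal{EE}_\infty$-coalgebras corepresent the same functor, i.e.\ to produce, for an arbitrary $u\mathcal{EE}_\infty$-coalgebra $D$, a chain of natural isomorphisms between $\mathrm{Hom}(D, \{C, \widehat{\mathrm{B}}_\iota(\mathfrak{g})\})$ and $\mathrm{Hom}(D, \widehat{\mathrm{B}}_\iota(\mathrm{hom}(C,\mathfrak{g})))$, and then invoke the Yoneda lemma.

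First I would unfold the left-hand side using the tensor-hom adjunction recalled just above: $\mathrm{Hom}(D, \{C, \widehat{\mathrm{B}}_\iota(\mathfrak{g})\}) \cong \mathrm{Hom}(D \otimes C, \widehat{\mathrm{B}}_\iota(\mathfrak{g}))$, where $D \otimes C$ is the monoidal product of $u\mathcal{EE}_\infty$-coalgebras. Next I would unfold the complete bar-cobar adjunction $\widehat{\Omega}_\iota \dashv \widehat{\mathrm{B}}_\iota$ from \cref{thm: Equivalence the Quillen CC infini et absolues}, rewriting this as a set of morphisms of qp-complete curved absolute $\mathcal{L}^\pi_\infty$-algebras $\mathrm{Hom}_{\mathcal{L}^\pi_\infty\text{-}\mathsf{alg}}(\widehat{\Omega}_\iota(D \otimes C), \mathfrak{g})$. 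In parallel, I would unfold the right-hand side, first by the bar-cobar adjunction, obtaining $\mathrm{Hom}_{\mathcal{L}^\pi_\infty\text{-}\mathsf{alg}}(\widehat{\Omega}_\iota(D), \mathrm{hom}(C,\mathfrak{g}))$, and then using the defining universal property of the convolution algebra $\mathrm{hom}(C,\mathfrak{g})$ — namely that morphisms of curved absolute $\mathcal{L}^\pi_\infty$-algebras into it should correspond to curved twisting-morphism-type data out of $\widehat{\Omega}_\iota(D) \otimes C$, or equivalently to morphisms $\widehat{\Omega}_\iota(D) \otimes C \to \mathfrak{g}$ compatible with the structure. The two sides then meet provided one identifies $\widehat{\Omega}_\iota(D \otimes C)$ with an appropriate construction built from $\widehat{\Omega}_\iota(D)$ and $C$, which is exactly where the Hopf (hence comonoidal) compatibility of $u\mathcal{EE}_\infty$ established in the preceding proposition enters.

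The main obstacle will be the bookkeeping of the curved and complete structures: one must verify that the completed tensor products and quasi-planar filtrations on both sides match up, and in particular that the convolution curvature and pre-differential on $\mathrm{hom}(C,\mathfrak{g})$ correspond under the isomorphism to the curvature of $\{C, \widehat{\mathrm{B}}_\iota(\mathfrak{g})\}$. Since the statement is attributed to \cite{grignou2022mapping}, I would lean on the general mapping-coalgebra formalism developed there: the compatibility of the internal hom with convolution algebras is proven in that setting for a Koszul-type twisting morphism, and the present case is an instance once we know $\iota: \mathrm{B}^{\mathrm{s.a}}\mathcal{E} \to u\mathcal{EE}_\infty$ is a suitable curved twisting morphism and $u\mathcal{EE}_\infty$ carries the Hopf structure. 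Concretely I would check that the hypotheses of the relevant theorem of \cite{grignou2022mapping} are met in positive characteristic — biclosedness of the monoidal structure, the Hopf structure on $u\mathcal{EE}_\infty$, and qp-completeness — all of which are furnished by the earlier results in this subsection, and then cite the general result. The genuinely delicate verification, which I would either spell out or defer to \cite{mappingcoalgebrascharp}, is that passing to the \emph{completed} and \emph{curved} versions does not break the naturality of the correspondence.
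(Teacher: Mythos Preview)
The paper gives no proof of this theorem at all: it is stated with attribution to \cite{grignou2022mapping} and nothing further. Your proposal therefore goes beyond what the paper does; your Yoneda-lemma strategy via the chain of adjunctions (tensor--hom, then complete bar--cobar, then the defining property of the convolution algebra) is the natural and correct approach, and is indeed how such compatibility results are typically established in the mapping-coalgebra formalism. Your caveat that the delicate part lies in tracking the curved and completed structures is accurate, and your conclusion that one should ultimately defer to the general result in \cite{grignou2022mapping} (or its positive-characteristic elaboration in \cite{mappingcoalgebrascharp}) is exactly what the paper itself does.
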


\section{The integration theory}
In this section, we develop the integration theory of (curved) absolute $\mathcal{L}^\pi_\infty$-algebras. We show that the integration functor is well-behaved in an appropriate sense by purely model categorical arguments. Then, we give a combinatorial and algebraic description of this integration functor. We describe the homotopy groups of the resulting simplicial set in terms of the initial algebra. We give explicit combinatorial formulas for the horn-fillers inside this simplicial set: in the simplest case, this formula can be thought as an analogue of the Baker--Campbell--Hausdorff formula for (curved) absolute $\mathcal{L}^\pi_\infty$-algebras. Finally, we prove some comparison results with other possible models and when $\kk$ happens to be a field of characteristic zero, with other known constructions.  

\subsection{Cellular chain functor}\label{subsection: dupont contraction}
We consider the cellular chains functor $C^c(-)$, endowed with the $\mathcal{E}$-coalgebra structure constructed in \cite{BergerFresse04}. In this section, we consider the category of simplicial sets endowed with the Kan--Quillen model structure. 

\begin{Remark}
This coalgebra structure is encoded by the notion of a coalgebra over the Barrett--Eccles \textit{operad}, since it is not in general conilpotent. Coalgebras over cooperads naturally encode \textit{conilpotent} types of coalgebras.
\end{Remark}

\begin{theorem}[\cite{BergerFresse04}]
There is a Quillen adjunction 

\[
\begin{tikzcd}[column sep=7pc,row sep=3pc]
            \mathsf{sSet} \arrow[r, shift left=1.1ex, "C^c(-)"{name=F}] 
            &\mathcal{E}\text{-}\mathsf{coalg}~, \arrow[l, shift left=.75ex, "\overline{\mathrm{R}}"{name=U}]
            \arrow[phantom, from=F, to=U, , "\dashv" rotate=-90]
\end{tikzcd}
\]

where $C^c(-)$ denotes the cellular chain functor endowed with a functorial $\mathcal{E}$-coalgebra structure and where the right adjoint is given by 
\[
\overline{\mathrm{R}}(C)_\bullet \coloneqq \mathrm{Hom}_{\mathcal{E}_\infty\text{-}\mathsf{cog}}(C^c(\Delta^\bullet),C)~.
\]
\end{theorem}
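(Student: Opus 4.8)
The plan is to exhibit the adjunction as a nerve-realization (or Kan extension) adjunction arising from a cosimplicial object in the category of $\mathcal{E}$-coalgebras, namely $[n] \mapsto C^c(\Delta^n)$, and then to verify that it is a Quillen adjunction for the relevant model structures. First I would observe that the cellular chain functor $C^c(-)$ is, by construction, the left Kan extension along the Yoneda embedding $\Delta \hookrightarrow \mathsf{sSet}$ of the cosimplicial $\mathcal{E}$-coalgebra $C^c(\Delta^\bullet)$, whose $\mathcal{E}$-coalgebra structure is supplied by the explicit formulae of \cite{BergerFresse04}. Concretely, for a simplicial set $X = \colim_{\Delta^n \to X} \Delta^n$ one sets $C^c(X) \coloneqq \colim_{\Delta^n \to X} C^c(\Delta^n)$, where the colimit is computed in $\mathcal{E}\text{-}\mathsf{coalg}$; since the forgetful functor to dg modules preserves colimits and $C^c$ computes ordinary cellular chains on underlying modules, this is well-defined and agrees with the usual normalized chains functor. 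The abstract nerve-realization yoga then automatically produces the right adjoint as the singular-type functor $\overline{\mathrm{R}}(C)_\bullet \coloneqq \mathrm{Hom}_{\mathcal{E}\text{-}\mathsf{coalg}}(C^c(\Delta^\bullet), C)$, together with the adjunction isomorphism
\[
\mathrm{Hom}_{\mathcal{E}\text{-}\mathsf{coalg}}(C^c(X), C) \cong \mathrm{Hom}_{\mathsf{sSet}}\big(X, \overline{\mathrm{R}}(C)\big)~,
\]
which is natural in $X$ and $C$; this is formal once $C^c(-)$ is known to be colimit-preserving and $\mathcal{E}\text{-}\mathsf{coalg}$ is cocomplete.

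Having the adjunction, the remaining task is to check it is a Quillen adjunction, and I would do this by verifying that the right adjoint $\overline{\mathrm{R}}$ preserves fibrations and acyclic fibrations, equivalently that the left adjoint $C^c(-)$ is left Quillen. The source carries the Kan-Quillen model structure and the target the model structure on $\mathcal{E}$-coalgebras whose weak equivalences are quasi-isomorphisms and whose cofibrations are the degree-wise monomorphisms, as described earlier in the excerpt. The natural route is to check that $C^c(-)$ sends cofibrations of simplicial sets to degree-wise monomorphisms of $\mathcal{E}$-coalgebras, and sends acyclic cofibrations to acyclic cofibrations. A monomorphism $X \hookrightarrow Y$ of simplicial sets induces an injection on normalized chains, so $C^c(X) \to C^c(Y)$ is a degree-wise monomorphism, giving the first half. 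For the acyclic part, it suffices by Reedy/skeletal induction to treat the generating acyclic cofibrations, the horn inclusions $\Lambda^n_k \hookrightarrow \Delta^n$; on cellular chains these are standard quasi-isomorphisms since both sides are contractible, and their cofibers are acyclic, so $C^c(-)$ sends these generators to acyclic cofibrations.

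The main obstacle, and the step deserving the most care, is the acyclic-cofibration condition: one must confirm that $C^c(-)$ preserves weak equivalences between cofibrant objects (or sends generating acyclic cofibrations to weak equivalences) in the category of $\mathcal{E}$-coalgebras, where weak equivalences are tested on underlying complexes but the cofibrancy and lifting structure live at the coalgebra level. The subtlety is that $C^c(\Delta^\bullet)$ is \emph{not} conilpotent, as flagged in the preceding remark, so the $\mathcal{E}$-coalgebra structure is genuinely non-cooperadic and one cannot naively reduce to a cofree-coalgebra computation; instead I would lean on the left-transferred model structure of the earlier proposition, for which cofibrations are exactly degree-wise monomorphisms and fibrations are characterized by lifting, so that checking the left adjoint preserves cofibrations and acyclic cofibrations reduces to an underlying-complex statement plus monomorphicity. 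Concretely, since weak equivalences and (via transfer) the relevant structure are detected on underlying dg modules, and since $C^c(-)$ on underlying modules is the normalized chains functor—which is well known to be left Quillen from the Kan-Quillen structure to dg modules—the coalgebra enhancement only adds the monomorphism check, which we have already handled. Thus the crux is to cite the left-transferred model structure carefully and to verify that the forgetful functor $U$ detects the classes in play, after which the Quillen property of $C^c(-) \dashv \overline{\mathrm{R}}$ follows from that of the underlying normalized-chains adjunction.
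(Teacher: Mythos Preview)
Your overall strategy---obtain the adjunction by nerve--realization from the cosimplicial $\mathcal{E}$-coalgebra $C^c(\Delta^\bullet)$, then check that $C^c(-)$ is left Quillen by verifying it sends (acyclic) cofibrations to (acyclic) cofibrations, reducing via left transfer to an underlying dg-module statement---is correct and matches the paper's (much terser) argument. The paper simply records that the key input is the Berger--Fresse $\mathcal{E}$-coalgebra structure on $C^c(-)$, and that once a left-transferred model structure on $\mathcal{E}$-coalgebras is in place the left Quillen property is ``immediate''; your proposal is a careful unpacking of exactly that.

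There is, however, one genuine gap. You repeatedly invoke ``the left-transferred model structure of the earlier proposition'', but that proposition concerns $u\mathcal{EE}_\infty$-coalgebras, i.e.\ coalgebras over $\Omega\mathrm{B}^{\mathrm{s.a}}\mathcal{E}$, and its proof uses that this operad is cofibrant. The operad $\mathcal{E}$ itself is \emph{not} cofibrant (the paper stresses this in a later remark), so the earlier proposition does not apply verbatim and the existence of a left-transferred model structure on $\mathcal{E}\text{-}\mathsf{coalg}$ is not something you can simply cite. The paper's proof handles this explicitly: it says that ``using the cylinder object argument'' one can show $\mathcal{E}$-coalgebras admit an analogous left-transferred model structure, and only then concludes. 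Your argument should either supply that cylinder-object verification for $\mathcal{E}$-coalgebras or at least flag that the model structure on the target is something to be established here, not imported from the $u\mathcal{EE}_\infty$ case.
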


\begin{proof}
The key point is constructing the functorial $\mathcal{E}$-coalgebra structure on $C^c(-)$, which is done in \cite{BergerFresse04}. The operad $\mathcal{E}$ is \textit{coadmissible}, meaning dg $\mathcal{E}$-coalgebras admit a model structure left-transferred from dg modules, see \cite[Proposition 29]{bricevictor}, originally stated in \cite{BergerMoerdijk}. It is immediate to check that this adjunction is a Quillen adjunction. 
\end{proof}

The counit morphism $\epsilon: \Omega \mathrm{B}^{\mathrm{s.a}}\mathcal{E} \qi \mathcal{E}$ induces a Quillen adjunction:

\[
\begin{tikzcd}[column sep=7pc,row sep=3pc]
            \mathcal{E}\text{-}\mathsf{coalg} \arrow[r, shift left=1.1ex, "\mathrm{Res}_{\epsilon}"{name=F}] 
            &u\mathcal{EE}_\infty\text{-}\mathsf{coalg}~. \arrow[l, shift left=.75ex, "\mathrm{Coind}_{\epsilon}"{name=U}]
            \arrow[phantom, from=F, to=U, , "\dashv" rotate=-90]
\end{tikzcd}
\]

By composing both Quillen adjunctions, we get the following Quillen adjunction: 

\[
\begin{tikzcd}[column sep=8pc,row sep=3pc]
            \mathsf{sSet} \arrow[r, shift left=1.1ex, "\mathrm{Res}_{\epsilon} ~\circ ~C^c(-)"{name=F}] 
            &u\mathcal{EE}_\infty\text{-}\mathsf{coalg}~, \arrow[l, shift left=.75ex, "\overline{\mathcal{R}}"{name=U}]
            \arrow[phantom, from=F, to=U, , "\dashv" rotate=-90]
\end{tikzcd}
\]

The right adjoint is given, for a $u\mathcal{EE}_\infty$-coalgebra $C$, by 

\[
\overline{\mathcal{R}}(C)_\bullet \coloneqq \mathrm{Hom}_{u\mathcal{EE}_\infty\text{-}\mathsf{cog}}\left(\mathrm{Res}_{\epsilon}C^c(\Delta^\bullet),C\right) \cong \mathrm{Hom}_{\mathcal{E}\text{-}\mathsf{cog}}\left(C^c(\Delta^\bullet),\mathrm{Coind}_{\epsilon}C \right)~.
\]

\begin{Remark}
We embed the cellular chains functor $C^c(-)$ inside the bigger category of $u\mathcal{EE}_\infty$-coalgebras since their homotopy theory is better behaved than the homotopy theory of $\mathcal{E}$-coalgebras. It is not known to us if the quasi-isomorphism $\epsilon: \Omega \mathrm{B}^{\mathrm{s.a}}\mathcal{E} \qi \mathcal{E}$ induces a Quillen equivalence, since $\mathcal{E}$ is not cofibrant as a dg operad. 
\end{Remark}

\begin{Notation}
Since the inclusion functor $\mathrm{Res}_{\epsilon}$ do not change the underlying dg module nor the coalgebraic structure of the cellular chain functor $C^c(-)$, we will denote the composition $\mathrm{Res}_{\epsilon}C^c(-)$ simply by $C^c(-)$ from now on.
\end{Notation}

\begin{theorem}\label{thm: triangle commutatif}
There is a commuting triangle 

\[
\begin{tikzcd}[column sep=5pc,row sep=2.5pc]
&\hspace{1pc}u\mathcal{EE}_\infty\text{-}\mathsf{coalg} \arrow[dd, shift left=1.1ex, "\widehat{\Omega}_{\iota}"{name=F}] \arrow[ld, shift left=.75ex, "\overline{\mathcal{R}}"{name=C}]\\
\mathsf{sSet}  \arrow[ru, shift left=1.5ex, "C^c_{*}(-)"{name=A}]  \arrow[rd, shift left=1ex, "\mathcal{L}"{name=B}] \arrow[phantom, from=A, to=C, , "\dashv" rotate=-70]
& \\
&\hspace{3pc}\mathsf{curv}~\mathsf{abs}~\mathcal{L}^\pi_\infty\textsf{-}\mathsf{alg}^{\mathsf{qp}\text{-}\mathsf{comp}} ~, \arrow[uu, shift left=.75ex, "\widehat{\mathrm{B}}_{\iota}"{name=U}] \arrow[lu, shift left=.75ex, "\mathcal{R}"{name=D}] \arrow[phantom, from=B, to=D, , "\dashv" rotate=-110] \arrow[phantom, from=F, to=U, , "\dashv" rotate=-180]
\end{tikzcd}
\]

made of Quillen adjunctions, where $\mathcal{L}$ is given by the composition of the left adjoint functors and $\mathcal{R}$ by the composition of the right adjoint functors.
\end{theorem}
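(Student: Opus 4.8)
The plan is to assemble the triangle from pieces that have already been constructed, then verify the single commutativity condition that makes the diagram coherent. The two legs of the triangle are already Quillen adjunctions by what precedes: the upper adjunction $C^c_*(-) \dashv \overline{\mathcal{R}}$ between $\mathsf{sSet}$ and $u\mathcal{EE}_\infty\text{-}\mathsf{coalg}$ is the composite Quillen adjunction constructed just above (with $\mathrm{Res}_\epsilon$ suppressed from the notation), and the vertical adjunction $\widehat{\Omega}_\iota \dashv \widehat{\mathrm{B}}_\iota$ between $u\mathcal{EE}_\infty\text{-}\mathsf{coalg}$ and $\mathsf{curv}~\mathsf{abs}~\mathcal{L}^\pi_\infty\text{-}\mathsf{alg}^{\mathsf{qp}\text{-}\mathsf{comp}}$ is the Quillen equivalence of \cref{thm: Equivalence the Quillen CC infini et absolues}. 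So the two remaining tasks are: first, to define the diagonal functors $\mathcal{L}$ and $\mathcal{R}$ and show they form a Quillen adjunction $\mathcal{L} \dashv \mathcal{R}$; second, to prove the triangle commutes.

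For the first task, I would simply set $\mathcal{L} \coloneqq \widehat{\Omega}_\iota \circ C^c_*(-)$ and $\mathcal{R} \coloneqq \overline{\mathcal{R}} \circ \widehat{\mathrm{B}}_\iota$. That $\mathcal{L}$ is left adjoint to $\mathcal{R}$ is then formal: a composite of left adjoints is left adjoint to the corresponding composite of right adjoints, so $\mathcal{L} \dashv \mathcal{R}$ follows immediately from the two existing adjunctions. That this composite adjunction is Quillen is equally formal, since a composite of left (respectively right) Quillen functors is again left (respectively right) Quillen, and both constituent adjunctions are already Quillen by the results cited above.

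For the second task — commutativity — the content is the identity $\overline{\mathcal{R}} \cong \mathcal{R} \circ \widehat{\Omega}_\iota$ up to natural isomorphism (equivalently, that the upper-right composite of functors agrees with the functor along the hypotenuse). Unwinding the definitions, $\mathcal{R} \circ \widehat{\Omega}_\iota = \overline{\mathcal{R}} \circ \widehat{\mathrm{B}}_\iota \circ \widehat{\Omega}_\iota$, so what must be supplied is a natural transformation identifying this with $\overline{\mathcal{R}}$. The natural candidate is $\overline{\mathcal{R}}$ applied to the unit $C \to \widehat{\mathrm{B}}_\iota\widehat{\Omega}_\iota(C)$ of the bar–cobar adjunction; since \cref{thm: Equivalence the Quillen CC infini et absolues} asserts this adjunction is a Quillen \emph{equivalence}, this unit is a weak equivalence on cofibrant objects, and applying the right Quillen functor $\overline{\mathcal{R}}$ yields the desired coherence at the level of homotopy categories. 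I expect this compatibility at the level of derived functors to be the main — and essentially the only — substantive point, the rest being the formal bookkeeping of composing adjunctions. The one subtlety to watch is that commutativity of a diagram of Quillen adjunctions is naturally a statement about the induced adjunctions on homotopy categories rather than a strict equality of point-set functors; I would make precise at which level the triangle is asserted to commute (total derived functors), so that the weak-equivalence of the bar–cobar unit suffices rather than a strict isomorphism.
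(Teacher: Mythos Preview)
Your first task is exactly the paper's argument: one \emph{defines} $\mathcal{L} \coloneqq \widehat{\Omega}_\iota \circ C^c_*$ and $\mathcal{R} \coloneqq \overline{\mathcal{R}} \circ \widehat{\mathrm{B}}_\iota$, and then $\mathcal{L} \dashv \mathcal{R}$ is Quillen because composites of Quillen adjunctions are Quillen. The paper's proof is one line for precisely this reason.

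Your second task, however, is a phantom. Once $\mathcal{L}$ and $\mathcal{R}$ are \emph{defined} as composites, the triangle commutes strictly and tautologically: the left adjoints satisfy $\mathcal{L} = \widehat{\Omega}_\iota \circ C^c_*$ by fiat, and equivalently the right adjoints satisfy $\mathcal{R} = \overline{\mathcal{R}} \circ \widehat{\mathrm{B}}_\iota$ by fiat. That is what ``commuting triangle of adjunctions'' means --- commutation of the left adjoints (equivalently, of the right adjoints). The relation $\overline{\mathcal{R}} \cong \mathcal{R} \circ \widehat{\Omega}_\iota$ you propose to verify mixes a \emph{left} adjoint ($\widehat{\Omega}_\iota$) with a \emph{right} adjoint ($\mathcal{R}$) along two edges of the triangle, and is not part of the commutativity statement at all; it holds only up to the bar--cobar unit, as you correctly note, but this is irrelevant to the theorem. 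There is therefore no subtlety about point-set versus derived commutativity: the triangle commutes on the nose by construction, and nothing further needs to be supplied.
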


\begin{proof}
Follows directly from the above, together with Theorem \ref{thm: Equivalence the Quillen CC infini et absolues}.
\end{proof}

\subsection{The integration functor} The define \textit{the integration functor} to the right adjoint functor $\mathcal{R}$ in the triangle above. This definition is analogous to the one given in \cite[Section 2.1]{lucio2022integration}. The next result also follows from the same model categorical arguments as in \textit{op.cit.}

\begin{theorem}\label{thm: propriétés de l'intégration}\leavevmode
\begin{enumerate}
\item For any qp-complete curved absolute $\mathcal{L}^\pi_\infty$-algebra $\mathfrak{g}$, the simplicial set $\mathcal{R}(\mathfrak{g})$ is a Kan complex.

\medskip

\item Let $f: \mathfrak{g} \twoheadrightarrow \mathfrak{h}$ be a degree-wise epimorphism of qp-complete curved absolute $\mathcal{L}^\pi_\infty$-algebras. Then 
\[
\mathcal{R}(f): \mathcal{R}(\mathfrak{g}) \twoheadrightarrow \mathcal{R}(\mathfrak{h})
\]
is a fibration of simplicial sets. 

\medskip

\item The functor $\mathcal{R}$ preserves weak equivalences. In particular, it sends any filtered quasi-isomorphism $f: \mathfrak{g} \qi \mathfrak{h}$ of qp-complete curved absolute $\mathcal{L}^\pi_\infty$-algebras to a weak homotopy equivalence of simplicial sets.
\end{enumerate}
\end{theorem}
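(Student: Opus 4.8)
The plan is to deduce all three statements formally from the single fact, established in \cref{thm: triangle commutatif}, that $\mathcal{R}$ is a right Quillen functor from the category of qp-complete curved absolute $\mathcal{L}^\pi_\infty$-algebras to $\mathsf{sSet}$ equipped with the Kan--Quillen model structure. The one ingredient that is not purely formal is the observation that \emph{every} qp-complete curved absolute $\mathcal{L}^\pi_\infty$-algebra is fibrant. By \cref{thm: Equivalence the Quillen CC infini et absolues}, the fibrations of the target model structure are exactly the degree-wise epimorphisms. Since the terminal object is the zero algebra and every morphism onto it is degree-wise surjective, the unique map $\mathfrak{g} \twoheadrightarrow 0$ is a fibration, so $\mathfrak{g}$ is fibrant. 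I would record this fibrancy of the whole category as a preliminary remark, since it is what makes the formal argument go through.

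Granting this, parts (1) and (2) are immediate. A right Quillen functor preserves fibrant objects, so $\mathcal{R}(\mathfrak{g})$ is fibrant in the Kan--Quillen structure, i.e.\ a Kan complex, which gives (1). For (2), an epimorphism $f \colon \mathfrak{g} \twoheadrightarrow \mathfrak{h}$ is a degree-wise epimorphism, hence a fibration by \cref{thm: Equivalence the Quillen CC infini et absolues}; a right Quillen functor preserves fibrations, so $\mathcal{R}(f)$ is a fibration of simplicial sets.

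For part (3), I would invoke Ken Brown's lemma: a right Quillen functor sends weak equivalences between fibrant objects to weak equivalences. Because every object is fibrant, $\mathcal{R}$ preserves all weak equivalences. The ``in particular'' clause then follows by combining two facts already at hand: by the Proposition preceding this theorem, every graded quasi-isomorphism is a weak equivalence of qp-complete curved absolute $\mathcal{L}^\pi_\infty$-algebras, and weak equivalences in the Kan--Quillen model structure on $\mathsf{sSet}$ are precisely the weak homotopy equivalences. Composing these, $\mathcal{R}$ sends any graded quasi-isomorphism $f \colon \mathfrak{g} \qi \mathfrak{h}$ to a weak homotopy equivalence.

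The main, and essentially only, obstacle is the fibrancy observation, which rests on the identification of the fibrations with degree-wise epimorphisms together with the fact that the zero algebra is terminal; everything else is a formal consequence of the Quillen adjunction $\mathcal{L} \dashv \mathcal{R}$. In this sense the real work has already been carried out in building the Quillen adjunctions of \cref{thm: triangle commutatif}, and the present theorem simply extracts the expected good behaviour of a right Quillen functor into a fully fibrant category.
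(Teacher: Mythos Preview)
Your proposal is correct and follows exactly the same approach as the paper: the paper's proof consists of the single sentence ``The functor $\mathcal{R}$ is a right Quillen functor,'' leaving the standard unpacking (fibrancy of all objects, Ken Brown's lemma) implicit, while you have spelled these steps out. The fibrancy observation you highlight is indeed used by the paper as well (it is stated explicitly in the proof of the very next Corollary).
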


\begin{proof}
The functor $\mathcal{R}$ is a right Quillen functor. 
\end{proof}

\begin{Remark}
The above theorem establishes the main properties that an integration functor should satisfy, where the first two points are part of the main results of \cite{Getzler09} and the last one can be considered a Goldman--Millson type of theorem. See also \cite[Remark 2.13]{lucio2022integration}. 
\end{Remark}

\begin{Corollary}
The integration functor commutes with limits of curved absolute $\mathcal{L}^\pi_\infty$-algebras and homotopy limits of qp-complete curved absolute $\mathcal{L}^\pi_\infty$-algebras.
\end{Corollary}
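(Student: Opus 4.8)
The plan is to separate the two assertions and, in each case, reduce them to a formal property of the functors in the commuting triangle of \cref{thm: triangle commutatif}. The starting observation is that the integration functor factors as $\mathcal{R} = \overline{\mathcal{R}} \circ \widehat{\mathrm{B}}_\iota$, the composite of the two right adjoints in the triangle, so that $\mathcal{R}$ is itself right adjoint to $\mathcal{L} = \widehat{\Omega}_\iota \circ C^c_*(-)$.

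For the first assertion, I would argue through the explicit levelwise description of $\mathcal{R}$. Being a right adjoint, $\mathcal{R}$ preserves all limits; concretely, unwinding both adjunctions gives, naturally in $\mathfrak{g}$, an isomorphism
\[
\mathcal{R}(\mathfrak{g})_n \;=\; \mathrm{Hom}_{u\mathcal{EE}_\infty\text{-}\mathsf{coalg}}\!\left(C^c_*(\Delta^n),\, \widehat{\mathrm{B}}_\iota(\mathfrak{g})\right) \;\cong\; \mathrm{Hom}_{\mathsf{curv}~\mathsf{abs}~\mathcal{L}^\pi_\infty\textsf{-}\mathsf{alg}}\!\left(\mathcal{L}(\Delta^n),\, \mathfrak{g}\right),
\]
exhibiting each simplicial level of $\mathcal{R}$ as corepresented by the algebra $\mathcal{L}(\Delta^n)$. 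Since representable functors preserve limits in their covariant slot and limits of simplicial sets are computed levelwise, $\mathcal{R}$ commutes with all limits of curved absolute $\mathcal{L}^\pi_\infty$-algebras. I would also record that limits in the qp-complete subcategory agree with those computed in all curved absolute $\mathcal{L}^\pi_\infty$-algebras, since qp-completeness is a limit-type condition and the inclusion is limit-preserving.

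For the second assertion, the plan is to upgrade this to the homotopical level using that $\mathcal{R}$ is a right Quillen functor for the model structures of \cref{thm: Equivalence the Quillen CC infini et absolues} and the Kan--Quillen structure. Crucially, by \cref{thm: propriétés de l'intégration} the functor $\mathcal{R}$ already preserves all weak equivalences, so its right derived functor $\mathbb{R}\mathcal{R}$ is modeled by $\mathcal{R}$ itself without any fibrant replacement. The Quillen adjunction $\mathcal{L} \dashv \mathcal{R}$ then induces an adjunction of underlying $\infty$-categories, and I would invoke that an $\infty$-categorical right adjoint preserves all limits, together with the identification of homotopy limits in a model category with limits in its underlying $\infty$-category. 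This yields that $\mathcal{R}$ preserves homotopy limits of qp-complete curved absolute $\mathcal{L}^\pi_\infty$-algebras.

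The main obstacle I anticipate is precisely this last homotopical step: making rigorous that a right Quillen functor preserves homotopy limits. The cleanest route is the $\infty$-categorical one sketched above; if instead one prefers to stay within model categories, one must argue that $\mathcal{R}$ applied levelwise preserves injectively (or Reedy) fibrant diagrams and commutes with the strict limit computing the homotopy limit, which requires checking that the relevant diagram model structures interact well with $\mathcal{R}$. The first assertion, by contrast, is essentially formal once the corepresentability isomorphism above is in hand.
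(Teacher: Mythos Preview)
Your proposal is correct and follows essentially the same approach as the paper. The paper's proof is a one-liner: $\mathcal{R}$ is right Quillen, and since every qp-complete curved absolute $\mathcal{L}^\pi_\infty$-algebra is fibrant, $\mathcal{R}$ coincides with its right derived functor; the conclusion then follows from standard facts about right Quillen functors. Your argument is the same in substance, only more explicit; the one cosmetic difference is that you justify $\mathcal{R} = \mathbb{R}\mathcal{R}$ by citing that $\mathcal{R}$ preserves all weak equivalences (\cref{thm: propriétés de l'intégration}), whereas the paper invokes that every object is fibrant---both yield the same conclusion via Ken Brown's lemma, and your worry about ``making rigorous that a right Quillen functor preserves homotopy limits'' is unnecessary once this identification is in hand.
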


\begin{proof}
The functor $\mathcal{R}$ is a right Quillen functor, and since every qp-complete curved absolute $\mathcal{L}^\pi_\infty$-algebra is fibrant, it is also equal to its derived functor. 
\end{proof}

\begin{Remark}
Let $\mathfrak{g}$ be a qp-complete curved absolute partition $\mathcal{L}^\pi_\infty$-algebra. There is an isomorphism 
\[
\mathcal{R}(\mathfrak{g})_\bullet \cong \lim_{\delta} \mathcal{R}(\mathfrak{g}/\mathrm{W}_\delta \mathfrak{g})_\bullet~.
\]
of Kan complexes. Thus $\mathcal{R}(\mathfrak{g})$ is the (homotopy) limit of a tower of simplicial sets obtained by integrating each step of the canonical qp-filtration.
\end{Remark}

\textbf{Non-abelian Dold-Kan correspondence.} The adjunction 
\[
\begin{tikzcd}[column sep=5pc,row sep=2.5pc]
\mathsf{sSet}  \arrow[r, shift left=1ex, "\mathcal{L}"{name=B}] 
&\mathsf{curv}~\mathsf{abs}~\mathcal{L}^\pi_\infty\textsf{-}\mathsf{alg}^{\mathsf{qp}\text{-}\mathsf{comp}} \arrow[l, shift left=.75ex, "\mathcal{R}"{name=D}] \arrow[phantom, from=B, to=D, , "\dashv" rotate=-90] 
\end{tikzcd}
\]
is a \textit{generalization of the Dold-Kan correspondence} over a field. Notice that a dg module $(V,d_V)$ is a particular example of a curved absolute $\mathcal{L}^\pi_\infty$-algebra, where the structural morphism 
\[
\gamma_V: \displaystyle \prod_{n \geq 0} \widehat{\Omega}^{\mathrm{s.a}}\mathcal{E}^*(n) ~\widehat{\otimes}_{\mathbb{S}_n} ~ V^{\otimes n} \longrightarrow V
\]
is the \textit{zero morphism} on any non-trivial symmetric corked rooted tree. Thus any chain complex admits an \textit{abelian} (trivial) curved absolute $\mathcal{L}^\pi_\infty$-algebra structure, which is furthermore qp-complete. 

\begin{Proposition}
Let $(V,d_V)$ be a dg module endowed with a trivial curved absolute $\mathcal{L}^\pi_\infty$-algebra structure. There is an isomorphism
\[
\mathcal{R}(V) \cong \Gamma(V)~,
\]
where $\Gamma(-)$ is the Dold--Kan functor. 
\end{Proposition}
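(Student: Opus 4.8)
The statement asserts that for an abelian curved absolute $\mathcal{L}^\pi_\infty$-algebra $(V,d_V)$ — one whose structural map $\gamma_V$ vanishes on all nontrivial symmetric corked rooted trees — the integration functor $\mathcal{R}$ recovers the classical Dold--Kan functor $\Gamma$. The plan is to unwind the definition of $\mathcal{R}(V)$ and show that, in the abelian case, the complicated combinatorics of symmetric corked rooted trees collapses to the linear data that $\Gamma$ encodes.

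First I would recall the explicit description of $\mathcal{R}(V)_\bullet = \mathrm{Hom}_{u\mathcal{EE}_\infty\text{-}\mathsf{cog}}\left(C^c_*(\Delta^\bullet),\widehat{\mathrm{B}}_\iota(V)\right)$, unwinding this via the bar-cobar adjunction into the set of Maurer--Cartan-type elements of the convolution algebra $\mathrm{hom}(C^c_*(\Delta^\bullet),V)$, as in the analogous characteristic-zero computation of \cite{lucio2022integration}. The key observation is that since $\gamma_V$ is zero on every nontrivial tree, the only surviving elementary operation is the trivial one of arity one (the differential $d_V$), together with the possible curvature term $l_0$. Consequently, the convolution structural map on $\mathrm{hom}(C^c_*(\Delta^\bullet),V)$ degenerates: all the higher brackets $l_n^{(\sigma_0,\dots,\sigma_r)}$ vanish, and the Maurer--Cartan equation $\gamma_V(\sum_n c_n^{\mathrm{id}}(\alpha,\dots,\alpha)) + d(\alpha) = 0$ reduces to the purely \emph{linear} equation $d(\alpha)=0$ (assuming, as the abelianity hypothesis dictates, that the curvature also vanishes on nontrivial trees).

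Next I would identify this set of cocycle-type elements, simplicial level by simplicial level, with the Dold--Kan functor. Since the structure is now entirely linear, the $p$-simplices of $\mathcal{R}(V)$ are the degree-zero closed elements of $\mathrm{hom}(C^c_*(\Delta^p),V)$, which by the standard adjunction defining $\Gamma$ is precisely $\Gamma(V)_p = \mathrm{Hom}_{\mathsf{dg}\text{-}\mathsf{mod}}(N^*(\Delta^p),V)$, using that $C^c_*(\Delta^\bullet)$ computes normalized chains. I would check that the simplicial structure maps agree, which is automatic once the identification of $p$-simplices is natural in $[p]\in\Delta$, since both sides are restrictions of the simplicial object $C^c_*(\Delta^\bullet)$.

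The main obstacle, and the step requiring genuine care, is \emph{verifying that the curvature genuinely drops out}. For a curved absolute algebra the curvature $l_0$ lives in weight one and is a priori nontrivial; the abelianity hypothesis stipulates $\gamma_V$ is zero on nontrivial trees, and I would need to confirm this forces $l_0=0$ (so that $d_V^2=0$ by equation \eqref{curved condition} and $V$ is an honest dg module) and that no curvature-induced corrections contaminate the Maurer--Cartan equation at higher simplicial levels through the $\widehat{\Omega}^{\mathrm{s.a}}\mathcal{E}^*$ tree-labelings. Once the algebra is confirmed to be an ordinary dg module with all higher operations trivial, the remaining identification with $\Gamma$ is the classical Dold--Kan computation and presents no difficulty.
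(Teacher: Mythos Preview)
Your approach is correct, but the paper takes a shorter and more conceptual route. Instead of passing through the Maurer--Cartan description of the convolution algebra and reducing the equation to $d(\alpha)=0$ by hand, the paper observes directly that when $V$ is abelian the complete bar construction $\widehat{\mathrm{B}}_\iota(V)$ is simply the cofree $u\mathcal{EE}_\infty$-coalgebra $\mathscr{C}(u\mathcal{EE}_\infty)(V)$ on the dg module $V$: the only part of the bar differential coming from the structure of $V$ is induced by $\gamma_V$, which vanishes by hypothesis. One then invokes the cofree--forgetful adjunction to get
\[
\mathcal{R}(V) \cong \mathrm{Hom}_{u\mathcal{EE}_\infty\text{-}\mathsf{cog}}\bigl(C^c_*(\Delta^\bullet), \mathscr{C}(u\mathcal{EE}_\infty)(V)\bigr) \cong \mathrm{Hom}_{\mathsf{dg}\text{-}\mathsf{mod}}\bigl(C^c_*(\Delta^\bullet), V\bigr) = \Gamma(V),
\]
in one line. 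Your detour through Maurer--Cartan elements and the linearization of the equation ultimately lands in the same place (degree-zero closed elements of $\mathrm{hom}(C^c_*(\Delta^p),V)$ are exactly dg maps), but the cofreeness argument bypasses all of that structure at once and makes the naturality in $[p]$ automatic.

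On your stated obstacle: the worry about curvature is easily dispatched. The curvature $l_0$ is the image under $\gamma_V$ of the single cork, which is a symmetric corked rooted tree of weight one and hence nontrivial; the abelian hypothesis kills it immediately, so $d_V^2 = 0$ and $V$ is a genuine dg module. No further corrections survive at higher simplicial levels because the twisting term of the bar differential is globally zero, not just zero on low-degree pieces.
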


\begin{proof}
Since the structure map $\gamma_V$ is trivial, the complete bar construction $\widehat{\mathrm{B}}_{\iota}V$ on $V$ is equal to the cofree construction $\mathscr{C}(u\mathcal{EE}_\infty)(V)$. Therefore we have the following isomorphisms
\[
\mathcal{R}(V) \cong \mathrm{Hom}_{u\mathcal{EE}_\infty\text{-}\mathsf{cog}}\left(C^c(\Delta^\bullet), \mathscr{C}(u\mathcal{EE}_\infty)(V) \right) \cong \mathrm{Hom}_{\mathsf{dg}~\mathsf{mod}}\left(C^c(\Delta^\bullet), V \right)~.
\]
\end{proof}

\textbf{A pointed version of the integration functor.} The reduced chain functor $\tilde{C}_*^c(-)$ on pointed simplicial sets
has a natural $\mathcal{E}^{\mathrm{nu}}$-coalgebra structure, which in turn gives a $\Omega \mathrm{B}\mathcal{E}^{\mathrm{nu}}$-coalgebra structure by pullback. By composing this adjunction with the complete bar-cobar adjunction between $\Omega \mathrm{B}\mathcal{E}^{\mathrm{nu}}$-coalgebras and absolute partition $\mathcal{L}_\infty$-algebras, we get a adjunction 
\[
\begin{tikzcd}[column sep=5pc,row sep=2.5pc]
\mathsf{sSet}_{*}  \arrow[r, shift left=1ex, "\mathcal{L}_*"{name=B}] 
&\mathsf{abs}~\mathcal{L}^\pi_\infty\textsf{-}\mathsf{alg}^{\mathsf{qp}\text{-}\mathsf{comp}}~, \arrow[l, shift left=.75ex, "\mathcal{R}_*"{name=D}] \arrow[phantom, from=B, to=D, , "\dashv" rotate=-90] 
\end{tikzcd}
\]
which fits in a commutative triangle of Quillen adjunctions like its unpointed analogue in Theorem \ref{thm: triangle commutatif}. Let us compare these two adjunctions.

\begin{Proposition}\label{Prop: commutativity of the pointing adjunctions}
The following square of Quillen adjunctions
\[
\begin{tikzcd}[column sep=4pc,row sep=4pc]
\mathsf{sSet}_* \arrow[r,"\mathcal{L}_*"{name=B},shift left=1.1ex] \arrow[d,"\mathrm{U}"{name=SD},shift left=1.1ex ]
&\mathsf{abs}~\mathcal{L}^\pi_\infty\text{-}\mathsf{alg}^{\mathsf{qp}\text{-}\mathsf{comp}} \arrow[d,"\mathrm{U}"{name=LDC},shift left=1.1ex ] \arrow[l,"\mathcal{R}_*"{name=C},,shift left=1.1ex]  \\
\mathsf{sSet} \arrow[r,"\mathcal{L} "{name=CC},shift left=1.1ex]  \arrow[u,"(-) \sqcup \{*\}"{name=LD},shift left=1.1ex ]
&\mathsf{curv}~\mathsf{abs}~\mathcal{L}^\pi_\infty\text{-}\mathsf{alg}^{\mathsf{qp}\text{-}\mathsf{comp}}~, \arrow[l,"\mathcal{R}"{name=CB},shift left=1.1ex] \arrow[u,"(-)_*"{name=TD},shift left=1.1ex] \arrow[phantom, from=SD, to=LD, , "\dashv" rotate=0] \arrow[phantom, from=C, to=B, , "\dashv" rotate=-90]\arrow[phantom, from=TD, to=LDC, , "\dashv" rotate=0] \arrow[phantom, from=CC, to=CB, , "\dashv" rotate=-90]
\end{tikzcd}
\] 

is commutative in the following sense: the left adjoints from the bottom-left to the top-right corner are naturally isomorphic.
\end{Proposition}

\begin{proof}
The proof is completely analogous to \cite[Proposition 3.34]{lucio2022integration}. See also \cite[Subsection 3.4]{lucio2022integration} for more details on how to compare counital and non-counital coalgebras. 
\end{proof}

\subsection{Maurer--Cartan elements} The $0$-simplices of $\mathcal{R}(\mathfrak{g})$ exactly correspond to solutions of the Maurer--Cartan equation in $\mathfrak{g}$. 

\begin{Proposition}
Let $\mathfrak{g}$ be a curved absolute $\mathcal{L}^\pi_\infty$-algebra. There is a bijection
\[
\mathcal{R}(\mathfrak{g})_0 \cong \mathcal{MC}(\mathfrak{g})~,
\]
between the $0$-simplices of $\mathcal{R}(\mathfrak{g})$ and the Maurer--Cartan elements of $\mathfrak{g}$.
\end{Proposition}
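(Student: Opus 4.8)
The plan is to unwind the definition of the integration functor coming from the commuting triangle of \Cref{thm: triangle commutatif} and then reduce the claim to identifying the group-like elements of the complete bar construction. Since $\mathcal{R} = \overline{\mathcal{R}} \circ \widehat{\mathrm{B}}_\iota$ is the composite of the two right adjoints, one has $\mathcal{R}(\mathfrak{g})_\bullet = \mathrm{Hom}_{u\mathcal{EE}_\infty\text{-}\mathsf{cog}}\!\left(C^c(\Delta^\bullet), \widehat{\mathrm{B}}_\iota(\mathfrak{g})\right)$, so that $\mathcal{R}(\mathfrak{g})_0 = \mathrm{Hom}_{u\mathcal{EE}_\infty\text{-}\mathsf{cog}}\!\left(C^c(\Delta^0), \widehat{\mathrm{B}}_\iota(\mathfrak{g})\right)$. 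The first step is to observe that $\Delta^0$ is terminal and that $C^c(\Delta^0) \cong \kk$ is the monoidal unit of $u\mathcal{EE}_\infty$-coalgebras, concentrated in degree $0$ with its canonical structure (the cellular chains on a point). A $0$-simplex is therefore exactly a morphism of $u\mathcal{EE}_\infty$-coalgebras out of the unit, i.e. the datum of a \emph{group-like}, degree $0$, closed element of $\widehat{\mathrm{B}}_\iota(\mathfrak{g})$: closedness is the chain-map condition, and group-likeness is compatibility with the cooperad coaction.

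The second step is to parametrize these elements using the explicit tree description of \Cref{Lemma: basis elements} together with the norm map of \Cref{Remark: norm map}. I would show that a degree $0$ group-like element of $\widehat{\mathrm{B}}_\iota(\mathfrak{g})$ is determined by a single degree $0$ element $\alpha \in \mathfrak{g}$, and is then forced to be the formal sum over identity-labelled corollas $\sum_{n \geq 0} c_n^{\mathrm{id}}(\alpha,\cdots,\alpha)$; the higher symmetric-group decorations $(\sigma_0,\cdots,\sigma_r)$ contribute nothing, as one reads off from the norm map. This is exactly where \emph{absoluteness} is essential: this defining series has infinitely many terms and is well-defined only because $\mathfrak{g}$ is an absolute algebra, where such formal sums always admit an image under $\gamma_\mathfrak{g}$. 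Imposing closedness for the bar differential — whose summands $d_1$, $d_2$, $d_3$ are described above and whose twisting part is built from $\iota$ and the structure map $\gamma_\mathfrak{g}$ — group-likeness collapses the condition onto the corolla (weight one) component, which reads $\gamma_\mathfrak{g}\!\left(\sum_{n \geq 0,\, n \neq 1} c_n^{\mathrm{id}}(\alpha,\cdots,\alpha)\right) + d_\mathfrak{g}(\alpha) = 0$, i.e. precisely the Maurer--Cartan equation defining $\mathcal{MC}(\mathfrak{g})$. Sending a coalgebra map to its element $\alpha$, and conversely sending $\alpha \in \mathcal{MC}(\mathfrak{g})$ to the above group-like element, then yields mutually inverse bijections.

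I expect the main obstacle to be the bookkeeping in the second step, which splits into two checks. First, one must prove that morphisms of $u\mathcal{EE}_\infty$-coalgebras from the unit correspond bijectively to group-like elements and that these are classified by a single $\alpha \in \mathfrak{g}_0$; this requires tracking how the symmetric-group decorations interact with the cooperad coaction through the norm map and verifying that only the identity-labelled corollas survive. Second, one must match the closedness condition to the Maurer--Cartan equation, i.e. confirm that the twisting part of the bar differential contributes $\gamma_\mathfrak{g}\!\left(\sum_{n \neq 1} c_n^{\mathrm{id}}(\alpha,\cdots,\alpha)\right)$, that $d_1$ contributes $d_\mathfrak{g}(\alpha)$, and that the internal cooperad terms $d_2$ and $d_3$ are absorbed by the group-likeness constraint. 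A more formal alternative would invoke the fundamental theorem of twisting morphisms for the complete bar--cobar adjunction to identify $\mathcal{R}(\mathfrak{g})_0$ with the Maurer--Cartan set of the convolution algebra $\mathrm{hom}(\kk, \mathfrak{g}) \cong \mathfrak{g}$, but the direct computation has the advantage of exhibiting the Maurer--Cartan equation explicitly, and of pinpointing where absoluteness is used.
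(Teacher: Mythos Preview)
Your approach is correct and is dual to the paper's through the complete bar--cobar adjunction. Rather than staying on the coalgebra side and analyzing group-like elements of $\widehat{\mathrm{B}}_\iota(\mathfrak{g})$, the paper transposes immediately via $\widehat{\Omega}_\iota \dashv \widehat{\mathrm{B}}_\iota$ to identify $\mathcal{R}(\mathfrak{g})_0$ with $\mathrm{Hom}_{\mathsf{curv}~\mathsf{abs}~\mathcal{L}^\pi_\infty}(\widehat{\Omega}_\iota(\kk), \mathfrak{g})$, and then computes $\widehat{\Omega}_\iota(\kk)$ explicitly: it is the free complete absolute algebra on one generator $a_0$, with cobar differential $d_{\mathrm{cobar}}(a_0) = -\sum_{n \neq 1} c_n^{\mathrm{id}}(a_0,\cdots,a_0)$ read off from the $\mathcal{E}$-coalgebra structure on $\kk$ and the norm map. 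A map out of a free object is determined by the image $\alpha$ of the generator, and commutation with $d_{\mathrm{cobar}}$ is exactly the Maurer--Cartan equation. This buys precisely the bookkeeping you flag as the obstacle: there is no need to describe the cofree coalgebra, classify its group-like elements, or argue that $d_2$ and $d_3$ are absorbed.

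One caution on your write-up: the formula $\sum_{n} c_n^{\mathrm{id}}(\alpha,\cdots,\alpha)$ and the differentials $d_1, d_2, d_3$ you invoke are those of Subsection~\ref{subsection: explicit description} for the \emph{cobar} side $\prod_n \widehat{\Omega}^{\mathrm{s.a}}\mathcal{E}^*(n)\,\widehat{\otimes}_{\mathbb{S}_n}\,(-)^{\otimes n}$, not for the underlying $u\mathcal{EE}_\infty$-coalgebra of $\widehat{\mathrm{B}}_\iota(\mathfrak{g})$; if you genuinely stay on the bar side you would need the corresponding description of the cofree $u\mathcal{EE}_\infty$-coalgebra and its bar differential, which the paper does not spell out. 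The alternative you mention at the end --- identifying $\mathrm{Hom}(\kk,\widehat{\mathrm{B}}_\iota(\mathfrak{g}))$ with curved twisting morphisms $\kk \to \mathfrak{g}$ --- is the cleanest way to run your strategy, and is exactly what the paper does for the gauge computation on $C^c_*(\Delta^1)$.
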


\begin{proof}
The $0$-simplices of $\mathcal{R}(\mathfrak{g})$ are determined by the $\mathcal{E}$-coalgebra structure of $C_*^c(\{*\}) \cong \kk$. One can check that this coalgebra structure is given by maps 
\[
\left\{
\begin{tikzcd}[column sep=4pc,row sep=-0.25pc]
\kk.a_0 \arrow[r]
&\displaystyle \prod_{n \geq 0} \mathrm{Hom}_{\mathbb{S}_n}\left(\mathcal{E}(n), (\kk.a_0)^{\otimes n}\right) \\
a_0 \arrow[r,mapsto]
&\displaystyle \sum_{n \geq 0,~\sigma_0 \in \mathbb{S}_n~ n \neq 1} \left[\mathrm{ev}_{a_0}: c_n^{\sigma_0} \mapsto a_0 \otimes \cdots \otimes a_0 \right]~,
\end{tikzcd}
\right.
\]
where the other decomposition maps are zero for degree reasons. Notice that 

\[
\mathbb{N}(c_n^{\mathrm{id}}(a_0,\cdots,a_0)) = \sum_{\sigma_0 \in \mathbb{S}_n} \left[\mathrm{ev}_{a_0}: c_n^{\sigma_0} \mapsto a_0 \otimes \cdots \otimes a_0 \right]
\]

under the norm map isomorphism 

\[
\mathbb{N}: \widehat{\Omega}^{\mathrm{s.a}}\mathcal{E}^*(n) ~\widehat{\otimes}_{\mathbb{S}_n}~ \mathfrak{g}^{\otimes n} \longrightarrow \mathrm{Hom}_{\mathbb{S}_n}\left(\mathrm{B}^{\mathrm{s.a}}\mathcal{E}(n), \mathfrak{g}^{\otimes n}\right)~.
\]
Thus 

\[
\widehat{\Omega}_{\iota}(\kk.a_0) \cong \left(\displaystyle \prod_{n \geq 0} \widehat{\Omega}^{\mathrm{s.a}}\mathcal{E}^*(n) ~ \widehat{\otimes}_{\mathbb{S}_n} ~ (\kk.a_0)^{\otimes n}, d_{\mathrm{cobar}}(a_0) = - \sum_{n \geq 0~,~ n \neq 1} c_n^{\mathrm{id}}(a_0,\cdots,a_0) \right)~.
\]

Therefore the data of a morphism of curved absolute $\mathcal{L}^\pi_\infty$-algebras $\widehat{\Omega}_{\iota}(\kk.a_0) \longrightarrow \mathfrak{g}$ is equivalent to the data of an element $\alpha$ in $\mathfrak{g}$ of degree $0$ such that 
\[
d_\mathfrak{g}(\alpha) = - \gamma_\mathfrak{g}\left(\sum_{n \geq 0, n \neq 1} c_n^{\mathrm{id}}(\alpha,\cdots,\alpha)\right)~.
\]
\end{proof}

\subsection{Gauge equivalences and higher homotopy groups} We compute paths in $\mathcal{R}(\mathfrak{g})$ and give a combinatorial description of them. These paths are \textit{gauge equivalences}, as in \cite[Section 2.4]{lucio2022integration}. This allows us to give a full combinatorial description of the set $\pi_0(\mathcal{R}(\mathfrak{g}))$. From the point of view of deformation theory, if $\mathfrak{g}$ encodes a spectral deformation problem, computing Maurer--Cartan elements up to gauge equivalences gives the \textit{classical} deformation problem associated to $\mathfrak{g}$. Then, we compute the higher homotopy groups of the integration functor $\mathcal{R}(\mathfrak{g})$ for any qp-complete absolute partition $\mathcal{L}_\infty^\pi$-algebra, at the $0$ Maurer--Cartan element. The group $\pi_k(\mathcal{R}(\mathfrak{g}), 0)$  is in bijection with elements in $\mathfrak{g}$ of degree $k$ which satisfy an algebraic equation, up to an equivalence relation imposed by elements in degree $k+1$, in terms of a similar algebraic equation. These equations are determined by the dg $\mathcal{E}$-coalgebra structure on the cellular chains on the interval and on the spheres. 

\medskip

\textbf{The coalgebra structure on the interval.} We use the general formulas given in \cite[Section 2.2]{BergerFresse04} for the $\mathcal{E}$-coalgebra structure of $C_*^c(\Delta^k)$ to effectively compute the structural map of $C_*^c(\Delta^1)$. Since $C_*^c(\Delta^1)$ is a canonical interval object, computing this structure will allow us to compute homotopies in the category of $u\mathcal{EE}$-coalgebras. 

\medskip

\textit{The function $\varphi_I^n$.} Let $n \geq 2$ and let $I \subseteq \{1,\cdots,n\}$ be an ordered subset, where we denote the elements by $I = \{i_1, \cdots,i_k\}$. Let $\sigma$ be a permutation in $\mathbb{S}_n$. We say that $\sigma$ is an $I$-\textit{-unshuffle} if 
\[
\sigma^{-1}(i_1) < \cdots < \sigma^{-1}(i_k)~. 
\]

We define the map $\varphi_I^n: \mathbb{S}_n \longrightarrow \{0,1\}$ by sending $\sigma$ to $1$ if it is an $I$-unshuffle and to $0$ if it is not. 

\medskip

\textit{The function $\pi^{(n,S)}_I$.} Let $n \geq 2$ and let $I,S \subseteq \{1,\cdots,n\}$ be two ordered subsets, where $S =\{o_1,\cdots, o_s\}$ is of cardinality $s \geq 1$. Let $\underline{\sigma} = (\sigma_0,\cdots, \sigma_{s-1})$ be an $s$-tuple of permutations of $\mathbb{S}_n$. We say that $\underline{\sigma}$ is an $S$\textit{-ordered partition of }$I$ if it satisfies the following conditions:
\begin{enumerate}
\item For all $i$ such that $1 \leq i \leq s$, we have that
\[
\sigma_{i-1}(\{1,\cdots, \sigma_{i-1}^{-1}(o_i) -1\}) \subseteq I~.
\]
The set $\{1,\cdots, \sigma_{i-1}^{-1}(o_i) -1\}$ is considered to be empty if $\sigma_{i-1}^{-1}(o_i) = 1$, in which case the condition is trivially satisfied. 

\item Let us denote by $\mathrm{Im}_I(\sigma_{i-1})$ the image of $\{1,\cdots, \sigma_{i-1}^{-1}(o_i)- 1\}$ by $\sigma_{i-1}$ inside $I$. Then the collections of sets defined by 
\[
I_i \coloneqq \mathrm{Im}_I(\sigma_{i-1})-\left(\bigcup_{j=0}^{i-2}\mathrm{Im}_I(\sigma_{i-1})\right)~. 
\]
forms an ordered partition $I_1 \sqcup \cdots \sqcup I_s$ of $I$.
\end{enumerate}

We define the map $\pi_I^{(n,S)}: \mathbb{S}_n^{\times s} \longrightarrow \{0,1\}$ by sending $\underline{\sigma}$ to $1$ if it is an $S$-ordered partition of $I$ and to $0$ if it is not. 

\begin{Example}
Let $\underline{\sigma}$ be in $\mathbb{S}_n^{\times s}$, and let $S = \{1,\cdots,s\}$. It is an $S$-ordered partition of $\emptyset$ if and only if the tuple $(\sigma_0(1), \cdots, \sigma_{s-1}(1))$ is the identity permutation $(1, \cdots, s)$ of $\mathbb{S}_s$. 
\end{Example}

\begin{Proposition}\label{Prop: coalgebra structure of the interval}
The $\mathcal{E}$-coalgebra structure on the interval $C_*^c(\Delta^1) \cong \kk.a_0 \oplus \kk.a_1 \oplus \kk.a_{01}$ is given by the following structure map:

\[
\hspace{-1.5pc}
\left\{
\begin{tikzcd}[column sep=0.25pc,row sep=0pc]
C_*^c(\Delta^1) \arrow[r]
&\displaystyle \prod_{n \geq 0} \mathrm{Hom}_{\mathbb{S}_n}\left(\mathcal{E}(n), (C_*^c(\Delta^1))^{\otimes n}\right) \\
a_i \arrow[r,mapsto]
&\displaystyle \sum_{n \geq 0,~\sigma_0 \in \mathbb{S}_n~ n \neq 1} \left[\mathrm{ev}_{a_i}: c_n^{\sigma_0} \mapsto a_i \otimes \cdots \otimes a_i \right]~~~~\text{for}~~~~ i=0,1~, \\
a_{01} \arrow[r,mapsto]
&\displaystyle \sum_{\substack{n \geq 2 \\ s + a + b =n}}\sum_{\substack{w \in \mathcal{E}(n)_{s-1}  \\ \sigma \in \mathbb{S}_n}} \alpha(w).\left[c_n^{\sigma.w} \mapsto ~ \sigma^{-1} \bullet \left(\underbrace{a_{01}\otimes \cdots \otimes a_{01}}_{s}\otimes \underbrace{a_{0} \otimes \cdots \otimes a_{0}}_{a} \otimes \underbrace{a_{1} \otimes \cdots \otimes a_{1}}_{b}\right) \right],
\end{tikzcd}
\right.
\]

where the coefficient $\alpha(w)$ is given by the product $\pi_{I_a}^{(n,S)}(w).\varphi_{I_b}^n(\sigma_{s-1})$, for $I_a = \{s+1,\cdots,s+a\}$, $I_b = \{s+a+1, \cdots, s+a+b\}$ and $S = \{1,\cdots,s\}$. 
\end{Proposition}

\begin{proof}
We want to compute which operations in the Barratt-Eccles operad give non-trivial decompositions of $a_{01}$; the case of $a_0$ and $a_1$ are straightforward. Since the $\mathcal{E}$-coalgebra structure is the pullback of the $\mathcal{S}urj$-coalgebra structure, let us first compute which elements in the surjections operad give such decompositions. 

\medskip

Let us fix an arity $n \geq 2$. We suppose that $u$ in $\mathcal{S}urj(n)$ produces a nont-trivial decomposition of $a_{01}$ involving $s$-copies of $a_{01}$, $a$-copies of $a_0$ and $b$-copies of $a_1$, where $s+a+b =n$. Then $u$ is of degree $s-1$. Furthermore, let us fix the following order on the decomposition
\[
a_{01}^{\otimes s} \otimes a_{0}^{\otimes a} \otimes a_{1}^{\otimes b} = \underbrace{a_{01}}_{1} \otimes \cdots \otimes \underbrace{a_{01}}_{s} \otimes \underbrace{a_{0}}_{s+1} \otimes \cdots \otimes \underbrace{a_{0}}_{s+a} \otimes \underbrace{a_{1}}_{s+a+1} \otimes \cdots \otimes \underbrace{a_{1}}_{s+a+b},
\]
with respect to the interval cut operation of $u$. For instance, this means that the first term $a_{01}$ comes from the two occurrences of $1$ in $u$ (one in the first $(s+a-1)$-terms and one in the last $(s+b-1)$-terms), or that the $(s+a+1)$-th term $a_{1}$ comes from the single appearance of $s+a+1$ in $u$, at least in the $(s+a+1)$-th position. Therefore $u$ must have a table arrangement of the form 
\[
\left|~~ 
\begin{aligned}
&s+1~\cdots~s+i_1~~ 1 \\
&s+i_1+1 ~ \cdots ~ s+i_2+i_1~~ 2 \\
&\vdots \\
&s+i_{s-2}+ \cdots +1 ~ \cdots ~ s+i_{s—1}+\cdots +i_1~~ s-1 \\
&s+i_{s-1}+ \cdots +1 ~ \cdots ~ s+i_s+\cdots +i_1 ~~ s ~~ u(s+a+1)~ \cdots ~ u(2s+a+b-1)~,
\end{aligned}
\right.
\]
where $(i_1,\cdots,i_s)$ are integers $\geq 0$ such that $i_1+\cdots+i_s=a$. If $i_j$ is zero, then $j$ is the first term in the $j$-th row of the table arrangement of $u$. Equivalently, the numbers $(i_1,\cdots,i_s)$ determine an ordered partition of $I_a = \{s+1, \cdots, s+a\}$ via $I_j = \{s+i_{j-1}+\cdots+i_1, \cdots, s+i_{j}+ \cdots +i_1\}$ for $j \geq 2$ and $I_1= \{s+1, \cdots, s+i_1\}$. The set $I_j$ is empty whenever $i_j =0$. And the table arrangement of $u$ starts with at row $j$ with $I_j$ and ends with $j$, except in the last row. This last row contains the rest of the terms of the surjection $u$; since we have also ordered the terms $a_{1}$ in the decomposition, the set $I_b= \{s+a+1,\cdots,s+a+b\}$ must appear ordered, possibly with other terms in between, in the last row. Finally, notice that such a decomposition involves no signs, as both the position and the permutation sign associated to a surjection $u$ with this type of table arrangement are trivial. 

\medskip

Now, consider a element $w = (\sigma_0, \cdots, \sigma_{s-1})$ in $\mathcal{E}(n)_{s-1}$. A surjection $u$ of the type described above appears in the image of the table reduction of $w$ if and only if the $w$ is a $I_a$-ordered partition and if $\sigma_{s-1}$ is a $I_b$-unshuffle. Finally, any decomposition involving $s$-copies of $a_{01}$, $a$-copies of $a_0$ and $b$-copies of $a_1$ can be obtained via a permutation in $\mathbb{S}_n$ of the above decomposition, hence the action of $\mathbb{S}_n$ on the $w$ in $\mathcal{E}(n)_{s-1}$ generates all the elements that produce these decompositions.
\end{proof}

\begin{Remark}
When $I_a = \emptyset$, that is, we consider only decompositions of $a_{01}$ which involve $a_{01}$ and $a_{1}$, we recover the formulas of \cite[Theorem 3.2.4]{BergerFresse04}. The sign $(-1)^\sigma$, where $\sigma = s(s-1)/2$, does not appear in the coalgebra structure since it comes from the duality pairing, see the paragraph above \cite[Proof of Lemma 3.3.3]{BergerFresse04}. And the sign $\epsilon_r(w)$ is precisely the sign that appears when a permutation $\sigma$ in $\mathbb{S}_n$ acts on the ordered decomposition $a_{01}^{\otimes s} \otimes a_{0}^{\otimes a} \otimes a_{1}^{\otimes b}$.
\end{Remark}

\textbf{Gauge equivalences.} We give a definition of gauge equivalences, which are paths in the $\infty$-groupoid associated to a curved absolute partition $\mathcal{L}^\pi_\infty$-algebra. Recall that in the characteristic zero setting, defining gauge equivalences in this way recovers the other definitions present in the literature, see \cite[Section 2.4]{lucio2022integration}.

\begin{Definition}[Gauge equivalences]
Let $\mathfrak{g}$ be a qp-complete curved absolute $\mathcal{L}^\pi_\infty$-algebra, and let $\alpha,\beta$ be two Maurer--Cartan elements. The element $\alpha$ is \textit{gauge equivalent} to $\beta$ if there exists a degree $1$ element $\lambda$ in $\mathfrak{g}$ such that

\[
\hspace{-1.5pc}
d_{\mathfrak{g}}(\lambda) = \beta - \alpha - \gamma_{\mathfrak{g}}\left(\sum_{\substack{n \geq 2 \\ s + a + b =n}} \sum_{\substack{w \in \mathcal{E}(n)_{s-1} \\ w = (\sigma_0,\cdots,\sigma_{s-1})}}\pi_{I_a}^{(n,S)}(w).\varphi_{I_b}^n(\sigma_{s-1}) c_n^{w}\left(\underbrace{\lambda, \cdots, \lambda}_{s},\underbrace{\alpha, \cdots, \alpha}_{a}, \underbrace{\beta, \cdots, \beta}_{b}\right)  \right)~.
\]

for $I_a = \{s+1,\cdots,s+a\}$, $I_b = \{s+a+1, \cdots, s+a+b\}$ and $S = \{1,\cdots,s\}$. We denote this by $\alpha \sim_\lambda \beta$. 
\end{Definition}

Since $\mathfrak{g}$ is qp-complete, the above formula can be split in several components according to the weight of the operations. 

\medskip

\textbf{Gauge actions.} The above formula can also be rewritten into the following fixed-point equation in terms of the element $\beta$:
\[
\beta = d_{\mathfrak{g}}(\lambda) + \alpha + \gamma_{\mathfrak{g}}\left(\sum_{\substack{n \geq 2 \\ s + a + b =n}} \sum_{\substack{w \in \mathcal{E}(n)_{s-1} \\ w = (\sigma_0,\cdots,\sigma_{s-1})}}\pi_{I_a}^{(n,s)}(w).\varphi_{I_b}^n(\sigma_{s-1}) c_n^{w}\left(\underbrace{\lambda, \cdots, \lambda}_{s},\underbrace{\alpha, \cdots, \alpha}_{a}, \underbrace{\beta, \cdots, \beta}_{b}\right)  \right)~.
\]
If $\alpha$ and $\lambda$ are in weight $1$ in $\mathfrak{g}$, this fixed-point equation admits an unique solution, using \cite[Appendix A]{robertnicoud2020higher}. See also Proposition \ref{prop: fixed point equation} (with different weight conventions) and the computations done in Subsection \ref{subsection: bch}. Let us briefly describe this solution.

\medskip

Let $\tau$ be a symmetric corked \textit{planar} tree. It is \textit{right-handed} if for every vertex $v$ with $\mathrm{In}(v) \geq 2$ incoming edges, the $j$-th incoming edge is linked to another edge only if all the previous $l$ incoming edges (reading from right to left) are linked to an edge, for all $0 \leq l \leq j$ and every $j \in [1,\mathrm{in}(v)]$. Furthermore, every vertex which is not a cork must have at least one \textit{leaf}. 

\medskip

A labelling of such a $\tau$ by $01$ and $0$, subsets of $[0,1]$, is \textit{licit} if for every vertex $v$ of $\tau$ which is not a cork, going from right to left of its incoming edges, $v$ has $b$ incoming edges attached to other edges, $a$ leaves labelled by $0$ and then $c$ leaves labelled by $01$, where $a + b + c = \mathrm{In}(v)$ and $a,b \geq 0$ and $c \geq 1$. In other words, a label $0$ cannot appear after a label $01$ (from right to left) and since $v$ has at least one leaf, at least one leaf (the utmost left one) is labelled by $01$. We denote the set of licit labels by $L^{01,0}(\tau)$. 

\medskip

We define a coefficient $\alpha^{(\tau;~01,\cdots,1)}$ for every right-handed symmetric corked planar tree $\tau$ together with a licit labelling by $01,0$ as follows: 
\[
\alpha^{(\tau;~ 01,\cdots,1)} = \prod_{v \in V(\tau)}\alpha^{(\underline{\sigma};~ 01, \cdots, 1)}~,
\]
where $\underline{\sigma}_v$ is the label of the vertex $v$, and where the product runs over all vertices of $\tau$ which are not corks. The coefficients at each vertex $v$ are given as follows: suppose the vertex $v$ has $b$ incoming edges attached to other edges, $c$ leaves labelled by $01$ and then $a$ leaves labelled by $0$, where $a + b + c = n$ is the total number of incoming edges, and where $a,b \geq 0$ and $c \geq 1$. Then
\[
\alpha^{(\underline{\sigma};~ 01, \cdots, 1)} = \begin{cases}
    \pi_{I_a}^{(n,I_c)}(\underline{\sigma}).\varphi_{I_b}^n(\sigma_{c-1}) & \quad \text{if } \mathrm{degree}(\underline{\sigma}) = c-1~, \\
    0 & \quad \text{otherwise}~,
\end{cases}
\]

where $\underline{\sigma} = (\sigma_0, \cdots \sigma_{c-1})$ and for $I_a = \{c+1,\cdots,s+a\}$, $I_b = \{c+a+1, \cdots, c+a+b\}$ and $I_c = \{1,\cdots,c\}$.

\medskip

Let $\mathfrak{g}$ be a qp-complete curved absolute $\mathcal{L}^\pi_\infty$-algebra, and let $\alpha$ be a Maurer--Cartan element in weight $1$ and let $\lambda$ be a degree $1$ element in $\mathfrak{g}$ of weight $1$. The \textit{gauge-action} of $\lambda$ on $\alpha$ is given by 
\[
\lambda \bullet \alpha = \sum_{m \geq 0} \sum_{\tau \in \mathrm{SCPT}^{\mathrm{right}}_m} \sum_{L^{01,0}(\tau)} \alpha^{(\tau;~ 01, \cdots, 1)} \gamma_\mathfrak{g}\left(\tau\left(\lambda ,\cdots, \alpha, \cdots, \lambda, \cdots, \alpha; d_\mathfrak{g}(\lambda) + \alpha \right)\right) ~, 
\]
\vspace{0.1pc}

where $\lambda$ appears if a leaf is labelled by $01$ and $\alpha$ by $0$, where the corks are replaced by the input $d_\mathfrak{g}(\lambda) + \alpha$ and where the sum is taken over all right-handed symmetric corked planar trees and all licit labellings of those. It is interesting to notice that the solution starts as follows
\[
\beta = \alpha + d_{\mathfrak{g}}(\lambda) + \sum_{n \geq 2} l_n^{\mathrm{id}}(\alpha, \cdots, \alpha,\lambda)  + \cdots~, 
\]
where the weight $1$ (in terms of structural operations) term is extremely similar to the \textit{twisted differential} $d_\mathfrak{g}^\alpha(\alpha)$ that appears in the same equation in the $\mathcal{L}_\infty$-algebra case, but without the coefficient $1/(n-1)!$. 

\begin{theorem}
Let $\mathfrak{g}$ be a qp-complete curved absolute $\mathcal{L}^\pi_\infty$-algebra. There is a canonical natural bijection
\[
\pi_0(\mathcal{R}(\mathfrak{g})) \cong \mathcal{MC}(\mathfrak{g})/\sim_{\mathrm{gauge}\text{ }\mathrm{equiv}}~.
\]

\end{theorem}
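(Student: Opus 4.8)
The plan is to exploit the fact that $\mathcal{R}(\mathfrak{g})$ is a Kan complex, so that its set of connected components is the quotient of its set of vertices by the relation ``being the two endpoints of a $1$-simplex''. Since we have already identified $\mathcal{R}(\mathfrak{g})_0 \cong \mathcal{MC}(\mathfrak{g})$, the statement reduces to proving that two Maurer--Cartan elements $\alpha,\beta$ bound a $1$-simplex of $\mathcal{R}(\mathfrak{g})$ if and only if they are gauge equivalent in the sense of the Definition above.

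First I would unravel the set of $1$-simplices. By adjunction,
\[
\mathcal{R}(\mathfrak{g})_1 \cong \mathrm{Hom}_{u\mathcal{EE}_\infty\text{-}\mathsf{cog}}\left(C^c(\Delta^1), \widehat{\mathrm{B}}_\iota(\mathfrak{g})\right) \cong \mathrm{Hom}\left(\widehat{\Omega}_\iota(C^c(\Delta^1)), \mathfrak{g}\right)~,
\]
so a $1$-simplex is a morphism of curved absolute $\mathcal{L}^\pi_\infty$-algebras out of $\widehat{\Omega}_\iota(C^c(\Delta^1))$. The graded module $C^c(\Delta^1)$ has basis $\{a_0,a_1,a_{01}\}$ with $a_0,a_1$ in degree $0$ and $a_{01}$ in degree $1$, and internal differential $d(a_{01})=a_1-a_0$. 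Exactly as in the computation of $\widehat{\Omega}_\iota(\kk.a_0)$ carried out for the $0$-simplices, the complete cobar construction is free as a graded absolute algebra on $C^c(\Delta^1)$, so a morphism to $\mathfrak{g}$ is uniquely determined by the images $\alpha=f(a_0)$, $\beta=f(a_1)$ and $\lambda=f(a_{01})$, subject to the single condition that $f$ commutes with the twisted cobar differential on each of the three generators.

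The main computational step, and the main obstacle, is to compute the cobar differential on the generator $a_{01}$ in terms of the explicit $\mathcal{E}$-coalgebra structure on $C^c(\Delta^1)$ of \cite[Section 3.2]{BergerFresse04}. The restrictions to the group-like elements $a_0$ and $a_1$ recover, as in the $0$-simplex computation, the Maurer--Cartan equations for $\alpha$ and $\beta$. On $a_{01}$, the component of the differential coming from the internal differential produces the term $\beta-\alpha=d(a_{01})$, while the components coming from the coproducts contribute precisely the terms $c_n^w(\lambda,\dots,\lambda,\alpha,\dots,\alpha,\beta,\dots,\beta)$ weighted by the signs $\epsilon_s(w)$ defined above; the qp-completeness hypothesis guarantees that this sum is well-defined and splits by weight. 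The compatibility equation on $a_{01}$ therefore reads
\[
d_\mathfrak{g}(\lambda) = \beta - \alpha + \gamma_\mathfrak{g}\left( \sum_{n \geq 2} \sum_{s+a+b=n} \sum_{\substack{w \in \mathcal{E}(n)_{s-1}\\ w=(\sigma_0,\dots,\sigma_{s-1})}} \epsilon_s(w)~ c_n^w\left(\underbrace{\lambda,\dots,\lambda}_{s}, \underbrace{\alpha,\dots,\alpha}_{a}, \underbrace{\beta,\dots,\beta}_{b}\right) \right)~,
\]
which is exactly the gauge equation. Since the two face maps $d_0,d_1\colon \mathcal{R}(\mathfrak{g})_1 \to \mathcal{R}(\mathfrak{g})_0$ are induced by the two vertex inclusions $\Delta^0 \to \Delta^1$, they send such an $f$ to $\beta$ and $\alpha$; thus a $1$-simplex is the same datum as a gauge equivalence between its endpoints.

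Finally I would conclude. Because $\mathcal{R}(\mathfrak{g})$ is a Kan complex by the established properties of the integration functor, the relation on vertices generated by $1$-simplices is already an equivalence relation and $\pi_0(\mathcal{R}(\mathfrak{g}))$ is the quotient of $\mathcal{R}(\mathfrak{g})_0$ by it. Transporting this relation along $\mathcal{R}(\mathfrak{g})_0\cong\mathcal{MC}(\mathfrak{g})$ and using the previous identification, it coincides with gauge equivalence; in particular gauge equivalence is an equivalence relation, and we obtain the canonical bijection $\pi_0(\mathcal{R}(\mathfrak{g})) \cong \mathcal{MC}(\mathfrak{g})/\sim_{\mathrm{gauge}}$.
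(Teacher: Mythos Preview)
Your proposal is correct and follows essentially the same route as the paper. The only cosmetic difference is that the paper phrases the computation on the coalgebra side---identifying $\pi_0$ with homotopy classes of maps $C_*^c(\{*\}) \to \widehat{\mathrm{B}}_\iota(\mathfrak{g})$ using $C_*^c(\Delta^1)$ as a cylinder object, and then passing to curved twisting morphisms $C_*^c(\Delta^1) \to \mathfrak{g}$---whereas you work on the algebra side with $\widehat{\Omega}_\iota(C^c(\Delta^1)) \to \mathfrak{g}$; these are equivalent via the complete bar-cobar adjunction and lead to the identical Berger--Fresse computation.
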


\begin{proof}
There are canonical bijections
\[
\pi_0(\mathcal{R}(\mathfrak{g})) \cong \mathrm{Hom}_{\mathsf{sSets}}(\{*\}, \mathcal{R}(\mathfrak{g}))/\sim_{\mathrm{hmt}} \cong \mathrm{Hom}_{u\mathcal{EE}_\infty\text{-}\mathsf{coalg}}\left(C_*^c(\{*\}), \widehat{\mathrm{B}}_\iota(\mathfrak{g}) \right)/\sim_{\mathrm{hmt}}~,
\]
since $C_*^c \dashv \overline{\mathcal{R}}$ is a Quillen adjuntion. In order to compute homotopies in $u\mathcal{EE}_\infty$-coalgebras, we need an interval object. This interval object is provided by tensoring with the interval object $C_*^c(\Delta^1)$. Notice that the $u\mathcal{EE}_\infty$-coalgebra structure of $C_*^c(\Delta^1)$ is the pullback of its $\mathcal{E}$-coalgebra structure by the projection, which is computed in Proposition \ref{Prop: coalgebra structure of the interval}. 

\medskip

Finally, the data of a morphism $h: C_*^c(\Delta^1) \longrightarrow \widehat{\mathrm{B}}_\iota(\mathfrak{g})$ is equivalent to the data of a curved twisting morphism $\iota_h: C_*^c(\Delta^1) \longrightarrow \mathfrak{g}$. In turn, this data coincides precisely with the definition of a gauge equivalence by Proposition \ref{Prop: coalgebra structure of the interval}, once we pullback the formulas in the coalgebra structure along the norm isomorphism.
\end{proof}

\begin{Remark}
It follows that gauge equivalence is an equivalence relation on the set of Maurer--Cartan elements. This is not immediate from the definition. For instance, it is not obvious \textit{a priori} that gauge equivalences are reflexive, as the formula in Proposition \ref{Prop: coalgebra structure of the interval} is not symmetric in $a_0$ and $a_1$. 
\end{Remark}

\textbf{The coalgebra structure on the pointed spheres.} Let $S^k$ be the simplicial model for the $k$-sphere given by $\Delta^k/\partial \Delta^k$. We consider the reduced chains on $\tilde{C}_*^c(S^k)$, which is a coalgebra over the non-unital Barratt--Eccles operad.

\begin{lemma}\label{lemma: coalgebra structure on the reduced spheres}
  Let $k \geq 2$. The $\mathcal{E}^{\mathrm{nu}}$-coalgebra structure on the pointed $k$-sphere $\tilde{C}_*^c(S^k) \cong \kk.a_{k}$ is given by the following structure map
\[
\left\{
\begin{tikzcd}[column sep=1pc,row sep=0pc]
\tilde{C}_*^c(S^k) \arrow[r]
&\displaystyle \prod_{n \geq 0} \Bigl[\E(n), \bigl(\tilde{C}_*^c(S^k)\bigr)^{\otimes n}\Bigr]^{\Sym_n} \\
a_{k} \arrow[r,mapsto]
&\displaystyle \sum_{n \geq 2} \sum_{\substack{w \in \E(n)_{k(n-1)}                             \\ \overline{w}_1 = \mathrm{id}_{\Sym_n}, \, \overline{w}_j \in \Sym_n}} \sum_{\sigma \in \Sym_n} \biggl[c_n^{\sigma.w} \mapsto (-1)^{j_k} \mathrm{sign}(\sigma) \prod_{j = 2}^k \mathrm{sign}(\overline{w}_j) \underbrace{a_{k} \otimes \cdots \otimes a_{k}}_{n} \biggr]~,
\end{tikzcd}
\right.
\]  
  where
  \begin{enumerate}
    \item in the second sum, we consider elements $w = (\sigma_0,\cdots, \sigma_{k(n-1)})$ in $\E(n)_{k(n-1)}$ such that, if we define the $n$-tuple
    \[
    \overline{w}_j \coloneqq (\sigma_{(j-1)(n-1)}(1),\cdots, \sigma_{j(n-1)}(1))
    \]
    for all $1 \leq j \leq k$, we have that $\overline{w}_1$ is the identity permutation of $\Sym_n$ and $\overline{w}_j$ is a permutation in $\Sym_n$ for all $2 \leq j \leq k$ (meaning all the numbers in the $n$-tuple are distinct);
    
    \medskip
    
    \item the sign of the decomposition is given by $(-1)^{j_k}$ where
    \[
    j_k(n) = \left[\frac{k(k-1)}{2}\right] \frac{(n+2)(n-1)}{2},
    \]
    where $\prod_{j = 2}^k \mathrm{sign}(\overline{w}_j)$ is the product of the signatures of the permutations $\overline{w}_j$ for all $2 \leq j \leq k$.
  \end{enumerate}
\end{lemma}

\begin{proof}
It essentially follows from the isomorphism $\tilde{C}_*^c(S^k) \cong \tilde{C}_*^c(S^1)^{\otimes k}$ in \cite[Proposition 3.2.5]{BergerFresse04}, together with formula of the Hopf structure of the operad $\mathcal{E}^{\mathrm{nu}}$, which is given by: 
\[
\Delta_{\mathcal{E}^{\mathrm{nu}}}((\sigma_0, \cdots, \sigma_l)) = \sum_{i=0}^l (\sigma_0, \ldots, \sigma_i) \otimes (\sigma_i, \ldots, \sigma_l)~,
\]
for any element $(\sigma_0, \cdots, \sigma_l)$ in $\mathcal{E}^{\mathrm{nu}}(n)_l$, and any $n \geq 1$. Indeed, by the isomorphism $\tilde{C}_*^c(S^k) \cong \tilde{C}_*^c(S^1)^{\otimes k}$ it is clear that $w = (\sigma_0,\cdots, \sigma_{k(n-1)})$ acts non-trivially on $a_k$ if and only if $w_{(j)}$ acts non trivially on the $j$-th copy of $a_1$, where 
\[
\Delta_{\mathcal{E}^{\mathrm{nu}}}^k(w) = \sum w_{(1)} \otimes \cdots \otimes \underbrace{w_{(j)}}_{j\text{-th}} \otimes \cdots \otimes w_{(k)}~. 
\]
So $w_{(j)}$ must be such that $\overline{w}_j$ is a permutation in $\mathbb{S}_n$ for all $1 \leq j \leq k$, and up to permuting the outputs, we can set the first one $\overline{w}_1$ to be the identity. Finally, the signs are computed as follows: the signature of $\overline{w}_j$ comes from the action of $w_{(j)}$ on the $j$-th copy of $a_1$. The sign-exponent term $j_k(n)$ is given by: 
\[
j_k(n) = \frac{k(k-1)}{2}(n-1) + \frac{k(k-1)}{2}\frac{n(n-1)}{2} = \left[\frac{k(k-1)}{2}\right] \frac{(n+2)(n-1)}{2}~, 
\] 
where the first term comes from applying $\Delta_{w_{(1)}} \otimes \cdots \otimes \Delta_{w_{(k)}}(a_1 \otimes \cdots \otimes a_1)$ and the second term comes from rearranging the outputs 

\[
a_1^{\otimes n} \otimes \cdots \otimes a_1^{\otimes n} \quad \text{into} \quad (\underbrace{a_1 \otimes \cdots \otimes a_1}_{k\text{ times}})^{\otimes n}
\]
\end{proof}

\textbf{Higher homotopy groups of the integration functor.} The explicit formulas for the coalgebra structures of the pointed spheres and of the interval allow us to give the following combinatorial description of the higher homotopy groups $\pi_k\mathcal{R}(\mathfrak{g})$ at the Maurer--Cartan element $0$. 

\begin{Definition}[Representative elements]\label{def: representative element}
Let $\mathfrak{g}$ be a qp-complete absolute $\mathcal{L}_\infty^\pi$-algebra. An element $\varepsilon$ of $\mathfrak{g}$ in degree $k \geq 1$ is a \textit{representative element} if it satisfies the following equation: 
\[
\displaystyle d_\mathfrak{g}(\varepsilon) + \sum_{n \geq 2} \sum_{\substack{w \in \E(n)_{k(n-1)}                             \\ \overline{w}_1 = \mathrm{id}_{\Sym_n}, \, \overline{w}_j \in \Sym_n}} (-1)^{\left[\frac{k(k-1)}{2}\right] \frac{(n+2)(n-1)}{2}} \prod_{j = 2}^k \mathrm{sign}(\overline{w}_j) ~ l^w_n(\varepsilon,\cdots,\varepsilon)= 0~, 
\]
where the sum runs over all $w = (\sigma_0,\cdots, \sigma_{k.(n-1)})$ in $\mathcal{E}(n)_{k.(n-1)}$ such that every $n$-tuple $\overline{w}_j \coloneqq (\sigma_{(j-1)(n-1)}(1),\cdots, \sigma_{j(n-1)}(1))$ is a permutation in $\mathbb{S}_n$ for all $1 \leq j \leq k$ and in particular $\overline{w}_1$ is the identity permutation of $\mathbb{S}_n$. 
\end{Definition}

Each operation $l^w_n$ is of degree $-k.(n-1)-1$ and thus in weight $k.(n-1)$. This means that there are a finite amount of terms in each weight and therefore the above sum converges in $\mathfrak{g}$ by qp-completeness. 

\begin{Definition}[Interval equivalences]\label{def: interval equivalences}
Let $\mathfrak{g}$ be a qp-complete absolute $\mathcal{L}_\infty^\pi$-algebra and let $\varepsilon_1$ and $\varepsilon_2$ be two representative elements of degree $k \geq 1$. They are \textit{interval-equivalent} if there exists an element $\varphi$ in $\mathfrak{g}$ of degree $k+1$ such that 
\[
d_{\mathfrak{g}}(\varphi) = \varepsilon_1 - \varepsilon_2 - \gamma_{\mathfrak{g}}\left(\sum_{\substack{n \geq 2, s \in [1,n] \\ a + b = n - s }} \sum_{\substack{w \in \E(n)_{(s-1)+k(n-1)} \\ \overline{w}'_j \in \Sym_n}} \Gamma_k(w) ~ c_n^{w}\left(\underbrace{\varphi, \cdots, \varphi}_{s},\underbrace{\varepsilon_1, \cdots, \varepsilon_1}_{a}, \underbrace{\varepsilon_2, \cdots, \varepsilon_2}_{b}\right)\right)
\]
where the coefficients $\Gamma_k(w)$ in $\{-1, 0, 1\}$ are given by 
\[
\Gamma_k(w) = \pi_{I_a}^{(n,s)}(w|_S).\varphi_{I_b}^n(\sigma_{s-1}).(-1)^{\left(k\left[\frac{s(s-1)}{2} + (n-1) \right] + \left[\frac{k(k-1)}{2}\right]\frac{(n+2)(n-1)}{2}\right)}.\prod_{j = 1}^k \mathrm{sign}(\overline{w}_j)~,
\]
and where the sum runs over all $w = (\sigma_0,\cdots, \sigma_{(s-1) + k.(n-1)})$ which satisfy the following condition: every $n$-tuple given by $\overline{w}'_j=(\sigma_{(s-1) + (j-1)(n-1)}(1), \cdots, \sigma_{(s-1) + j(n-1)}(1))$ is a permutation in $\mathbb{S}_n$ for all $1 \leq j \leq k$. Here $w|_S$ refers to the first $s$-tuple permutations $(\sigma_0, \cdots, \sigma_{s-1})$.
\end{Definition}

\begin{theorem}\label{thm: fake Berglund theorem}
Let $\mathfrak{g}$ be an qp-complete absolute $\mathcal{L}_\infty^\pi$-algebra. There is a canonical natural bijection

\[
\pi_k(\mathcal{R}(\mathfrak{g}),0) \cong \mathrm{rep}(\mathfrak{g}_k)/\sim_{\mathrm{int}}~,
\]
\vspace{0.1pc}

between the $k$-th homotopy group of $\mathcal{R}(\mathfrak{g})$ at the Maurer--Cartan $0$ and the set of representative elements of degree $k \geq 1$ in $\mathfrak{g}$ up to interval equivalences.  
\end{theorem}

\begin{proof}
Let $\mathfrak{g}$ be an qp-complete absolute $\mathcal{L}_\infty^\pi$-algebra. There are canonical bijections
\[
\pi_k(\mathcal{R}(\mathfrak{g}),0) \cong \mathrm{Hom}_{\mathsf{sSets}_*}\left(S^k, \mathcal{R}(\mathfrak{g})\right)/\sim_{\mathrm{hmt}} \cong \mathrm{Hom}_{\mathcal{EE}_\infty\text{-}\mathsf{coalg}}\left(\tilde{C}_*^c(S^k), \widehat{\mathrm{B}}^\flat_\iota(\mathfrak{g}) \right)/\sim_{\mathrm{hmt}}~,
\]
by the pointed version of the commuting triangle constructed in Theorem \ref{thm: triangle commutatif}.

\medskip

The data of a morphism of $\mathcal{EE}_\infty$-coalgebras $\tilde{C}_*^c(S^k) \longrightarrow \widehat{\mathrm{B}}^\flat_\iota(\mathfrak{g})$ is equivalent to the data of a twisting morphism $\tilde{C}_*^c(S^k) \longrightarrow \mathfrak{g}$, which is precisely the data of a representative element in $\mathrm{rep}(\mathfrak{g}_k)$ by Lemma \ref{lemma: coalgebra structure on the reduced spheres}.

\medskip

By Proposition \ref{Prop: curved non curved comparison}, we can compute homotopies between such maps in the category of $u\mathcal{EE}_\infty$-coalgebras. Such an homotopy is equivalent to the data of a curved twisting morphism $C_*^c(\Delta^1) \otimes \tilde{C}_*^c(S^k) \longrightarrow \mathfrak{g}$, since $C_*^c(\Delta^1)$ is an interval object. So it all boils down to computing the $u\mathcal{EE}_\infty$-coalgebra structure of $C_*^c(\Delta^1) \otimes \tilde{C}_*^c(S^k)$, which follows from Proposition \ref{Prop: coalgebra structure of the interval} and Lemma \ref{lemma: coalgebra structure on the reduced spheres}, using the same methods as in the proof of Lemma \ref{lemma: coalgebra structure on the reduced spheres}.
\end{proof}

\begin{Remark}[Berglund's theorem and formally étale cochains]\label{Rmk: Berglund fails + formality of cochains}
Let $\mathfrak{g}$ be a (shifted) $\mathcal{L}_\infty$-algebra over a characteristic zero field. Berglund showed in \cite{Berglund15} that there is an isomorphism
\[
\mathrm{H}_{k}(\mathfrak{g}) \cong \pi_k(\mathcal{R}(\mathfrak{g}),0)~,
\]
where the integration functor considered is that of \cite{Getzler09}; more generally, homotopy groups of $\mathcal{R}(\mathfrak{g})$ at a Maurer--Cartan $\alpha$ are given by the homology of the twisted $\mathcal{L}_\infty$-algebra by that element. These isomorphisms can be proved using the same methods as the ones used here, and ultimately rely on the fact that the coalgebra structure on the cellular chains of sphere is (homotopically) trivial over a characteristic zero field, see \cite[Subsection 2.6]{lucio2022integration}. In general, however, this coalgebra structure is non-trivial (see after the next paragraph). 

\medskip

This is consistent with the fact, proven by Lurie in \cite[Proposition 2.4.12]{Lurie11Rational}, that the $\mathbb{E}_\infty$-algebra of cochains $C^*_c(X)$ of a finite space $X$ is formally étale over a algebraically closed field of characteristic $p$. This means that its cotangent complex is acyclic. And, up to comparison and rectification results, the linear dual this cotangent complex should coincide with $\mathcal{L}_*(X)$. Thus $\mathcal{L}_*(X)$ should also be acyclic. If Berglund's theorem was to hold in that setting, it would be impossible to construct $p$-adic models using our functor $\mathcal{L}_*$, as we will do in Section \ref{Section: Lie models}. 
\end{Remark}

\textbf{Relationship with May-Steenrod chain level operations.} Kaufmann and Medina-Mardones give in \cite{chainSteenrod} an effective construction for (higher) Steenrod operations at the chain level. These terms appear as the labels of the operations in the sum of Definition \ref{def: representative element}. Let us give some examples. For instance, the cycle in the Barratt-Eccles operad that models the cup-$i$ product of arity $2$ is degree $i$ element $\sigma_i = ((12),(21),(12), \cdots)$, where $\sigma_i$ ends with $(12)$ if $i$ is odd and with $(21)$ if it is even. The first term in the equation of Definition of \ref{def: representative element} for a representative element of degree $k$ is:
\[
d_{\mathfrak{g}}(\varepsilon) + l_2^{\sigma_{(k+1)}}(\varepsilon,\varepsilon) + \cdots = 0~, 
\]
and it is labelled by the cycle representing the cup-$(k+1)$ product of arity $2$. Recall that cup-$i$ of arity $2$ are in general non-trivial over a fields of characteristic 2. Moreover, these cycles that represent the cup-$(k+1)$ (co)products in a general $\mathcal{E}$-(co)algebra appear in the above formula because of the particular $\mathcal{E}$-coalgebra structure on the reduced chains on the spheres $\tilde{C}_*^c(S^k)$. And it is interesting to notice that they lie precisely in complexity $(k+1)$ in the sense of \cite[Section 1.6]{BergerFresse04}, meaning that they belong to the $\mathbb{E}_{k+1}$ suboperad of the $\mathbb{E}_\infty$ that is $\mathcal{E}$. This agrees with the recent result of Heuts and Land in \cite{heuts2024formalitymathbbenalgebrascochainsspheres} which states that the cochains on spheres $C_c^*(S^k)$ are formal as $\mathbb{E}_k$-algebras but not as a $\mathbb{E}_{k+1}$-algebra.

\medskip

More generally, the higher cup-$(k+1)$ operations that appear in these formulas lie always in $\mathbb{E}_{k+1}$, so one should expect them to act in a homotopically non-trivial way on the spheres over a positive characteristic fields in general, where these arities that appear are those divided by the characteristic. This in important because, in principle, only the elements $w$ in $\mathcal{E}$ which act homotopically non-trivially on $\tilde{C}_*^c(S^k)$ should contribute to the sum in Definition \ref{def: representative element}. For example, in characteristic $0$, $\tilde{C}_*^c(S^k)$ is homotopically trivial as a $\mathcal{E}$-coalgebra, and one recovers Berglund's theorem explained in Remark \ref{Rmk: Berglund fails + formality of cochains}. And at a fixed prime $p$, it should be possible to simplify this sum as well. For instance, the first homotopically non-trivial terms only appear in arity $p$. This shall be the subject of future research. 

\subsection{The analogues of the (higher) Baker--Campbell--Hausdorff formulas in positive characteristic}\label{subsection: bch}
The goal of this subsection is to show that, for any absolute partition $\mathcal{L}_\infty$-algebra $\mathfrak{g}$, the horn-fillers in the $\infty$-groupoid $\mathcal{R}(\mathfrak{g})$ are given by explicit algebraic formulas, and thus that $\mathcal{R}(\mathfrak{g})$ is an algebraic $\infty$-groupoid in the sense of \cite{AlgebraicKancomplex}. This notion can be thought as a "group up to homotopy" in an algebraic sense. In particular, the formula for the horn-fillers with respect to $\Lambda_1^2$ can be thought as the analogue of the Baker--Campbell--Hausdorff formula in positive characteristic. This opens the door to a generalization of the Lie correspondences between unipotent or abstract groups and nilpotent Lie algebras to a positive characteristic setting. 

\medskip

\textbf{Formal fixed-points equations.} We adapt the algebraic methods of \cite[Appendix A.1]{robertnicoud2020higher} for the particular context we are interested in. Let $(V,d_V)$ be a chain complex over a field $\kk$. We suppose there exists a decreasing filtration $F_*V$ on $V$
\[
V = F_0V \supseteq F_1V \supseteq F_2V \supseteq \cdots \supseteq F_pV \supseteq \cdots
\]
such that $V$ is \textit{complete} with respect to it, meaning that $V \cong \lim_{p \in \mathbb{N}} V/F_pV~.$ We consider a family of multilinear maps 
\[
P_j: V^{\otimes j} \longrightarrow V \quad \text{such that} \quad P_j(F_{i_1}V, \cdots,F_{i_j}V) \subset F_{i_1 + \cdots + i_j + j}V~,
\]
that is, such that $P_j \in F_j\mathrm{Hom}(V^{\otimes j},V)$, for all $j \geq 0$. The term $P_0$ corresponds to an element $P_0(1)$ in $V$. We consider the following fixed-point equation 
\[
P(x) = P_0 + \sum_{j \geq 1}P_j(x^{\otimes j}) = x~,
\]
where we want to find a solution $x$ in $V$. Let $\tau$ be a planar tree with $d$ leaves. We denote by $\tau^P$ the linear map $V^{\otimes d} \longrightarrow V$ obtained by replacing the vertices of $\tau$ with their corresponding operation and composing these operations along $\tau$, meaning every vertex $v$ in $\tau$ with $k$ incoming edges is replaced by the operation $P_k$, and we take the composition of all these operations as they are disposed in the planar tree $\tau$. 

\begin{Proposition}\label{prop: fixed point equation}
The exists an unique solution $x$ in $V$ to the above fixed-point equation, given by 
\[
x  = \sum_{\tau \in \mathrm{PT}} \tau^P(P_0, \cdots, P_0)~,
\]
where the sum runs over all planar trees $\tau$. 
\end{Proposition}

\begin{proof}
The proof is exactly the same as in \cite[Proposition A.5]{robertnicoud2020higher}. The only difference is the weight convention. In \textit{loc.cit.} the operations $P_j$ are in weight $1$ for all $j \geq 0$, whereas here the operation $P_j$ is in weight $j$ for all $j \geq 0$. It is straightforward to check that the same arguments apply with this new weight convention. 
\end{proof}

\textbf{Horn-fillers in Maurer--Cartan spaces.} Like in \cite[Section 5.1]{robertnicoud2020higher} and in \cite[Section 2.5]{lucio2022integration}, we give an algebraic characterization of all the horn-fillers in $\mathcal{R}(\mathfrak{g})$, here in the case where $\mathfrak{g}$ is a qp-complete curved absolute partition $\mathcal{L}_\infty$-algebra. These horn-fillers are given by explicit algebraic formulas, solutions to a fixed-points equation. 

\medskip

Let $D^n$ denote the chain complex given by $\kk.u$ in degree $n$ and $\kk.du$ in degree $n-1$, where $d(u) = du$. We consider the free qp-complete pdg $\mathrm{B}^{\mathrm{s.a}}\mathcal{E}$-algebra on $D^n$, which is given by
\[
(D^n)^{\mathrm{B}^{\mathrm{s.a}}\mathcal{E}} = \prod_{m \geq 0} \widehat{\Omega}^{\mathrm{s.a}}\mathcal{E}^*(m) ~\widehat{\otimes}_{\mathbb{S}_n}~ (D^n)^{\otimes m}~. 
\]
It does not form a \textbf{curved} $\mathrm{B}^{\mathrm{s.a}}\mathcal{E}$-algebra since it does not satisfy the curved condition \ref{curved condition}. However, curved $\mathrm{B}^{\mathrm{s.a}}\mathcal{E}$-algebras are a reflexive full subcategory of pdg $\mathrm{B}^{\mathrm{s.a}}\mathcal{E}$-algebras, hence working in this context in enough for our immediate purposes. See \cite[Section 3.5]{bricevictor} for more details on these notions. 

\begin{lemma}\label{lemma: decomposition of L of Delta n}
There is an isomorphism of qp-complete pdg $\mathrm{B}^{\mathrm{s.a}}\mathcal{E}$-algebras 
\[
\mathcal{L}(\Delta^n) \cong \mathcal{L}(\Lambda_k^n) \amalg (D^n)^{\mathrm{B}^{\mathrm{s.a}}\mathcal{E}}~, 
\]
for all $n \geq 2$ and any $0 \leq k \leq n$, where $\Lambda_k^n$ denotes the $k$-horn of dimension $n$. 
\end{lemma}

\begin{proof}
Let us consider the following morphism of qp-complete pdg $\mathrm{B}^{\mathrm{s.a}}\mathcal{E}$-algebras 
\[
\begin{tikzcd}[column sep=1.5pc,row sep=0pc]
\varphi: \mathcal{L}(\Lambda_k^n) \amalg (D^n)^{\mathrm{B}^{\mathrm{s.a}}\mathcal{E}} \arrow[r]
&\mathcal{L}(\Delta^n) \\
a_I \arrow[r,mapsto]
&a_I \\
u \arrow[r,mapsto]
&a_{[n]} \\
du \arrow[r,mapsto]
&d(a_{[n]})~.  
\end{tikzcd}
\]
We want to construct an inverse $\psi$ of $\varphi$ with the following assignment 
\[
\begin{tikzcd}[column sep=1.5pc,row sep=0pc]
\psi: \mathcal{L}(\Delta^n)\arrow[r]
&\mathcal{L}(\Lambda_k^n) \amalg (D^n)^{\mathrm{B}^{\mathrm{s.a}}\mathcal{E}} \\
a_I \arrow[r,mapsto]
&a_I \\
a_{[n]} \arrow[r,mapsto]
&u \\
a_{\widehat{k}} \arrow[r,mapsto]
&x~,  
\end{tikzcd}
\]
where $\widehat{k} \subset [n]$ is the subset containing all elements except $k$. Let us denote 
\[
d(a_{[n]}) = \sum_{l=0}^n(-1)^l a_{\widehat{l}} - \sum_{m \geq 2} \sum_{w \in \mathcal{E}(m)} \sum_{\substack{I_1,\cdots,I_m \subseteq [n] \\ I_l \neq \emptyset}} \lambda^{(w;I_1,\cdots, I_m)}~ l^{w}_m(a_{I_1},\cdots,a_{I_m})~,
\]
the image of $a_{[n]}$ by the pre-differential, where the first sum comes from the differential of $C_*^c(\Delta^n)$ and the second sum comes from the decompositions of $a_{[n]}$ in the $\mathcal{E}$-coalgebra structure. In fact, the second sum is the pre-image of the decompositions of $a_{[n]}$ by the norm isomorphism described in Remark \ref{Remark: norm map}, like in previous computations. Notice that the coefficients $\lambda^{(w;I_1,\cdots, I_m)}$ are either $0$ or $\pm 1$. 

\medskip

Like in the proof of \cite[Lemma 5.1]{robertnicoud2020higher}, the element $x$ needs to satisfy a fixed-point equation where the operations are 
\[
P_0 = (-1)^k du - \sum_{l\neq k}(-1)^{l+k} a_{\widehat{l}}+  (-1)^k\sum_{m \geq 2} \sum_{w \in \mathcal{E}(m)} \sum_{\substack{I_1,\cdots,I_m \subseteq [n] \\ I_l \neq \emptyset,\widehat{k}}} \lambda^{(w;I_1,\cdots, I_m)} ~ l^{w}_m(a_{I_1},\cdots,a_{I_m})
\]
that is, where the second sum only involves the decompositions of $a_{[n]}$ without the term $a_{\widehat{k}}$; and 
\[
P_j = (-1)^k\sum_{m \geq 2} \sum_{w \in \mathcal{E}(m+j)} \sum_{\substack{I_1,\cdots,I_m \subseteq [n] \\ I_l \neq \emptyset,\widehat{k}}} \lambda^{(w;I_1,\cdots,\widehat{k},\cdots,\widehat{k},\cdots, I_m)} ~ l^{w}_m(a_{I_1},\cdots,—,\cdots,—,\cdots,a_{I_m})~,
\]
where the terms that appear correspond to decompositions of $a_{[n]}$ which involve $j$-times the term $a_{\widehat{k}}$, for all $j \geq 1$. These slots are left as inputs of the operation $P_j$; there are exactly $j$-inputs. 

\medskip

The term $P_0$ is in weight zero, as any $a_{\widehat{l}}$ is a generator, and the term $P_1$ is in weight $1$, since it is a sum of structural operations. Let us show that $P_j$ is necessarily in weight at least $j$ when $j \geq 2$. Any $w$ in $\mathcal{E}(n)$ which decomposes $a_{[n]}$ into $j$ copies of $a_{\widehat{k}}$ must be of degree at least $j(n-1)-n$, and therefore $l_m^w$ is in weight at least $j(n-1)-n + 1$. This number is greater than $j$ if $n \geq 3$. For $n = 2$, the result hold because of the specific formulas of \cite[Section 2]{BergerFresse04}. If $a_{02}$ appears $j$-times, the interval cut is at least of length $2j + 2$, hence the degree of $w$ is at least $j$, thus the weight of $l_m^w$ is at least $j+1$. If $a_{01}$ or $a_{12}$ appear $j$-times, the interval cut is at least of length $2j + 1$, hence the degree of $w$ is at least $j-1$, thus the weight of $l_m^w$ is at least $j$. Therefore this fixed-point equation has an unique solution $x$ by Proposition \ref{prop: fixed point equation}. We then conclude using the same arguments as in the proof of \cite[Lemma 5.1]{robertnicoud2020higher}. 
\end{proof}

\begin{theorem}\label{thm: horn-fillers}
Let $\mathfrak{g}$ be a qp-complete curved absolute $\mathcal{L}_\infty^\pi$-algebra. There are bijections 
\[
\rho_k^n: \mathfrak{g}_n \times \mathrm{Hom}_{\mathsf{sSet}}(\Lambda_k^n,\mathcal{R}(\mathfrak{g})) \cong \mathrm{Hom}_{\mathsf{sSet}}(\Delta^n, \mathcal{R}(\mathfrak{g})) ~,
\]
natural in $\mathfrak{g}$, for all $n \geq 2$ and $0 \leq k \leq n$. 
\end{theorem}

\begin{proof}
We have the following natural isomorphism 
\[
\mathrm{Hom}_{\mathsf{sSet}}(\Delta^n, \mathcal{R}(\mathfrak{g})) \cong \mathrm{Hom}_{\mathsf{curv}~\mathsf{abs}~\mathcal{L}_\infty^\pi\text{-}\mathsf{alg}^{\mathsf{qp}\text{-}\mathsf{comp}}}(\mathcal{L}(\Delta^n), \mathfrak{g})~,
\]
given by the $\mathcal{L} \dashv \mathcal{R}$ adjunction. Using Lemma \ref{lemma: decomposition of L of Delta n} and the fact that qp-complete curved absolute $\mathcal{L}_\infty^\pi$-algebras are a full subcategory of pdg $\mathrm{B}^{\mathrm{s.a}}\mathcal{E}$-algebras, we get 
\begin{align*}
\mathrm{Hom}_{\mathsf{curv}~\mathsf{abs}~\mathcal{L}_\infty^\pi\text{-}\mathsf{alg}^{\mathsf{qp}\text{-}\mathsf{comp}}}(\mathcal{L}(\Delta^n), \mathfrak{g}) &\cong \mathrm{Hom}_{\mathsf{pdg}~\mathrm{B}^{\mathrm{s.a}}\mathcal{E}\text{-}\mathsf{alg}}(\mathcal{L}(\Delta^n), \mathfrak{g}) \\
&\cong \mathrm{Hom}_{\mathsf{pdg}~\mathrm{B}^{\mathrm{s.a}}\mathcal{E}\text{-}\mathsf{alg}}((D^n)^{\mathrm{B}^{\mathrm{s.a}}\mathcal{E}}, \mathfrak{g}) \times \mathrm{Hom}_{\mathsf{pdg}~\mathrm{B}^{\mathrm{s.a}}\mathcal{E}\text{-}\mathsf{alg}}(\mathcal{L}(\Lambda_k^n), \mathfrak{g}) \\
&\cong \mathrm{Hom}_{\mathsf{pdg}~\mathsf{mod}}(D^n, \mathfrak{g}) \times \mathrm{Hom}_{\mathsf{pdg}~\mathrm{B}^{\mathrm{s.a}}\mathcal{E}\text{-}\mathsf{alg}}(\mathcal{L}(\Lambda_k^n), \mathfrak{g}) \\
&\cong \mathfrak{g}_n \times \mathrm{Hom}_{\mathsf{curv}~\mathsf{abs}~\mathcal{L}_\infty^\pi\text{-}\mathsf{alg}^{\mathsf{qp}\text{-}\mathsf{comp}}}(\mathcal{L}(\Lambda_k^n), \mathfrak{g}) \\
&\cong  \mathfrak{g}_n \times \mathrm{Hom}_{\mathsf{sSet}}(\Lambda_k^n,\mathcal{R}(\mathfrak{g}))~. 
\end{align*}
\end{proof}

Let $\mathfrak{g}$ be a qp-complete curved absolute $\mathcal{L}_\infty^\pi$-algebra. The simplicial set $\mathcal{R}(\mathfrak{g})$ is not only a Kan complex, it is an algebraic Kan complex in the sense of \cite{AlgebraicKancomplex}. Indeed, for any lifting problem $\Lambda_k^n \longrightarrow \mathcal{R}(\mathfrak{g})$, there is a canonical horn-filler given by $0$ in $\mathfrak{g}_n$ under the bijection of Theorem \ref{thm: horn-fillers}. One may thing of $\mathcal{R}(\mathfrak{g})$ not as satisfying the \textit{property} of being a Kan complex, but as being endowed with additional \textit{structure}. From this point of view, $\mathcal{R}(\mathfrak{g})$ is a \textit{group up to homotopy} in an algebraic sense. 

\medskip

The data of an $n$-simplex $\kappa$ in $\mathcal{R}(\mathfrak{g})_n$ is equivalent to the data of a curved twisting morphism $\alpha_\kappa: C_*^c(\Delta^n) \longrightarrow \mathfrak{g}$. It is in particular a collection of elements $\alpha_\kappa(a_I)$ in $\mathfrak{g}$ of degree $\mathrm{Card}(I)-1$ for all non-empty subsets $I \subseteq [n]$. They satisfy algebraic equations imposed by the $\mathcal{E}$-coalgebra structure of $C_*^c(\Delta^n)$. \textit{Mutatis mutandis}, the same holds for the data of a horn $\Lambda_k^n$ in $\mathcal{R}(\mathfrak{g})$, replacing $C_*^c(\Delta^n)$ by $C_*^c(\Lambda^n_k)$. 

\begin{Definition}[Horn-filling operations]
Let $\mathfrak{g}$ be a qp-complete curved absolute $\mathcal{L}_\infty^\pi$-algebra and let $y$ be a degree $n$ element in $\mathfrak{g}$. The \textit{horn-filler operation} with respect to $y$ is defined as
\[
\begin{tikzcd}[column sep=1.5pc,row sep=0pc]
\mathrm{HF}_y: \mathrm{Hom}_{\mathsf{sSet}}(\Lambda_k^n,\mathcal{R}(\mathfrak{g})) \arrow[r]
&\mathfrak{g}_{n-1} \\
\{x_{I}\}_{\substack{I \subset [n], \\ I \neq \emptyset,\widehat{k}}} \arrow[r,mapsto]
&\rho_k^n(y,x_I)(a_{\widehat{k}})~. 
\end{tikzcd}
\]
It sends any horn $\Lambda_k^n$ in $\mathcal{R}(\mathfrak{g})$, that is, any collection $\{x_{I}\}$ of elements in $\mathfrak{g}$ satisfying the algebraic equations imposed by the $\mathcal{E}$-coalgebra structure of $C_*^c(\Lambda^n_k)$, to the element $\rho_k^n(y,x_I)(a_{\widehat{k}})$ in $\mathfrak{g}_{n-1}$. This element is given constructed as follows: associated to the pair $(y,x_I)$, there exists an unique $n$-simplex $\rho_k^n(y,x_I)$ in $\mathcal{R}(\mathfrak{g})$ by the bijection of Theorem \ref{thm: horn-fillers}; this $n$-simplex is the data of a curved twisting morphism $\rho_k^n(y,x_I):  C_*^c(\Delta^n) \longrightarrow \mathfrak{g}$, the horn-filler product $\mathrm{HF}_y(x_I)$ is the image of $a_{\widehat{k}}$ by this curved twisting morphism. 
\end{Definition}

\begin{Remark}
This definition is analogous to the \textit{higher Baker--Campbell--Hausdorff} products defined in \cite{robertnicoud2020higher} and extended to the curved setting in \cite{lucio2022integration}. 
\end{Remark}

Recall that a symmetric corked \textit{planar} tree $\tau$ is a \textit{planar tree} where every vertex has at least two incoming edges or zero incoming edges (called corks). Every vertex $v$ is labelled by an element $\underline{\sigma}_v$ in $\mathcal{E}(\mathrm{In}(v))$, where $\mathrm{In}(v)$ is the number of incoming edges of the vertex. The set of symmetric corked planar trees is denoted by $\mathrm{SCPT}$.

\medskip

The image of the element $a_{[n]}$ in $\mathcal{L}(\Delta^n)$ by the pre-differential is given by 
\[
d(a_{[n]}) = \sum_{l=0}^n(-1)^l a_{\widehat{l}} - \sum_{m \geq 2} \sum_{w \in \mathcal{E}(m)} \sum_{\substack{I_1,\cdots,I_m \subseteq [n] \\ I_l \neq \emptyset}} \lambda^{(w;I_1,\cdots, I_m)}~ l^{w}_m(a_{I_1},\cdots,a_{I_m})~,
\]
where the coefficients $\lambda^{(w;I_1,\cdots, I_m)}$ are pre-image by the norm map of the decomposition of $a_{[n]}$ by the $\mathcal{E}$-coalgebra structure of $C_*^c(\Delta^n)$. These coefficients are either $0$ or $\pm 1$. From these coefficients, we are going to define a coefficient for any symmetric corked planar tree $\tau$ of arity $m$ with leaves labelled by a collection $(I_1, \cdots, I_m)$ of subsets $I_j \subseteq [n]$, where $I_j \neq \widehat{k}$ and $0 \leq k \leq n$. We define the coefficient $\lambda^{(\tau; I_1, \cdots, I_m)}$ as 
\[
\lambda^{(\tau; I_1, \cdots, I_m)} = \prod_{v \in V(\tau)} (-1)^k \lambda^{(\underline{\sigma}_v; I_{i_1},\cdots,\widehat{k},\cdots, I_{\mathrm{in}(v)})}~, 
\]
where $\underline{\sigma}_v$ is the label of the vertex $v$, and where the product runs over all vertices of $\tau$. 

\medskip

The subsets that appear in the coefficient $\lambda^{(\underline{\sigma}_v; I_{i_1},\cdots,\widehat{k},\cdots, I_{\mathrm{in}(v)})}$ are chosen as follows: from left to right, if an incoming edge is a leaf, the we chose the label of the leaf, if an incoming edge is attached to another vertex, we chose the subset $\widehat{k}$. 

\begin{Proposition}\label{prop: general formula for higher bch}
Let $\mathfrak{g}$ be a qp-complete curved absolute $\mathcal{L}_\infty^\pi$-algebra, let $y$ be a degree $n$ element in $\mathfrak{g}$ and let $\{x_I\}$ be a collection of elements in $\mathfrak{g}$ corresponding to a horn $\Lambda_k^n$ in $\mathcal{R}(\mathfrak{g})$. The horn-filler operation with respect to $y$ of $\{x_I\}$ is given by the following formula 
\[
\mathrm{HF}_y(x_I) = \gamma_\mathfrak{g}\left(\sum_{m \geq 0} \sum_{\tau \in \mathrm{SCPT}_m} \sum_{\substack{I_1,\cdots,I_m \subseteq [n] \\ I_l \neq \emptyset,\widehat{k}}} \lambda^{(\tau; I_1, \cdots, I_m)} \tau\left(x_{I_1}, \cdots, x_{I_m}; (-1)^k dy - \sum_{l\neq k}(-1)^{l+k}x_{\widehat{l}}\right) \right)~, 
\]
where $\tau\left(x_{I_1}, \cdots, x_{I_m}; (-1)^k dy - \sum_{l\neq k}(-1)^{l+k}x_{\widehat{l}}\right)$ denotes the planar tree obtained by labelling the leaves of $\tau$ by the elements $x_{I_1}, \cdots, x_{I_m}$ and by replacing the corks in $\tau$ by the element $(-1)^k dy - \sum_{l\neq k}(-1)^{l+k}x_{\widehat{l}}$.
\end{Proposition}

\begin{proof}
Follows directly from solving the formal fixed-point equation in the proof of Lemma \ref{lemma: decomposition of L of Delta n}, using the explicit solution of Proposition \ref{prop: fixed point equation}. 
\end{proof}

\begin{Remark}
Computing the coefficients $\lambda^{(\underline{\sigma}_v; I_{i_1},\cdots,\widehat{k},\cdots, I_{\mathrm{in}(v)})}$ is the only obstacle to fully determining the horn-filler products. Using the Python package for the Barrat--Eccles operad of \cite{pythonbarratteccles}, one can write a computer code for the $\mathcal{E}$-coalgebra structure of $C_*^c(\Delta^n)$ and compute first terms of the above formula. 
\end{Remark}

\textbf{The analogue of the Baker--Campbell--Hausdorff formula.} Let $\mathfrak{g}$ be a qp-complete absolute $\mathcal{L}_\infty^\pi$-algebra. Then $0$ is a Maurer--Cartan element. The data of a horn $\Lambda_1^2$ in $\mathcal{R}(\mathfrak{g})$ based at the Maurer--Cartan $0$ is given by 

\begin{center}
\includegraphics[width=45mm,scale=0.5]{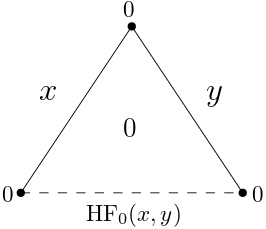},
\end{center}

where $x$ and $y$ are two representative elements in $\mathfrak{g}_1$, as defined in Definition \ref{def: representative element}. The horn-filler product $\mathrm{HF}_0(x,y)$ with respect to $0$ in $\mathfrak{g}_2$ of $x$ and $y$ gives a representative element in $\mathfrak{g}_1$ which represents the composition of the two paths represented by $x$ and $y$ inside $\pi_1(\mathcal{R}(\mathfrak{g}),0)$. This precise construction recovers the classical Baker--Campbell-Hausdorff formula in the characteristic zero setting, see \cite[Theorem 5.2.37]{Bandiera} or \cite[Corollary 5.20]{robertnicoud2020higher}. Our is to give a closed formula for the analogue of the Baker--Campbell--Hausdorff formula in the setting of absolute partition $\mathcal{L}_\infty^\pi$-algebras. 

\medskip

Let $\tau$ be a symmetric corked planar tree. It is \textit{left-handed} if for every vertex $v$ with $\mathrm{In}(v) \geq 2$ incoming edges, the $j$-th incoming edge is linked to another edge only if all the previous $l$ incoming edges (reading from left to right) are linked to an edge, for all $0 \leq l \leq j$ and every $j \in [1,\mathrm{In}(v)]$. Furthermore, every vertex which is not a cork must have at least two leaves, so incoming edges linked to another vertex can only occur in vertices with valence at least $3$. 

\medskip

A labelling of such a $\tau$ by $01$ and $12$, subsets of $[0,1,2]$, is \textit{licit} if for every vertex $v$ of $\tau$ which is not a cork, going from left to right of its incoming edges, $v$ has $c$ incoming edges attached to other edges, $a$ leaves labelled by $01$ and then $b$ leaves labelled by $12$, where $a + b + c = \mathrm{in}(v)$ and $c \geq 0$ and $a,b \geq 1$. In other words, a label $01$ cannot appear after a label $12$ and since $v$ has at least two leaves, at least one of them is labelled by $01$ and one of them by $12$. We denote the set of licit labels by $L^{01,12}(\tau)$. 

\medskip

We define a coefficient $\alpha^{(\tau;~01,\cdots,12)}$ for every left-handed symmetric corked planar tree $\tau$ together with a licit labelling by $01,12$ as follows: 
\[
\alpha^{(\tau;~ 01,\cdots,12)} = \prod_{v \in V(\tau)}(-1)\alpha^{(\underline{\sigma};~ 01, \cdots, 12)}~,
\]
where $\underline{\sigma}_v$ is the label of the vertex $v$, and where the product runs over all vertices of $\tau$ which are not corks. The coefficients at each vertex $v$ are given as follows: suppose the vertex $v$ has $c$ incoming edges attached to other edges, $a$ leaves labelled by $01$ and then $b$ leaves labelled by $12$, where $a + b + c = n$ is the total number of incoming edges, and where $c \geq 0$ and $a, b \geq 1$. Then 
\[
\alpha^{(\underline{\sigma};~ 01, \cdots, 12)} = \begin{cases}
    \epsilon.\varphi_{I_a}^n(\underline{\sigma}_{a+c-1}(1))\varphi_{I_c}^n(\underline{\sigma}_{a+c-1}(1))\pi^{(n,I_b)}_{I_a'}(\underline{\sigma}_{b}) & \quad \text{if } \mathrm{degree}(\underline{\sigma}) = n-2~, \\
    0 & \quad \text{otherwise}~,
\end{cases}
\]
where $\underline{\sigma}_{a+c-1} = (\sigma_0, \cdots \sigma_{a+c-2})$, where $\underline{\sigma}_{b}= (\sigma_{a+c-1},\cdots, \sigma_{n-2})$, and where $I_c =\{1,\cdots,c\}$, $I_b = \{c+a+1\}$ and where $I_a = \{c+1, \cdots, c+a\}$ and $I_a' = \{c+a,c+1,\cdots, c+a-1\}$. Notice that $I_a$ and $I_a'$ have the same underlying set but different orders. The sign $\epsilon$ is given by 
\[
\epsilon = \mathrm{sign}(\sigma_0(1), \cdots \sigma_{a+c-2}(1)).(-1)^{b-1}~. 
\]
In plaint words, the coefficient is non-trivial if and only if the permutation $(\sigma_0(1), \cdots \sigma_{a+c-2}(1))$ in $\mathbb{S}_{a+c-1}$ is an $(c,a-1)$-unshuffle and if the last $b$-terms $(\sigma_{a+c-1},\cdots, \sigma_{n-2})$ forms a $I_b$-ordered partition of $I_a'$. 

\begin{Example}
Here is an example of left-handed symmetric corked planar tree with a licit labelling (the only possible licit labelling of this tree in fact). 
\vspace{0.5pc}
\begin{center}
\includegraphics[width=80mm,scale=1]{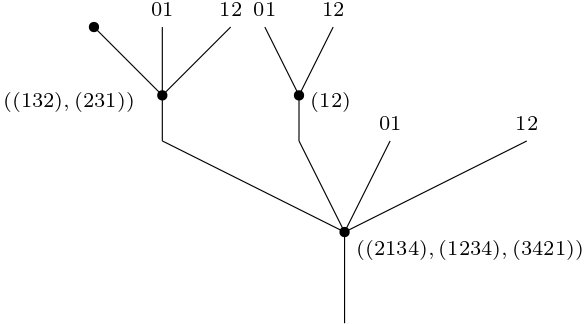},
\end{center}
\vspace{0.5pc}
The coefficients of the vertex labelled by $(12)$ and the vertex labelled by $((132),(231))$ are both $1$. The coefficient of the last vertex is $-1$. The coefficient of the whole tree is therefore $(-1)^2 = 1$. 
\end{Example}

\begin{theorem}\label{thm: horn-filler formula (BCH)}
Let $\mathfrak{g}$ be a qp-complete absolute $\mathcal{L}_\infty^\pi$-algebra, and let $x,y$ be two representative elements in $\mathfrak{g}_1$. The horn-filler product $\mathrm{HF}_0(x,y)$ is given by 

\[
\mathrm{HF}_0(x,y) = \sum_{m \geq 0} \sum_{\tau \in \mathrm{SCPT}^{\mathrm{left}}_m} \sum_{L^{01,12}(\tau)} \alpha^{(\tau;~ 01, \cdots, 12)} \gamma_\mathfrak{g}\left(\tau\left(x,\cdots, y, \cdots, x, \cdots, y; x+y \right)\right) ~, 
\]
\vspace{0.1pc}

where $x$ appears if a leaf is labelled by $01$ and $y$ by $12$, where the corks are replaced by the input $x+y$ and where the sum is taken over all left-handed symmetric corked planar trees and all licit labellings of those. 
\end{theorem}

\begin{proof}
We consider $C_*^c(\Delta^2)$ together with its $\mathcal{E}$-coalgebra structure. We want to compute which operations $\underline{\sigma}$ in $\mathcal{E}$ give a decomposition of $a_{012}$ with $c$ terms $a_{02}$, $a$ terms $a_{01}$ and $b$ terms $a_{12}$, for any $a,b,c \geq 0$. Let $n = a + b + c$, it is immediate that such an operation is of degree $n-2$. 

\medskip

The rest of the proof follows the same ideas as the proof of Proposition \ref{Prop: coalgebra structure of the interval}. We first compute which surjections $u$ act non-trivially giving us the decomposition we want. We fix the following order on the decompositions: 
\[
a_{02}^{\otimes c} \otimes a_{01}^{\otimes a} \otimes a_{12}^{\otimes b} = \underbrace{a_{02}}_{1} \otimes \cdots \otimes \underbrace{a_{02}}_{c} \otimes \underbrace{a_{01}}_{c+1} \otimes \cdots \otimes \underbrace{a_{01}}_{c+a} \otimes \underbrace{a_{12}}_{c+a+1} \otimes \cdots \otimes \underbrace{a_{12}}_{c+a+b},
\]
with respect to the interval cut operation of $u$. Notice that $a,b \geq 1$. Indeed, if there is a $1$ in the interval cut, then both $a \geq 1$ and $b \geq 1$. If there are no $1$'s, the interval cut only contains $0$'s and $2$'s. Since the operation is of arity $n$ and degree $n-2$, there are $2n-1$ cuts, and thus there must be a repetition and the resulting decomposition is trivial. In general, a surjection $u$ that produces such a decomposition has a table arrangement of the form
\[
\left|~~ 
\begin{aligned}
&u(1) \\
&\vdots \\
&u(a+c-1)\\
&c+a ~ c+1 ~\cdots~ c+i_1~~ c+a+1\\
&c+i_1+1 ~ \cdots ~ c+i_2+i_1~~ c+a+2 \\
&\vdots \\
&c+i_{b-2}+ \cdots +1 ~ \cdots ~ c+i_{b—1}+\cdots + i_1~~ c+a+b-1 \\
&c+i_{b-1}+ \cdots +1 ~ \cdots ~ c+i_b+\cdots +i_1 ~~ n ~~ u(2a + b + c)~ \cdots ~ u(2a + 2c + 2b -2)~,
\end{aligned}
\right.
\]
\vspace{0.1pc}

where $(u(1),\cdots,u(a+c-1))$ is a $(c,a-1)$-unshuffle in $\mathbb{S}_{a+c-1}$ and where $(i_1,\cdots,i_b)$ are integers $\geq 0$ such that $i_1+\cdots+i_b=a$. If $i_j$ is zero, then $c+a+j$ is the first term in the $(a+c-1+j)$-th row of the table arrangement of $u$. These terms determine an $I_b$-ordered partition of $I_a' = \{c+a,c+1,\cdots, c+a-1\}$. The position sign associated to such a surjection is $(-1)^{b-1}$, since there are $b-1$ inner intervals ending with $1$. The permutation sign is given by the signature of the permutation $(u(1),\cdots,u(a+c-1))$. Indeed, when rearranging the terms, these are the only permutations that involve two intervals of length $1$; the rest involve intervals of length $0$ and $1$, and therefore produce no signs. From the above, we get the characterization of the elements $\underline{\sigma}$ in $\mathcal{E}(n)_{n-2}$ that have a surjection $u$ of this type which appears in their table reduction. This characterization is precisely encoded in the coefficients $\alpha^{(\underline{\sigma};~ 01, \cdots, 12)}$, where incoming edges connected to another vertex correspond to the terms $a_{02}$ in the decomposition, and the labels $01$ and $12$ to the terms $a_{01}$ and $a_{12}$, respectively. The sign that appears in these coefficients corresponds to the sign of the decomposition. We conclude adapting Proposition \ref{prop: general formula for higher bch} to our particular computation. 
\end{proof}

The first terms of the formula are the following:
\[ 
\begin{aligned}
\mathrm{HF}_0(x,y) =&~x + y - l_2^{(12)}(x,y) - l_3^{((123),(213))}(x,x,y) - l_3^{((132),(213))}(x,x,y) - l_3^{((123),(231))}(x,x,y)-\\
&-l_3^{((132),(231))}(x,x,y) - l_3^{((123),(231))}(y,x,y) - l_3^{((132),(231))}(y,x,y) +\\
&+ l_3^{((123),(132))}(x,y,y) + l_3^{((123),(312))}(x,y,y) + l_3^{((123),(321))}(x,y,y)+\\
&+l_3^{((123),(231))}\left(l_2^{(12)}(x,y),x,y\right) + l_3^{((132),(231))}\left(l_2^{(12)}(x,y),x,y\right) + \cdots \\
\end{aligned}
\]
Let us make a few observations. Notice that unlike the classical Baker--Campbell--Hausdorff formula, which involves coefficients of the form $1/r!$, the coefficients in this formula are all $\pm 1$. Furthermore, by suitably permuting the labels, they can be all made positive. It is thus obvious that such a formula is well-defined in any characteristic. Another point is that unlike the classical BCH formula, which is made of iterations of the Lie bracket, here there are no iterations of the first bracket $l_2^{(12)}(x,y)$ alone. All iterations involve higher arity operations as well. However, when $\kk$ is a field of characteristic zero, the group produced by this formula can be compared to the group produced by the BCH formula; in fact, they are isomorphic. See Subsection \ref{subsection: comparison results} for more details.
 
\begin{theorem}\label{thm: integration gives a nilpotent group}
Let $\mathfrak{g}$ be a nilpotent partition $\mathcal{L}_\infty^\pi$-algebra concentrated in degree $1$. Then $\mathcal{R}(\mathfrak{g})$ is the classifying space of the group 
\[
\big(\mathfrak{g},\mathrm{HF}_0(-,-),0\big)~,
\]
which is a nilpotent group. 
\end{theorem}

\begin{proof}
By Theorem \ref{thm: fake Berglund theorem}, its clear that $\mathcal{R}(\mathfrak{g})$ is the classifying space of its first homotopy group $\pi_1(\mathcal{R}(\mathfrak{g}),0)$, since it is connected and every other homotopy group is trivial. Furthermore, every element in $\mathfrak{g}$ is a representative element, and there are no interval equivalences since $\mathfrak{g}_2 = \{0\}$. Hence the underlying set of $\pi_1(\mathcal{R}(\mathfrak{g}),0)$ is given by $\mathfrak{g}$, and moreover, the composition of paths in $\pi_1(\mathcal{R}(\mathfrak{g}),0)$ is explicitly given by the formula $\mathrm{HF}_0(-,-)$. 

\medskip

Since $\mathfrak{g}$ is concentrated in degree $1$, operations in arity $n$ must be in degree $n-2$. Therefore $\mathfrak{g}$ is weight-nilpotent if and only if it is arity-nilpotent; and in any of these cases it is nilpotent in the sense of Definition \ref{def: nilpotent partition Linfiniti algebra}. Let us prove that it is nilpotent by induction on the weight. If $\mathfrak{g}$ is abelian, meaning it endowed with the trivial structure and every operation is zero, then the horn-filler product is 
\[
\mathrm{HF}_0(x,y) = x + y~,
\]
for any $x,y$ in $\mathfrak{g}$. We get an abelian and hence a nilpotent group. Now consider the short exact sequence of $\kk$-vector spaces
\[
0 \longrightarrow \mathrm{W}_{\delta}\mathfrak{g}/\mathrm{W}_{\delta + 1}\mathfrak{g} \hookrightarrow \mathfrak{g}/\mathrm{W}_{\delta +1} \mathfrak{g} \twoheadrightarrow \mathfrak{g}/\mathrm{W}_{\delta}\mathfrak{g} \longrightarrow 0~. 
\]
These are all nilpotent partition $\mathcal{L}_\infty^\pi$-algebra maps, hence they induce a short exact sequence between their associated groups. The structure on $\mathrm{W}_{\delta}\mathfrak{g}/\mathrm{W}_{\delta + 1}\mathfrak{g}$ is trivial, hence its associated group is abelian. The group $\mathfrak{g}/\mathrm{W}_{\delta}\mathfrak{g}$ is nilpotent by induction hypothesis. Since the group associated to $\mathfrak{g}/\mathrm{W}_{\delta +1} \mathfrak{g}$ is an extension of a nilpotent group by an abelian group, it is nilpotent if this extension is central. Let $x$ be in $\mathrm{W}_{\delta}\mathfrak{g}/\mathrm{W}_{\delta + 1}\mathfrak{g}$ and $g$ be in $\mathfrak{g}/\mathrm{W}_{\delta +1} \mathfrak{g}$, then 
\[
\mathrm{HF}_0(x,g) = x + g = g + x = \mathrm{HF}_0(g,x)~. 
\] 
Indeed, since $x$ lies precisely in weight $\delta$, any operations applied to $x$ lies in weight $\delta + 1$ and hence vanishes in $\mathfrak{g}/\mathrm{W}_{\delta +1} \mathfrak{g}$. There are no terms in the formula of $\mathrm{HF}_0(x,g)$ involving operations only labelled by $g$. Therefore all higher terms vanish in the formulas for $\mathrm{HF}_0(x,g)$ and $\mathrm{HF}_0(g,x)$. Since $x$ then commutes with any element $g$ in $\mathfrak{g}/\mathrm{W}_{\delta +1} \mathfrak{g}$, this group extension is central, which concludes the proof.
\end{proof}

\begin{Remark}\label{Rmk: degree 1 and k-points of unipotent groups}
Notice that a nilpotent partition $\mathcal{L}_\infty^\pi$-algebra does not need to be concentrated in degree $1$ in order for $\mathcal{R}(\mathfrak{g})$ to be the classifying space of a group. If the representative elements in all degrees different from $1$ are trivial up to interval equivalence, then by Theorem \ref{thm: fake Berglund theorem} the space $\mathcal{R}(\mathfrak{g})$ is the classifying space of its $\pi_1$, whose group structure is still determined by  the formula in Theorem \ref{thm: horn-filler formula (BCH)}. 
\end{Remark}

\begin{Example}\label{Example: 2-nilpotent associative algebras}
Let us give a first simple non-trivial example of the construction of Theorems \ref{thm: horn-filler formula (BCH)} and \ref{thm: integration gives a nilpotent group}. The data of a $2$-nilpotent partition $\mathcal{L}_\infty$-algebra concentrated in degree $1$ amounts to the data of a $\kk$-vector space $\mathfrak{g}$ together with a binary operation
\[
l_2: \mathfrak{g} \otimes \mathfrak{g} \longrightarrow \mathfrak{g}~,
\]
such that any composition of $l_2$ with itself is zero, i.e: $l_2(l_2(-,-),-) = l_2(-,l_2(-,-)) = 0$. This structure is sometimes referred to as a $2$-nilpotent associative algebra. 

\medskip

The following statements can be checked by direct computations. 

\begin{itemize}
\item The horn-filler product of Theorem \ref{thm: horn-filler formula (BCH)} is given in this case by
\[
x \star y \coloneqq x + y - l_2(x,y)~,
\]
and it defines a group structure on $\mathfrak{g}$, where the neutral element is $0$ and the inverse is given by 
\[
x^{-1} = -x -l_2(x,x)~. 
\]

\item The group $(\mathfrak{g},\star,0)$ is nilpotent of nilpotency class $2$. Commutators are given by 
\[
x \star y \star x^{-1} \star y^{-1} = l_2(y,x) - l_2(x,y)~. 
\]

\item If $\kk$ is of characteristic $p$, then $x^{\star p} = l_2(x,x)$ and thus $x^{\star p^2} = p.l_2(x,x) = 0$, meaning every element $x$ is of order at most $p^2$. 
\end{itemize}

\medskip

For example, if $\mathfrak{g}$ is a $\kk$-vector space of dimension $3$ denoted by $\kk^{\oplus 3}$ and if $l_2$ is given by 
\[
\begin{tikzcd}[column sep=3.5pc,row sep=0pc]
l_2: \kk^{\oplus 3} \otimes \kk^{\otimes 3} \arrow[r]
&\kk^{\oplus 3} \\
(\alpha,\beta,\gamma) \otimes (\alpha',\beta',\gamma') \arrow[r,mapsto]
&(0,0,-\alpha.\beta')~, 
\end{tikzcd}
\]
then the resulting group $(\kk^{\oplus 3},\star,0)$ is isomorphic to $\mathrm{H}_3(\kk)$, the Heisenberg group of $3\text{x}3$ upper triangular matrices with $1$'s in the diagonal and coefficients in $\kk$. 
\end{Example}

\begin{Example}\label{example: 3-nilpotent monodic Lie algebra}
A $3$-nilpotent partition $\mathcal{L}_\infty$-algebra concentrated in degree $1$ amounts to the data of a vector space $\mathfrak{g}$ endowed with a binary operation 
\[
l_2^{\id}(-,-): \mathfrak{g} \otimes \mathfrak{g} \longrightarrow \mathfrak{g}~, 
\]
and with a familly of ternary operations 
\[
l_3^{(\id,\sigma)}(-,-,-): \mathfrak{g} \otimes \mathfrak{g} \otimes \mathfrak{g} \longrightarrow \mathfrak{g}~, 
\]    
for all permutations $\sigma \neq \id$ in $\mathbb{S}_3$. The operations $l_3^{(\sigma, \id)}$ are obtained from $l_3^{(\id,\sigma^{-1})}$ up to the sign $\mathrm{sign}(\sigma)$ by permuting the entries of this later operation via $\sigma$. Since it is $3$-nilpotent, the only eventually non-trivial compositions of the operations above are $l_2^{\id} \circ_1 l_2^{\id}$ and  $l_2^{\id} \circ_2 l_2^{\id}$. The only relation that these operations satisfy by Appendix \ref{Appendice formules} is 

\begin{equation}\label{eq: relation 3 order}
l_2^{\id} \circ_1 l_2^{\id} + \sum_{\sigma \neq \id \in \mathbb{S}_n} l_3^{(\sigma, \id)} = l_2^{\id} \circ_2 l_2^{\id} + \sum_{\sigma \neq \id \in \mathbb{S}_n} l_3^{(\id, \sigma)}~,
\end{equation}

which is given by the fact that $\partial(l_3^{\id})$ is zero. Then it can be checked by direct computation that 
\begin{itemize}
    \item the horn-filler product is given by 
    \[ 
\begin{aligned}
\mathrm{HF}_0(x,y) =&~x + y - l_2^{(12)}(x,y) - l_3^{((123),(213))}(x,x,y) - l_3^{((132),(213))}(x,x,y) - l_3^{((123),(231))}(x,x,y)-\\
&-l_3^{((132),(231))}(x,x,y) - l_3^{((123),(231))}(y,x,y) - l_3^{((132),(231))}(y,x,y) +\\
&+ l_3^{((123),(132))}(x,y,y) + l_3^{((123),(312))}(x,y,y) + l_3^{((123),(321))}(x,y,y)~.\\
\end{aligned}
\]

\medskip

    \item One can also show that the horn-filler product is associative by comparing $\mathrm{HF}_0(\mathrm{HF}_0(x,y),z)$ (left composition) and $\mathrm{HF}_0(x,\mathrm{HF}_0(y,z))$ (right composition). This boils down to comparing the terms 
\[ 
\begin{aligned}
l_2^{\id} \circ_1 l_2^{\id}(x,y,z) - l_3^{((123),(213))}(x,y,z) - l_3^{((123),(213))}(y,x,z)- l_3^{((132),(213))}(x,y,z) - l_3^{((132),(213))}(y,x,z) \\
-l_3^{((123),(231))}(x,y,z) -l_3^{((123),(231))}(y,x,z) -l_3^{((132),(231))}(x,y,z) -l_3^{((132),(231))}(y,x,z)~,\\
\end{aligned}
\]
    which appear on the left composition, with the terms 
\[ 
\begin{aligned}
l_2^{\id} \circ_2 l_2^{\id}(x,y,z) - l_3^{((123),(231))}(z,x,y) - l_3^{((123),(231))}(y,x,z) - l_3^{((132),(231))}(z,x,y) - l_3^{((132),(231))}(y,x,z) \\
+l_3^{((123),(132))}(x,y,z) + l_3^{((123),(132))}(x,z,y) + l_3^{((123),(312))}(x,y,z) \\
+  l_3^{((123),(312))}(x,z,y) + l_3^{((123),(321))}(x,y,z) +  l_3^{((123),(321))}(x,z,y)~,
\end{aligned}
\]
    which appear on the right composition. After reordering the entries of these operations, $4$ ternary operations cancel on each side and the remaining $10$ are equal precisely because of the first relation that monodic Lie algebras satisfy. 

    \medskip
    
    \item this associative product defines a group structure, with neutral element $0$ and with the inverse given by 
\[
\begin{aligned}
    x^{-1} &= -x - l_2^{\id}(x,x)  - l_2^{\id} \circ_1 l_2^{\id}(x,x,x) -  \sum_{\substack{\tau \in \mathbb{S}_3 \\ \tau ~~\text{transposition}}}  l_3^{(\tau, \id)}(x,x,x) \\ 
    &= -x - l_2^{\id}(x,x)  - l_2^{\id} \circ_2 l_2^{\id}(x,x,x) -  \sum_{\substack{\tau \in \mathbb{S}_3 \\ \tau ~~\text{transposition}}}  l_3^{(\id, \tau)}(x,x,x) ~, 
\end{aligned}
\]
where the latter two expressions are equal in fact by the relation \ref{eq: relation 3 order}, which evaluated on a single variable simplifies and only involves decorations by transpositions. This is because the two cycles in $\mathbb{S}_3$ are inverse to each other and have a $+1$ signature. 
\end{itemize}
\end{Example}

\subsection{Formal moduli problems and integration theory} The $\infty$-category of formal moduli problems defined in terms of $\mathbb{E}_\infty$-algebras is equivalent to the $\infty$-category of partition $\mathcal{L}_\infty$-algebras, as first shown in \cite{brantnermathew}. We work in the set-up of \cite{deuxiemepapier}. The goal of this subsection is to show that under a homotopy completeness assumption, applying the integration functor $\mathcal{R}$ to a partition $\mathcal{L}_\infty$-algebra recovers its associated formal moduli problem. 

\begin{Definition}[Homotopy complete algebra]
Let $\mathfrak{g}$ be a partition $\mathcal{L}_\infty$-algebra. It is \textit{homotopy complete} if the derived unit of adjunction 
\[
\mathbb{L}\eta_{\mathfrak{g}}: \mathfrak{g} \qi \mathrm{Res}~\mathbb{L}\mathrm{Ab}(\mathfrak{g})
\]
of Proposition \ref{Prop: adjunction entre absolues et pas absolues} is a quasi-isomorphism. 
\end{Definition}

\begin{Remark}
This definition is a divided powers analogue of the definition of homotopy completeness given by Harper and Hess in \cite{HarperHess}. 
\end{Remark}

\begin{Example}
Partition $\mathcal{L}_\infty$-algebras which admit as a quasi-free model generated by a finite dimensional complex $V$ in degrees $\leq 0$ are homotopy complete, as shown in \cite{deuxiemepapier}. These algebras correspond to representable formal moduli problems. 
\end{Example}

In order to present the formal moduli problem associated to a homotopy complete partition $\mathcal{L}_\infty$-algebra, we follow the main ideas of \cite[Section 3.3]{deuxiemepapier}. Formal moduli problems can also be defined as contravariant functors from the $\infty$-category of coArtinian $\Omega\mathrm{B}\mathcal{E}^{\mathrm{nu}}$-coalgebras to spaces that preserve certain pushouts. Indeed, there is an equivalence, given by linear duality, between the $\infty$-category of coArtinian $\Omega\mathrm{B}\mathcal{E}^{\mathrm{nu}}$-coalgebras and the $\infty$-category of Artinian $\Omega\mathrm{B}\mathcal{E}^{\mathrm{nu}}$-algebras. Notice that $\Omega\mathrm{B}\mathcal{E}^{\mathrm{nu}}$-algebras are a model for $\mathbb{E}_\infty$-algebras over $\kk$.

\begin{Proposition}\label{Prop: gives back the fmp}
Let $\mathfrak{g}$ be a partition $\mathcal{L}_\infty$-algebra which is homotopy complete. The functor
\[
\begin{tikzcd}[column sep=1.5pc,row sep=0.1pc]
\psi(\mathfrak{g}): \Omega\mathrm{B}\mathcal{E}^{\mathrm{nu}}\text{-}\mathsf{coalg} \arrow[r]
&\mathsf{sSets} \\
C \arrow[r,mapsto]
&\mathcal{R}(\mathrm{hom}(C \oplus \kk, \mathbb{L}\mathrm{Ab}(\mathfrak{g})))~.
\end{tikzcd}
\]
\vspace{0.1pc}

when restricted to coArtinian $\Omega\mathrm{B}\mathcal{E}^{\mathrm{nu}}$-coalgebras, presents the formal moduli problem associated to $\mathfrak{g}$, where $\mathrm{hom}(C \oplus \kk,\mathbb{L}\mathrm{Ab}(\mathfrak{g}))$ refers to the convolution algebra construction of subsection \ref{subsection: convolution} between $C \oplus \kk$ (cofreely added a strict counit to $C$) and and the derived homotopy completion $\mathbb{L}\mathrm{Ab}(\mathfrak{g}))$ of $\mathfrak{g}$. 
\end{Proposition}

\begin{proof}
By \cite[Lemma 9]{deuxiemepapier}, the formal moduli problem associated to a partition $\mathcal{L}_\infty$-algebra is given by the functor 

\[
\begin{tikzcd}[column sep=3.5pc,row sep=0.1pc]
\Psi: \mathsf{part}~\mathcal{L}_\infty\text{-}\mathsf{alg}~[\mathrm{Q.iso}^{-1}] \arrow[r,]
&\mathsf{Fun}\left(\mathsf{coArt}~\Omega\mathrm{B}\mathcal{E}^{\mathrm{nu}}\text{-}\mathsf{coalg},\mathcal{S}\right) \\
\mathfrak{g} \arrow[r,mapsto]
&\left[C \mapsto \mathrm{Map}_{\mathsf{part}~\mathcal{L}_\infty\text{-}\mathsf{alg}~[\mathrm{Q.iso}^{-1}]}\left(\mathrm{Res}~\widehat{\Omega}^\flat_\iota C,\mathfrak{g}\right) \right]~,
\end{tikzcd}
\]

at the $\infty$-categorical level. If $C$ is coArtinian, then $\mathrm{Res}~\widehat{\Omega}_\iota C$ is homotopy complete by \cite[Theorem 4]{deuxiemepapier}. Hence 
\[
\mathrm{Map}_{\mathsf{part}~\mathcal{L}_\infty\text{-}\mathsf{alg}~[\mathrm{Q.iso}^{-1}]}\left(\mathrm{Res}~\widehat{\Omega}^\flat_\iota C,\mathfrak{g}\right) \simeq \mathrm{Map}_{\mathsf{abs}~\mathsf{part}~\mathcal{L}_\infty\text{-}\mathsf{alg}~[\mathrm{W}^{-1}]}\left(\widehat{\Omega}^\flat_\iota C,\mathbb{L}\mathrm{Ab}(\mathfrak{g})\right)~,
\]
since $\mathfrak{g}$ is supposed to be homotopy complete as well. Therefore it suffices to give an explicit model for this mapping space. By Proposition \ref{Prop: curved non curved comparison}, we can compute this mapping space in curved absolute partition $\mathcal{L}_\infty$-algebras. A model for this mapping space is given by

\hspace{-1pc}
\begin{align*}
\mathrm{Hom}_{\Omega \mathrm{B}^{\mathrm{s.a}}\mathcal{E}\text{-}\mathsf{coalg}}\left((C\oplus \kk) \otimes C_*^c(\Delta^\bullet),\widehat{\mathrm{B}}_\iota\mathbb{L}\mathrm{Ab}(\mathfrak{g})\right) 
& \cong \mathrm{Hom}_{\mathsf{dg}~\Omega \mathrm{B}^{\mathrm{s.a}} \mathcal{E}\text{-}\mathsf{coalg}}\left(C_*^c(\Delta^\bullet),\left\{C \oplus \kk, \widehat{\mathrm{B}}_\iota\mathbb{L}\mathrm{Ab}(\mathfrak{g})\right\} \right) \\
& \cong \mathrm{Hom}_{\mathsf{dg}~\Omega \mathrm{B}^{\mathrm{s.a}} \mathcal{E}\text{-}\mathsf{coalg}}\left(C_*^c(\Delta^\bullet),\widehat{\mathrm{B}}_\iota\mathrm{hom}(C \oplus \kk,\mathbb{L}\mathrm{Ab}(\mathfrak{g})) \right) \\
& \cong \mathcal{R}(\mathrm{hom}(C \oplus \kk,\mathbb{L}\mathrm{Ab}(\mathfrak{g})))~,
\end{align*}
\vspace{0.1pc}

since $C \otimes C_*^c(\Delta^\bullet)$ is a Reedy cofibrant resolution of $C$ as	a $\Omega\mathrm{B}\mathcal{E}^{\mathrm{nu}}$-coalgebra. Finally, any quasi-isomorphism $\mathfrak{g} \qi \mathfrak{g}'$ induces a weak equivalence $\mathbb{L}\mathrm{Ab}(\mathfrak{g}) \qi \mathbb{L}\mathrm{Ab}(\mathfrak{g}')$, and the convolution construction preserves weak equivalences in the second variable. 
\end{proof}

\begin{Remark}
Like in the characteristic zero case of \cite[Section 7]{CCN19}, it should be possible to remove the homotopy completeness assumption on $\mathfrak{g}$. The idea is that, given a general partition $\mathcal{L}_\infty$-algebra $\mathfrak{g}$, a convolution construction between $\mathfrak{g}$ and a strictly coArtinian coalgebra $C$ is necessarily nilpotent, and thus one can apply the integration functor to it.
\end{Remark}

\begin{Remark}[Recovering general fmps]
Let $\C$ be a $0$-reduced quasi-planar conilpotent dg cooperad. We define formal moduli problems of Artinian $\Omega \C$-algebras in an analogous way. Furthermore, if $\C$ satisfies a homological condition called \textit{temperedness}, then this $\infty$-category is equivalent to the $\infty$-category of dg $\C^*$-algebras localized at quasi-isomorphisms. We refer to \cite{deuxiemepapier} for more details. 

\medskip

If $\C$ is quasi-planar, the dg operad $\Omega \C$ admits a canonical $\mathcal{E}$-comodule structure $\Omega\C \longrightarrow \Omega\C \otimes \mathcal{E}$, as described in \cite[Section 2]{bricevictor}. This gives an enrichment $\left\{-,-\right\}$ of dg $\Omega \C$-coalgebras over dg $\mathcal{E}$-coalgebras, and an analogue convolution construction $\mathrm{hom}(C,\mathfrak{g})$ between a dg $\Omega \C$-coalgebras $C$ and a dg $\C$-algebra $\mathfrak{g}$. This convolution is naturally a curved absolute partition $\mathcal{L}_\infty$-algebra. Applying the integration functor to this convolution algebra gives a model for formal moduli problems encoded by homotopy complete dg $\C^*$-algebras. 
\end{Remark}

\subsection{Comparison results}\label{subsection: comparison results}
We discuss comparison results. First, we explain how similar constructions can be made using other models. Secondly, since our constructions work over any field $\kk$, we compare them to known constructions in characteristic zero. In this case, we show that the integration functor constructed in this paper is naturally weakly equivalent to the one constructed in \cite{lucio2022integration}. This allows us to compare it to \cite{robertnicoud2020higher} or to \cite{Getzler09}. In particular, this gives us a way to compare the formula in Theorem \ref{thm: horn-filler formula (BCH)} with the classical Baker--Campbell--Hausdorff formula. 

\medskip

\textbf{Another quasi-planar model.} Let $\mathcal{S}urj$ be the unital surjections operad of McClure and Smith, defined in \cite{McClureSmith}. It is a $\mathbb{S}$-cofibrant resolution of the commutative operad. As mentioned in Remark \ref{Rmk: change of models for partition Lie algebras}, the category of dg $\Omega \mathcal{S}urj^*$-algebras localized at quasi-isomorphisms also present the $\infty$-category of partition Lie algebras. This is the model for partition $\mathcal{L}_\infty$-algebras considered in \cite[Definition 4.46]{pdalgebras}. 

\begin{lemma}\label{lemma: Surj quasi-planar}
The conilpotent dg cooperad $\mathrm{B}^{\mathrm{s.a}}\mathcal{S}urj$ is quasi-planar. 
\end{lemma}

The proof of the above lemma is in fact trickier than that of Lemma \ref{lemma: quasi-planar}, it will appear a the forthcoming paper \cite{najib}. The cellular chains functor $C_*^c(-)$ has a natural $\mathcal{S}urj$-coalgebra structure, constructed in \cite{McClureSmith} and in \cite{BergerFresse04}. Using this, one can construct an integration functor for curved $\mathrm{B}^{\mathrm{s.a}}\mathcal{S}urj$-algebras and Theorems \ref{thm: triangle commutatif} and \ref{thm: propriétés de l'intégration} also hold in that context. Moreover, there is a quasi-morphism of dg operads 
\[
\mathcal{TR}: \mathcal{E} \qi \mathcal{S}urj
\]
called the \textit{table reduction morphism}, constructed in \cite{BergerFresse04}. Using this quasi-isomorphism and Lemma \ref{lemma: Surj quasi-planar} together with the general theory of \cite{bricevictor}, it is straightforward to compare the constructions performed so far with their analogues for this other model. (For a blueprint of these arguments, see below the comparisons that hold in characteristic zero). 

\medskip

Let us discuss the differences between these two choices. The $\mathcal{E}$-coalgebra structure of $C_*^c(-)$ is given by pulling back the $\mathcal{S}urj$-coalgebra structure along the table reduction morphism. As explained in the proofs, the formulas in Proposition \ref{Prop: coalgebra structure of the interval} and Lemma \ref{lemma: coalgebra structure on the reduced spheres} are obtained from simpler formulas for the $\mathcal{S}urj$-coalgebra structure. This also results in simpler formulas for the analogues of Proposition \ref{prop: general formula for higher bch} and Theorem \ref{thm: horn-filler formula (BCH)}. The key difference, however, is that the dg operad $\mathcal{S}urj$ is not a Hopf operad. Thus, \textit{a priori}, one does not have a tensor product of $\Omega \mathrm{B}^{\mathrm{s.a}}\mathcal{S}urj$-coalgebras or a convolution structure. These are crucial in many proofs. For example, when we construct cylinder objects by tensoring with $C_*^c(\Delta^1)$. This can be somewhat repaired since $\Omega \mathrm{B}^{\mathrm{s.a}}\mathcal{S}urj$-coalgebras are in fact tensored over $\mathcal{E}$-coalgebras, but still things become more cumberstone. 

\medskip

\textbf{Comparisons in characteristic zero.} There is a canonical quasi-isomorphism $\varphi: \mathcal{E} \qi u\mathcal{C}om$, given by the augmentation map of $\kk[\mathbb{S}_n]$ in degree $0$, for every $n \geq 0$. When $\kk$ is a field of characteristic zero, this induces a quasi-isomorphism of cofibrant dg operads $\varphi: \Omega \mathrm{B}^{\mathrm{s.a}}\mathcal{E} \qi \Omega \mathrm{B}^{\mathrm{s.a}} u\mathcal{C}om$. Notice that outside of characteristic zero, $\Omega \mathrm{B}^{\mathrm{s.a}} u\mathcal{C}om$ is not cofibrant any more. This morphism induces the following commutative square of Quillen equivalences
\[
\begin{tikzcd}[column sep=5pc,row sep=5pc]
u\mathcal{EE}_\infty\text{-}\mathsf{coalg} \arrow[r,"\widehat{\Omega}_\iota"{name=B},shift left=1.1ex] \arrow[d,"\mathrm{Coind}_{\Omega \mathrm{B}^{\mathrm{s.a}}\varphi} "{name=SD},shift left=1.1ex ]
&\mathsf{curv}~\mathsf{abs}~\mathcal{L}^\pi_\infty\textsf{-}\mathsf{alg}^{\mathsf{qp}\text{-}\mathsf{comp}} \arrow[d,"\mathrm{Res}_{\mathrm{B}^{\mathrm{s.a}}\varphi}"{name=LDC},shift left=1.1ex ] \arrow[l,"\widehat{\mathrm{B}}_\iota"{name=C},,shift left=1.1ex]  \\
u\mathcal{CC}_\infty\text{-}\mathsf{coalg} \arrow[r,"\widehat{\Omega}^2_\iota "{name=CC},shift left=1.1ex]  \arrow[u,"\mathrm{Res}_{\Omega \mathrm{B}^{\mathrm{s.a}}\varphi}"{name=LD},shift left=1.1ex ]
&\mathsf{curv}~\mathsf{abs}~\mathcal{L}_\infty\textsf{-}\mathsf{alg}^{\mathsf{comp}}~, \arrow[l,"\widehat{\mathrm{B}}^2_\iota"{name=CB},shift left=1.1ex] \arrow[u,"\mathrm{Ind}_{\mathrm{B}^{\mathrm{s.a}}\varphi}"{name=TD},shift left=1.1ex] \arrow[phantom, from=SD, to=LD, , "\dashv" rotate=0] \arrow[phantom, from=C, to=B, , "\dashv" rotate=-90]\arrow[phantom, from=TD, to=LDC, , "\dashv" rotate=0] \arrow[phantom, from=CC, to=CB, , "\dashv" rotate=-90]
\end{tikzcd}
\] 
where $u\mathcal{CC}_\infty = \Omega \mathrm{B}^{\mathrm{s.a}} u\mathcal{C}om$ is the model for counital cocommutative up to homotopy coalgebras used in \cite{lucio2022integration}. We add the superscript $2$ to distinguish the between two complete bar-cobar adjunctions. Let us denote by $\mathcal{R}^{\mathrm{zero}}$ the integration functor constructed in \cite{lucio2022integration}. 

\begin{Proposition}\label{prop: comparison characteristic zero}
Let $\kk$ be a field of characteristic zero. Let $\mathfrak{g}$ be a qp-complete curved absolute partition $\mathcal{L}_\infty$-algebra and let $\mathfrak{h}$ be a complete curved absolute $\mathcal{L}_\infty$-algebra. The following simplicial sets are naturally weakly equivalent
\[
\mathcal{R}(\mathfrak{g}) \simeq \mathcal{R}^{\mathrm{zero}}(\mathrm{Res}_{\mathrm{B}^{\mathrm{s.a}}\varphi}(\mathfrak{g})) \quad \text{and} \quad  \mathcal{R}(\mathbb{L}\mathrm{Ind}_{\mathrm{B}^{\mathrm{s.a}}\varphi}(\mathfrak{h})) \simeq \mathcal{R}^{\mathrm{zero}}(\mathfrak{h})~. 
\]
\end{Proposition}

\begin{proof}
Let $\kk$ be the ground field with its canonical $u\mathcal{CC}_\infty$-coalgebra structure. It is straightforward to check that $\mathrm{Res}_{\Omega\mathrm{B}^{\mathrm{s.a}}\varphi}(\kk)$ is isomorphic to $\kk$ with its canonical $u\mathcal{EE}_\infty$-coalgebra structure. Let $\mathfrak{g}$ be a curved absolute partition $\mathcal{L}_\infty$-algebra, the simplicial set $\mathcal{R}^{\mathrm{zero}}(\mathfrak{g})$ has the following homotopy type
\[
\mathcal{R}(\mathfrak{g}) \simeq \mathrm{Map}_{u\mathcal{EE}_\infty\text{-}\mathsf{coalg}}\left(\kk, \widehat{\mathrm{B}}_\iota(\mathfrak{g})\right) \simeq \mathrm{Map}_{u\mathcal{EE}_\infty\text{-}\mathsf{coalg}}\left(\mathrm{Res}_{\Omega \mathrm{B}^{\mathrm{s.a}}\varphi}(\kk), \widehat{\mathrm{B}}_\iota(\mathfrak{g})\right)
\]
which is naturally equivalent via adjunction to 
\[
\mathrm{Map}_{u\mathcal{CC}_\infty\text{-}\mathsf{coalg}}\left(\kk, \mathbb{R}\mathrm{Coind}_{\Omega \mathrm{B}^{\mathrm{s.a}}\varphi}\widehat{\mathrm{B}}_\iota(\mathfrak{g})\right) \simeq \mathrm{Map}_{u\mathcal{CC}_\infty\text{-}\mathsf{coalg}}\left(\kk, \widehat{\mathrm{B}}^2_\iota(\mathfrak{g})\right)\simeq \mathcal{R}^{\mathrm{zero}}(\mathrm{Res}_{\mathrm{B}^{\mathrm{s.a}}\varphi}(\mathfrak{g}))~.
\]
The second equivalence follows from the first one, using the fact that the derived unit of adjunction $\mathfrak{h} \qi \mathrm{Res}_{\mathrm{B}^{\mathrm{s.a}}\varphi} \mathbb{L}\mathrm{Ind}_{\mathrm{B}^{\mathrm{s.a}}\varphi} \mathfrak{h}$ is a natural weak equivalence. 
\end{proof}

Let us describe the action of the restriction functor $\mathrm{Res}_{\mathrm{B}^{\mathrm{s.a}}\varphi}$ on a (curved) absolute partition $\mathcal{L}_\infty$-algebra $\mathfrak{g}$. It does not change its underlying chain complex/pre-differential module. The structure operations of the (curved) absolute $\mathcal{L}_\infty$-algebra $\mathrm{Res}_{\mathrm{B}^{\mathrm{s.a}}\varphi}(\mathfrak{g})$ are given by 
\[
\left\{l_n = \displaystyle \sum_{\sigma \in \mathbb{S}_n} l_n^{\sigma}\right\}~, 
\]
for all $n \geq 0$. All the operations $l_n^w$ where the degree of $w$ is $\geq 1$ are sent to $0$. One can check that these new operations $\{l_n\}$ are indeed symmetric of degree $-1$ and that they satisfy the axioms of an absolute $\mathcal{L}_\infty$-algebra. In particular, if $\mathfrak{g}$ is a nilpotent partition $\mathcal{L}_\infty^\pi$-algebra concentrated in degree $1$, then $\mathrm{Res}_{\mathrm{B}^{\mathrm{s.a}}\varphi}(\mathfrak{g})$ is in fact a nilpotent Lie algebra concentrated in degree $1$. 

\begin{theorem}\label{thm: gives back bch in characteristic zero}
Let $\kk$ be a field of characteristic zero. Let $\mathfrak{g}$ be a nilpotent partition $\mathcal{L}_\infty$-algebra concentrated in degree $1$. There is an isomorphism of groups 

\[
\big(\mathfrak{g},\mathrm{HF}_0(-,-),0\big) \cong \big(\mathfrak{g},\mathrm{BCH}(-,-),0\big)~,
\]
\vspace{0.1pc}

between the nilpotent group obtained with the horn-filler formula of Theorem \ref{thm: horn-filler formula (BCH)} and the exponential group obtained from the underlying Lie algebra of $\mathfrak{g}$ using the Baker--Campbell--Hausdorff formula.  
\end{theorem}

\begin{proof}
Follows directly from Proposition \ref{prop: comparison characteristic zero}. Indeed, since the simplicial sets $\mathcal{R}^{\mathrm{zero}}(\mathrm{Res}_{\mathrm{B}^{\mathrm{s.a}}\varphi}(\mathfrak{g}))$ and $\mathcal{R}(\mathfrak{g})$ are weakly equivalent, their homotopy groups, namely their $\pi_1$, are isomorphic.  
\end{proof}

\begin{Remark}
Notice that if $\mathfrak{h}$ is a nilpotent Lie algebra concentrated in degree $1$, then the absolute partition $\mathcal{L}_\infty$-algebra $\mathbb{L}\mathrm{Ind}_{\mathrm{B}^{\mathrm{s.a}}\varphi}(\mathfrak{h})$ need not be concentrated in degree $1$. This coincides with the observation made in Remark \ref{Rmk: degree 1 and k-points of unipotent groups}. However, by Proposition \ref{prop: comparison characteristic zero}, the space $\mathcal{R}(\mathbb{L}\mathrm{Ind}_{\mathrm{B}^{\mathrm{s.a}}\varphi}(\mathfrak{h}))$ is still the classifying space of the exponential group of $\mathfrak{h}$, and its group structure can also be described up to isomorphism by Theorem \ref{thm: horn-filler formula (BCH)}. 
\end{Remark}

\begin{Example}\label{Example: comparison for 2-nilpotent associative algebras}
In general, Theorem \ref{thm: gives back bch in characteristic zero} does not give an explicit construction of for the isomorphism between these two groups. However, in the case of Example \ref{Example: 2-nilpotent associative algebras}, an explicit isomorphism can be constructed. 

\medskip

Recall that the data of a $2$-nilpotent partition $\mathcal{L}_\infty$-algebra concentrated in degree $1$ is equivalent to the data of a $\kk$-vector space $\mathfrak{g}$ together with a binary map 
\[
l_2: \mathfrak{g} \otimes \mathfrak{g} \longrightarrow \mathfrak{g}~,
\]
such that any composition of $l_2$ with itself is zero, i.e: $l_2(l_2(-,-),-) = l_2(-,l_2(-,-)) = 0$. The horn-filler product in this case is given by $\mathrm{HF}_0(x,y) = x + y - l_2(x,y)$ and it can be checked by hand that it defines a group structure on $\mathfrak{g}$.

\medskip

Given this data, one can also define a Lie bracket on $\mathfrak{g}$ as follows 
\[
[x,y] = l_2(y,x) - l_2(x,y)~. 
\]
The Baker--Campbell--Hausdorff formula is then given by 
\[
\mathrm{BCH}(x,y) = x + y + \frac{1}{2}[x,y]~. 
\]
It can be checked by hand that the map $f(x) = x + \frac{l_2(x,x)}{2}$ defines a group isomorphism between the group defined with the horn-filler product and the one defined with the Baker--Campbell--Hausdorff formula. This later construction, however, only makes sense with $2$ is invertible in the ground field $\kk$. 
\end{Example}

\section{Lie-type models in $p$-adic homotopy theory}\label{Section: Lie models}
We show that (curved) absolute partition $\mathcal{L}_\infty$-algebras, with the transferred model structure from their Koszul dual coalgebras, model the $p$-adic homotopy types of connected finite type nilpotent spaces. This is done by a dualizing Mandell's $p$-adic models constructed in \cite{Mandell}. Then, we construct an intrinsic model structure on (curved) absolute partition $\mathcal{L}_\infty$-algebras, transferred from simplicial sets, and adapted to the study of $p$-adic homotopy types.  Building on the work done in the previous sections, we give an explicit description of the homotopy groups of the $p$-completion of a pointed connected finite type nilpotent space in terms of its absolute partition $\mathcal{L}_\infty$ model. Finally, we construct (curved) absolute partition $\mathcal{L}_\infty$ models for $p$-adic mapping spaces.

\subsection{Comparing derived units of adjunctions}
From now on, we assume the base field $\kk$ to be an algebraic closed field of a characteristic $p >0$. For example, the algebraic closure $\overline{\mathbb{F}_p}$ of $\mathbb{F}_p$. Furthermore, we consider the category of simplicial sets endowed with the $\mathbb{F}_p$-local model structure constructed in \cite{Bousfield75}, where cofibrations are given by monomorphisms and where weak equivalences are given by morphisms $f: X \longrightarrow Y$ such that $\mathrm{H}_*(f,\mathbb{F}_p): \mathrm{H}_*(X,\mathbb{F}_p) \longrightarrow \mathrm{H}_*(Y,\mathbb{F}_p)$ is an isomorphism.

\medskip

\textbf{Cochains adjunction:} There is an adjunction

\[
\begin{tikzcd}[column sep=5pc,row sep=2.5pc]
\mathsf{sSet} \arrow[r, shift left=1.5ex, "C^*_c(-)"{name=A}]
&\mathsf{dg}~\Omega \mathrm{B}^{\mathrm{s.a}} \mathcal{E}\textsf{-}\mathsf{alg}^{\mathsf{op}}~, \arrow[l, shift left=.75ex, "\mathrm{U}"{name=C}] \arrow[phantom, from=A, to=C, , "\dashv" rotate=-90]
\end{tikzcd}
\]

where $C^*_c(-)$ is the cellular cochains functor, endowed with a $\Omega \mathrm{B}^{\mathrm{s.a}} \mathcal{E}$-algebra structure, which is given by pulling back the $\mathcal{E}$-algebra structure along the morphism $\Omega \mathrm{B}^{\mathrm{s.a}} \mathcal{E} \longrightarrow \mathcal{E}$.

\medskip 

\textbf{Finite type nilpotent spaces.} We recall what finite type nilpotent spaces are and we state the main theorem of this subsection.

\begin{Definition}[Finite type simplicial set]
Let $X$ be a simplicial set. It is said to be of \textit{finite type} if the homology groups $\mathrm{H}_n(X,\mathbb{F}_p)$ are finite dimensional for all $n \geq 0$.
\end{Definition}

\begin{Definition}[Nilpotent simplicial set]
Let $X$ be a simplicial set. It is said to be \textit{nilpotent} if for every $0$-simplex $\alpha$ it satisfies the following conditions:

\begin{enumerate}
\item The group $\pi_1(X,\alpha)$ is nilpotent.

\medskip

\item The $\pi_1(X,\alpha)$-module $\pi_n(X,\alpha)$-module is a nilpotent $\pi_1(X,\alpha)$-module.
\end{enumerate}
\end{Definition}

\begin{theorem}[\cite{Mandell}]
Let $X$ be a connected finite type nilpotent simplicial set. There derived unit of adjunction 
\[
\eta_X: X \longrightarrow \mathbb{R}\mathrm{U}C^*_c(X) 
\]

is an $\mathbb{F}_p$-equivalence. 
\end{theorem}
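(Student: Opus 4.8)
This is M. Mandell's theorem \cite{Mandell}, and the plan is to recall the shape of his argument, transported to the present $\Omega\mathrm{B}^{\mathrm{s.a}}\mathcal{E}$-algebra model of $\mathbb{E}_\infty$-algebras. The overall strategy would be a Postnikov tower induction. Since $X$ is connected, nilpotent and of finite type, its Bousfield $\mathbb{F}_p$-completion admits a \emph{principal refinement} of its Postnikov tower: a tower $\cdots \to X_n \to X_{n-1} \to \cdots \to X_1 = *$ of principal fibrations, the $n$-th layer being classified by a $k$-invariant $X_{n-1} \to K(\mathbb{F}_p, n+1)$, so that each fiber is an Eilenberg--MacLane space $K(\mathbb{F}_p, n)$ and $X \simeq \lim_n X_n$ after completion (nilpotence is exactly what licenses this refinement into principal fibrations with Eilenberg--MacLane fibers). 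The key structural input I would record first is that both $X \mapsto X$ and $X \mapsto \mathbb{R}\mathrm{U}C^*_c(X)$ send such principal fibrations to homotopy pullbacks: on the algebraic side this is an Eilenberg--Moore statement, namely that $C^*_c$ carries the fiber square to a homotopy pushout of $\mathbb{E}_\infty$-algebras, and the derived mapping-space functor $\mathbb{R}\mathrm{U}$ converts that pushout back into a pullback of simplicial sets.

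The heart of the matter is the base case: identifying the $\mathbb{E}_\infty$-cochain algebra $C^*_c(K(\mathbb{F}_p,n))$ over $\kk = \overline{\mathbb{F}_p}$ and computing $\mathbb{R}\mathrm{U}$ of it. First I would identify $C^*_c(K(\mathbb{F}_p,n))$, up to $\mathbb{F}_p$-equivalence, with the \emph{free} $\mathbb{E}_\infty$-algebra on a single generator in degree $n$; concretely $\mathrm{H}^*(K(\mathbb{F}_p,n);\mathbb{F}_p)$ is the free unstable algebra over the Steenrod algebra, which matches the homology of the free $\mathbb{E}_\infty$-algebra on a degree-$n$ class through the power (Dyer--Lashof) operations, so that the Cartan--Serre computation is read off as operadic freeness. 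Then I would compute the realization of a free algebra: $\mathbb{E}_\infty$-maps out of a free algebra into $C^*_c(\Delta^\bullet)$ are governed by the single generator together with the constraint relating it to its image under the Frobenius-type power operation. It is precisely here that algebraic closedness of $\overline{\mathbb{F}_p}$ is indispensable: the governing equations are of Artin--Schreier type $x^p - x = \text{(correction terms)}$, and over an algebraically closed field of characteristic $p$ their solution sets carry exactly the structure that reproduces the $\mathbb{F}_p$-homotopy type of $K(\mathbb{F}_p,n)$, whereas over a non-closed field one would lose points and the unit would fail to be an equivalence.

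With the base case established, I would run the induction up the tower: assuming $\eta_{X_{n-1}}$ and the unit for the fiber $K(\mathbb{F}_p,n)$ are $\mathbb{F}_p$-equivalences, the homotopy-pullback compatibility recorded above, combined with a five-lemma comparison of the two resulting fiber sequences, yields that $\eta_{X_n}$ is an $\mathbb{F}_p$-equivalence; one then passes to the limit, invoking finite type to guarantee convergence of the tower and that $\mathbb{R}\mathrm{U}C^*_c$ commutes with the relevant homotopy limits. The main obstacle is unquestionably the Eilenberg--MacLane base case: both the freeness identification of $C^*_c(K(\mathbb{F}_p,n))$ and the point-count computation of its realization over $\overline{\mathbb{F}_p}$ are delicate, and the latter is exactly the step that forces passage to the algebraic closure and distinguishes this positive-characteristic theory from its rational counterpart. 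A secondary technical point would be checking the Eilenberg--Moore pushout property inside the chosen $\Omega\mathrm{B}^{\mathrm{s.a}}\mathcal{E}$-algebra model, which requires suitable cofibrancy of the cochains and appropriate finiteness in the tower.
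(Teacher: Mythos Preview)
Your sketch is a faithful outline of Mandell's original argument and is correct as such; however, the paper does not reprove Mandell's theorem at all. Its entire proof consists of one observation: the dg operad $\Omega \mathrm{B}^{\mathrm{s.a}}\mathcal{E}$ is an $\mathbb{E}_\infty$-operad in the sense required by \cite{Mandell}, i.e.\ an $\mathbb{S}$-projective resolution of the commutative operad, and therefore the main theorem of \cite{Mandell} applies verbatim to the cochains adjunction built from it. That is the only content specific to this paper --- Mandell's result is model-independent once the operad is a cofibrant $\mathbb{E}_\infty$-operad, so no transport of the Postnikov/Eilenberg--Moore/Artin--Schreier machinery into the $\Omega \mathrm{B}^{\mathrm{s.a}}\mathcal{E}$-model is actually needed.

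What your approach buys is an explanation of \emph{why} the theorem is true, highlighting the role of $\overline{\mathbb{F}_p}$ and the Eilenberg--MacLane base case; what the paper's approach buys is brevity and modularity, isolating the single new verification (that the chosen operad meets Mandell's hypotheses) from the heavy lifting already done in \cite{Mandell}. If you want to match the paper, replace your sketch with that one-line reduction; if you want to keep your sketch, you should at least flag that the only step genuinely at stake in this paper's context is checking that $\Omega \mathrm{B}^{\mathrm{s.a}}\mathcal{E}$ is $\mathbb{S}$-projective and quasi-isomorphic to $\mathcal{C}om$.
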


\begin{proof}
The dg operad $\Omega \mathrm{B}^{\mathrm{s.a}} \mathcal{E}$ qualifies as an $\mathcal{E}_\infty$-operad, that is, a $\mathbb{S}$-projective resolution of the commutative operad. Thus the main theorem of \cite{Mandell} applies. 
\end{proof}

\begin{Proposition}\label{prop: n-ieme equivalence}
Let $X$ be a finite type simplicial set. There is a weak equivalence of simplicial sets
\[
\mathcal{R}\mathcal{L}(X) \simeq \mathbb{R}\mathrm{U}C^*_c(X)~,
\]

which is natural on the subcategory of finite type simplicial sets.
\end{Proposition}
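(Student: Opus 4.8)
The plan is to identify both sides with homotopy mapping spaces and then match them using finite-type linear duality, which is the incarnation of Koszul duality underlying the statement.

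First I would rewrite the left-hand side. By \cref{thm: triangle commutatif} we have $\mathcal{R}\mathcal{L}(X)=\overline{\mathcal{R}}\,\widehat{\mathrm{B}}_\iota\widehat{\Omega}_\iota C^c_*(X)$. Since the complete bar--cobar adjunction is a Quillen equivalence (\cref{thm: Equivalence the Quillen CC infini et absolues}), every $u\mathcal{EE}_\infty$-coalgebra is cofibrant (cofibrations are monomorphisms) and every qp-complete curved absolute $\mathcal{L}^\pi_\infty$-algebra is fibrant, the derived unit $C^c_*(X)\to \widehat{\mathrm{B}}_\iota\widehat{\Omega}_\iota C^c_*(X)$ is a quasi-isomorphism onto a fibrant object; hence $\widehat{\mathrm{B}}_\iota\widehat{\Omega}_\iota C^c_*(X)$ is a fibrant replacement of $C^c_*(X)$. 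Applying the right Quillen functor $\overline{\mathcal{R}}$ therefore computes its derived functor, so that
\[
\mathcal{R}\mathcal{L}(X)\;\simeq\;\mathbb{R}\overline{\mathcal{R}}\,(C^c_*(X))\;\simeq\;\mathrm{Map}^{h}_{u\mathcal{EE}_\infty\text{-}\mathsf{coalg}}\!\left(C^c_*(\Delta^\bullet),\,C^c_*(X)\right),
\]
a Kan complex. Dually, unwinding the cochains adjunction $C^*_c(-)\dashv \mathrm{U}$ in $\Omega\mathrm{B}^{\mathrm{s.a}}\mathcal{E}\text{-}\mathsf{alg}^{\mathsf{op}}$ gives $\mathbb{R}\mathrm{U}C^*_c(X)\simeq \mathrm{Map}^{h}_{u\mathcal{EE}_\infty\text{-}\mathsf{alg}}(C^*_c(X),C^*_c(\Delta^\bullet))$, where the derived functor is computed by a cofibrant replacement of $C^*_c(X)$ in $\Omega\mathrm{B}^{\mathrm{s.a}}\mathcal{E}$-algebras.

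Next I would set up the duality. For any simplicial set $Y$ one has, by the very definition of normalized cochains, $C^*_c(Y)=(C^c_*(Y))^{*}$, and linear dualization turns the $u\mathcal{EE}_\infty$-coalgebra structure on $C^c_*(Y)$ into the $u\mathcal{EE}_\infty=\Omega\mathrm{B}^{\mathrm{s.a}}\mathcal{E}$-algebra structure on $C^*_c(Y)$. For $Y=\Delta^n$ the normalized chains $C^c_*(\Delta^n)$ are finite dimensional, so $(C^c_*(\Delta^n))^{*}=C^*_c(\Delta^n)$ on the nose and the biduality map is an isomorphism. Dualization then defines, naturally in $n$ and in $X$, a map of simplicial sets
\[
\mathrm{Map}^{h}_{u\mathcal{EE}_\infty\text{-}\mathsf{coalg}}\!\left(C^c_*(\Delta^\bullet),C^c_*(X)\right)\longrightarrow \mathrm{Map}^{h}_{u\mathcal{EE}_\infty\text{-}\mathsf{alg}}\!\left(C^*_c(X),C^*_c(\Delta^\bullet)\right),
\]
sending a coalgebra morphism $f$ to its transpose $f^{*}$; the two variances of $C^*_c(-)$ and of the mapping space cancel, so the assignment is covariant in $X$, matching the naturality required by the statement.

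Finally I would prove this comparison is a weak equivalence. The only obstruction to dualization being a strict bijection is the double-dual: the transpose of an algebra map $C^*_c(X)\to C^*_c(\Delta^n)$ lands a priori in $(C^c_*(X))^{**}$ rather than $C^c_*(X)$. This is exactly where the finite-type hypothesis enters: over $\kk$ the functor $(-)^{*}$ commutes with homology, so $\mathrm{H}_*((C^c_*(X))^{**})\cong(\mathrm{H}_*(X))^{**}$, and since $\mathrm{H}_*(X)$ is finite dimensional in each degree the biduality map $C^c_*(X)\to (C^c_*(X))^{**}$ is a quasi-isomorphism. Passing to homotopy mapping spaces, which are invariant under quasi-isomorphism in each variable, the dualization map becomes an equivalence; assembling over $\bullet$ and combining with the two identifications above yields the natural weak equivalence $\mathcal{R}\mathcal{L}(X)\simeq \mathbb{R}\mathrm{U}C^*_c(X)$. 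I expect the main difficulty to be precisely this last step: controlling the double-dualization and checking that the strict dualization functor is compatible with the cofibrant and fibrant replacements used to define the two derived mapping spaces, so that the comparison is genuinely a homotopy equivalence rather than merely a bijection on strict mapping sets.
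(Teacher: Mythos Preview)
Your overall strategy---rewrite both sides as derived mapping spaces in $u\mathcal{EE}_\infty$-coalgebras and $u\mathcal{EE}_\infty$-algebras and compare them via linear duality using the finite-type hypothesis---is exactly the paper's strategy. The paper, however, organizes the duality step differently: instead of producing a direct comparison map and chasing the double dual, it invokes a commuting square of Quillen adjunctions
\[
\begin{tikzcd}[column sep=4pc,row sep=3pc]
\Omega \mathrm{B}^{\mathrm{s.a}} \mathcal{E}\text{-}\mathsf{alg}^{\mathsf{op}} \arrow[r,leftrightarrow,"\mathrm{B}_\iota / \Omega_\iota"] \arrow[d,leftrightarrow,"(-)^\circ/(-)^*"']
& \mathsf{curv}~\mathrm{B}^{\mathrm{s.a}}\mathcal{E}\text{-}\mathsf{coalg}^{\mathsf{op}} \arrow[d,leftrightarrow,"(-)^*/(-)^\vee"] \\
\Omega \mathrm{B}^{\mathrm{s.a}} \mathcal{E}\text{-}\mathsf{coalg} \arrow[r,leftrightarrow,"\widehat\Omega_\iota/\widehat{\mathrm B}_\iota"']
& \mathsf{curv}~\mathrm{B}^{\mathrm{s.a}}\mathcal{E}\text{-}\mathsf{alg}^{\mathsf{qp\text{-}comp}}
\end{tikzcd}
\]
taken from \cite[Section 8]{bricevictor}. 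It then computes $\mathbb{R}\mathrm{U}C^*_c(X)$ with the explicit cofibrant replacement $\Omega_\iota\mathrm{B}_\iota C^*_c(X)$, observes that this has degree-wise finite-dimensional homology bounded above (this is where finite type is used), and uses that the \emph{Sweedler dual} $(-)^\circ$ is homotopically fully faithful on such objects to pass to the coalgebra side, landing directly on $\mathrm{Hom}_{\mathsf{coalg}}(C^c_*(\Delta^\bullet),\widehat{\mathrm B}_\iota\widehat\Omega_\iota C^c_*(X))=\mathcal{R}\mathcal{L}(X)$.

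The place where your more direct route is genuinely incomplete is precisely the step you flag. The inverse you sketch---dualize an algebra map $g\colon C^*_c(X)\to C^*_c(\Delta^n)$ to a ``coalgebra map'' into $(C^c_*(X))^{**}$---does not make sense as stated: the linear dual of a $u\mathcal{EE}_\infty$-algebra is not a $u\mathcal{EE}_\infty$-coalgebra without finiteness, so $(C^c_*(X))^{**}=(C^*_c(X))^*$ carries no evident coalgebra structure, and the biduality quasi-isomorphism of \emph{dg modules} cannot be upgraded to one of coalgebras. What fixes this is exactly the paper's device: replace the naive $(-)^*$ on algebras by its right adjoint, the generalized Sweedler dual $(-)^\circ$, which always lands in coalgebras, and use that the derived unit $C\to (C^*)^\circ$ is a quasi-isomorphism under the stated finiteness. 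Once you phrase your last step through $(-)^\circ$ (equivalently, through the square above), your argument and the paper's become the same.
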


\begin{proof}
There is an equivalence 
\[
\mathbb{R}\mathrm{U}C^*_c(X) \simeq \mathrm{Hom}_{\Omega \mathrm{B}^{\mathrm{s.a}} \mathcal{E} \text{-}\mathsf{alg}}(\Omega_\iota \mathrm{B}_\iota C_c^*(X), C_c^*(\Delta^\bullet))~.
\]

The following square of Quillen adjunctions

\[
\begin{tikzcd}[column sep=5pc,row sep=5pc]
\Omega \mathrm{B}^{\mathrm{s.a}} \mathcal{E}\text{-}\mathsf{alg}^{\mathsf{op}} \arrow[r,"\mathrm{B}_\iota^{\mathsf{op}}"{name=B},shift left=1.1ex] \arrow[d,"(-)^\circ "{name=SD},shift left=1.1ex ]
&\mathsf{curv}~\mathrm{B}^{\mathsf{s.a}}\mathcal{E}\text{-}\mathsf{coalg}^{\mathsf{op}} \arrow[d,"(-)^*"{name=LDC},shift left=1.1ex ] \arrow[l,"\Omega_\iota^{\mathsf{op}}"{name=C},,shift left=1.1ex]  \\
\Omega \mathrm{B}^{\mathrm{s.a}} \mathcal{E}\text{-}\mathsf{coalg} \arrow[r,"\widehat{\Omega}_\iota "{name=CC},shift left=1.1ex]  \arrow[u,"(-)^*"{name=LD},shift left=1.1ex ]
&\mathsf{curv}~\mathrm{B}^{\mathsf{s.a}}\mathcal{E}\text{-}\mathsf{alg}^{\mathsf{qp}\text{-}\mathsf{comp}} ~, \arrow[l,"\widehat{\mathrm{B}}_\iota"{name=CB},shift left=1.1ex] \arrow[u,"(-)^\vee"{name=TD},shift left=1.1ex] \arrow[phantom, from=SD, to=LD, , "\dashv" rotate=0] \arrow[phantom, from=C, to=B, , "\dashv" rotate=-90]\arrow[phantom, from=TD, to=LDC, , "\dashv" rotate=0] \arrow[phantom, from=CC, to=CB, , "\dashv" rotate=-90]
\end{tikzcd}
\] 

commutes. The left-hand side categories are endowed with a transferred model structure from dg modules and the right-hand side with a transferred model structure using each of these bar-cobar adjunctions. This follows from \cite[Section 8]{bricevictor}, using the fact that $\mathrm{B}^{\mathsf{s.a}}\mathcal{E}$ is a quasi-planar curved conilpotent cooperad as shown in Lemma \ref{lemma: quasi-planar}. Since $X$ is a finite type simplicial set, the homology of $\Omega_\iota \mathrm{B}_\iota C_c^*(X)$ is degree-wise finite dimensional and bounded above. Therefore the generalized Sweedler dual $(-)^\circ$ is homotopically fully-faithful and we get 
\[
\mathrm{Hom}_{\Omega \mathrm{B}^{\mathrm{s.a}} \mathcal{E} \text{-}\mathsf{alg}}(\Omega_\iota\mathrm{B}_\iota C_c^*(X), C_c^*(\Delta^\bullet)) \simeq \mathrm{Hom}_{\Omega \mathrm{B}^{\mathrm{s.a}} \mathcal{E} \text{-}\mathsf{coalg}}(C_*^c(\Delta^\bullet), \widehat{\mathrm{B}}_\iota \widehat{\Omega}_\iota C_*^c(X) )~,
\]
which concludes the proof. See \cite[Section 3]{lucio2022integration} for an analogue of this proof in characteristic zero.
\end{proof}

\begin{theorem}\label{thm: modèles d'homotopie rationnel type fini}
Let $X$ be a connected finite type nilpotent simplicial set. The unit of adjunction
\[
\eta_X: X \qi \mathcal{R}\mathcal{L}(X) 
\]

is an $\mathbb{F}_p$-equivalence. 
\end{theorem}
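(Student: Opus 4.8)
The plan is to combine the two results established immediately above. Mandell's theorem asserts that the derived unit $\eta_X^{\mathrm{M}}: X \to \mathbb{R}\mathrm{U}C^*_c(X)$ of the cochains adjunction is an $\mathbb{F}_p$-equivalence, while Proposition \ref{prop: n-ieme equivalence} provides a natural weak-equivalence $\theta_X: \mathcal{R}\mathcal{L}(X) \simeq \mathbb{R}\mathrm{U}C^*_c(X)$ for finite type simplicial sets. Granting these, the statement reduces to a two-out-of-three argument, provided one knows that the Lie-theoretic unit $\eta_X$ and Mandell's unit $\eta_X^{\mathrm{M}}$ are identified under $\theta_X$.

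First I would unravel the unit $\eta_X: X \to \mathcal{R}\mathcal{L}(X)$. Since $\mathcal{L} = \widehat{\Omega}_\iota C^c_*(-)$ and $\mathcal{R} = \overline{\mathcal{R}} \widehat{\mathrm{B}}_\iota$, this unit factors as the unit of $C^c_* \dashv \overline{\mathcal{R}}$ followed by $\overline{\mathcal{R}}$ applied to the unit of the complete bar-cobar adjunction $\widehat{\Omega}_\iota \dashv \widehat{\mathrm{B}}_\iota$. I would then trace this composite through the commuting square of Quillen equivalences appearing in the proof of Proposition \ref{prop: n-ieme equivalence}, which relates $\Omega \mathrm{B}^{\mathrm{s.a}}\mathcal{E}$-algebras and coalgebras through the generalized Sweedler dual $(-)^\circ$ and the linear dual $(-)^*$. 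The conceptual reason the two units should agree is that this square, together with the finiteness of the homology of $C^*_c(X)$ (which makes $(-)^\circ$ homotopically fully faithful), intertwines the adjunction $C^*_c \dashv \mathrm{U}$ with $\mathcal{L} \dashv \mathcal{R}$; units of adjunctions related in this way are compatible, so that the triangle
\[
\begin{tikzcd}[column sep=1.2pc, row sep=1.6pc]
& \mathcal{R}\mathcal{L}(X) \arrow[dd, "\theta_X"', "\simeq"] \\
X \arrow[ru, "\eta_X"] \arrow[rd, "\eta_X^{\mathrm{M}}"'] & \\
& \mathbb{R}\mathrm{U}C^*_c(X)
\end{tikzcd}
\]
commutes in the homotopy category.

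With the triangle in hand the conclusion is immediate: $\theta_X$ is a weak-equivalence and $\eta_X^{\mathrm{M}}$ is an $\mathbb{F}_p$-equivalence by Mandell's theorem, so by the two-out-of-three property of $\mathbb{F}_p$-equivalences the unit $\eta_X: X \to \mathcal{R}\mathcal{L}(X)$ is an $\mathbb{F}_p$-equivalence as well. The main obstacle is precisely the middle step: verifying that the two units genuinely correspond under $\theta_X$. This is not formal, since $\theta_X$ is assembled from a zigzag passing through several dualities; one must check that the unit of the bar-cobar adjunction on the Lie side matches the unit of the cochains adjunction after dualizing, which ultimately rests on the compatibility of the evaluation maps against the resolutions $C^c_*(\Delta^\bullet)$ and $C^*_c(\Delta^\bullet)$ and on the naturality already built into Proposition \ref{prop: n-ieme equivalence}.
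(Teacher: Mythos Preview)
Your proposal is correct and follows exactly the route the paper takes: the paper's proof is the single line ``Follows directly from Proposition \ref{prop: n-ieme equivalence}'', which is precisely your combination of Mandell's theorem with the natural weak equivalence $\mathcal{R}\mathcal{L}(X)\simeq \mathbb{R}\mathrm{U}C^*_c(X)$ and a two-out-of-three argument. You are in fact more careful than the paper, since you isolate and address the compatibility of the two units under $\theta_X$, a point the paper leaves implicit in the naturality clause of Proposition \ref{prop: n-ieme equivalence}.
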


\begin{proof}
Follows directly from Proposition \ref{prop: n-ieme equivalence}.
\end{proof}

\begin{Remark}
Let $X$ connected finite type simplicial set. The unit of adjunction
\[
\eta_X: X \qi \mathcal{R}\mathcal{L}(X) 
\]

is weakly equivalent to the Bousfield-Kan $p$-completion. 
\end{Remark}

\begin{Corollary}
Let $X$ be a pointed connected finite type nilpotent simplicial set. The unit of adjunction
\[
\eta_X: X \qi \mathcal{R}_*\mathcal{L}_*(X) 
\]

is an $\mathbb{F}_p$-equivalence.
\end{Corollary}

\begin{proof}
Follows from Theorem \ref{thm: modèles d'homotopie rationnel type fini} and Proposition \ref{Prop: commutativity of the pointing adjunctions}. 
\end{proof}

\begin{Remark}
In \cite{bachmann2024}, Bachmann and Burklund showed that the chains functor from spaces to $\mathbb{E}_\infty$-coalgebras over $\kk$ (algebraically closed field of characteristic $p >0$) is homotopically fully faithful on all nilpotent spaces, without any pointed, connected or finite type assumption. If $\Omega \mathrm{B}^{\mathrm{s.a}} \mathcal{E}$-coalgebras rectify $\mathbb{E}_\infty$-coalgebras over $\kk$, that is, if the $\infty$-category of $\Omega \mathrm{B}^{\mathrm{s.a}} \mathcal{E}$-coalgebras localized at quasi-isomorphisms is equivalent to the $\infty$-category of $\mathbb{E}_\infty$-coalgebras over $\kk$, then it would follow that Theorem \ref{thm: modèles d'homotopie rationnel type fini} holds for all nilpotent spaces as well.
\end{Remark}

\subsection{Minimal models}
In this subsection, we construct minimal resolutions of qp-complete curved absolute $\mathcal{L}^\pi_\infty$-algebras with respect to transferred weak equivalences. 

\begin{Definition}[Minimal model]
Let $\mathfrak{g}$ be a qp-complete curved absolute $\mathcal{L}^\pi_\infty$-algebra. A \textit{minimal model} $(V,\varphi_{d_V},\psi_V)$ amounts to the data of 

\begin{enumerate}
\item A graded module $V$ together with a map
\[
\varphi_{d_V}: V \longrightarrow \prod_{n \geq 0} \widehat{\Omega}^{\mathrm{s.a}}\mathcal{E}^*(n)^{(\geq 1)} ~\widehat{\otimes}_{\mathbb{S}_n} ~ V^{\otimes n}~
\]

which lands on elements of weight greater or equal to $1$, such that the induced derivation $d_V$ satisfies
\[
d_V^2 = l_2^{(12)}(l_0,g) + l_2^{(21)}(l_0,g)~. 
\]

\item A weak equivalence of curved absolute $\mathcal{L}^\pi_\infty$-algebras 
\[
\psi_V: \left(\prod_{n \geq 0} \widehat{\Omega}^{\mathrm{s.a}}\mathcal{E}^*(n) ~\widehat{\otimes}_{\mathbb{S}_n} ~ V^{\otimes n},d_V \right) \qi \mathfrak{g}~.
\]
\end{enumerate}
\end{Definition}

\begin{Remark}
Given a graded module $V$, the data of the derivation $d_V$ amounts to a $u\mathcal{EE}_\infty$-coalgebra structure on $V$. Thus the data $(V,\varphi_{d_V})$ above amounts to the data of a \textit{minimal} $u\mathcal{EE}_\infty$-coalgebra, that is, a $u\mathcal{EE}_\infty$-coalgebra whose underlying differential is zero.
\end{Remark}

\begin{Proposition}
Let $(V,\varphi_{d_V},\psi_V)$ and $(W,\varphi_{d_W},\psi_W)$ be two minimal models of a qp-complete curved absolute $\mathcal{L}^\pi_\infty$-algebra $\mathfrak{g}$. There is an isomorphism of graded modules $V \cong W$. 
\end{Proposition}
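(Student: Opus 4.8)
The plan is to show that the underlying graded module of any minimal model of $\mathfrak{g}$ is canonically isomorphic to the homology $\mathrm{H}_*(\widehat{\mathrm{B}}_\iota \mathfrak{g})$ of the complete bar construction. Since $\mathrm{H}_*(\widehat{\mathrm{B}}_\iota \mathfrak{g})$ is manifestly an invariant of $\mathfrak{g}$ that does not depend on the chosen minimal model, any two minimal models must then have isomorphic underlying graded modules. This mirrors the characteristic-zero treatment in \cite{lucio2022integration}.

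First I would unwind the data $(V,\varphi_{d_V})$ using the Remark preceding the statement: it is exactly a \emph{minimal} $u\mathcal{EE}_\infty$-coalgebra $\mathcal{V}$, that is, a $u\mathcal{EE}_\infty$-coalgebra whose underlying differential vanishes. By the very definition of the complete cobar construction relative to $\iota$, the quasi-free algebra $\left(\prod_{n \geq 0} \widehat{\Omega}^{\mathrm{s.a}}\mathcal{E}^*(n)\,\widehat{\otimes}_{\mathbb{S}_n}\, V^{\otimes n}, d_V\right)$ is precisely $\widehat{\Omega}_\iota(\mathcal{V})$, so that the weak equivalence $\psi_V$ is a morphism $\psi_V \colon \widehat{\Omega}_\iota(\mathcal{V}) \to \mathfrak{g}$ of qp-complete curved absolute $\mathcal{L}^\pi_\infty$-algebras.

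Next I would invoke the complete bar-cobar Quillen equivalence of Theorem \ref{thm: Equivalence the Quillen CC infini et absolues}. Every $u\mathcal{EE}_\infty$-coalgebra is cofibrant (cofibrations are degree-wise monomorphisms) and every qp-complete curved absolute $\mathcal{L}^\pi_\infty$-algebra is fibrant (fibrations are degree-wise epimorphisms). Since $\widehat{\Omega}_\iota(\mathcal{V})$ is therefore already fibrant, applying the defining property of a Quillen equivalence to the identity weak equivalence of $\widehat{\Omega}_\iota(\mathcal{V})$ shows that its adjunct, the unit $\eta_{\mathcal{V}} \colon \mathcal{V} \to \widehat{\mathrm{B}}_\iota \widehat{\Omega}_\iota(\mathcal{V})$, is a weak equivalence of $u\mathcal{EE}_\infty$-coalgebras, hence a quasi-isomorphism. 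On the other hand, $\psi_V$ being a weak equivalence means by definition that $\widehat{\mathrm{B}}_\iota(\psi_V)$ is a quasi-isomorphism. Composing, the map
\[
\mathcal{V} \xrightarrow{\ \eta_{\mathcal{V}}\ } \widehat{\mathrm{B}}_\iota \widehat{\Omega}_\iota(\mathcal{V}) \xrightarrow{\ \widehat{\mathrm{B}}_\iota(\psi_V)\ } \widehat{\mathrm{B}}_\iota(\mathfrak{g})
\]
is a quasi-isomorphism. Passing to homology and using that $\mathcal{V}$ has zero differential, so that $\mathrm{H}_*(\mathcal{V}) = V$, yields an isomorphism $V \cong \mathrm{H}_*(\widehat{\mathrm{B}}_\iota \mathfrak{g})$ of graded modules. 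The identical argument applied to $(W,\varphi_{d_W},\psi_W)$ gives $W \cong \mathrm{H}_*(\widehat{\mathrm{B}}_\iota \mathfrak{g})$, whence $V \cong W$.

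The only genuinely delicate points are bookkeeping. One must verify that minimality translates precisely into the vanishing of the internal differential of $\mathcal{V}$, so that its homology equals its underlying graded module, and that the model-categorical hypotheses (all objects cofibrant on the coalgebra side, all objects fibrant on the algebra side) indeed identify the derived unit with the ordinary unit $\eta_{\mathcal{V}}$. Neither step requires new ideas; the main conceptual content is entirely carried by the Quillen equivalence of Theorem \ref{thm: Equivalence the Quillen CC infini et absolues} together with the definition of weak equivalences on the two sides.
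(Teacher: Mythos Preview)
Your argument is correct and takes a genuinely different route from the paper's proof. The paper argues as follows: from the two weak equivalences $\psi_V$ and $\psi_W$ to $\mathfrak{g}$ one obtains (using that both quasi-free algebras are cofibrant and all objects are fibrant) a direct weak equivalence $\widehat{\Omega}_\iota(\mathcal{V}) \qi \widehat{\Omega}_\iota(\mathcal{W})$; such a map is exactly an $\infty$-quasi-isomorphism $\mathcal{V} \rightsquigarrow \mathcal{W}$ of $u\mathcal{EE}_\infty$-coalgebras, and since both have trivial differential the linear component must be an isomorphism. Your approach instead identifies the generators of \emph{any} minimal model with the invariant $\mathrm{H}_*(\widehat{\mathrm{B}}_\iota \mathfrak{g})$, which is essentially the content of the \emph{next} proposition in the paper (Proposition~\ref{prop; generateurs du model minimal, homologie de la Bar}), deduced here in a cleaner way via the Quillen equivalence rather than via the homotopy transfer theorem. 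What the paper's route buys is slightly more: it produces an actual $\infty$-isomorphism between the two minimal $u\mathcal{EE}_\infty$-coalgebras, not merely an isomorphism of the underlying graded modules. What your route buys is that it avoids both the passage from a zigzag to a direct weak equivalence and the appeal to the theory of $\infty$-morphisms from \cite{bricevictor}, relying only on Theorem~\ref{thm: Equivalence the Quillen CC infini et absolues}.
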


\begin{proof}
If $(V,\varphi_{d_V},\psi_V)$ and $(W,\varphi_{d_W},\psi_W)$ are two minimal models of $\mathfrak{g}$, then, by definition, there exists a weak equivalence of curved absolute $\mathcal{L}^\pi_\infty$-algebras
\[
\left(\prod_{n \geq 0} \widehat{\Omega}^{\mathrm{s.a}}\mathcal{E}^*(n) ~\widehat{\otimes}_{\mathbb{S}_n} ~ V^{\otimes n},d_V \right) \qi \left( \prod_{n \geq 0} \widehat{\Omega}^{\mathrm{s.a}}\mathcal{E}^*(n) ~\widehat{\otimes}_{\mathbb{S}_n} ~ W^{\otimes n},d_W\right)~.
\]

This data amounts to an $\infty$-quasi-isomorphism between the $u\mathcal{EE}_\infty$-coalgebras $V$ and $W$. Since their differentials are trivial, this data is an $\infty$-isomorphism, and in particular, an isomorphism of graded modules between $V$ and $W$. See \cite[Section 7]{bricevictor} for the statements related to $\infty$-morphisms of coalgebras in positive characteristic.
\end{proof}

\begin{Proposition}\label{prop; generateurs du model minimal, homologie de la Bar}
Let $\mathfrak{g}$ be a qp-complete curved absolute $\mathcal{L}^\pi_\infty$-algebra. It admits a minimal model where the graded module of generators is given by the homology of its complete bar construction.
\end{Proposition}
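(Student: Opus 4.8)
The plan is to produce the minimal model of $\mathfrak{g}$ by transferring the $u\mathcal{EE}_\infty$-coalgebra structure of its complete bar construction $\widehat{\mathrm{B}}_\iota(\mathfrak{g})$ onto its homology and then applying the complete cobar functor. By the Remark above, the data $(V,\varphi_{d_V})$ of the first part of a minimal model is exactly the data of a \emph{minimal} $u\mathcal{EE}_\infty$-coalgebra, that is, a dg $u\mathcal{EE}_\infty$-coalgebra with vanishing internal differential; moreover the associated free curved absolute $\mathcal{L}^\pi_\infty$-algebra $\bigl(\prod_{n\geq 0}\widehat{\Omega}^{\mathrm{s.a}}\mathcal{E}^*(n)\,\widehat{\otimes}_{\mathbb{S}_n}\,V^{\otimes n},d_V\bigr)$ is precisely the complete cobar construction $\widehat{\Omega}_\iota(V)$. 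So it suffices to equip $V:=\mathrm{H}_*(\widehat{\mathrm{B}}_\iota(\mathfrak{g}))$ with a minimal $u\mathcal{EE}_\infty$-coalgebra structure together with a suitable $\infty$-quasi-isomorphism to $\widehat{\mathrm{B}}_\iota(\mathfrak{g})$.

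First I would note that $\widehat{\mathrm{B}}_\iota(\mathfrak{g})$ is a genuine dg $u\mathcal{EE}_\infty$-coalgebra (the curvature of $\mathfrak{g}$ being absorbed into the twisting morphism $\iota$), hence in particular a dg $\kk$-module. Working over a field, I choose a homotopy retract of its underlying complex onto its homology $V$ and invoke the homotopy transfer theorem for coalgebras over a quasi-planar conilpotent cooperad from \cite[Section 7]{bricevictor}. The quasi-planarity of $\mathrm{B}^{\mathrm{s.a}}\mathcal{E}$, established in Lemma \ref{lemma: quasi-planar}, is exactly what makes this transfer available in positive characteristic. It endows $V$ with a $u\mathcal{EE}_\infty$-coalgebra structure and an $\infty$-quasi-isomorphism $i_\infty\colon V\rightsquigarrow \widehat{\mathrm{B}}_\iota(\mathfrak{g})$. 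Since the internal differential of $V$ vanishes, the transferred coalgebra is minimal, i.e. its structure map $\varphi_{d_V}$ lands in weight $\geq 1$, providing the datum (1) of a minimal model.

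It then remains to produce the weak equivalence $\psi_V$. Under the bar-cobar dictionary of \cite[Section 7]{bricevictor}, the $\infty$-quasi-isomorphism $i_\infty$ is by definition a strict morphism of curved absolute $\mathcal{L}^\pi_\infty$-algebras $\widehat{\Omega}_\iota(V)\to\widehat{\Omega}_\iota\widehat{\mathrm{B}}_\iota(\mathfrak{g})$, and it is a weak equivalence precisely because $i_\infty$ is an $\infty$-quasi-isomorphism. Composing with the counit $\widehat{\Omega}_\iota\widehat{\mathrm{B}}_\iota(\mathfrak{g})\to\mathfrak{g}$, which is a weak equivalence by Theorem \ref{thm: Equivalence the Quillen CC infini et absolues} since the complete bar-cobar adjunction is a Quillen equivalence and every qp-complete object is fibrant, yields the desired weak equivalence $\psi_V\colon\widehat{\Omega}_\iota(V)\qi\mathfrak{g}$. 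As $\widehat{\Omega}_\iota(V)$ is automatically qp-complete, the triple $(V,\varphi_{d_V},\psi_V)$ is a minimal model of $\mathfrak{g}$ with $V=\mathrm{H}_*(\widehat{\mathrm{B}}_\iota(\mathfrak{g}))$, as claimed.

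The main obstacle is the homotopy transfer step: over $\overline{\mathbb{F}_p}$ the classical transfer formulas fail, since they rely on splitting off invariants under the symmetric group actions, which is unavailable in positive characteristic. The entire purpose of working with the quasi-planar cooperad $\mathrm{B}^{\mathrm{s.a}}\mathcal{E}$ and the $\infty$-morphism formalism of \cite{bricevictor} is to circumvent exactly this difficulty; granting those results, both the transfer and its translation through bar-cobar are formal. A minor point to verify is that the transferred structure is genuinely minimal, which holds because the transfer is performed along a retract onto homology, so the linear (weight-zero) part of the transferred differential vanishes.
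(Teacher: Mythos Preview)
Your proof is correct and follows essentially the same approach as the paper: choose a contraction of $\widehat{\mathrm{B}}_\iota(\mathfrak{g})$ onto its homology, apply the homotopy transfer theorem of \cite[Section 7]{bricevictor} to obtain a minimal $u\mathcal{EE}_\infty$-coalgebra structure on $V=\mathrm{H}_*(\widehat{\mathrm{B}}_\iota\mathfrak{g})$ together with an $\infty$-quasi-isomorphism, and then compose the induced weak equivalence $\widehat{\Omega}_\iota(V)\qi\widehat{\Omega}_\iota\widehat{\mathrm{B}}_\iota(\mathfrak{g})$ with the counit of the complete bar-cobar adjunction. Your additional remarks on why the transferred structure is minimal and why the counit is a weak equivalence are accurate elaborations of points the paper leaves implicit.
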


\begin{proof}
Since we are working over a field, we can choose a contraction between $\widehat{\mathrm{B}}_\iota \mathfrak{g}$ and its homology. We can then apply the homotopy transfer theorem of \cite[Section 7]{bricevictor} in order to transfer an $\infty$-quasi-isomorphic $u\mathcal{EE}_\infty$-coalgebra structure onto its homology. This gives a weak equivalence of qp-complete curved absolute $\mathcal{L}^\pi_\infty$-algebras
\[
\widehat{\Omega}_\iota\mathrm{H}_*\left(\widehat{\mathrm{B}}_\iota \mathfrak{g}\right) \qi \widehat{\Omega}_\iota\widehat{\mathrm{B}}_\iota \mathfrak{g} \qi \mathfrak{g}~, 
\]
by composing the $\infty$-quasi-isomorphism with the counit of the complete bar-cobar adjunction.
\end{proof}

\begin{Definition}[$p$-adic model]\label{def: p-adic model}
Let $\mathfrak{g}$ be a qp-complete curved absolute $\mathcal{L}^\pi_\infty$-algebra. It is a $p$\textit{-adic model} if there exists a simplicial set $X$ and a zig-zag of weak equivalences of qp-complete curved absolute $\mathcal{L}^\pi_\infty$-algebras 
\[
\mathcal{L}(X) \lqi \cdot \qi \cdots \lqi \cdot \qi \mathfrak{g}~,
\]

and if, furthermore, $\mathcal{R}(\mathfrak{g})$ is $\mathbb{F}_p$-weakly equivalent to $X$. 
\end{Definition}

\begin{Example}
If $\mathfrak{g}$ is weakly equivalent to $\mathcal{L}(X)$, where $X$ is a connected finite type nilpotent simplicial set, then $\mathfrak{g}$ is a $p$-adic model. 
\end{Example}

\begin{Proposition}\label{prop: homologie de la Bar complète}
Let $\mathfrak{g}$ be a qp-complete curved absolute $\mathcal{L}^\pi_\infty$-algebra which is a $p$-adic model. The canonical morphism of $u\mathcal{EE}_\infty$-algebras 
\[
C_*^c(\mathcal{R}(\mathfrak{g})) \qi \widehat{\mathrm{B}}_\iota \mathfrak{g}
\]
is a quasi-isomorphism.
\end{Proposition}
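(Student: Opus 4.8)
The plan is to recognise the displayed arrow as the counit of the adjunction $C^c_* \dashv \overline{\mathcal{R}}$ and to transport the question along the defining zig-zag of a $p$-adic model. Since $\mathcal{R}(\mathfrak{g}) = \overline{\mathcal{R}}(\widehat{\mathrm{B}}_\iota \mathfrak{g})$, the canonical morphism $C^c_*(\mathcal{R}(\mathfrak{g})) \to \widehat{\mathrm{B}}_\iota \mathfrak{g}$ is exactly the component $\varepsilon_{\widehat{\mathrm{B}}_\iota \mathfrak{g}}$ of the counit $\varepsilon \colon C^c_* \overline{\mathcal{R}} \Rightarrow \mathrm{id}$ of the Quillen adjunction $C^c_* \dashv \overline{\mathcal{R}}$ from \cref{thm: triangle commutatif}; in particular it is natural in the $u\mathcal{EE}_\infty$-coalgebra variable.

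First I would show that being a quasi-isomorphism is invariant under weak-equivalences of curved absolute $\mathcal{L}^\pi_\infty$-algebras. Given a weak-equivalence $f \colon \mathfrak{g} \to \mathfrak{g}'$, naturality of $\varepsilon$ produces a commuting square whose right-hand vertical arrow is $\widehat{\mathrm{B}}_\iota(f)$ --- a quasi-isomorphism by the very definition of weak-equivalence in this category --- and whose left-hand vertical arrow is $C^c_*(\mathcal{R}(f))$. By \cref{thm: propriétés de l'intégration} the functor $\mathcal{R}$ preserves weak-equivalences, so $\mathcal{R}(f)$ is a weak homotopy equivalence of simplicial sets and $C^c_*(\mathcal{R}(f))$ is a quasi-isomorphism. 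Two-out-of-three then shows that $\varepsilon_{\widehat{\mathrm{B}}_\iota \mathfrak{g}}$ is a quasi-isomorphism if and only if $\varepsilon_{\widehat{\mathrm{B}}_\iota \mathfrak{g}'}$ is. Propagating this equivalence along the zig-zag $\mathcal{L}(X) \lqi \cdot \qi \cdots \qi \mathfrak{g}$ reduces the statement to the case $\mathfrak{g} = \mathcal{L}(X) = \widehat{\Omega}_\iota C^c_*(X)$.

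In this case I would compute the counit by a triangle-identity argument. Writing $u = \eta^{\widehat{\Omega}}_{C^c_*(X)} \colon C^c_*(X) \to \widehat{\mathrm{B}}_\iota \widehat{\Omega}_\iota C^c_*(X) = \widehat{\mathrm{B}}_\iota \mathcal{L}(X)$ for the unit of $\widehat{\Omega}_\iota \dashv \widehat{\mathrm{B}}_\iota$ at $C^c_*(X)$, the composite unit of $\mathcal{L} \dashv \mathcal{R}$ factors as $\eta_X = \overline{\mathcal{R}}(u) \circ \eta^{C}_X$, where $\eta^C$ denotes the unit of $C^c_* \dashv \overline{\mathcal{R}}$. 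Using naturality of $\varepsilon$ along $u$ together with the triangle identity $\varepsilon_{C^c_*(X)} \circ C^c_*(\eta^C_X) = \mathrm{id}$, one obtains the identity $\varepsilon_{\widehat{\mathrm{B}}_\iota \mathcal{L}(X)} \circ C^c_*(\eta_X) = u$. Now $u$ is a quasi-isomorphism: the adjunction $\widehat{\Omega}_\iota \dashv \widehat{\mathrm{B}}_\iota$ is a Quillen equivalence by \cref{thm: Equivalence the Quillen CC infini et absolues}, every $u\mathcal{EE}_\infty$-coalgebra is cofibrant by \cref{prop: left transferred structure}, and every qp-complete curved absolute $\mathcal{L}^\pi_\infty$-algebra is fibrant, so $u$ is the derived unit at a cofibrant object. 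Moreover $C^c_*(\eta_X)$ is a quasi-isomorphism: the $p$-adic model hypothesis, combined with the fact that $\mathcal{R}$ sends the reducing zig-zag to weak homotopy equivalences, forces $\eta_X \colon X \to \mathcal{R}\mathcal{L}(X)$ to be an $\mathbb{F}_p$-equivalence, and $C^c_*(-)$ sends $\mathbb{F}_p$-equivalences to quasi-isomorphisms over $\overline{\mathbb{F}_p}$ (by flatness of $\overline{\mathbb{F}_p}$ over $\mathbb{F}_p$ and universal coefficients). Two-out-of-three applied to $\varepsilon_{\widehat{\mathrm{B}}_\iota \mathcal{L}(X)} \circ C^c_*(\eta_X) = u$ then finishes the proof.

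The step I expect to be the main obstacle is the last one: extracting from the abstract hypothesis that $\mathcal{R}(\mathfrak{g})$ is $\mathbb{F}_p$-weakly equivalent to $X$ the sharper statement that the specific comparison map $\eta_X$ is an $\mathbb{F}_p$-equivalence, rather than merely that the two simplicial sets are abstractly $\mathbb{F}_p$-equivalent. This requires checking that the $\mathbb{F}_p$-equivalence witnessing the $p$-adic model is realised through the canonical unit $\eta_X$ composed with the (weak) image of the zig-zag under $\mathcal{R}$; for the governing examples, such as connected finite type nilpotent $X$, this is precisely the content of \cref{thm: modèles d'homotopie rationnel type fini}. The remaining verifications --- the triangle-identity bookkeeping and the fibrant/cofibrant claims --- are routine given the model structures already in place.
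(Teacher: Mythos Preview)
Your argument is correct and is essentially an unpacked version of the paper's proof. The paper proceeds in one stroke: it asserts that the counit $\mathcal{L}(\mathcal{R}(\mathfrak{g})) \to \mathfrak{g}$ of the composite adjunction is a weak-equivalence, and then observes that its transpose along the Quillen equivalence $\widehat{\Omega}_\iota \dashv \widehat{\mathrm{B}}_\iota$ is precisely the map $C^c_*(\mathcal{R}(\mathfrak{g})) \to \widehat{\mathrm{B}}_\iota \mathfrak{g}$, which must therefore be a quasi-isomorphism. Your reduction along the zig-zag to $\mathfrak{g} = \mathcal{L}(X)$ followed by the triangle-identity computation is exactly what justifies the paper's first assertion; the paper simply does not spell this out.

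The concern you flag at the end --- that the abstract $\mathbb{F}_p$-equivalence $\mathcal{R}(\mathfrak{g}) \simeq_{\mathbb{F}_p} X$ must be upgraded to the statement that the \emph{specific} map $\eta_X$ is an $\mathbb{F}_p$-equivalence --- is a genuine subtlety, and it is equally present (though unacknowledged) in the paper's terse argument: the claim that the counit $\mathcal{L}\mathcal{R}(\mathfrak{g}) \to \mathfrak{g}$ is a weak-equivalence is, via the triangle identity, equivalent to $\mathcal{L}(\eta_X)$ being one, hence to $\eta_X$ being an $\mathbb{F}_p$-equivalence. So your caution is warranted, and your observation that in the governing examples this is supplied by \cref{thm: modèles d'homotopie rationnel type fini} is exactly how the paper implicitly resolves it as well.
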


\begin{proof}
The canonical map $C_*^c(\mathcal{R}(\mathfrak{g})) \qi \widehat{\mathrm{B}}_\iota \mathfrak{g}$ is a quasi-isomorphism since its transpose $\mathcal{L}(\mathcal{R}(\mathfrak{g})) \qi \mathfrak{g}$ is a weak equivalence.
\end{proof}

\begin{Corollary}
Let $\mathfrak{g}$ be a qp-complete curved absolute $\mathcal{L}^\pi_\infty$-algebra which is a $p$-adic model for a simplicial set $X$. Then its minimal model is generated by $\mathrm{H}_*(X)$.
\end{Corollary}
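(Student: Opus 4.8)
The plan is to chain together the two preceding propositions with the definition of a $p$-adic model, so that the statement reduces to a short sequence of homology identifications. By Proposition~\ref{prop; generateurs du model minimal, homologie de la Bar}, the complete curved absolute $\mathcal{L}^\pi_\infty$-algebra $\mathfrak{g}$ admits a minimal model whose graded module of generators is exactly $\mathrm{H}_*(\widehat{\mathrm{B}}_\iota \mathfrak{g})$, the homology of its complete bar construction. It therefore suffices to produce an isomorphism of graded modules $\mathrm{H}_*(\widehat{\mathrm{B}}_\iota \mathfrak{g}) \cong \mathrm{H}_*(X)$, and the whole task is to track this homology through the $p$-adic model data.

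First I would invoke Proposition~\ref{prop: homologie de la Bar complète}: since $\mathfrak{g}$ is a $p$-adic model, the canonical morphism $C_*^c(\mathcal{R}(\mathfrak{g})) \qi \widehat{\mathrm{B}}_\iota \mathfrak{g}$ is a quasi-isomorphism of $u\mathcal{EE}_\infty$-coalgebras, hence induces an isomorphism $\mathrm{H}_*(C_*^c(\mathcal{R}(\mathfrak{g}))) \cong \mathrm{H}_*(\widehat{\mathrm{B}}_\iota \mathfrak{g})$. As $C_*^c(-)$ is the cellular chains functor valued in $\kk$-modules, the left-hand side is the cellular homology $\mathrm{H}_*(\mathcal{R}(\mathfrak{g}))$ of the Kan complex $\mathcal{R}(\mathfrak{g})$. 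Next, the definition of a $p$-adic model supplies an $\mathbb{F}_p$-weak equivalence $\mathcal{R}(\mathfrak{g}) \simeq X$; such an equivalence induces an isomorphism on $\mathbb{F}_p$-homology and, after base change along the flat extension $\mathbb{F}_p \hookrightarrow \overline{\mathbb{F}_p} = \kk$, an isomorphism $\mathrm{H}_*(\mathcal{R}(\mathfrak{g})) \cong \mathrm{H}_*(X)$. Composing the three isomorphisms then yields $\mathrm{H}_*(\widehat{\mathrm{B}}_\iota \mathfrak{g}) \cong \mathrm{H}_*(X)$, which is precisely the generating module of the minimal model produced by Proposition~\ref{prop; generateurs du model minimal, homologie de la Bar}.

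I do not expect a genuine obstacle, as the corollary is essentially formal once the two propositions are at hand; the one point deserving care is the coefficient bookkeeping. One must confirm that the homology governing the generators of the minimal model, namely that of the complete bar construction computed as a complex of $\kk$-modules, is the same invariant as the $\mathbb{F}_p$-homology controlled by the $p$-adic model hypothesis. This is exactly what the flatness of $\overline{\mathbb{F}_p}$ over $\mathbb{F}_p$ guarantees: the $\mathbb{F}_p$-equivalence transfers without loss to an isomorphism on $\kk$-coefficient homology, and the chain of identifications closes up.
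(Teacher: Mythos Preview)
Your proof is correct and follows the same approach as the paper: invoke Proposition~\ref{prop: homologie de la Bar complète} to identify $\mathrm{H}_*(\widehat{\mathrm{B}}_\iota \mathfrak{g})$ with $\mathrm{H}_*(X)$, then apply Proposition~\ref{prop; generateurs du model minimal, homologie de la Bar} to conclude. You are in fact more careful than the paper, which jumps directly to $\mathrm{H}_*(\widehat{\mathrm{B}}_\iota \mathfrak{g}) \cong \mathrm{H}_*(X)$ without spelling out the intermediate step through $\mathrm{H}_*(\mathcal{R}(\mathfrak{g}))$ or the coefficient base change along $\mathbb{F}_p \hookrightarrow \overline{\mathbb{F}_p}$.
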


\begin{proof}
Immediate by Propositions \ref{prop: homologie de la Bar complète} and \ref{prop; generateurs du model minimal, homologie de la Bar}.
\end{proof}

\subsection{Recovering the homotopy groups of spaces and an intrinsic model structure}
In this subsection, we will focus on pointed simplicial sets and on the adjunction $\mathcal{L}_* \dashv \mathcal{R}_*$. On the one hand, we show how to recover the homotopy groups of the $p$-completion of a pointed connected finite type nilpotent space from its absolute partition $\mathcal{L}_\infty$ model. On the other hand, we use this description to define a new class of weak equivalences of absolute partition $\mathcal{L}_\infty$-algebras and show that one can transfer the model structure on simplicial sets onto absolute partition $\mathcal{L}_\infty$-algebras via the $\mathcal{L}_* \dashv \mathcal{R}_*$. 

\begin{theorem}\label{thm: homotopy groups of X}
Let $X$ be a pointed connected finite type nilpotent space. There is a bijection

\[
\pi_k(X_{\mathbb{F}_p}) \cong \mathrm{rep}(\mathcal{L}_*(X)_k)/\sim_{\mathrm{int}}~,
\]
\vspace{0.1pc}

between the $k$-th homotopy group $\pi_k(X_{\mathbb{F}_p})$ of its $p$-completion and representative elements of degree $k$ in $\mathcal{L}_*(X)$ up to interval equivalences, for all $k \geq 1$.
\end{theorem}

\begin{proof}
Follows directly from Theorems \ref{thm: fake Berglund theorem} and \ref{thm: modèles d'homotopie rationnel type fini}. 
\end{proof}

The above theorem has interesting consequences even in the basic case of the spheres where $X=S^m$. In this case, the absolute partition $\mathcal{L}_\infty$-algebra $\mathcal{L}_*(S^m)$ can be fully described: its underlying graded module admits a basis in terms of formal power series of symmetric rooted trees with leaves labelled by the element $a_{[m]}$ where $\tilde{C}_*^c(S^m) \cong \overline{\mathbb{F}}_p.a_{[m]}$, as explained in Lemma \ref{Lemma: basis elements}. The differential on symmetric rooted trees is given by the terms described after the aforementioned lemma, and the image by the differential on the labelling element $a_{[m]}$ is determined by the formula in Lemma \ref{lemma: coalgebra structure on the reduced spheres}. Computing representative elements in $\mathcal{L}_*(S^m)$ amounts to find formal power series of a given degree
\[
\varepsilon = \sum_{\delta \geq 0} \sum_{\tau \in \mathrm{SRT}(\delta)} \lambda_\tau \tau \left(a_{[m]}, \cdots, a_{[m]}\right)~,
\]
which satisfy the following equation 
\[
\displaystyle d_\mathfrak{g}(\varepsilon) + \sum_{n \geq 2} \sum_{\substack{w \in \E(n)_{k(n-1)}                             \\ \overline{w}_1 = \mathrm{id}_{\Sym_n}, \, \overline{w}_j \in \Sym_n}} (-1)^{\left[\frac{k(k-1)}{2}\right] \frac{(n+2)(n-1)}{2}} \prod_{j = 2}^k \mathrm{sign}(\overline{w}_j) ~ l^w_n(\varepsilon,\cdots,\varepsilon)= 0~, 
\]
where the action of the operations $l_n^{w}$ on symmetric rooted trees is given by grafting. This equation can be decomposed along the degree and the arity of the symmetric rooted trees that it involves, thus it can be checked inductively. Representative elements should be considered up to interval equivalences, but then again, these equivalences are given by purely combinatorial formulas (Definition \ref{def: interval equivalences}) as well, and they can be constructed weight by weight. In principle, both of this questions should be implementable in a computer. However, this is beyond the scope of the present paper and shall be the subject of future research. 

\begin{Example}
The generator $a_{[m]}$ in $\mathcal{L}_*(S^m)$ is a representative element in degree $m$, which corresponds to the canonical map $S^m \longrightarrow S^m_{\mathbb{F}_p}$. Since $a_{[m]}$ is in weight zero, it can be checked that it can not be interval equivalent to $0$, thus $\pi_m(S^m_{\mathbb{F}_p}) \neq 0$. 
\end{Example}

\begin{Remark}
Of course, in the above discussion, one can replace $\mathcal{L}_*(X)$ and in particular $\mathcal{L}_*(S^m)$ by any other absolute partition $\mathcal{L}_\infty$-algebra model in the sense of Definition \ref{def: p-adic model}. Thus computations could greatly be simplified if smaller models were found.
\end{Remark}

\medskip

\textbf{An intrinsic model structure.} We define $\pi$-equivalences of absolute partition $\mathcal{L}_\infty$-algebras and show that they are weak equivalences in a model category structure. We then compare this new model structure with the previous model structure considered so far, which was transferred from $\mathcal{EE}_\infty$-coalgebras. 

\begin{Definition}[$\pi$-equivalence]
Let $f: \mathfrak{g} \longrightarrow \mathfrak{h}$ be a map of absolute partition $\mathcal{L}_\infty$-algebras. It is a $\pi$\textit{-equivalence} if the induced map 
\[
\pi_*(\mathcal{R}(f),0):\pi_*(\mathcal{R}(\mathfrak{g}),0) \qi \pi_*(\mathcal{R}(\mathfrak{h}),0)
\]
is an isomorphism of groups for all $* \geq 1$. 
\end{Definition}

\begin{Remark}
By Theorem \ref{thm: fake Berglund theorem}, a map $f: \mathfrak{g} \longrightarrow \mathfrak{h}$ is a $\pi$-equivalence if and only if it induces a bijection 
\[
f: \mathrm{rep}(\mathfrak{g}_*)/\sim_{\mathrm{int}} \qi \mathrm{rep}(\mathfrak{h}_*)/\sim_{\mathrm{int}}
\]
between the sets of representative elements up to interval equivalences in all degrees $* \geq 1$. 
\end{Remark}

\begin{theorem}\label{thm: local model structure with pi-equivalences}
There is a model category structure on the category of absolute partition $\mathcal{L}_\infty$-algebras determined by the following classes of maps
\begin{enumerate}

\medskip
\item the class of weak equivalences is given by $\pi$-equivalences;

\medskip
\item the class of fibrations is given by maps $f$ such that $\mathcal{R}_*(f)$ is a Kan fibration;

\medskip
\item the class of cofibrations is determined by the left-lifting property against acyclic fibrations.

\medskip
\end{enumerate}
The adjunction $\mathcal{L}_* \dashv \mathcal{R}_*$ is also a Quillen adjunction with respect to this model structure. 
\end{theorem}

\begin{proof}
Let us consider the composite adjunction
\[
\begin{tikzcd}[column sep=5pc,row sep=3pc]
         \mathsf{sSet}_0 \arrow[r, shift left=1.1ex, "\mathrm{inc}"{name=F}] 
          &\mathsf{sSet}_* \arrow[l, shift left=.75ex, "\mathrm{cn}"{name=U}] \arrow[r, shift left=1.1ex, "\mathcal{L}_*"{name=A}]
          &\mathsf{abs}~\mathcal{L}_\infty^\pi\text{-}\mathsf{alg}^{\mathsf{qp}\text{-}\mathsf{comp}} \arrow[l, shift left=.75ex, "\mathcal{R}_*"{name=B}]
            \arrow[phantom, from=F, to=U, , "\dashv" rotate=-90] \arrow[phantom, from=A, to=B, , "\dashv" rotate=-90]
\end{tikzcd}
\]

where $\mathsf{sSet}_0$ denotes the category of reduced simplicial sets together with their model structure constructed in \cite[Chapter V, Proposition 6.2]{GoerssJardine}, and the Quillen adjunction $\mathrm{inc} \dashv \mathrm{cn}$ is given by the inclusion of reduced simplicial sets into pointed simplicial sets on the one hand, and by the contracting the connected component of the base point on the other hand. The category $\mathsf{sSet}_0$ together with this model structure presents the $\infty$-category of pointed connected spaces. The main idea is to use the "old" model structure (transferred from $\mathcal{EE}_\infty$-coalgebras) on qp-complete absolute $\mathcal{L}_\infty^\pi$-algebras to prove that we can use the right transfer theorem for model structures along the above composite adjunction. 

\medskip

The class of maps which are sent to weak equivalences of reduced simplicial sets by $\mathcal{R}_*$ is given by $\pi$-equivalences, it includes all previous weak equivalences transferred from $\mathcal{EE}_\infty$-coalgebras). The class maps $f$ such that $\mathcal{R}_*(f)$ is a Kan fibration contains all degree-wise surjections by Theorem \ref{thm: propriétés de l'intégration}. Therefore every qp-complete absolute $\mathcal{L}_\infty^\pi$-algebra $\mathfrak{g}$ is fibrant with respect to this new class of fibrations and, moreover the diagonal map $\mathfrak{g} \longrightarrow \mathfrak{g} \oplus \mathfrak{g}$ can be factored into a weak equivalence followed by a fibration, simply by considering the factorization of this map with respect to the old model structure. Thus the hypothesis of the right transfer theorem for model structures are satisfied. 
\end{proof}

\begin{Remark}
One can also apply the same arguments in order to transfer the model category structure on pointed simplicial sets $\mathsf{sSet}_*$ to qp-complete absolute $\mathcal{L}_\infty^\pi$-algebras along the adjunction $\mathcal{L}_* \dashv \mathcal{R}_*$. In this case, fibrations can be characterized as degree-wise surjections in degrees $\geq 0$. However, weak equivalences are now given by maps which induce weak homotopy equivalences between all the connected components of the Maurer--Cartan space, and not only on the connected component of the zero Maurer--Cartan element. By computing the (unreduced) $\mathcal{E}$-coalgebra structure of $C_*^c(S^k)$, it is possible to give an algebraic characterization of these equivalences similar to Theorem \ref{thm: fake Berglund theorem}. This model structure with "global equivalences" is analogue to the ones considered in \cite[Section 8.1]{Liemodels} and in \cite[Section 6.4]{robertnicoud2020higher}.
\end{Remark}

\begin{Proposition}\label{Prop: local model structure is an infinity sub cat}
The Quillen adjunction given by the identity functor
\[
\begin{tikzcd}[column sep=5pc,row sep=3pc]
         \mathcal{EE}_\infty\text{-}\mathsf{coalg}~[\mathrm{Q.iso}^{-1}] \simeq \mathsf{abs}~\mathcal{L}_\infty^\pi\text{-}\mathsf{alg}^{\mathsf{qp}\text{-}\mathsf{comp}}~[\mathrm{W}^{-1}]  \arrow[r, shift left=1.1ex, "\mathrm{Id}"{name=A}]
          &\mathsf{abs}~\mathcal{L}_\infty^\pi\text{-}\mathsf{alg}^{\mathsf{qp}\text{-}\mathsf{comp}}~[\pi\text{-}\mathrm{equi}^{-1}]  \arrow[l, shift left=.75ex, "\mathbb{L}\mathrm{Id}"{name=B}]
            \arrow[phantom, from=A, to=B, , "\dashv" rotate=90]
\end{tikzcd}
\]
exhibits the $\infty$-category of qp-complete absolute $\mathcal{L}_\infty^\pi$-algebras up to $\pi$-equivalences as a coreflective sub-$\infty$-category of $\mathcal{EE}_\infty$-coalgebras up to quasi-isomorphism.
\end{Proposition}

\begin{proof}
Clearly, the identity functor forms a Quillen adjunction since it preserves all fibrations and weak equivalences. It is straightforward to check that the derived unit of this adjunction is an isomorphism, so in particular a $\pi$-equivalence.
\end{proof}

\begin{Remark}
When $\kk$ is a field of characteristic zero, the notion of $\pi$-equivalence corresponds to the notion of a quasi-isomorphisms in degrees $\geq 1$, see Remark \ref{Rmk: Berglund fails + formality of cochains} and combine it with Proposition \ref{prop: comparison characteristic zero}. So, in particular, absolute partition $\mathcal{L}_\infty$-algebras up to quasi-isomorphism in degrees $\geq 1$ do present an $\infty$-category of an algebraic nature, that is, of algebras over a monad on the $\infty$-category of chain complexes. It would be very interesting to determine if this remains true for the $\infty$-category of absolute partition $\mathcal{L}_\infty$-algebras up to $\pi$-equivalences over a field of positive characteristic. 
\end{Remark}

\subsection{Mapping spaces}\label{Subsection: mapping spaces}
In this subsection, we construct explicit $p$-adic models for mapping spaces, without any assumption on the source simplicial set. Recall that, if $X$ and $Y$ are simplicial sets, there is a model for their mapping space given by 

\[
\mathrm{Map}(X,Y)_\bullet \coloneqq \mathrm{Hom}_{\mathsf{sSet}}(X \times \Delta^\bullet, Y)~,
\]
\vspace{0.25pc}

which forms a Kan complex when $Y$ is so.

\begin{lemma}\label{prop: les chaines sont lax monoidales en infini morphismes}
Let $X$ and $Y$ be two simplicial sets. The Eilenberg-Zilber map can be extended into an $\infty$-quasi-isomorphism 
\[
\mathrm{EZ}: C^c_*(X) \otimes C_*^c(Y) \rightsquigarrow C^c_*(X \times Y) ~,
\]
of $u\mathcal{EE}_\infty$-coalgebras, which furthermore is natural in $X$ and $Y$.
\end{lemma}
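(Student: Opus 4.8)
The plan is to upgrade the classical Eilenberg--Zilber quasi-isomorphism to a morphism of $u\mathcal{EE}_\infty$-coalgebras by first establishing the analogous statement over the operad $\mathcal{E}$ and then restricting along the Hopf morphism $\epsilon$, exploiting the compatibility of all the relevant structure maps with the Hopf structures. First I would recall that the Barratt--Eccles operad $\mathcal{E}$ is a Hopf operad: each $\mathcal{E}(n)$ carries a coassociative coproduct for which the operadic composition maps are coalgebra morphisms. As recalled in Subsection~\ref{subsection: convolution}, the operad $u\mathcal{EE}_\infty = \Omega \mathrm{B}^{\mathrm{s.a}}\mathcal{E}$ likewise inherits a Hopf structure from the Boardman--Vogt construction, and I would check that the counit $\epsilon: u\mathcal{EE}_\infty \qi \mathcal{E}$ is a morphism of Hopf operads. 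It then follows that the underlying tensor product endows both the category of $\mathcal{E}$-coalgebras and the category of $u\mathcal{EE}_\infty$-coalgebras with symmetric monoidal structures, and that the restriction functor $\mathrm{Res}_\epsilon$ is symmetric monoidal, giving a canonical isomorphism $\mathrm{Res}_\epsilon(C \otimes D) \cong \mathrm{Res}_\epsilon(C) \otimes \mathrm{Res}_\epsilon(D)$ of $u\mathcal{EE}_\infty$-coalgebras for all $\mathcal{E}$-coalgebras $C$ and $D$.

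The crux is the statement, at the level of the operad $\mathcal{E}$, that the cellular chains functor $C^c_*(-)$ is oplax symmetric monoidal with structure map given precisely by the Eilenberg--Zilber map. Concretely, I would invoke \cite{BergerFresse04} to obtain that for all simplicial sets $X$ and $Y$ the map
\[
\mathrm{EZ}: C^c_*(X) \otimes C_*^c(Y) \longrightarrow C^c_*(X \times Y)
\]
is a morphism of $\mathcal{E}$-coalgebras, where the source carries the tensor-product $\mathcal{E}$-coalgebra structure induced by the Hopf structure on $\mathcal{E}$, and that this morphism is natural in $X$ and $Y$. This is the only step relying on genuine combinatorics of the Barratt--Eccles operad, and I expect it to be the main obstacle: one must verify that the shuffles defining $\mathrm{EZ}$ interact correctly with the diagonals encoding the $\mathcal{E}$-coalgebra structure and with the coproducts on the $\mathcal{E}(n)$.

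Granting this, the argument concludes formally. Applying $\mathrm{Res}_\epsilon$ to the above morphism, and using that the $u\mathcal{EE}_\infty$-coalgebra structure on $C^c_*(-)$ is by definition $\mathrm{Res}_\epsilon$ of its $\mathcal{E}$-coalgebra structure, I obtain a morphism of $u\mathcal{EE}_\infty$-coalgebras
\[
\mathrm{EZ}: C^c_*(X) \otimes C_*^c(Y) \longrightarrow C^c_*(X \times Y)~,
\]
carrying the correct tensor-product structure on the source. Its underlying morphism of dg modules is a quasi-isomorphism by the classical Eilenberg--Zilber theorem, so it is a quasi-isomorphism of $u\mathcal{EE}_\infty$-coalgebras in the sense of the model structure of Proposition~\ref{prop: left transferred structure}. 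Finally, naturality in $X$ and $Y$ is inherited from the naturality of the Eilenberg--Zilber map together with that of the restriction functor $\mathrm{Res}_\epsilon$.
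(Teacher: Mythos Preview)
Your proposal is correct and follows essentially the same approach as the paper: cite \cite{BergerFresse04} for the fact that the Eilenberg--Zilber map is a morphism of $\mathcal{E}$-coalgebras, then restrict along $\epsilon$ to obtain the statement for $u\mathcal{EE}_\infty$-coalgebras. You are in fact more careful than the paper about one point the author leaves implicit, namely that $\epsilon$ is a morphism of Hopf operads so that $\mathrm{Res}_\epsilon$ is symmetric monoidal and the tensor-product $u\mathcal{EE}_\infty$-coalgebra structure on the source is the right one.
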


\begin{proof}
The Eilenberg-Zilber map does not commute with the $\mathcal{E}$-coalgebra structures, so it does not define a strict morphism. However, it can be extended into an $\infty$-quasi-isomorphism by \cite[Section 5]{fresse2003deriveddivisionfunctorsmapping} in a functorial way. Concretely, by Proposition 5.4, we get a morphism of $\mathrm{B}^{\mathrm{s.a}} \mathcal{E}$-coalgebras between $\mathrm{B}_\pi(C^*_c (X \times Y)) \longrightarrow \mathrm{B}_\pi(C^*_c(X) \otimes C^*_c(Y))$ which extends the Eilenberg-Zilber map, so it defines an $\infty$-quasi-isomorphism $C^*_c (X \times Y) \rightsquigarrow C^*_c(X) \otimes C^*_c(Y)$ of the $\mathcal{E}$-algebras between cochains on the product and the tensor product of the cochains. Thus, by passing to the linear dual, we get an $\infty$-quasi-isomorphism of $\mathcal{E}$-coalgebras $C^c_*(X) \otimes C_*^c(Y) \rightsquigarrow C^c_*(X \times Y)$ when $X$ and $Y$ are of finite type that we can extend it to all spaces $X,Y$ by colimits like in the proof of \cite[Lemma 3.27]{lucio2022integration}.
\end{proof}

\begin{theorem}\label{thm: vrai thm mapping spaces}
Let $\mathfrak{g}$ be a qp-complete curved absolute $\mathcal{L}^\pi_\infty$-algebra and let $X$ be a simplicial set. There is a weak equivalence of Kan complexes
\[
\mathrm{Map}(X, \mathcal{R}(\mathfrak{g})) \simeq \mathcal{R}\left(\mathrm{hom}(C^c_*(X),\mathfrak{g})\right)~,
\]

which is natural in $X$ and in $\mathfrak{g}$, where $\mathrm{hom}(C^c_*(X),\mathfrak{g})$ denotes the convolution curved absolute $\mathcal{L}^\pi_\infty$-algebra. Furthermore, it is possible to replace the cellular chains $C^c_*(X)$ by the homology $\mathrm{H}_*(X)$ to obtain a smaller model, meaning that there is a weak equivalence of Kan complexes
\[
\mathrm{Map}(X, \mathcal{R}(\mathfrak{g})) \simeq \mathcal{R}\left(\mathrm{hom}(\mathrm{H}_*(X),\mathfrak{g})\right)~,
\]

which is now only natural in $\mathfrak{g}$. 
\end{theorem}

\begin{proof}
There is an isomorphism of simplicial sets
\[
\mathrm{Map}(X,\mathcal{R}(\mathfrak{g}))_\bullet \coloneqq \mathrm{Hom}_{\mathsf{sSet}}(X \times \Delta^\bullet, \mathcal{R}(\mathfrak{g})) \cong \mathrm{Hom}_{\mathsf{sSet}}( C^c_*(X \times \Delta^\bullet), \widehat{\mathrm{B}}_\iota(\mathfrak{g}))~.
\]
We can pre-compose by the $\infty$-quasi-isomorphism extending the Eilenberg-Zilber map of Lemma \ref{prop: les chaines sont lax monoidales en infini morphismes}, giving a weak equivalence of simplicial sets
\[
\mathrm{Hom}_{\Omega \mathrm{B}^{\mathrm{s.a}} \mathcal{E}\text{-}\mathsf{cog}}(C^c_*(X \times \Delta^\bullet), \widehat{\mathrm{B}}_\iota(\mathfrak{g})) \qi \mathrm{Hom}_{\Omega \mathrm{B}^{\mathrm{s.a}} \mathcal{E}\text{-}\mathsf{cog}}( C^c_*(X)  \otimes C^c_*(\Delta^\bullet), \widehat{\mathrm{B}}_\iota(\mathfrak{g}))~,
\]

since both $C^c_*(X \times \Delta^\bullet)$ and $C^c_*(X)  \otimes C^c_*(\Delta^\bullet)$ are Reedy cofibrant. Let's compute this last simplicial set:
\begin{align*}
\mathrm{Hom}_{\Omega \mathrm{B}^{\mathrm{s.a}} \mathcal{E}\text{-}\mathsf{cog}}( C^c_*(X)  \otimes C^c_*(\Delta^\bullet), \widehat{\mathrm{B}}_\iota(\mathfrak{g})) 
&\cong \mathrm{Hom}_{\Omega \mathrm{B}^{\mathrm{s.a}} \mathcal{E}\text{-}\mathsf{cog}} \left( C^c_*(\Delta^\bullet), \left\{ C^c_*(X), \widehat{\mathrm{B}}_\iota(\mathfrak{g}) \right\} \right) \\
&\cong \mathrm{Hom}_{\Omega \mathrm{B}^{\mathrm{s.a}} \mathcal{E}\text{-}\mathsf{cog}} \left( C^c_*(\Delta^\bullet), \widehat{\mathrm{B}}_\iota \left(\mathrm{hom}( C^c_*(X), \mathfrak{g}) \right) \right) \\
& \cong \mathrm{Hom}_{\mathsf{curv}~\mathsf{abs}~\mathcal{L}^\pi_\infty\text{-}\mathsf{alg}} \left( \widehat{\Omega}_\iota(C^c_*(\Delta^\bullet)), \mathrm{hom}( C^c_*(X), \mathfrak{g}) \right) \\
& \cong \mathcal{R}\left(\mathrm{hom}( C^c_*(X), \mathfrak{g}) \right)~.
\end{align*}
Upon choosing a contraction between $C^c_*(X)$ and its homology $\mathrm{H}_*(X)$, we can use the homotopy transfer theorem to obtain a $u\mathcal{CC}_\infty$-coalgebra structure on $\mathrm{H}_*(X)$ which is weakly equivalent to $C^c_*(X)$. This implies that $\mathrm{H}_*(X)  \otimes C^c_*(\Delta^\bullet)$ and $C^c_*(X)  \otimes C^c_*(\Delta^\bullet)$ are weakly equivalent as Reedy cofibrant objects, and thus we can replace the later by the former in the above computation.
\end{proof}

\begin{Corollary}\label{Cor: True mapping spaces.}
Let $X$ be a simplicial set and let $Y$ be a connected finite type nilpotent simplicial set. There is a weak equivalence of simplicial sets 
\[
\mathrm{Map}(X, Y_{\mathbb{F}_p}) \simeq \mathcal{R}\left(\mathrm{hom}(\mathrm{H}_*(X), \mathcal{L}(Y))\right)~,
\]
where $Y_{\mathbb{F}_p}$ denotes the Bousfield-Kan p-completion of $Y$. 
\end{Corollary}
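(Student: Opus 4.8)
The plan is to obtain this as a direct specialization of Theorem \ref{thm: vrai thm mapping spaces}, followed by the identification of $\mathcal{R}\mathcal{L}(Y)$ with the $\mathbb{F}_p$-completion $Y_{\mathbb{F}_p}$. First I would note that $\mathcal{L}(Y)$ is a complete curved absolute $\mathcal{L}^\pi_\infty$-algebra: by the commuting triangle of Theorem \ref{thm: triangle commutatif}, the left adjoint $\mathcal{L}$ takes values in $\mathsf{curv}~\mathsf{abs}~\mathcal{L}^\pi_\infty\textsf{-}\mathsf{alg}^{\mathsf{qp}\text{-}\mathsf{comp}}$. Hence Theorem \ref{thm: vrai thm mapping spaces} applies verbatim with $\mathfrak{g} = \mathcal{L}(Y)$ and the given $X$, yielding a natural weak-equivalence of Kan complexes
\[
\mathrm{Map}(X, \mathcal{R}\mathcal{L}(Y)) \simeq \mathcal{R}\left(\mathrm{hom}(C^c_*(X), \mathcal{L}(Y))\right)~.
\]
This already produces the right-hand side of the statement; the only remaining task is to replace the source $\mathcal{R}\mathcal{L}(Y)$ of the left-hand mapping space by $Y_{\mathbb{F}_p}$.

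For this I would invoke Theorem \ref{thm: modèles d'homotopie rationnel type fini}: since $Y$ is connected, of finite type, and nilpotent, the unit $\eta_Y: Y \qi \mathcal{R}\mathcal{L}(Y)$ is an $\mathbb{F}_p$-equivalence, and by the subsequent remark it is weakly-equivalent to the Bousfield-Kan $p$-completion map $Y \to Y_{\mathbb{F}_p}$. In particular the target $\mathcal{R}\mathcal{L}(Y)$ is a Kan complex by Theorem \ref{thm: propriétés de l'intégration}(1) and is moreover $\mathbb{F}_p$-local, so that the two $\mathbb{F}_p$-equivalences $\eta_Y$ and the completion map, both emanating from $Y$ into $\mathbb{F}_p$-local targets, induce a genuine weak-equivalence
\[
\mathcal{R}\mathcal{L}(Y) \simeq Y_{\mathbb{F}_p}~.
\]

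Finally I would transport this equivalence through the mapping-space functor. Since every simplicial set is cofibrant and both $Y_{\mathbb{F}_p}$ and $\mathcal{R}\mathcal{L}(Y)$ are Kan complexes, the functor $\mathrm{Map}(X,-)$ preserves the weak-equivalence $\mathcal{R}\mathcal{L}(Y) \simeq Y_{\mathbb{F}_p}$, and composing with the equivalence from the first paragraph gives
\[
\mathrm{Map}(X, Y_{\mathbb{F}_p}) \simeq \mathrm{Map}(X, \mathcal{R}\mathcal{L}(Y)) \simeq \mathcal{R}\left(\mathrm{hom}(C^c_*(X), \mathcal{L}(Y))\right)~,
\]
which is the claim. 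The genuinely substantive input is Theorem \ref{thm: vrai thm mapping spaces} itself; within the present argument the only delicate point is the identification $\mathcal{R}\mathcal{L}(Y) \simeq Y_{\mathbb{F}_p}$, i.e. knowing that $\mathcal{R}\mathcal{L}(Y)$ is not merely $\mathbb{F}_p$-equivalent to $Y$ but is itself $\mathbb{F}_p$-local so as to be recognized as the completion. I expect this to be the main (though modest) obstacle, and it is exactly what Theorem \ref{thm: modèles d'homotopie rationnel type fini} and the identification of $\eta_Y$ with the $p$-completion map provide; everything else is formal homotopy invariance of mapping spaces into fibrant objects.
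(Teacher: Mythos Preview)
Your proof is correct and follows essentially the same approach as the paper: apply Theorem \ref{thm: vrai thm mapping spaces} with $\mathfrak{g}=\mathcal{L}(Y)$, then identify $\mathcal{R}\mathcal{L}(Y)\simeq Y_{\mathbb{F}_p}$ via Theorem \ref{thm: modèles d'homotopie rationnel type fini} and the remark following it, and finally use homotopy invariance of $\mathrm{Map}(X,-)$ between Kan complexes. The paper's proof is terser but records precisely this chain of equivalences.
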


\begin{proof}
There is a weak equivalence of simplicial sets 
\[
Y_{\mathbb{F}_p} \simeq \mathcal{RL}(Y)~,
\]
which induces the following weak equivalences
\[
\mathrm{Map}(X, Y_{\mathbb{F}_p}) \simeq \mathrm{Map}(X, \mathcal{RL}(Y)) \simeq \mathcal{R}\left(\mathrm{hom}(\mathrm{H}_*(X), \mathcal{L}(Y))\right)~.
\]
\end{proof}

\textbf{Relationship with previous work on mapping spaces.} Let us give a brief overview of the relationship between Theorem \ref{thm: vrai thm mapping spaces} and existing work about $p$-adic mapping spaces, in particular around the Sullivan conjecture \cite{Miller,Carlsson,LannesSchwartz,LannesT}. There are many different statement in the literature which can be called the Sullivan conjecture, however in general they revolve around proving that the pointed mapping space 
\[
\mathrm{Map}_*(\mathrm{B}V, Y_{\mathbb{F}_p}) \simeq \{*\}
\]
is contractible under some finiteness assumptions on $Y$, where $\mathrm{B}V$ is the classifying space of a finite dimensional $\mathbb{F}_p$-vector space $V$. 

\medskip

Previous work on these questions was done at the level of so called power operations. The homology functor 
\[
\mathrm{H}_*: u\mathcal{EE}_\infty\text{-}\mathsf{cog} \longrightarrow \mathsf{gr}~\kk\text{-}\mathsf{mod}
\]
from $u\mathcal{EE}_\infty$-coalgebras to graded $\kk$-vector spaces factors through the category of graded counital coalgebras equipped with Dyer--Lashof operations, which are called power operations. Moreover, on the homology of $u\mathcal{EE}_\infty$-coalgebras of the form $C_*^c(X)$, where $X$ is a space, Dyer--Lashof operations give back Steenrod operations, and $\mathrm{H}_*(X)$ is an unstable counital coalgebra over the Steenrod algebra. Dually, the cohomology of a $u\mathcal{EE}_\infty$-algebra is also endowed with Dyer--Lashof operations, which specify to Steenrod operations in the case of the cohomology of a space $\mathrm{H}^*(X)$. See \cite{Lawson} for a modern point of view on power operations.

\medskip

In \cite{LannesT}, Lannes considers the tensor product of both unstable algebras and unstable coalgebras over the Steenrod algebra. He shows that the tensor product of unstable coalgebras admits a right adjoint, given by a self-enrichment, and that the tensor product with a degree-wise finite dimensional unstable algebra admits a left adjoint. These two constructions can be compared under several finiteness assumptions, see \cite[Section 1.13]{LannesT}. See also the textbook account of  \cite{SchwartzBook}. 

\medskip

Lannes' well-known $T_V$ functor is defined as the left adjoint to the tensor product $\mathrm{H}^*(\mathrm{B}V) \otimes -$ in unstable modules over the Steenrod algebra, and later it is proved that it coincides with the left adjoint to this same tensor product in unstable algebras over the Steenrod algebras. Lannes works with the functor $T_V$ as it is better behaved in general, nevertheless, one of the key points of his work is to be able to compare it to the self-enrichment $\mathbf{hom}(\mathrm{H}_*(\mathrm{B}V),-)$ in unstable coalgebras. In particular, when one takes a cosimplicial resolution on the target. 

\medskip

The reason is that taking a cosimplicial resolution on the target unstable coalgebra is what originally allowed Bousfield and Kan to define the unstable Adam's spectral sequence in \cite{BousfieldKanUnstable}, which converges to the homotopy groups of $p$-adic mapping spaces. Analysing this spectral sequence was the key ingredient in Miller's original proof in \cite{Miller}. 

\medskip

The mapping coalgebra construction of Proposition \ref{Prop: mapping coalgebras} is a lift of the self-enrichment of unstable coalgebras constructed by Lannes in \cite{LannesT} from the level of power operations on homology to the level of $u\mathcal{EE}_\infty$-coalgebras. A functorial fibrant resolution at the $u\mathcal{EE}_\infty$-coalgebra level is given by the complete bar-cobar construction; it can be considered a lift of the cosimplicial resolution at the unstable coalgebra level and it is also related to the André--Quillen cohomology, in this case, of $u\mathcal{EE}_\infty$-coalgebras, see Remark \ref{Rmk: underlying infinit cats}. Let $C,D$ be two $u\mathcal{EE}_\infty$-coalgebras, by Theorem \ref{thm: compatibilité des mappings et des convolutions} there is a natural isomorphism 
\[
\left\{C, \widehat{\mathrm{B}}_\iota \widehat{\Omega}_\iota D \right\} \cong \widehat{\mathrm{B}}_\iota\mathrm{hom}\left(C,\widehat{\Omega}_\iota D \right)~, 
\]
where on the left hand side we consider the mapping coalgebra construction and on the right hand side the complete bar construction on the convolution absolute partition $\mathcal{L}_\infty$-algebra. The left hand side is a lift of the self-enrichment of unstable coalgebras where one takes a cosimplicial resolution of the target. So Theorem \ref{thm: vrai thm mapping spaces} can be interpreted as an algebraic model of the derived primitives of this lift, in the sense of Remark \ref{Rmk: underlying infinit cats}. In particular, considering the case $C = \mathrm{H}_*(\mathrm{B}V)$ recovers Lannes' $T_V$ when passing to homology and taking the linear dual under finiteness assumptions. And applying Theorem \ref{thm: fake Berglund theorem} to the models in \ref{thm: vrai thm mapping spaces} gives a description of the homotopy groups of a general $p$-adic mapping space, and in this sense, it can be considered an "algebraic lift" of the unstable Adam's spectral sequence of \cite{BousfieldKanUnstable}.

\appendix
\section{Structural formulas}\label{Appendice formules}
The goal of this appendix is to give formulas for the relations that the elementary operations of a (curved) absolute $\mathcal{L}^\pi_\infty$-algebra satisfy. These relations are imposed by the partial decomposition maps of the counital partial cooperad $\mathcal{E}^*$; they are in general hard to understand. In other words, our main goal is to describe the linear dual of the maps in \cite[Section 1.1.3]{BergerFresse04}. 

\begin{lemma}
Let $\sigma$ be an element in $\mathbb{S}_{n+k-1}$, with $n, k \geq 1$. For any $1 \leq i \leq n$, there is at most one pair $(\tau,\nu)$ with $\tau \in \mathbb{S}_n$ and $\nu \in \mathbb{S}_k$ such that $\tau \circ_i \nu = \sigma~.$
\end{lemma}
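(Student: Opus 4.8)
The statement is exactly the injectivity of the partial composition map
\[
\circ_i \colon \mathbb{S}_n \times \mathbb{S}_k \longrightarrow \mathbb{S}_{n+k-1}, \qquad (\tau,\nu) \longmapsto \tau \circ_i \nu,
\]
of the associative (symmetric) operad underlying $\mathcal{E}$ in simplicial degree zero, so the plan is to make this map explicit as a block substitution and then to exhibit a deterministic way of reading $(\tau,\nu)$ back off its image. First I would record the description of $\circ_i$ in one-line notation: writing $\tau \in \mathbb{S}_n$ as the word $\tau(1)\cdots\tau(n)$ on the alphabet $\{1,\dots,n\}$, the composite $\tau \circ_i \nu$ is obtained by shifting every letter strictly greater than $i$ up by $k-1$ and replacing the unique occurrence of the letter $i$ by the contiguous block word $\bigl(i+\nu(1)-1,\dots,i+\nu(k)-1\bigr)$, that is, the word of $\nu$ transported into the alphabet $\{i,\dots,i+k-1\}$. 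I would first check that this matches the structure maps of \cite[Section 1.1.3]{BergerFresse04}.

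The key observation is that, in $\sigma = \tau \circ_i \nu$, the $k$ letters of the block $B \coloneqq \{i,\dots,i+k-1\}$ occur as a single \emph{contiguous} sub-word of $\sigma$, since they were inserted in place of one letter. This yields an explicit recovery procedure from $\sigma$ and $i$ alone: locate the positions occupied by the letters of $B$ (which must be consecutive in any preimage); the relative order in which these letters appear is the transported word of $\nu$, so subtracting $i-1$ from each entry returns $\nu$; and collapsing this sub-word back to the single letter $i$, while shifting every letter greater than $i+k-1$ down by $k-1$, returns $\tau$. Since $\tau$ and $\nu$ are thereby determined as functions of $\sigma$, any two pairs composing to $\sigma$ must agree, which is the desired uniqueness.

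I expect the only real obstacle to be bookkeeping: fixing the precise convention for $\circ_i$ in \cite{BergerFresse04} (left versus right action, values versus positions) so that the contiguity of $B$ and the collapsing step are phrased correctly. Once the block-substitution formula is pinned down, the contiguity is immediate from the insertion, the recovery maps are manifestly well-defined and single-valued, and no genuine combinatorial difficulty remains.
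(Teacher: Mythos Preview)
Your proof is correct. It follows the same overall strategy as the paper---show that $(\tau,\nu)$ can be read off deterministically from $\sigma$---but the two versions pin down the decomposability criterion differently. The paper states that the decomposition exists iff $\sigma$ stabilizes $\{1,\dots,n+k-1\}\setminus\{i,\dots,i+k-1\}$, and then recovers $\tau$ as the restriction of $\sigma$ to that set. You instead use that the letters $\{i,\dots,i+k-1\}$ appear as a contiguous sub-word in the one-line notation of $\sigma$, and recover $\tau$ by collapsing that sub-word. Under the block-substitution convention of \cite[Section~1.1.3]{BergerFresse04} that you spell out, your contiguity criterion is the right one: for non-trivial $\tau$ the inserted block moves away from the positions $\{i,\dots,i+k-1\}$, so $\sigma$ need not stabilize that interval (e.g.\ $\tau=21$, $\nu=\mathrm{id}\in\mathbb{S}_2$, $i=1$ gives $\sigma=312$, which does not fix $\{1,2\}$). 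Your explicit concern with fixing the convention is therefore exactly the right instinct; once the convention is fixed, both arguments reduce to the same elementary observation that a block insertion can be undone uniquely.
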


\begin{proof}
The decomposition exists if and only if $\sigma$ stabilizes $\{1,\cdots, n + k -1\} - \{i,\cdots, i + k-1\}$. In that case, the permutation $\tau$ is determined by how $\sigma$ restricts to this subset, and therefore $\nu$ is also completely determined. 
\end{proof}

Let $\sigma$ be in $\mathbb{S}_{n+k-1}$. We say it is $(n,k,i)$-\textit{admissible} if it admits a decomposition $\Delta^{n,k}_i(\sigma)$, and we denote $\sigma^{(1)}$ and $\sigma^{(2)}$ the unique pair in $\mathbb{S}_n \times \mathbb{S}_k$ such that $\sigma^{(1)} \circ_i \sigma^{(2)} = \sigma$. 

\medskip

Let $(\sigma_0, \cdots, \sigma_r)$ be an $r$-tuple of elements in $\mathbb{S}_{n+k-1}$. Suppose that $\sigma_j$ is $(n,k,i)$-admissible for all $1 \leq j \leq r$, and denote $(\sigma_0^{(1)},\cdots, \sigma_r^{(1)})$ and $(\sigma_0^{(2)},\cdots, \sigma_r^{(2)})$ the $r$-tuples of decompositions. 

\medskip

Let $a,b$ be two natural numbers such that $a + b = r$ and let $\varphi: (0,0) \longrightarrow (a,b)$ be a lattice path. The $r$-tuple $(\sigma_0, \cdots, \sigma_r)$ is $(\varphi,i)$\textit{-admissible} if for any step $(j,k) \longrightarrow (j+1,k)$ in $\varphi$
\[
\sigma_j^{(2)} = \sigma_{j+1}^{(2)}~,
\]
and for any step $(j,k) \longrightarrow (j,k+1)$,
\[
\sigma_k^{(1)} = \sigma_{k+1}^{(1)}~.
\]
\vspace{0.1pc}

One can write $(\sigma_0^{(1)},\cdots, \sigma_a^{(1)})$ and $(\sigma_0^{(2)},\cdots, \sigma_b^{(2)})$ by suppressing the repetitions when $(\sigma_0, \cdots, \sigma_r)$ is $(\varphi,i)$-admissible for a given lattice path $\varphi: (0,0) \longrightarrow (a,b)$. 

\medskip

For any lattice path $\varphi: (0,0) \longrightarrow (a,b)$, the \textit{lattice sign} $\epsilon(\varphi)$ is the sign of the shuffle permutation which takes horizontal segments to the first place and vertical segments to the last place, like in \cite[Section 1.1.3]{BergerFresse04}. 

\begin{Proposition}\label{prop: formulas for the decompositions}
The partial decomposition map
\[
\Delta^{n,k}_i: \mathcal{E}^*(n+k-1) \longrightarrow \mathcal{E}^*(n) \otimes \mathcal{E}^*(k)
\]
is given by as follows: let $(\sigma_0, \cdots, \sigma_r)$ be an element in $\mathcal{E}^*(n+k-1)$. 

\medskip

\begin{enumerate}
\item If $\sigma_j$ is $(n,k,i)$-admissible for all $1 \leq j \leq r$, then 

\[
\Delta^{n,k}_i(\sigma_0, \cdots, \sigma_r) = \sum_{a + b = r} \sum_{\substack{\varphi: (0,0) \rightarrow (a,b), \\ (\varphi,i)\text{-}\mathrm{admissible}}} (-1)^{\epsilon(\varphi)}\left(\sigma_0^{(1)},\cdots, \sigma_a^{(1)}\right) \otimes \left(\sigma_0^{(2)},\cdots, \sigma_b^{(2)}\right)~,
\]

where the second sum is taken over all lattice paths $\varphi: (0,0) \longrightarrow (a,b)$ for which $(\sigma_0, \cdots, \sigma_r)$ is $(\varphi,i)$-admissible.

\medskip

\item If there exists a $\sigma_j$ which is not $(n,k,i)$-admissible, then the image of $(\sigma_0, \cdots, \sigma_r)$ by the decomposition $\Delta^{n,k}_i$ is zero. 
\end{enumerate}
\end{Proposition}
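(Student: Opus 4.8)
The plan is to realize the partial decomposition $\Delta^{n,k}_i$ as the linear dual of the operadic composition $\circ_i$ of the chain-level Barratt--Eccles operad, and then to read off the formula. First I would recall from \cite[Section 1.1.3]{BergerFresse04} that the composition
\[
\circ_i : \mathcal{E}(n) \otimes \mathcal{E}(k) \longrightarrow \mathcal{E}(n+k-1)
\]
is the composite of the Eilenberg--Zilber shuffle map $C_*(E\mathbb{S}_n) \otimes C_*(E\mathbb{S}_k) \to C_*(E\mathbb{S}_n \times E\mathbb{S}_k)$ with the chains of the simplicial composition map $(\tau,\nu) \mapsto \tau \circ_i \nu$. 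Explicitly, on basis elements this reads
\[
\circ_i\big((\tau_0,\dots,\tau_a) \otimes (\nu_0,\dots,\nu_b)\big) = \sum_{\varphi : (0,0) \to (a,b)} (-1)^{\epsilon(\varphi)} \, \big(\dots, \tau_{j_p} \circ_i \nu_{l_p}, \dots \big),
\]
where $\varphi$ ranges over lattice paths, $(j_p,l_p)$ denotes its $p$-th vertex, and $\epsilon(\varphi)$ is the associated shuffle sign, which is precisely the lattice sign of the statement.

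Since $\mathcal{E}^*$ is the dual counital partial cooperad, $\Delta^{n,k}_i = (\circ_i)^*$, so I would compute its value on a dual basis element $(\sigma_0,\dots,\sigma_r)^*$ by pairing against an arbitrary $(\tau_0,\dots,\tau_a)\otimes(\nu_0,\dots,\nu_b)$. By duality this pairing extracts the coefficient of $(\sigma_0,\dots,\sigma_r)$ in the shuffle expansion above, hence counts with sign the lattice paths $\varphi$ whose vertex-labelling $\tau_{j_p}\circ_i\nu_{l_p}$ reproduces the sequence $(\sigma_0,\dots,\sigma_r)$. Each such path forces $\sigma_p = \tau_{j_p}\circ_i\nu_{l_p}$, so every $\sigma_p$ must be $(n,k,i)$-admissible with $\sigma_p^{(1)} = \tau_{j_p}$ and $\sigma_p^{(2)} = \nu_{l_p}$, the decomposition being unique by the preceding lemma. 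A horizontal step $(j,l)\to(j+1,l)$ keeps the second factor $\nu_l$ constant and thus imposes $\sigma_p^{(2)} = \sigma_{p+1}^{(2)}$, while a vertical step keeps $\tau_j$ constant and imposes $\sigma_p^{(1)} = \sigma_{p+1}^{(1)}$; these are exactly the conditions defining $(\varphi,i)$-admissibility. Conversely, given an admissible path, suppressing the repetitions in $(\sigma_0^{(1)},\dots,\sigma_r^{(1)})$ and $(\sigma_0^{(2)},\dots,\sigma_r^{(2)})$ recovers the unique pair $(\tau_\bullet),(\nu_\bullet)$ realizing it. Matching coefficients across the dual basis then yields the formula of part (1), with the shuffle sign $(-1)^{\epsilon(\varphi)}$ appearing verbatim.

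For part (2) I would observe that every label occurring in the shuffle expansion is of the form $\tau \circ_i \nu$, hence is $(n,k,i)$-admissible by construction. Therefore, if some $\sigma_j$ fails to be $(n,k,i)$-admissible, no shuffle term can have $(\sigma_0,\dots,\sigma_r)$ as its labelling, so the pairing vanishes for every $(\tau_\bullet)\otimes(\nu_\bullet)$ and $\Delta^{n,k}_i(\sigma_0,\dots,\sigma_r) = 0$. The main point requiring care is the sign bookkeeping: one must check that the Eilenberg--Zilber shuffle sign of $\varphi$ coincides with the lattice sign $\epsilon(\varphi)$ of the statement, which holds because both are computed from the same shuffle permutation sending horizontal segments to the front and vertical segments to the back. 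A secondary subtlety is to confirm that distinct admissible paths contribute independently without cancellation, which follows since $\varphi$ together with $(\sigma_\bullet)$ determines $(\tau_\bullet),(\nu_\bullet)$, so the correspondence between shuffle paths realizing $(\sigma_\bullet)$ and admissible lattice paths is a sign-preserving bijection.
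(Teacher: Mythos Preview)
Your proposal is correct and follows exactly the approach of the paper: both identify $\Delta^{n,k}_i$ with the linear dual of the Berger--Fresse composition $\circ_i^{n,k}$ and then read off the formula, treating part (2) by observing that non-admissible tuples lie outside the image of $\circ_i^{n,k}$. Your write-up is in fact more detailed than the paper's, which disposes of part (1) with the phrase ``follows from direct inspection''; the explicit matching you do between shuffle terms and $(\varphi,i)$-admissible paths, together with the uniqueness lemma guaranteeing that each admissible path determines its $(\tau_\bullet,\nu_\bullet)$ uniquely, is exactly what that inspection amounts to.
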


\begin{proof}
If one of the permutations $\sigma_j$ in $\mathbb{S}_{n+k-1}$ cannot be obtained as the $i$-th composition of a permutation in $\mathbb{S}_n$ and a permutation in $\mathbb{S}_k$, then it not reached by 
\[
\circ^{n,k}_i: \mathcal{E}(n) \otimes \mathcal{E}(k) \longrightarrow \mathcal{E}(n+k-1) 
\]
as defined in \cite[Section 1.1.3]{BergerFresse04}. Therefore its image by $\Delta^{n,k}_i = (\circ_i^{n,k})^*$ is zero. The other part of the formula follows from direct inspection.
\end{proof}

\begin{Corollary}
Let $\mathfrak{g}$ be a curved absolute partition $\mathcal{L}_\infty$-algebra. Its elementary operations $\{l_n^{(\sigma_0,\cdots,\sigma_r)}\}$ satisfy the following relations:
\begin{align*}
\partial \left(l_n^{(\sigma_0,\cdots,\sigma_r)} \right) = &\sum_{\substack{\bar{\sigma} \in \mathbb{S}_n\\ \bar{\sigma} \neq \sigma_e}} \sum_{j=0}^r (-1)^j l_n^{(\sigma_0, \cdots, \bar{\sigma}, \cdots, \sigma_r)} + \\
& +\sum_{\substack{p+q=n+1, \\ a+b =r}} \sum_{i = 0}^p \sum_{\substack{\varphi: (0,0) \rightarrow (a,b), \\ (\varphi,i)\text{-}\mathrm{admissible}}} (-1)^{\epsilon(\varphi)} l_p^{(\sigma_0^{(1)},\cdots, \sigma_a^{(1)})} \circ_i  l_q^{(\sigma_0^{(2)},\cdots, \sigma_b^{(2)})}~,
\end{align*}
where $\bar{\sigma}$ is added in the $j$-spot in $(\sigma_0, \cdots, \bar{\sigma}, \cdots, \sigma_r)$.
\end{Corollary}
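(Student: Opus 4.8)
The plan is to derive these relations directly from the two defining conditions on the structural map $\gamma_\mathfrak{g}$: its compatibility with the pre-differentials (that is, $\gamma_\mathfrak{g}$ is a morphism of pdg modules) together with the associativity condition of an algebra over the monad. First I would unwind the meaning of $\partial\left(l_n^{(\sigma_0,\cdots,\sigma_r)}\right)$. By definition $l_n^{(\sigma_0,\cdots,\sigma_r)} = \gamma_\mathfrak{g}\left(c_n^{(\sigma_0,\cdots,\sigma_r)}(-,\cdots,-)\right)$, and $\partial$ is the commutator of $d_\mathfrak{g}$ with this operation, namely $\partial(l_n^w) = d_\mathfrak{g}\circ l_n^w - (-1)^{|l_n^w|}\, l_n^w\circ d_{\mathfrak{g}^{\otimes n}}$. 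The term $l_n^w\circ d_{\mathfrak{g}^{\otimes n}}$ is precisely $\gamma_\mathfrak{g}$ applied to $d_1(c_n^w)$, the part of the source pre-differential acting on the leaves, while $d_\mathfrak{g}\circ l_n^w = d_\mathfrak{g}\circ\gamma_\mathfrak{g}(c_n^w)$. Using that $\gamma_\mathfrak{g}$ intertwines the three-term source pre-differential $d_1+d_2+d_3$ described above with $d_\mathfrak{g}$, the two $d_1$-contributions cancel and we are left with
\[
\partial(l_n^w) = \gamma_\mathfrak{g}\left(d_2(c_n^w)\right) + \gamma_\mathfrak{g}\left(d_3(c_n^w)\right)~,
\]
so the whole statement is reduced to evaluating $\gamma_\mathfrak{g}$ on the $d_2$ and $d_3$ parts of a single corolla.

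For the $d_2$ term, I would simply substitute the explicit formula for the differential on $\mathcal{E}^*$ recalled above, $d_2((\sigma_0,\cdots,\sigma_r)) = \sum_{\sigma\neq\sigma_j}\sum_{i=0}^r (-1)^i(\sigma_0,\cdots,\sigma,\cdots,\sigma_r)$, applied to the single vertex of the corolla. Since $\gamma_\mathfrak{g}$ is linear and sends a labelled corolla to the corresponding elementary operation, this produces exactly the first sum $\sum_{\bar\sigma\neq\sigma_j}\sum_{j=0}^r (-1)^j\, l_n^{(\sigma_0,\cdots,\bar\sigma,\cdots,\sigma_r)}$.

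For the $d_3$ term, I would invoke the associativity of $\gamma_\mathfrak{g}$: applied to a symmetric corked rooted tree with two vertices $v^{(1)}$ and $v^{(2)}$, where $v^{(2)}$ is grafted on the $i$-th input of $v^{(1)}$, the structural map evaluates to the composite $l_p^{(\text{label of }v^{(1)})}\circ_i l_q^{(\text{label of }v^{(2)})}$. Since $d_3(c_n^w)$ is by construction the sum over all partial decompositions of the corolla, substituting the explicit computation of $\Delta^{n,k}_i$ from Proposition \ref{prop: formulas for the decompositions} — the sum over admissible lattice paths $\varphi:(0,0)\longrightarrow(a,b)$ weighted by $(-1)^{\epsilon(\varphi)}$ — yields precisely the second sum, and the vanishing clause of that proposition cuts the summation down to the ranges $p+q=n+1$, $a+b=r$ with $(\varphi,i)$-admissibility. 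I expect the main obstacle to be purely a matter of sign bookkeeping: one must reconcile the Koszul signs coming from the degree $-r-1$ of the corolla labels with the lattice signs $\epsilon(\varphi)$, and one must pass through the norm isomorphism of \cref{Iso} (see \cref{Remark: norm map}) in order to identify the decomposition of the partial cooperad $\mathcal{E}^*$ with a genuine operadic composite $\circ_i$ of the elementary operations. Once the signs are matched, the equality of the two sides is a routine comparison of summation indices.
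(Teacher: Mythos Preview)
Your proposal is correct and follows the same approach as the paper, just far more explicitly: the paper's proof is the single line ``Follows directly from Proposition~\ref{prop: formulas for the decompositions},'' and what you have written is precisely the unpacking of that sentence---identifying $\partial(l_n^w)$ with $\gamma_\mathfrak{g}(d_2(c_n^w)+d_3(c_n^w))$ via the compatibility of $\gamma_\mathfrak{g}$ with the pre-differentials, reading off the first sum from the formula for $d_2$, and obtaining the second sum from the decomposition formula of the Proposition together with the associativity of $\gamma_\mathfrak{g}$.
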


\begin{proof}
Follows directly from Proposition \ref{prop: formulas for the decompositions}.
\end{proof}

\begin{Remark}
If $\mathfrak{g}$ is a partition $\mathcal{L}_\infty$-algebra, these are precisely the relations satisfied by the structural operations. In an absolute partition $\mathcal{L}_\infty$-algebra, these relations extend to formal power series of elementary operations. 
\end{Remark}

\begin{Remark}
Many of the formulas in this paper have been implemented in Python using the Python package \cite{pythonbarratteccles} written by Medina-Mardones that encodes elements in the Barratt-Eccles operad and their composition.
\end{Remark}

\bibliographystyle{alpha}
\bibliography{bibax}

\def\cprime{$'$}
\begin{thebibliography}{BFMT20}

\bibitem[{Ban}14]{Bandiera}
Ruggero {Bandiera}.
\newblock Higher deligne groupoids, derived brackets and deformation problems
  in holomorphic poisson geometry,
  \href{http://www.bdim.eu/item?fmt=pdf&id=tesi_2014_BandieraRuggero_1}{http://www.bdim.eu},
  2014.

\bibitem[BB24]{bachmann2024}
Tom Bachmann and Robert Burklund.
\newblock $\mathbb{E}_{\infty}$-coalgebras and $p$-adic homotopy theory,
  \href{https://arxiv.org/abs/2402.15850}{arxiv.2402.15850}, 2024.

\bibitem[BCN21]{pdalgebras}
Lukas Brantner, Ricardo Campos, and Joost Nuiten.
\newblock {PD} operads and explicit partition {L}ie algebras,
  \href{https://arxiv.org/abs/2104.03870}{arxiv.2104.03870}; \textit{ to appear
  in Memoirs of the AMS.}, 2021.

\bibitem[{Ber}15]{Berglund15}
Alexander {Berglund}.
\newblock {Rational homotopy theory of mapping spaces via Lie theory for
  \(L_\infty\)-algebras}.
\newblock {\em {Homology Homotopy Appl.}}, 17(2):343--369, 2015.

\bibitem[BF04]{BergerFresse04}
Clemens Berger and Benoit Fresse.
\newblock Combinatorial operad actions on cochains.
\newblock {\em Math. Proc. Camb. Philos. Soc.}, 137(1):135--174, 2004.

\bibitem[BFMT20]{Liemodels}
Urtzi Buijs, Yves F{\'e}lix, Aniceto Murillo, and Daniel Tanr{\'e}.
\newblock {\em Lie models in topology}, volume 335 of {\em Prog. Math.}
\newblock Cham: Birkh{\"a}user, 2020.

\bibitem[BK71]{BousfieldKanUnstable}
A.~K. Bousfield and D.~M. Kan.
\newblock The homotopy spectral sequence of a space with coefficients in a
  ring.
\newblock {\em Topology}, 11:79--106, 1971.

\bibitem[BM03]{BergerMoerdijk}
Clemens Berger and Ieke Moerdijk.
\newblock Axiomatic homotopy theory for operads.
\newblock {\em Comment. Math. Helv.}, 78(4):805--831, 2003.

\bibitem[BM19]{brantnermathew}
Lukas Brantner and Akhil Mathew.
\newblock Deformation theory and partition {L}ie algebras,
  \href{https://arxiv.org/abs/1904.07352}{arxiv.1904.07352}, \textit{to appear
  in Acta Mathematica}, 2019.

\bibitem[{Bou}75]{Bousfield75}
A.~K. {Bousfield}.
\newblock {The localization of spaces with respect to homology}.
\newblock {\em {Topology}}, 14:133--150, 1975.

\bibitem[Car91]{Carlsson}
Gunnar Carlsson.
\newblock Equivariant stable homotopy and {Sullivan}'s conjecture.
\newblock {\em Invent. Math.}, 103(3):497--525, 1991.

\bibitem[CCN22]{CCN19}
Damien Calaque, Ricardo Campos, and Joost Nuiten.
\newblock Moduli problems for operadic algebras.
\newblock {\em Journal of the London Mathematical Society}, 106(4):3450--3544,
  2022.

\bibitem[CG21]{fmps}
Damien Calaque and Julien Grivaux.
\newblock Formal moduli problems and formal derived stacks.
\newblock In {\em Derived algebraic geometry}, pages 85--145. Paris:
  Soci{\'e}t{\'e} Math{\'e}matique de France (SMF), 2021.

\bibitem[FG12]{FrancisGaitsgory}
John Francis and Dennis Gaitsgory.
\newblock Chiral {Koszul} duality.
\newblock {\em Sel. Math., New Ser.}, 18(1):27--87, 2012.

\bibitem[Fre03]{fresse2003deriveddivisionfunctorsmapping}
Benoit Fresse.
\newblock Derived division functors and mapping spaces,
  \href{https://arxiv.org/abs/math/0208091}{arxiv.0208091}, 2003.

\bibitem[Fre09]{Fresse09}
Benoit Fresse.
\newblock {\em Modules over operads and functors}, volume 1967 of {\em Lect.
  Notes Math.}
\newblock Berlin: Springer, 2009.

\bibitem[Ger64]{Gerstenhaber64}
Murray Gerstenhaber.
\newblock On the deformation of rings and algebras.
\newblock {\em Ann. of Math. (2)}, 79:59--103, 1964.

\bibitem[Get09]{Getzler09}
Ezra Getzler.
\newblock Lie theory for nilpotent {$L_\infty$}-algebras.
\newblock {\em Ann. of Math. (2)}, 170(1):271--301, 2009.

\bibitem[GJ99]{GoerssJardine}
Paul~G. Goerss and John~F. Jardine.
\newblock {\em Simplicial homotopy theory}, volume 174 of {\em Prog. Math.}
\newblock Basel: Birkh{\"a}user, 1999.

\bibitem[GL22]{linearcoalgebras}
Brice~Le Grignou and Damien Lejay.
\newblock Homotopy theory of linear cogebras, v2,
  \href{https://arxiv.org/abs/1803.01376}{arXiv.1803.01376}.
\newblock 2022.

\bibitem[Gri22]{grignou2022mappingII}
Brice~Le Grignou.
\newblock Mapping coalgebras {II}: Operads,
  \href{https://arxiv.org/abs/2208.14395}{arxiv.2208.14395}, 2022.

\bibitem[Gri24]{grignou2022mapping}
Brice~Le Grignou.
\newblock {Mapping coalgebras III: Chain complexes, \textit{in preparation}},
  2024+.

\bibitem[GRiL23a]{bricevictor}
Brice~Le Grignou and Victor {R}oca {i}~{L}ucio.
\newblock Homotopical operadic calculus in postive characteristic,
  \href{https://arxiv.org/abs/2310.13095}{arXiv.2310.13095}, 2023.

\bibitem[GRiL23b]{deuxiemepapier}
Brice~Le Grignou and Victor {R}oca {i}~{L}ucio.
\newblock A new approach to formal moduli problems,
  \href{https://arxiv.org/abs/2306.07227}{arXiv:2306.07227}, 2023.

\bibitem[HH13]{HarperHess}
John~E. {Harper} and Kathryn {Hess}.
\newblock {Homotopy completion and topological Quillen homology of structured
  ring spectra}.
\newblock {\em {Geom. Topol.}}, 17(3):1325--1416, 2013.

\bibitem[Hin01]{Hinich01}
Vladimir Hinich.
\newblock Dg coalgebras as formal stacks.
\newblock {\em Journal of Pure and Applied Algebra}, 162(2):209--250, 2001.

\bibitem[HL24]{heuts2024formalitymathbbenalgebrascochainsspheres}
Gijs Heuts and Markus Land.
\newblock Formality of $\mathbb{E}_n$-algebras and cochains on spheres,
  \href{https://arxiv.org/abs/2407.00790}{arxiv.2407.00790}, 2024.

\bibitem[HM12]{HirshMilles12}
Joseph Hirsh and Joan Mill{\`e}s.
\newblock Curved {K}oszul duality theory.
\newblock {\em Math. Ann.}, 354(4):1465--1520, 2012.

\bibitem[Hov99]{HoveyModel}
Mark Hovey.
\newblock {\em Model categories}, volume~63 of {\em Math. Surv. Monogr.}
\newblock Providence, RI: American Mathematical Society, 1999.

\bibitem[IRiL26]{najib}
Najib Idrissi and Victor Roca~i Lucio.
\newblock Unordered configuration spaces in positive characteristic via
  point-set constructions, 2026.

\bibitem[Jan03]{Jantzen}
Jens~Carsten Jantzen.
\newblock {\em Representations of algebraic groups.}, volume 107 of {\em Math.
  Surv. Monogr.}
\newblock Providence, RI: American Mathematical Society (AMS), 2nd ed. edition,
  2003.

\bibitem[KMM21]{chainSteenrod}
Ralph~M. Kaufmann and Anibal~M. Medina-Mardones.
\newblock Cochain level {May}-{Steenrod} operations.
\newblock {\em Forum Math.}, 33(6):1507--1526, 2021.

\bibitem[KS58]{KodairaSpencer58}
Kunihiko Kodaira and Donald~C. Spencer.
\newblock On deformations of complex analytic structures. {I}, {II}.
\newblock {\em Ann. of Math. (2)}, 67:328--466, 1958.

\bibitem[Lan92]{LannesT}
Jean Lannes.
\newblock On function spaces whose source is the classifying space of an
  elementary abelian {{\(p\)}}-group.
\newblock {\em Publ. Math., Inst. Hautes {\'E}tud. Sci.}, 75:135--244, 1992.

\bibitem[Law20]{Lawson}
Tyler Lawson.
\newblock {{\(E_n\)}}-spectra and {Dyer}-{Lashof} operations.
\newblock In {\em Handbook of homotopy theory}, pages 793--849. Boca Raton, FL:
  CRC Press, 2020.

\bibitem[Laz54]{Lazardnilpotent}
Michel Lazard.
\newblock Sur les groupes nilpotents et les anneaux de {Lie}.
\newblock {\em Ann. Sci. {\'E}c. Norm. Sup{\'e}r. (3)}, 71:101--190, 1954.

\bibitem[LS86]{LannesSchwartz}
Jean Lannes and Lionel Schwartz.
\newblock A propos de conjectures de {Serre} et {Sullivan}.
\newblock {\em Invent. Math.}, 83:593--603, 1986.

\bibitem[Lur11a]{Lurie11}
Jacob Lurie.
\newblock {\em Derived Algebraic Geometry X: Formal Moduli Problems,
  \href{https://people.math.harvard.edu/~lurie/papers/DAG-X.pdf}{https://people.math.harvard.edu/~lurie/papers/DAG-X.pdf}}.
\newblock 2011.

\bibitem[Lur11b]{Lurie11Rational}
Jacob Lurie.
\newblock {\em Derived Algebraic Geometry XIII: Rational and $p$-adic homotopy
  theory,
  \href{https://www.math.ias.edu/~lurie/papers/DAG-XIII.pdf}{https://www.math.ias.edu/~lurie/papers/DAG-XIII.pdf}}.
\newblock 2011.

\bibitem[Lur18a]{ellipticI}
Jacob Lurie.
\newblock Elliptic cohomology {I}: Spectral abelian varieties,
  \href{https://www.math.ias.edu/~lurie/papers/Elliptic-I.pdf}{https://www.math.ias.edu/~lurie/papers/Elliptic-I.pdf},
  2018.

\bibitem[Lur18b]{sag}
Jacob Lurie.
\newblock Spectral algebraic geometry,
  \href{https://www.math.ias.edu/~lurie/papers/SAG-rootfile.pdf}{https://www.math.ias.edu/~lurie/papers/SAG-rootfile.pdf}.
\newblock 2018.

\bibitem[Man01]{Mandell}
Michael~A. Mandell.
\newblock $\mathbb{E}_{\infty}$-algebras and {{\(p\)}}-adic homotopy theory.
\newblock {\em Topology}, 40(1):43--94, 2001.

\bibitem[Mil84]{Miller}
Haynes Miller.
\newblock The {Sullivan} conjecture on maps from classifying spaces.
\newblock {\em Ann. Math. (2)}, 120:39--87, 1984.

\bibitem[MM21]{pythonbarratteccles}
Anibal~M. Medina-Mardones.
\newblock A computer algebra system for the study of commutativity
  up-to-coherent homotopies,
  \href{https://arxiv.org/abs/2102.07670}{arxiv.2102.07670}, 2021.

\bibitem[MS03]{McClureSmith}
James~E. McClure and Jeffrey~H. Smith.
\newblock Multivariable cochain operations and little {{\(n\)}}-cubes.
\newblock {\em J. Am. Math. Soc.}, 16(3):681--704, 2003.

\bibitem[Nik11]{AlgebraicKancomplex}
Thomas Nikolaus.
\newblock Algebraic models for higher categories.
\newblock {\em Indag. Math., New Ser.}, 21(1-2):52--75, 2011.

\bibitem[{Pri}10]{Pridham10}
J.~P. {Pridham}.
\newblock {Unifying derived deformation theories}.
\newblock {\em {Adv. Math.}}, 224(3):772--826, 2010.

\bibitem[Qui69]{Quillen69}
D.~Quillen.
\newblock Rational homotopy theory.
\newblock {\em Ann. of Math. (2)}, 90:205--295, 1969.

\bibitem[RiL22]{absolutealgebras}
Victor {R}oca {i}~{L}ucio.
\newblock Absolute algebras, contramodules, and duality squares,
  \href{https://arxiv.org/abs/2209.09833}{arxiv:2209.09833}, 2022.

\bibitem[RiL24]{lucio2022integration}
Victor {R}oca {i}~{L}ucio.
\newblock The integration theory of curved absolute
  $\mathcal{L}_\infty$-algebras.
\newblock {\em Advances in Mathematics}, 448:109718, 2024.

\bibitem[RNV20]{robertnicoud2020higher}
Daniel Robert-Nicoud and Bruno Vallette.
\newblock Higher {L}ie theory,
  \href{https://arxiv.org/abs/2010.10485}{arxiv.2010.10485}, 2020.

\bibitem[Sch94]{SchwartzBook}
Lionel Schwartz.
\newblock {\em Unstable modules over {Steenrod} algebra and {Sullivan}'s fixed
  point set conjecture}.
\newblock Chicago, IL: Univ. of Chicago Press, 1994.

\bibitem[Ser65]{SerreLie}
Jean-Pierre Serre.
\newblock Lie algebras and {Lie} groups. 1964 lectures given at {Harvard}
  {University}.
\newblock New {York}-{Amsterdam}: {W}. {A}. {Benjamin}, {Inc}. vi, 247 p. (not
  consecutively paginated) (1965)., 1965.

\bibitem[SF88]{Straderestricted}
Helmut Strade and Rolf Farnsteiner.
\newblock {\em Modular {Lie} algebras and their representations}, volume 116 of
  {\em Pure Appl. Math., Marcel Dekker}.
\newblock New York etc.: Marcel Dekker, Inc., 1988.

\bibitem[Sul77]{Sullivan77}
D.~Sullivan.
\newblock Infinitesimal computations in topology.
\newblock {\em Inst. Hautes \'Etudes Sci. Publ. Math.}, (47):269--331 (1978),
  1977.

\bibitem[To{\"e}17]{toenfmp}
Bertrand To{\"e}n.
\newblock Problems of formal modules.
\newblock In {\em S\'eminaire Bourbaki. Volume 2015/2016. Expos\'es 1104--1119.
  Avec table par noms d'auteurs de 1948/49 \`a 2015/16}, pages 199--244, ex.
  Paris: Soci{\'e}t{\'e} Math{\'e}matique de France (SMF), 2017.

\end{thebibliography}

\end{document}